\documentclass[12pt,reqno] {amsart}
\usepackage{amsmath}
\usepackage{bbm, dsfont}
\usepackage{amssymb}
\usepackage{color}
\usepackage{graphicx}%
\usepackage{hyperref}
\usepackage{cleveref}
\usepackage{comment}
 \usepackage{mathrsfs}
 \usepackage{float}
 \usepackage{tikz, tikz-cd}
  \usetikzlibrary{arrows.meta, knots, calc}
 \usepackage{bm}
 \usepackage{mathtools} 
\usepackage{extarrows} 
\usepackage{blindtext}

 \usepackage{ulem}
 \usepackage{enumerate}
  \usepackage{mathdots}
\setcounter{MaxMatrixCols}{30}
\setlength{\hoffset}{-0.8in}
\setlength{\textwidth}{6.3in}
\usetikzlibrary{decorations.markings}
\usetikzlibrary{decorations.pathreplacing}

\tikzset{->-/.style={decoration={
  markings,
  mark=at position #1 with {\arrow{>}}},postaction={decorate}}}
\tikzset{-<-/.style={decoration={
  markings,
  mark=at position #1 with {\arrow{<}}},postaction={decorate}}}
  \tikzset{
    partial ellipse/.style args={#1:#2:#3}{
        insert path={+ (#1:#3) arc (#1:#2:#3)}
    }
}
\theoremstyle{plain}

\numberwithin{equation}{section}

\newtheorem{theorem}{Theorem}[section]
\newtheorem*{theorem*}{Theorem}
\newtheorem{corollary}[theorem]{Corollary}
\newtheorem{lemma}[theorem]{Lemma}
\newtheorem{proposition}[theorem]{Proposition}
\newtheorem{remark}[theorem]{Remark}
\newtheorem{definition}[theorem]{Definition}
\newtheorem{example}[theorem]{Example}
\newtheorem*{ac}{Acknowledgement}
\newcommand{\1}{\textbf{1}}

\newcommand{\Hom}{\mathrm{HOM}}
\newcommand{\rep}{\mathrm{Rep }}
\newcommand{\Vc}{\mathrm{Vec }}
\newcommand{\Irr}{\mathrm{Irr }}
\newcommand{\bC}{\mathbb{C}}
\newcommand{\cC}{\mathcal{C}}
\newcommand{\cD}{\mathcal{D}}
\newcommand{\cE}{\mathcal{E}}
\newcommand{\fM}{\mathfrak{M}}
\newcommand{\Cl}{\mathrm{Cl}}
\newcommand{\Schi}{\mathfrak{S}\chi}
\newcommand{\sming}{\textcolor{red}}
\newcommand{\SL}{\operatorname{SL}}
\newcommand{\GL}{\operatorname{GL}}
\newcommand{\ord}{\operatorname{ord}}
\newcommand{\Gal}{\operatorname{Gal}}
\newcommand{\MCG}{\operatorname{MCG}}
\newcommand{\RT}{\operatorname{RT}}
\newcommand{\red}[1]{\textcolor{red}{#1}}
\newcommand{\op}{\operatorname{op}}
\begin{document}

\title{Alterfold Theory and Topological Modular Invariance}

 \author{Zhengwei Liu}
 \address{Z. Liu, Yau Mathematical Sciences Center and Department of Mathematics, Tsinghua University, Beijing, 100084, China}
 \address{Beijing Institute of Mathematical Sciences and Applications, Huairou District, Beijing, 101408, China}
 \email{liuzhengwei@mail.tsinghua.edu.cn}

 \author{Shuang Ming}
 \address{S. Ming, Beijing Institute of Mathematical Sciences and Applications, Beijing, 101408, China}
 \email{sming@bimsa.cn}

\author{Yilong Wang}
\address{Y. Wang, Beijing Institute of Mathematical Sciences and Applications, Beijing, 101408, China}
\email{wyl@bimsa.cn}

 \author{Jinsong Wu}
 \address{J. Wu, Beijing Institute of Mathematical Sciences and Applications, Beijing, 101408, China}
 \email{wjs@bimsa.cn}

\begin{abstract}
We propose a topological paradigm in alterfold topological quantum field theory to explore various concepts, including modular invariants, $\alpha$-induction and connections in Morita contexts within a modular fusion category of non-zero global dimension over an arbitrary field.
Using our topological perspective, we provide streamlined quick proofs and broad generalizations of a wide range of results. These include all theoretical findings by B\"{o}ckenhauer, Evans, and Kawahigashi on $\alpha$-induction.
Additionally, we introduce the concept of double $\alpha$-induction for pairs of Morita contexts and define its higher-genus $Z$-transformation, which remains invariant under the action of the mapping class group.
Finally, we establish a novel integral identity for modular invariance across multiple Morita contexts, unifying several known identities as special cases.

\end{abstract}

\maketitle

\tableofcontents

\section{Introduction}

Quantum symmetries extend the concept of classical group symmetries and are supported by mathematical frameworks from various fields, including 
subfactor theory \cite{Jon83, Jon85, Jon99}, quantum groups \cite{Dri87, Jim85}, confromal field theory \cite{FMS97}, topological quantum field theory (TQFT) \cite{Ati88, Wit88}, and tensor category theory \cite{EGNO15}, among others. 

In recent papers, the authors developed 2+1 alterfold topological quantum field theory in \cite{LMWW23,LMWW23b} and a higher dimensional theory in \cite{Liu24}. 
The alterfold topological quantum field theory of a spherical fusion category $\mathcal{C}$ contains both the Turaev-Viro TQFT \cite{TurVir92} of $\mathcal{C}$ and the Reshetikhin-Turaev TQFT \cite{ResTur91} of the Drinfeld center of $\mathcal{C}$.
It also offers convenient topological interpretations of notions in fusion categories such as the half-braiding, the twist, the Frobenius-Schur indicators, the S-matrices, and the Kirby-color,  providing conceptual and efficient computational tools. 
When $\mathcal{C}$ is modular, its Drinfeld center $\mathcal{Z(C)}=\mathcal{C}\boxtimes \mathcal{C}^{op}$ has natural 3D topological presentations. 

In this paper, we propose conceptual topological interpretations of modular invariants, $\alpha$-induction and connections for Morita contexts in a modular fusion category with non-zero global dimension over arbitrary field, using the alterfold TQFT. 
Based on our topological interpretation, we provide quick proofs and various generalizations of a large class of relevant results, including all theoretical results of B\"{o}ckenhauer, Evans and Kawahigashi on $\alpha$-induction in \cite{BE98, BE99, BE00, BE99b, BEK99, BEK00}.
We also provide a quick proof of recent results of Kawahigashi on the equivalence between the commutativity of a Frobenius algebra and the flatness of the corresponding connection in \cite{Kaw23,Kaw24}, (see Theorem \ref{thm:flatequiv}), as well as a non-trivial generalization of this equivalence to any ribbon fusion category over any field in Theorem \ref{thm:flatequi}. 
We also propose the notions of higher genus modular invariants and double $\alpha$-induction for a pair of Morita contexts in Theorem \ref{thm:mcg0}, and prove a new integral identity for the modular invariants of multiple Morita contexts in Theorem \ref{thm:physical}.

First let us briefly review the development and importance of the theory of modular invariance. 
In 1987, Cappelli, Itzykson, and Zuber \cite{CIZ87} classified minimal conformal invariant theories based on an $A$-$D$-$E$ classification of modular invariants of quantum $\mathfrak{sl}_2$. 
The partition function of a torus is expended over the basis $\{ \chi_j\}_j$ of characters  as 
\begin{align}\label{Equ: Z=Zjk}
Z=\sum_{j,k} z_{jk} \chi_j \overline{\chi_k}.
\end{align}
The coefficients $z_{jk}$ are the anomalous dimensions and they form a matrix commuting with the modular transformation, called the modular invariant mass matrix. 
This relationship between modular invariants and Dynkin diagrams was categorified by Ocneanu as the $A$-$D$-$E$ classification of module categories of quantum $\mathfrak{sl}_2$ in terms of connections on Dynkin diagrams \cite{Ocn88,Ocn00}, see also \cite{Ost03m} without assuming unitarity. 
In this case, the entries of the modular invariant become the dimension of certain hom-spaces and the Dynkin diagrams become the fusion graphs of the type $A$ modular fusion category acting on its module categories.
It was surprising that the connection is flat if and only if the type of Dynkin diagram is $A_n$, $D_{2n}$, $E_6$ or $E_8$, which led to the classification of subfactors with index below 4 \cite{Ocn88, GHJ89, BN91, Izu91, Izu94,Kaw95}.

The construction of a module category of a unitary fusion category is equivalent to the construction of a Q-system, or a Frobenius algebra, as shown by Longo-Rehren \cite{LonReh95}. 
Xu constructed commutative Frobenius algebras $Q$ in the modular fusion category $\mathcal{C}$ from conformal inclusions of loop groups, and induction maps $\alpha_{\pm}$ from $\mathcal{C}$ to the $Q$-$Q$ bimodule category $\mathcal{D}$. As examples, he reconstructed the $E_6$ and $E_8$ subfactors from the subcategory of $\mathcal{D}$ generated by $\alpha_{+}(\mathcal{C})$, where $\mathcal{C}$ is the modular fusion category of $\mathfrak{sl}_2$.

For a Frobenius algebra $Q$ in a unitary modular fusion category $\mathcal{C}$, B\"{o}ckenhauer, Evans and Kawahigashi constructed the modular invariant mass matrix $Z$ through the induction maps $\alpha_{\pm}$ as
$z_{jk}=\dim \hom_{\mathcal{D}}(\alpha_+(X_j),\alpha_-(X_k))$ for simple objects $X_j$ of $\mathcal{C}$, and they systematically studied modular invariants in \cite{BE98, BE99, BE00, BE99b, BEK99, BEK00}.
The modular invariant is crucial in the classification of Frobenius algebras of modular fusion categories, see \cite{Gan94, Ocn00, Gan23,EdiGan24} for the classification up to $\mathfrak{sl}_7$. 

We first study the alterfold TQFT for a modular fusion category $\mathcal{C}$ and give a topological interpretation of the well-known result that the Drinfeld center $\mathcal{Z(C)}$ is braided equivalent to $\mathcal{C}\boxtimes \mathcal{C}^{op}$ \cite{Wal91, Tur92}.
In particular, the vector space with torus boundary in the TQFT has a basis $\{\xi_{j} \otimes \xi_k^{op}\}_{j,k}$ labeled by simple objects $X_j,X_k$ of $\mathcal{C}$.
For a spherical fusion category  $\mathcal{D}$ Morita equivalent to $\mathcal{C}$, we construct the partition function $Z^{\mathcal{D}}$ of a torus and its expansion over the basis of simple objects of the $Z(\mathcal{C})$ as
\begin{align}\label{Z=Zjk top}
Z^{\mathcal{D}}=\sum_{j,k}z_{jk}\xi_{j} \otimes \xi_{k}^{op},   
\end{align}
which is a topological analogue of Equation ~\eqref{Equ: Z=Zjk} in alterfold TQFT as follows.
\begin{align*}
\vcenter{\hbox{\scalebox{0.5}{
\begin{tikzpicture}
\begin{scope}[shift={(1, 1)}, scale=1.8]
\path [fill=brown!20!white] (-1,0.5)--(2, 0.5)--(1, -0.5)--(-2, -0.5)--cycle;
\draw (-1,0.5)--(2, 0.5) (-2,-0.5)--(1, -0.5);
\draw (-2,-0.5)--(-1, 0.5) (1,-0.5)--(2, 0.5) node [below] {\tiny $B$} node [above] {\tiny $A$};
\node at (0,0) { $\mathcal{D}$};
\end{scope}
\begin{scope}[shift={(1, -1)}, scale=1.8]
\draw (-1,0.5)--(2, 0.5) (-2,-0.5)--(1, -0.5);
\draw (-2,-0.5)--(-1, 0.5) (1,-0.5)--(2, 0.5) node [above] {\tiny $B$}; 
\node at (2, 0.5) [below right] {\tiny Time Boundary};
\end{scope}
\end{tikzpicture}}}}
=
\sum_{j,k} z_{jk} \frac{1}{\mu}
\vcenter{\hbox{\scalebox{0.5}{
\begin{tikzpicture}
\begin{scope}[shift={(1, 1)}, scale=1.8]
\draw (-1,0.5)--(2, 0.5) (-2,-0.5)--(1, -0.5);
\draw (-2,-0.5)--(-1, 0.5) (1,-0.5)--(2, 0.5) node [above] {\tiny $B$} node [below] {\tiny $A$};
\draw [red] (-0.5,-0.5)--(0.5, 0.5);
\path [fill=white] (0.2, 0.2) circle (0.2cm);
\draw [blue] (-1.3, 0.2)--(1.7, 0.2);
\draw [blue] (-1.7, -0.2)--(-0.4, -0.2) (0, -0.2) --(1.3, -0.2);
\end{scope}
\begin{scope}[shift={(1, -1)}, scale=1.8]
\draw (-1,0.5)--(2, 0.5) (-2,-0.5)--(1, -0.5);
\draw (-2,-0.5)--(-1, 0.5) (1,-0.5)--(2, 0.5) node [above] {\tiny $B$}; 
\node at (2, 0.5) [below right] {\tiny Time Boundary};
\end{scope}
\end{tikzpicture}}}},
\end{align*}
where $\mu$ is the global dimension of $\mathcal{C}$.

The modular invariance property of the matrix $Z^{\mathcal{D}}$ is transparent from its topological nature.
The coefficient $z_{jk}$ can be expressed pictorially as
\begin{align*}
z_{jk}=\langle \xi_j\otimes \xi_k^{op}, Z^\mathcal{D}\rangle=& 
\frac{1}{\mu^2}
\vcenter{\hbox{\scalebox{0.5}{
\begin{tikzpicture}
\begin{scope}[shift={(1, 1)}, scale=1.8]
\path [fill=brown!20!white] (-1,0.5)--(2, 0.5)--(1, -0.5)--(-2, -0.5)--cycle;
\draw (-1,0.5)--(2, 0.5) (-2,-0.5)--(1, -0.5);
\draw (-2,-0.5)--(-1, 0.5) (1,-0.5)--(2, 0.5) node [below] {\tiny $B$} node [above] {\tiny $A$};
\node at (0,0) { $\mathcal{D}$};
\end{scope}
\begin{scope}[shift={(1, -1)}, scale=1.8]
\draw (-1,0.5)--(2, 0.5) (-2,-0.5)--(1, -0.5);
\draw (-2,-0.5)--(-1, 0.5) (1,-0.5)--(2, 0.5) node [above] {\tiny $B$} node [below] {\tiny $A$};
\draw [red] (-0.5,-0.5)--(0.5, 0.5);
\path [fill=white] (0.2, 0.2) circle (0.2cm);
\draw [blue] (-1.3, 0.2)--(1.7, 0.2);
\draw [blue] (-1.7, -0.2)--(-0.4, -0.2) (0, -0.2) --(1.3, -0.2);
\end{scope}
\end{tikzpicture}}}}\\
= & \frac{1}{\mu^2} \vcenter{\hbox{\scalebox{0.7}{
\begin{tikzpicture}[scale=0.7]
\draw [line width=0.6cm, brown!20!white] (0,0) [partial ellipse=-0.1:180.1:2 and 1.5];
\begin{scope}[shift={(2.5, 0)}]
\draw [line width=0.6cm] (0,0) [partial ellipse=0:180:2 and 1.5];
\draw [white, line width=0.57cm] (0,0) [partial ellipse=-0.1:180.1:2 and 1.5];
\draw [blue] (0,0) [partial ellipse=0:180:2.15 and 1.65];
\draw [blue] (0,0) [partial ellipse=0:180:1.85 and 1.35];
\end{scope} 
\begin{scope}[shift={(2.5, 0)}]
\draw [line width=0.6cm] (0,0) [partial ellipse=180:360:2 and 1.5];
\draw [white, line width=0.57cm] (0,0) [partial ellipse=178:362:2 and 1.5];
\draw [blue, -<-=0.5] (0,0) [partial ellipse=178:362:2.15 and 1.65] node[black, pos=0.7,below ] {\tiny $X_k$}; 
\draw [blue, ->-=0.5] (0,0) [partial ellipse=178:362:1.85 and 1.35] node[black, pos=0.7,above ] {\tiny $X_j$};
\end{scope}
\draw [line width=0.6cm, brown!20!white] (0,0) [partial ellipse=180:360:2 and 1.5];
\begin{scope}[shift={(4.5, 0)}]
\draw [red, dashed](0,0) [partial ellipse=0:180:0.4 and 0.25] ;
\draw [white, line width=4pt] (0,0) [partial ellipse=290:270:0.4 and 0.25];
\draw [red] (0,0) [partial ellipse=260:360:0.4 and 0.25];
\draw [red] (0,0) [partial ellipse=180:230:0.4 and 0.25];
\end{scope}
\end{tikzpicture}}}}\\
=&\dim\hom_{\mathcal{Z(C)}} (I(\1_{\mathcal{D}})), X_j\boxtimes X_k^{op}),
\end{align*}
where $I$ is the induction functor.

Furthermore, we show that the $\alpha$-induction functors $\alpha_{\pm}$ are implemented as the forgetful functors from $\mathcal{C}\boxtimes \1$ and $\1 \boxtimes\mathcal{C}^{op}$ to $\mathcal{D}$ respectively in the alterfold TQFT. 
(The element in $\mathcal{Z(C)}$ is presented by a diagram in the bulk $\Sigma \times [0,1]$, for a torus $\Sigma$. 
The forgetful functor is pushing the diagram to the $\mathcal{D}$-colored boundary $\Sigma \times \{1\}$.)
Then we obtain that
\begin{align*}
\dim\hom_{\mathcal{Z(C)}} (I(\1_{\mathcal{D}})), X_j\boxtimes X_k^{op})
=&\dim\hom_{\mathcal{D}} (\1_{\mathcal{D}}, \alpha_+(X_j)\otimes \alpha_-(X_k^{*}))\\
=&\dim\hom_{\mathcal{D}} (\alpha_+(X_j), \alpha_-(X_k)),
\end{align*}
which recovers the B\"{o}ckenhauer, Evans and Kawahigashi definition of the modular invariant mass $Z$-matrix.

Next, we express $Z^{\mathcal{D}}$ as a solid Hopf link.
This leads to two kinds of ``torus actions'',
\begin{align*}
\vcenter{\hbox{\scalebox{0.5}{
\begin{tikzpicture}[xscale=0.8, yscale=0.6]
\draw [line width=0.8cm] (0,0) [partial ellipse=0:180:2 and 1.5];
\draw [white, line width=0.77cm] (0,0) [partial ellipse=-0.1:180.1:2 and 1.5];
\draw [red] (0,0) [partial ellipse=0:180:2 and 1.5];
\path [fill=brown!20!white](-0.65, -3) rectangle (0.65, 3);
\begin{scope}[shift={(0,3)}]
\path [fill=brown!20!white] (0,0) [partial ellipse=0:180:0.6 and 0.3];
\draw (0,0) [partial ellipse=0:360:0.6 and 0.3];
\end{scope}
\draw [line width=0.8cm] (0,0) [partial ellipse=180:360:2 and 1.5];
\draw (-0.6, 3)--(-0.6, 0) (0.6, 3)--(0.6, 0); 
\draw (-0.6, -3)--(-0.6, 0) (0.6, -3)--(0.6, 0);
\draw [white, line width=0.77cm] (0,0) [partial ellipse=178:362:2 and 1.5];
\draw [red] (0,0) [partial ellipse=178:362:2 and 1.5]; 
\begin{scope}[shift={(-1.95, 0.3)}]
\draw [blue, -<-=0.4, dashed](0,0) [partial ellipse=0:180:0.47 and 0.25] node [pos=0.4, above] {\tiny $X_k$};
\draw [white, line width=4pt] (0,0) [partial ellipse=290:270:0.47 and 0.25];
\draw [blue] (0,0) [partial ellipse=280:360:0.47 and 0.25];
\draw [blue] (0,0) [partial ellipse=180:260:0.47 and 0.25];
\end{scope}
\begin{scope}[shift={(-1.95, -0.3)}]
\draw [blue, dashed](0,0) [partial ellipse=0:180:0.47 and 0.25] ;
\draw [white, line width=4pt] (0,0) [partial ellipse=290:270:0.47 and 0.25];
\draw [blue, -<-=0.2] (0,0) [partial ellipse=180:360:0.47 and 0.25] node [pos=0.2, below] {\tiny $X_j$};
\end{scope}
\begin{scope}[shift={(0,-3)}]
\path [fill=brown!20!white] (0,0) [partial ellipse=180:360:0.6 and 0.3];
\draw [dashed](0,0) [partial ellipse=0:180:0.6 and 0.3];
\draw (0,0) [partial ellipse=180:360:0.6 and 0.3];
\end{scope}
\end{tikzpicture}}}}, \quad 
\vcenter{\hbox{\scalebox{0.5}{
\begin{tikzpicture}[xscale=0.8, yscale=0.6]
\draw [brown!20!white, line width=0.7cm] (0,0) [partial ellipse=0:180:2 and 1.5];
\node at (-2, 0.5) {$\mathcal{D}$};
\begin{scope}[shift={(0,3)}]
\draw (0,0) [partial ellipse=0:360:0.6 and 0.3];
\end{scope}
\path [fill=white] (-0.6, 0) rectangle (0.6, 2.7);
\draw [blue, ->-=0.5] (-0.2, 2.8)--(-0.2, 0) node [black, left, pos=0.6] {\tiny $X_k$} ;
\draw [blue, -<-=0.5] (0.2, 2.8)--(0.2, 0) node [black, right, pos=0.6] {\tiny $X_j$}; 
\draw (-0.6, 3)--(-0.6, 0) (0.6, 3)--(0.6, 0); 
\draw [blue] (-0.2, -3.2)--(-0.2, 0) (0.2, -3.2)--(0.2, 0);
\draw (-0.6, -3)--(-0.6, 0) (0.6, -3)--(0.6, 0);
\draw [brown!20!white, line width=0.7cm] (0,0) [partial ellipse=180:360:2 and 1.5];
\begin{scope}[shift={(0,-3)}]
\draw [dashed](0,0) [partial ellipse=0:180:0.6 and 0.3];
\draw (0,0) [partial ellipse=180:360:0.6 and 0.3];
\end{scope}
\draw [red, dashed](0,0) [partial ellipse=0:180:0.6 and 0.3]; 
\draw [white, line width=4pt]  (0,0) [partial ellipse=220:270:0.6 and 0.3];
\draw [red]  (0,0) [partial ellipse=180:280:0.6 and 0.3];
\draw [red]  (0,0) [partial ellipse=300:360:0.6 and 0.3];
\end{tikzpicture}}}}
=\displaystyle \frac{z_{jk}\mu}{d_jd_k}
\vcenter{\hbox{\scalebox{0.5}{
\begin{tikzpicture}[xscale=0.8, yscale=0.6]
\begin{scope}[shift={(0,3)}]
\draw (0,0) [partial ellipse=0:360:0.6 and 0.3];
\end{scope}
\path [fill=white] (-0.6, 0) rectangle (0.6, 2.7);
\draw [blue, ->-=0.5] (-0.2, 2.8)--(-0.2, 0) node [left, pos=0.6] {\tiny $X_k$} ;
\draw [blue, -<-=0.5] (0.2, 2.8)--(0.2, 0) node [right, pos=0.6] {\tiny $X_j$}; 
\draw (-0.6, 3)--(-0.6, 0) (0.6, 3)--(0.6, 0); 
\draw [blue] (-0.2, -3.2)--(-0.2, 0) (0.2, -3.2)--(0.2, 0);
\draw (-0.6, -3)--(-0.6, 0) (0.6, -3)--(0.6, 0);
\begin{scope}[shift={(0,-3)}]
\draw [dashed](0,0) [partial ellipse=0:180:0.6 and 0.3];
\draw (0,0) [partial ellipse=180:360:0.6 and 0.3];
\end{scope}
\draw [red, dashed](0,0) [partial ellipse=0:180:0.6 and 0.3]; 
\draw [white, line width=4pt]  (0,0) [partial ellipse=220:270:0.6 and 0.3];
\draw [red]  (0,0) [partial ellipse=180:280:0.6 and 0.3];
\draw [red]  (0,0) [partial ellipse=300:360:0.6 and 0.3];
\end{tikzpicture}}}}.
\end{align*}

From the first torus action, we can read out the minimal central idempotents in the fusion algebra of the dual fusion category and topological character.
Together with the $\alpha$-induction, we obtain a topological interpretation of its minimal idempotents \ref{thm:minimalidempotent}).
This yields a topological interpretation of the exponents of $K_0(\mathcal{C})$ acting on $K_0(\mathcal{M})$, where $\mathcal{M}$ is the module category (see Corollary \ref{cor:exponent}).

From the second torus action, we obtain $\displaystyle \frac{z_{j,k}\mu}{d_j d_k}$ as its spectrum (Lemma~\ref{lem:zcoefficient}), where $d_j$ is the quantum dimension. 
Considering the action of tori colored by multiple Morita context, 
we prove the following theorem of integral equalities for their modular invariant $Z$-matrices. 

\begin{theorem*}[Theorem \ref{thm:physical}]
Suppose $\mathcal{C}$ is a modular fusion category and $\mathcal{D}_s$, $s=1, \ldots, n$ are fusion categories Morita equivalent to $\mathcal{C}$.
We denote by $z_{jk}^{\mathcal{D}_s}$ the coefficients of the modular invariant mass matrix $Z^{\mathcal{D}_s}$.
Then we have that 
\begin{enumerate}
\item $\displaystyle \sum_{j, k=1}^r \prod_{s=1}^nz_{jk}^{\mathcal{D}_s}\frac{\mu^{n-2}}{d_k^{n-2}d_j^{n-2}}=\dim\hom_{\mathcal{Z(C)}}(\1_{\mathcal{Z(C)}}, I(\1_{\mathcal{D}_1})\otimes \cdots \otimes I(\1_{\mathcal{D}_n}))$.
\item  $\displaystyle \sum_{j=1}^r\prod_{s=1}^nz_{jj}^{\mathcal{D}_s}\frac{\mu^{n-2}}{d_j^{2n-4}}= \dim\hom_{\mathcal{Z(C)}}(I(\1_{\mathcal{C}}), I(\1_{\mathcal{D}_1})\otimes \cdots \otimes I(\1_{\mathcal{D}_n}))$.
\item $\displaystyle \sum_{j=1}^r \prod_{s=1}^nz_{j1}^{\mathcal{D}_s}\frac{\mu^{n-2}}{d_j^{n-2}}=\dim\hom_{\mathcal{Z(C)}}(\sum_j G^+(X_j), I(\1_{\mathcal{D}_1})\otimes \cdots \otimes I(\1_{\mathcal{D}_n}))$.
\item $\displaystyle \sum_{j=1}^r \prod_{s=1}^nz_{1j}^{\mathcal{D}_s}\frac{\mu^{n-2}}{d_j^{n-2}} =\dim\hom_{\mathcal{Z(C)}}(\sum_j G^-(X_j), I(\1_{\mathcal{D}_1})\otimes \cdots \otimes I(\1_{\mathcal{D}_n}))$, where $G^\pm$ is defined in Equation \eqref{eq:G}.
\end{enumerate}
\end{theorem*}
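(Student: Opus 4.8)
The plan is to exhibit all four identities as two different evaluations of one and the same closed alterfold partition function: the left-hand sides arise from iterating the second torus action of Lemma~\ref{lem:zcoefficient}, and the right-hand sides from the forgetful-functor description of $\alpha$-induction together with the standard $S^2\times S^1$ formula for dimensions of $\hom$-spaces.

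First, recall that $z^{\mathcal{D}_s}_{jk}=\dim\hom_{\mathcal{Z(C)}}(I(\1_{\mathcal{D}_s}),X_j\boxtimes X_k^{op})$, together with semisimplicity of $\mathcal{Z(C)}\simeq\mathcal{C}\boxtimes\mathcal{C}^{op}$, identifies $I(\1_{\mathcal{D}_s})$ with the $\mathcal{Z(C)}$-object carried by a $\mathcal{D}_s$-colored solid torus placed in the bulk $T^2\times[0,1]$, exactly as in Equation~\eqref{Z=Zjk top}. For $A\in\mathcal{Z(C)}$ let $\widehat{A}$ denote its alterfold incarnation along a core circle: the empty diagram for $A=\1_{\mathcal{Z(C)}}$; a $\mathcal{C}$-colored solid torus (a tube of $\mathcal{C}$-bulk) for $A=I(\1_{\mathcal{C}})$; and the $\alpha_{+}$-, resp.\ $\alpha_{-}$-induction defect $G^{\pm}$ of Equation~\eqref{eq:G}, summed over the $X_j$, for the last two cases. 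Using the standard identity $\dim\hom_{\mathcal{Z(C)}}(\1,\bigotimes_iY_i)=Z_{\mathrm{RT}}(S^2\times S^1;\bigsqcup_iY_i)$ for parallel labelled circles, all four right-hand sides become $Z_{\mathrm{AF}}(M_A)$, where $M_A$ is the closed alterfold obtained from $S^2\times S^1$ by inserting $n$ disjoint $\mathcal{D}_s$-colored solid tori together with the defect $\widehat{A}$, all realised as tubular neighbourhoods of parallel copies of $S^1\times\{\mathrm{pt}\}$, the rest of the manifold being $\mathcal{C}$-colored; the description of $\alpha$-induction as forgetful functors from $\mathcal{C}\boxtimes\1$ and $\1\boxtimes\mathcal{C}^{op}$ given before the theorem is what identifies the $G^{\pm}$-slots.

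Next I would compute $Z_{\mathrm{AF}}(M_A)$ from the other direction. Realising $S^2\times S^1$ by $0$-surgery on an unknot $U$, the parallel $S^1\times\{\mathrm{pt}\}$ circles become parallel meridians of $U$, i.e.\ a string of "beads" threaded on $U$; after the surgery $U$ carries the Kirby colour of $\mathcal{Z(C)}$, so on the $(j,k)$-isotypic sector its strand is coloured by the simple $X_j\boxtimes X_k^{op}$. By Lemma~\ref{lem:zcoefficient}, pushing the $s$-th $\mathcal{D}_s$-bead off that strand multiplies the state by $\dfrac{z^{\mathcal{D}_s}_{jk}\mu}{d_jd_k}$, so doing this for $s=1,\dots,n$ contributes $\displaystyle\prod_{s=1}^{n}\dfrac{z^{\mathcal{D}_s}_{jk}\mu}{d_jd_k}$ on that sector. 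What remains is the $\widehat{A}$-decorated, $\mathcal{D}_s$-free manifold, whose sectorwise contribution is a computable weight $w^{A}_{jk}$: one finds $w^{A}_{jk}=\dfrac{(d_jd_k)^2}{\mu^2}$ for $A=\1_{\mathcal{Z(C)}}$; the same expression restricted to the diagonal sector for $A=I(\1_{\mathcal{C}})$, since $\hom_{\mathcal{Z(C)}}(I(\1_{\mathcal C}),X_j\boxtimes X_k^{op})=\hom_{\mathcal{C}}(\1,X_j\otimes X_k)$ is nonzero only there; and restricted to the sectors $k=\1$, resp.\ $j=\1$, for $A=\sum_jG^{+}(X_j)$, resp.\ $\sum_jG^{-}(X_j)$, by the analogous computation with the forgetful functors from $\mathcal{C}\boxtimes\1$ and $\1\boxtimes\mathcal{C}^{op}$. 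Summing, $Z_{\mathrm{AF}}(M_A)=\sum_{j,k}w^{A}_{jk}\prod_{s=1}^{n}\dfrac{z^{\mathcal{D}_s}_{jk}\mu}{d_jd_k}$, and collecting the powers of $\mu$ and of $d_j,d_k$ reproduces precisely the four left-hand sides.

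The main obstacle is the normalisation bookkeeping rather than the topology: one must track the exact vector of $V(T^2)$ represented by $\widehat{A}$ and by each $\mathcal{D}_s$-solid torus, including the framing and twist corrections incurred when a $\mathcal{D}_s$-torus links the core $U$ and is slid past the defect $\widehat{A}$, and one must justify inside the alterfold calculus that the half-braiding defects $G^{\pm}$ — which are not ordinary coloured circles — may be threaded past a $\mathcal{D}_s$-torus. I would pin down all constants by first checking $n=1$ and $n=2$ directly: for $n=2$, item (1) reads $\sum_{j,k}z^{\mathcal{D}_1}_{jk}z^{\mathcal{D}_2}_{jk}=\langle Z^{\mathcal{D}_1},Z^{\mathcal{D}_2}\rangle=\dim\hom_{\mathcal{Z(C)}}(I(\1_{\mathcal{D}_1}),I(\1_{\mathcal{D}_2}))$, immediate from Equation~\eqref{Z=Zjk top} and self-duality of $I(\1_{\mathcal{D}})$, and the remaining items likewise recover the known single- and double-$\alpha$-induction identities; the general $n$ then follows from the multiplicativity built into stacking $n$ independent bead actions. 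Since every $d_j$ and $\mu$ is invertible under the non-zero global dimension hypothesis and the bead calculus uses neither unitarity nor characteristic zero, the argument is valid over an arbitrary field.
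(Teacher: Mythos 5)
Your proposal is correct and follows essentially the same route as the paper: the paper likewise evaluates a single closed alterfold (a chain of the $n$ $\mathcal{D}_s$-colored tori threaded by the Kirby color) in two ways, obtaining the left-hand sides by decomposing into $(j,k)$-sectors and iterating the eigenvalue formula of Lemma~\ref{lem:zcoefficient}, and the right-hand sides by recognizing the resulting closed diagram as $\mu^2$ times the relevant $\dim\hom_{\mathcal{Z(C)}}$, with items (2)--(4) obtained by swapping out the central torus exactly as you describe. The normalisations you flag ($w^{A}_{jk}=(d_jd_k)^2/\mu^2$, the $\mu^{n-2}/(d_jd_k)^{n-2}$ collection) match the paper's bookkeeping.
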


We also provide a quick proof of a recent result of Kawahigashi on the equivalence between the commutativity of a Frobenius algebra and the flatness of the corresponding connection in \cite{Kaw23,Kaw24}, (see Theorem \ref{thm:flatequiv}), as well as a non-trivial generalization of this equivalence to any ribbon fusion category over any field in Theorem \ref{thm:flatequi}.
We present a topological interpretation of the full center of a Frobenius algebra studied in \cite{KR08}, which is the $\mathcal{D}$-colored tube in the alterfold TQFT and characterize the equivalence of the isomorphism of the full centers and the isomorphism of the Frobenius algebras \cite{KR08}.
Inspired by the topological interpretation of the character for modular fusion category, we study the topological character for Morita equivalence and obtain the orthogonality of characters (Theorems \ref{thm:ortho1}, \ref{thm:ortho2}, \ref{thm:ortho3}). 
The first one is known in \cite{ENO21} and we have not found the others in the literature.

Finally, we propose the notion of double $\alpha$-induction by coloring the boundary surfaces $\Sigma \times {1}$ and $\Sigma \times {0}$ by spherical fusion categories $\mathcal{D}$ and $\mathcal{E}$ which are Morita equivalent to the modular fusion category $\mathcal{C}$. 
The $\alpha$-induction has tensor functors $\alpha_{\pm}$ from $\mathcal{C}$ to $\mathcal{D}$, while the double $\alpha$-induction has tensor functors from $\mathcal{C}$ to a multi-fusion category generated by $\mathcal{D}$, $\mathcal{E}$.
The corresponding $Z$-matrix is generalized to the higher-genus $Z$-transformation which is invariant under mapping class group action due to its topological nature (See Theorem \ref{thm:mcg0}). 
We shall investigate the analogy of the results in the higher alterfold TQFT \cite{Liu24} in the near future.

The paper is organized as follows.
Section 2 we will review some basic facts about the alterfold topological quantum field theory for Morita equivalence.
Section 3 we present a topological interpretation of the $\alpha$-induction and characterize the minimal idempotents of the dual fusion algebra.
Section 4 we recall the bi-invertible connection and show that the commutativity of the Frobenius algebra is equivalent to the flatness of the bi-invertible connection.
Section 5 we present a topological characterization of the isomorphism of the full centers.
Section 6 we study the orthogonalities of the topological characters.
Section 7 we obtain numbers of identities for modular invariance and related power series.
Section 8 we propose the double $\alpha$-induction by attaching one more $2D$ surface.
The equivariance of the double $\alpha$-induction is obtained.

\begin{ac}
We thank Professor Christoph Schweigert for helpful discussions and suggestions.  Z.~L., Y.~W. and J.~W. are supported by Beijing Natural Science Foundation Key Program (Grant No. Z220002). S.~M. and J.~W.; Y.~W.; J.~W. are supported by National Natural Science Foundation of China (Grant no. 12371124; 12301045; 12031004) respectively. 
Z.~L. is supported by Natural Science Foundation of Beijing Municipality (Grant No. Z221100002722017) and by National Key Programs of China (Grant no. 2020YFA0713000).
The authors are supported by grants from Beijing Institute of Mathematical Sciences and Applications. 
\end{ac}

\section{Preliminaries}
We shall briefly review some results for modular fusion category and the Morita equivalence in the alterfold TQFT.

\subsection{Morita Context}
We work with the alterfold TQFT decorated by a $2$-category of spherical Morita contexts defined in \cite{Mug03a, Mug03b}. 
This 2-category consists of two objects $\mathfrak{C}$ and $\mathfrak{D}$ with endo-categories $\mathcal{C}=\operatorname{END}(\mathfrak{C})$ and $\mathcal{D}=\operatorname{END}(\mathfrak{D})$, and we denote $\text{HOM}(\mathfrak{C}, \mathfrak{D})$ by ${}_{\mathcal{C}}\mathcal{M}_{\mathcal{D}}$, which indicates that $\mathcal{M}$ is equipped with a natural $\mathcal{C}$-$\mathcal{D}$ bimodule structure. 
Of course, conditions such as sphericality are required be satisfied, see \cite{Mug03a, Mug03b} for details. 
M\"uger showed that given such a spherical Morita context, for all simple generator $J$ in ${}_{\mathcal{C}}\mathcal{M}_{\mathcal{D}}$, $Q:=J\overline{J}$ is a Frobenius algebra in $\mathcal{C}$, and that $\mathcal{D}$ is equivalent to the category of $Q$-$Q$ bimodules in $\cC$, which is denoted by ${}_Q\cC_{Q}$ in \cite{EGNO15}. 
In this case, $\cC$ and $\cD$ are called Morita equivalent, and we will abuse notations by saying such a 2-category is a Morita context between $\cC$ and $\cD$.

In this paper, we consider all fusion categories Morita equivalent to a given braided fusion category $\cC$, which means, we consider all Morita contexts sharing a common object $\mathfrak{C}$ with endo-category $\operatorname{END}(\mathfrak{C}) = \cC$.  Since Morita equivalence is an equivalence relation, so a Morita context between $\cC$ and $\cD = \operatorname{END}(\mathfrak{D})$ and one between $\cC$ and $\cE = \operatorname{END}(\mathfrak{E})$ provides a Morita context between $\cD$ and $\cE$, and in this case, we denote $\operatorname{HOM}(\mathfrak{D}, \mathfrak{E})$ by ${}_{\cD}\mathcal{M}_{\cE}$.

Let us fix some notations: Denote by $d_J$ the quantum dimension of $J$.
It is shown in \cite{Mug03a} that the Drinfeld centers $\mathcal{Z(C)}$, $\mathcal{Z(D)}$ and $\mathcal{Z(E)}$ are braided equivalent to each other.
We always assume that the set $\Irr(\mathcal{C})$ of simple objects consists of elements $X_1, \ldots, X_r$, where $X_1=\1$.
Denote by $d_j$ the quantum dimension of $X_j$ and by $\mu$ the global dimension of $\mathcal{C}$ and assume that $\mu\neq 0$.
Suppose that $\Irr(\mathcal{D})=\{Y_1, \ldots, Y_{r'}\}$, where $Y_1=\1$.

\subsection{Alterfold TQFT}
We recall the partition function, topological moves, and alterfold TQFT \cite{LMWW23, LMWW23b} in the following.
A 3-alterfold is a closed compact 3-manifold including an embedded separating surface such that the connected components of the complement of the separating surface in the 3-manifold are alternatively colored by colors $A$ and $B$. 
The region colored by $B$ is denoted by $R_B$.
There are five topological moves including the graphical calculus for the Morita context.
\begin{itemize}
\item \textbf{Planar Graphical Calculus:} The planar graphical calculus for Morita context.
Let $D\subset \Sigma$ be an embedded disk and $\Gamma_{D}=D\cap \Gamma$. 
Suppose $\Gamma_{D}'=\Gamma_{D}$ as morphisms in the Morita context. 
We change the $\Gamma$ to $\Gamma'$ by replacing $\Gamma_{D}$ to $\Gamma_{D}'$.

\begin{align}\label{eq:calculus}
\begin{array}{ccc}
   \vcenter{\hbox{
\begin{tikzpicture}
\draw[->-=0.8, ->-=0.2, blue] (-.3, 1.2)--(-.3, -1.2);
\draw[->-=0.8, ->-=0.2, blue] (.3, 1.2)--(.3, -1.2);
\draw[blue] (0, -.75) node{$\ldots$};
\draw[blue] (0, .65) node{$\ldots$};
\draw[fill=white] (-.4, -.3) rectangle (.4, .3);
\draw (0, 0) node{$\Gamma_{D}$};
\draw[dashed] (-1.2, -1.2) rectangle (1.2, 1.2);
\draw (1, -1) node{$D$};
\end{tikzpicture}}}
&
\rightarrow
&
\vcenter{\hbox{
\begin{tikzpicture}    
\draw[->-=0.8, ->-=0.2, blue] (-.3, 1.2)--(-.3, -1.2);
\draw[->-=0.8, ->-=0.2, blue] (.3, 1.2)--(.3, -1.2);
\draw[blue] (0, -.75) node{$\ldots$};
\draw[blue] (0, .65) node{$\ldots$};
\draw[fill=white] (-.4, -.3) rectangle (.4, .3);
\draw (0, 0) node{$\Gamma_{D}'$};
\draw[dashed] (-1.2, -1.2) rectangle (1.2, 1.2);
\draw (1, -1) node{$D$};
\end{tikzpicture}}}\\
\end{array}
\end{align}

\item \textbf{Move 0:} Let $P$ be a point in the interior of $R_B$. 
We change the color of a tubular neighborhood $P_{\epsilon}$ of $P$ from $B$ to $A$ and add a factor of $\displaystyle \frac{1}{\mu}$ in front of it.
The boundary of the $A$-colored $0$-handle $P_\epsilon$ can be colored by either $\mathcal{C}$ or $\mathcal{D}$. 
We remark that
the boundary color can be switched to the other by applying planar graphical calculus, i.e. adding a closed string labeled by $J$, with its orientation property chosen.

\begin{align}\label{eq:move0}
\begin{array}{ccc}
   \vcenter{\hbox{\scalebox{0.7}{
\begin{tikzpicture}
\draw[dashed] (0, 0) rectangle (2, 2);
\draw[dashed] (.8, .8) rectangle (2.8, 2.8);
\draw[dashed] (0, 2)--+(0.8, 0.8);
\draw[dashed] (2, 2)--+(0.8, 0.8);
\draw[dashed] (2, 0)--+(0.8, 0.8);
\draw[dashed] (0, 0)--+(0.8, 0.8);
\draw (2, 2.3) node{\tiny{$R_{B}$}};
\draw (1.3, 1.3) node[right]{\tiny{$P$}} node{$\cdot$};
\end{tikzpicture}}}}  & \rightarrow & \frac{1}{\mu}\vcenter{\hbox{\scalebox{0.7}{
\begin{tikzpicture}
\draw[dashed] (0, 0) rectangle (2, 2);
\draw[dashed] (.8, .8) rectangle (2.8, 2.8);
\draw[dashed] (0, 2)--+(0.8, 0.8);
\draw[dashed] (2, 2)--+(0.8, 0.8);
\draw[dashed] (2, 0)--+(0.8, 0.8);
\draw[dashed] (0, 0)--+(0.8, 0.8);
\filldraw[white!70!gray] (1.3, 1.3) circle (0.3);
\draw[dashed, opacity=0.3] (1.3, 1.3) [partial ellipse=0:180:0.3 and 0.1];
\draw[opacity=0.3] (1.3, 1.3) [partial ellipse=180:360:0.3 and 0.1];
\draw (2, 2.3) node{\tiny{$R_{B}$}};
\end{tikzpicture}}}}\\
\vspace{2mm}\\
(M, \Sigma, \Gamma)&\rightarrow & (M, \Sigma \sqcup \partial P_{\epsilon}, \Gamma)\\
\end{array}
\end{align}

\item \textbf{Move 1:} 
 Let $S$ be an embedded arc in $R_{B}$ with $\partial S$ meeting $\Sigma$ transversely and not intersecting $\Gamma$. We change the color of the tubular neighborhood $S_{\epsilon}$ of $S$ from $B$ to $A$, and put an $\Omega$-colored circle $C$ on the belt of $S_{\epsilon}$, where the $\Omega$-color on the belt is replaced by the $\Omega$-color in $\Hom(\mathfrak{C}, \mathfrak{C})$, $\Hom(\mathfrak{C}, \mathfrak{D})$, $\Hom(\mathfrak{D}, \mathfrak{C})$, $\Hom(\mathfrak{D}, \mathfrak{D})$, depending on the color on the attaching boundary of the $1$-handle. 
 Below is an example of adding $1$-handle attached to $\mathcal{D}$-colored regions.

\begin{align*}
\begin{array}{ccc}
   \vcenter{\hbox{
\begin{tikzpicture}
\path[fill=gray!50!white]
(-1.5, 0) [partial ellipse=-90:90:0.5 and 1];
\draw (-1.5, 0) [partial ellipse=-90:90:0.5 and 1];
\node at (-1.8, 0) {$\mathfrak{B}$};
\node at (1.8, 0) {$\mathfrak{A}$};
\path[fill=gray!50!white]
(-2.5, -1) rectangle (-1.5, 1);
\begin{scope}[xscale=-1]
\path[fill=gray!50!white]
(-1.5, 0) [partial ellipse=-90:90:0.5 and 1];
\draw (-1.5, 0) [partial ellipse=-90:90:0.5 and 1];
\path[fill=gray!50!white]
(-2.5, -1) rectangle (-1.5, 1);
\end{scope}
\draw[dashed] (-1, 0)--(1, 0);
\draw (0, 0.2) node[above]{\tiny{$S$}};
\end{tikzpicture}}}  & \rightarrow & \vcenter{\hbox{
\begin{tikzpicture}
\path[fill=gray!50!white]
(-1.5, 0) [partial ellipse=-90:90:0.5 and 1];
\draw (-1.5, 0) [partial ellipse=-90:90:0.5 and 1];
\path[fill=gray!50!white]
(-2.5, -1) rectangle (-1.5, 1);
\node at (-1.8, 0) {$\mathcal{D}$};
\node at (1.8, 0) {$\mathcal{C}$};
\begin{scope}[xscale=-1]
\path[fill=gray!50!white]
(-1.5, 0) [partial ellipse=-90:90:0.5 and 1];
\draw (-1.5, 0) [partial ellipse=-90:90:0.5 and 1];
\path[fill=gray!50!white]
(-2.5, -1) rectangle (-1.5, 1);
\end{scope}
\draw[dashed] (-1, 0)--(1, 0);
\draw (0, 0.2) node[above]{\tiny{$C$}};
\path[fill=white]
(-1.1, -0.25) rectangle (1.1, 0.25);
\path[fill=gray!50!white]
(-1.1, -0.25) rectangle (1.1, 0.25);
\draw (-1.02, 0.25)--(1.02, 0.25);
\draw (-1.02, -0.25)--(1.02, -0.25);
\draw[red, dashed] (0, 0) [partial ellipse=-90:90:0.125 and 0.25];
\draw[red, -<-=0.5] (0, 0) [partial ellipse=90:270:0.125 and 0.25];
\draw (0, 0) node[right]{\tiny${}_{\mathcal{C}}\Omega_{\mathcal{D}}$};
\end{tikzpicture}}}\\
\vspace{2mm}\\
(M, \Sigma, \Gamma)&\rightarrow & (M, \partial(R_{B}\setminus S_{\epsilon}), \Gamma\sqcup C)\\
\end{array}
\end{align*}

\item \textbf{Move 2:}  Let $D$ be a disk in $R_{B}$ with $\partial D\subset \partial R_{B}$ intersect $\Gamma$ only at edges transversely. We change the color of a tubular neighborhood of $D$ from $B$ to $A$, then cut the diagram along $\partial D$ and put a pair of sum of dual basis $\phi$ and $\phi'$ on both side of $D_{\epsilon}$.

\begin{align}\label{eq:move2}
\begin{array}{ccc}
   \vcenter{\hbox{
\begin{tikzpicture}
\draw[dashed] (0, 1.5) [partial ellipse=0:360:1 and 0.3];
\draw[dashed] (0, 0) [partial ellipse=0:360:1 and 0.3];
\draw[dashed] (0, -1.5) [partial ellipse=0:360:1 and 0.3];
\draw (0.5, 0) node{\tiny{$D$}};
\draw (-1, -1.5)--(-1, 1.5);
\draw (1, -1.5)--(1, 1.5);
\path[fill=gray!50!white]
(-1, -1.5) rectangle (-2, 1.5);
\path[fill=gray!50!white]
(1, -1.5) rectangle (2, 1.5);
\draw [blue, ->-=0.7] (0, 1.2)--(0, -1.8);
\draw [blue, ->-=0.7] (-0.3, 1.2)--(-0.3, -1.8);
\draw [blue, ->-=0.7] (0.3, 1.2)--(0.3, -1.8);
\end{tikzpicture}}}  & \rightarrow & \vcenter{\hbox{
\begin{tikzpicture}
\draw[dashed] (0, 1.5) [partial ellipse=0:360:1 and 0.3];
\draw[dashed] (0, -1.5) [partial ellipse=0:360:1 and 0.3];
\draw (-1, -1.5)--(-1, -1.2);
\draw (1, 1.5)--(1, 1.2);
\draw (-1, 1.5)--(-1, 1.2);
\draw (1, -1.5)--(1, -1.2);
\path[fill=gray!50!white]
(-1, -1.5) rectangle (-2, 1.5);
\path[fill=gray!50!white]
(1, -1.5) rectangle (2, 1.5);
\draw [blue, ->-=0.5] (0, 1.2)--(0, 0.7);
\draw [blue, ->-=0.5] (-0.3, 1.2)--(-0.3, 0.7);
\draw [blue, ->-=0.5] (0.3, 1.2)--(0.3, 0.7);
\draw [blue, ->-=0.5] (0, -1)--(0, -1.8);
\draw [blue, ->-=0.5] (-0.3, -1)--(-0.3, -1.8);
\draw [blue, ->-=0.5] (0.3, -1)--(0.3, -1.8);
\path[fill=gray!50!white]
(-1, -1.2) arc (180:0:1)--(1, 1.2) arc (0:-180:1)--(-1, -1.2);
\draw (-1, -1.2) arc (180:0:1);
\draw (1, 1.2) arc (0:-180:1);
\draw [fill=white](-0.4, 0.5) rectangle (0.4, 0.8); \node at (0, 0.65) {\tiny{$\phi$}};
\draw [fill=white](-0.4, -1.2) rectangle (0.4, -0.9); \node at (0, -1.05) {\tiny{$\phi'$}};
\end{tikzpicture}}}\\
\vspace{2mm}\\
(M, \Sigma, \Gamma)&\rightarrow & (M, \partial(R_{B}\setminus D_{\epsilon}), \Gamma')
\end{array},
\end{align}
where $\phi$ and $\phi'$ denote the dual base $\{\phi_j\}_j$ and $\{\phi_j'\}_j$ by and suppress the summation.

\item \textbf{Move 3:}  Let $T$ be a $B$-colored $3$-ball with $\partial T\subset \Sigma$ and $\partial T\cap \Gamma=\emptyset$. 
We change the color of $T$ from $B$ to $A$,
no matter whether the boundary of the sphere is colored by $\mathcal{C}$ or $\mathcal{D}$.
\begin{align}\label{eq:move3}
\begin{array}{ccc}
\vcenter{\hbox{
\begin{tikzpicture}
\path[fill=gray!50!white]
(-2, -1.5) rectangle (2, 1.5);
\draw [fill=white] (0, 0) [partial ellipse=0:360:1];
\draw[opacity=0.3] (0, 0)[partial ellipse=0:-180:1 and 0.3];
\draw[dashed, opacity=0.3] (0, 0)[partial ellipse=0:180:1 and 0.3];
\draw (0.3, 0.3) node{\tiny{$T$}};
\end{tikzpicture}
}}
&\rightarrow&
\vcenter{\hbox{
\begin{tikzpicture}
\path[fill=gray!50!white]
(-2, -1.5) rectangle (2, 1.5);
\end{tikzpicture}
}}\\
\vspace{2mm}\\
(M, \Sigma, \Gamma)&\rightarrow& (M, \Sigma\setminus \partial T, \Gamma)\\
\end{array}
\end{align}
\end{itemize}

There exists a unique partition function invariant under the topological moves described as above.

\begin{proposition}[Partition Function, Theorem 6.1 in \cite{LMWW23b}] \label{thm:e partition function}
Let $\zeta$ be a nonzero scalar. 
There exists a unique partition function $Z_e$ from $3$-alterfolds to the ground field satisfying the following conditions.
\begin{itemize}
    \item If $R_{B}=\emptyset$, $Z_e(M,\emptyset,\emptyset)=1$.
    
    \item \textbf{Disjoint Union:} Suppose $(M, \Sigma, \Gamma)$ and $(M', \Sigma', \Gamma')$ are two $3$-alterfolds.
    Then
    $$Z_e(M\sqcup M', \Sigma \sqcup \Sigma', \Gamma\sqcup\Gamma')=Z_e(M, \Sigma, \Gamma)Z_e(M', \Sigma', \Gamma').$$
    
     \item \textbf{Homeomorphims:} The evaluation $Z_e(M, \Sigma, \Gamma)$ only depends on the orientation preserving homeomorphism class of $(R_{B}, \Gamma)$.
     
    \item \textbf{Planar Graphical Calculus:} If $\Gamma'$ and $\Gamma$ are diagrams on $\Sigma$ that are identical outside of a contractible region $D$, and $\Gamma_{D}:=\Gamma\cap D$ equals to $\Gamma_{D}':=\Gamma'\cap D$ as morphisms. 
    Then
    $$Z_e(M, \Sigma, \Gamma)=Z_e(M, \Sigma, \Gamma').$$
    \item \textbf{Move 0:} Suppose $(M, \Sigma', \Gamma')$ is derived from $(M, \Sigma, \Gamma)$ by applying Move $0$, then
    $$Z_e(M, \Sigma', \Gamma')=\zeta\mu Z_e(M, \Sigma, \Gamma).$$
    
    \item \textbf{Move 1:} Suppose $(M, \Sigma', \Gamma')$ is derived from $(M, \Sigma, \Gamma)$ by applying Move $1$, then
    $$Z_e(M, \Sigma', \Gamma')=\frac{1}{\zeta} Z_e(M, \Sigma, \Gamma).$$
    
    \item \textbf{Move 2:} Suppose $(M, \Sigma', \Gamma')$ is derived from $(M, \Sigma, \Gamma)$ by applying Move $2$, then
    $$Z_e(M, \Sigma', \Gamma')=\zeta Z_e(M, \Sigma, \Gamma).$$
    
    \item \textbf{Move 3:} Suppose $(M, \Sigma', \Gamma')$ is derived from $(M, \Sigma, \Gamma)$ by applying Move $3$, then
    $$Z_e(M, \Sigma', \Gamma')=\frac{1}{\zeta} Z_e(M, \Sigma, \Gamma).$$
\end{itemize}
\end{proposition}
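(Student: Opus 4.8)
The plan is to establish existence and uniqueness separately, following the standard strategy for Reshetikhin--Turaev/Turaev--Viro type state sums. For uniqueness, I would first observe that any 3-alterfold $(M,\Sigma,\Gamma)$ can be reduced to the empty configuration $(M,\emptyset,\emptyset)$ by a finite sequence of the listed moves (together with planar graphical calculus). Indeed, using Move~0 one introduces an $A$-colored $0$-handle inside every component of $R_B$; then handle-cancellation arguments — Move~1 to attach $1$-handles, Move~2 to cut along compressing disks, Move~3 to remove $B$-colored $3$-balls — let one simplify the $B$-region component by component down to nothing, since every connected $B$-region admits a handle decomposition whose handles are exactly the elementary moves. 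At each move the value of any invariant satisfying the stated normalizations is multiplied by an explicit power of $\zeta$ and $\mu$ (recorded in Proposition~\ref{thm:e partition function}), so the final value is forced. One must check the value is independent of the chosen reduction sequence: different handle decompositions of the same $B$-region are related by handle slides and (de)stabilizations, and one verifies these are consequences of the five moves plus planar graphical calculus. This is essentially the content already developed in \cite{LMWW23, LMWW23b}, so I would cite it, observing that the Morita-context decoration does not affect the topological bookkeeping because Move~0 allows switching boundary colors freely between $\mathcal{C}$ and $\mathcal{D}$.

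For existence, I would define $Z_e$ by picking, for each $(M,\Sigma,\Gamma)$, a handle decomposition of $\overline{R_B}$, evaluating the associated $\mathcal{C}$-$\mathcal{D}$-bimodule state sum (the composite of the structural morphisms of the Morita context read off from the handle attachments, with $\Omega$-colored belt circles inserted as in Move~1 and dual-basis pairs inserted as in Move~2), and multiplying by $\zeta^{\#(\text{1- and 3-handles})-\#(\text{2-handles})}\,\mu^{\#(\text{0-handles})}$ or the analogous normalization dictated by the move weights. The key point is well-definedness: two handle decompositions of the same compact surface-with-structure differ by Cerf-theory moves (handle slides, creation/annihilation of cancelling pairs, and handle reorderings), and each of these must be shown to preserve the state-sum value. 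Handle slides reduce to associativity/Frobenius relations in the Morita context and the defining property of the $\Omega$-color (its "killing" property, i.e. that $\Omega$ absorbs morphisms); cancelling-pair moves reduce to the zig-zag/snake identities for the dual bases $\{\phi_j\},\{\phi_j'\}$ and to the sphericality axioms. Then one checks directly that the resulting functional satisfies the normalization, disjoint-union, homeomorphism, and five move axioms — most of these are immediate from the construction, and the move axioms hold by design since each move corresponds to inserting one handle with its prescribed weight.

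The main obstacle is the well-definedness of the existence construction: proving that the bimodule-theoretic state sum is invariant under all Cerf moves between handle decompositions, and in particular that the $\Omega$-color behaves correctly under handle slides across regions colored by different members of the Morita 2-category ($\operatorname{HOM}(\mathfrak{C},\mathfrak{C})$, $\operatorname{HOM}(\mathfrak{C},\mathfrak{D})$, etc.). This is where sphericality of the Morita context is genuinely used — it is needed so that the $\Omega$-colored circles can be slid over cups and caps without introducing twist factors, which is exactly the subtlety that fails for non-spherical data. I expect the cleanest route is not to reprove this from scratch but to invoke the corresponding theorem of \cite{LMWW23b} (Theorem~6.1 there) for the underlying alterfold TQFT and then verify that the Morita-context decoration is compatible: the only new ingredient is the planar graphical calculus for the bimodule, and the fact that Move~0's boundary-color switch is implemented by a closed $J$-string, so consistency follows from M\"uger's theorem that $Q = J\overline{J}$ is a Frobenius algebra and $\mathcal{D} \simeq {}_Q\mathcal{C}_Q$. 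I would therefore present the proof as: (i) reduce uniqueness to reducibility of $R_B$ via the moves, citing the handle-decomposition argument; (ii) for existence, cite Theorem~6.1 of \cite{LMWW23b} and check the Morita-context compatibility using the planar graphical calculus axiom and M\"uger's structure theorem.
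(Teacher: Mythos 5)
This proposition is recalled verbatim from Theorem~6.1 of \cite{LMWW23b}; the present paper supplies no proof of it beyond the citation, so the ``official'' argument here is simply a reference to that earlier work. Your proposal ultimately reduces to invoking that same theorem (with a compatibility check for the Morita-context decoration), which matches the paper's treatment; the additional handle-decomposition/Cerf-theory sketch is a plausible reconstruction of the cited proof but cannot be compared against anything written in this paper.
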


We usually assume that $\zeta=1$ for computational convenience.
The partition function gives a topological quantum field theory.
\begin{proposition}[Alterfold TQFT,  Theorem 6.2, 6.3 in \cite{LMWW23b}]\label{thm:etqft}
The partition function $Z_e$ defined in Proposition \ref{thm:e partition function} extends to a topological quantum field theory.
If the Morita context is unitary, then the alterfold TQFT is unitary.
\end{proposition}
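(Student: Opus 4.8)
The plan is to obtain the TQFT functor from $Z_e$ by the universal ``partition function to TQFT'' construction of Blanchet--Habegger--Masbaum--Vogel, adapted to the alterfold cobordism category. To a closed decorated surface $\Sigma$ (a $2$-alterfold: an oriented surface with an embedded separating $1$-manifold splitting it into $A$- and $B$-colored pieces, together with a compatible diagram $\Gamma$) one associates the free vector space $\mathcal{V}(\Sigma)$ spanned by orientation-preserving homeomorphism classes of compact $3$-alterfolds bounding $(\Sigma,\Gamma)$. Gluing along $\Sigma$ and applying $Z_e$ to the resulting closed $3$-alterfold defines a bilinear pairing $\mathcal{V}(\Sigma)\times\mathcal{V}(\overline{\Sigma})\to k$ (the ground field), well defined by the Homeomorphisms clause of Proposition \ref{thm:e partition function}; set $V(\Sigma)$ to be the quotient of $\mathcal{V}(\Sigma)$ by the radical of this pairing. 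The remaining structure is then formal: a decorated cobordism $W\colon\Sigma_0\to\Sigma_1$ acts on bounding $3$-alterfolds by gluing, and associativity of gluing together with invariance of $Z_e$ shows that this descends to $Z_e(W)\colon V(\Sigma_0)\to V(\Sigma_1)$, compatible with composition and sending the cylinder to the identity; the Disjoint Union axiom gives $V(\Sigma\sqcup\Sigma')\cong V(\Sigma)\otimes V(\Sigma')$ and $V(\emptyset)=k$, establishing the symmetric monoidal structure.

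The crucial point is finite-dimensionality of each $V(\Sigma)$. Here I would fix a handlebody-type $3$-alterfold bounding $\Sigma$, choose a spine, and use Moves $0$--$3$ together with the planar graphical calculus of the Morita context to reduce an arbitrary bounding $(M,\Gamma_M)$ to a linear combination of ``standard'' ones: those whose diagram is supported near the spine and is given by a coloring of its edges by $\Irr(\mathcal{C})$ (or $\Irr(\mathcal{D})$, switching boundary color by a $J$-loop as in Move $0$) and by a basis of the resulting finite-dimensional coupon hom-spaces. Since $\mathcal{C}$ and $\mathcal{D}$ are fusion, there are finitely many such standard vectors, so $\dim V(\Sigma)<\infty$, and nondegeneracy of the induced pairing is automatic from the quotient. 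Finally one checks the gluing axiom: cutting a closed $3$-alterfold along an embedded $\Sigma$ expresses its $Z_e$-value as a sum over a pair of dual bases of $V(\Sigma)$ of the values of the two pieces — the normalization $\zeta=1$ and the $\mu$-factor attached to Move $0$ are arranged precisely so that no further scalar correction is needed, modulo the framing ($p_1$-structure) anomaly inherited from the Reshetikhin--Turaev sector sitting inside the theory, which is handled by working with $p_1$-structured surfaces and cobordisms.

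For the unitary statement, assume the Morita context is unitary, so that $\mathcal{C}$, $\mathcal{D}$, and the bimodule $\mathcal{M}$ carry compatible dagger structures. Then $Z_e$ is reflection positive: for a bounding $3$-alterfold $(M,\Gamma_M)$ the closed $3$-alterfold $M\cup_\Sigma\overline{M}$ satisfies $Z_e(M\cup_\Sigma\overline{M},\Gamma_M\cup\overline{\Gamma_M})\geq 0$, because the graphical-calculus evaluation of the glued diagram, after reduction to standard form, is a sum of squared norms in the positive-definite hom-spaces of the unitary structure, with strict positivity exactly off the radical. Hence the pairing on $\mathcal{V}(\Sigma)$ descends to a positive-definite Hermitian form on $V(\Sigma)$, turning it into a finite-dimensional Hilbert space; and since $Z_e(\overline{W})=Z_e(W)^{*}$ with respect to these forms, the cobordism maps are adjoint-compatible, so the alterfold TQFT is unitary.

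The main obstacle is the normal-form/finite-dimensionality step of the second paragraph: extracting from Moves $0$--$3$ a genuine reduction of every bounding $3$-alterfold to the finite standard family and verifying that the radical of the universal pairing is exactly what those moves predict. A secondary subtlety is the careful bookkeeping of the framing anomaly coming from the embedded Drinfeld-center (Reshetikhin--Turaev) part of the theory; everything else is a formal consequence of the axioms of Proposition \ref{thm:e partition function}.
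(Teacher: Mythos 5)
The paper does not prove this statement: it is imported verbatim as a citation of Theorems 6.2 and 6.3 of \cite{LMWW23b}, so there is no in-paper argument to compare against. Your outline --- the universal (Blanchet--Habegger--Masbaum--Vogel-style) construction of $V(\Sigma)$ as bounding $3$-alterfolds modulo the radical of the gluing pairing, finite-dimensionality via reduction to a standard spine-supported family using Moves $0$--$3$, and unitarity via reflection positivity of the doubled alterfold --- is essentially the route taken in the cited reference; indeed the present paper later describes the RT spaces exactly as ``equivalence classes of the 3-alterfolds with time boundaries \dots modulo the kernel of the partition function,'' confirming the universal-construction approach. The one substantive step you correctly flag as the main obstacle (the normal form and exactness of the radical) is where all the work in \cite{LMWW23b} lies, so your proposal is a faithful sketch rather than a complete proof, but it is not a different method.
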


Recall that the Drinfeld center $\mathcal{Z(C)}$ is braided isomorphic to the tube category $\mathcal{A}$ defined in \cite{LMWW23} whose morphisms are depicted as cylinders in the alterfold TQFT. 
The $B$-colored region with time boundary represents the hom-space in the Reshetikhin-Turaev TQFT of the Drinfeld center $\mathcal{Z(C)}$.
The $\mathcal{C}$-colored surface is colored by white and the $\mathcal{D}$-colored surface is colored by brown in general.
Without specifying, the inside of a tube is colored by $A$ and the outside is colored by $B$.
We shall frequently use the topological moves 0,1,2,3 throughout this paper.

\subsection{Alterfold TQFT for Modular Fusion Category}
When $\mathcal{C}$ is a modular fusion category.
The Drinfeld center $\mathcal{Z(C)}$ as tube category is isomorphic to $\mathcal{C}\otimes \mathcal{C}^{op}$ through the following decomposition:
\begin{align*}
    \vcenter{\hbox{\scalebox{0.7}{
\begin{tikzpicture}[xscale=0.8, yscale=0.6]
\begin{scope}[shift={(0,3)}]
\draw (0,0) [partial ellipse=0:360:0.6 and 0.3];
\end{scope}
\path [fill=white] (-0.6, 0) rectangle (0.6, 2.7);
\draw [blue, ->-=0.5] (-0.2, 2.8)--(-0.2, 0) node [left, pos=0.6] {\tiny $X_k$} ;
\draw [blue, -<-=0.5] (0.2, 2.8)--(0.2, 0) node [right, pos=0.6] {\tiny $X_j$}; 
\draw (-0.6, 3)--(-0.6, 0) (0.6, 3)--(0.6, 0); 
\draw [blue] (-0.2, -3.2)--(-0.2, 0) (0.2, -3.2)--(0.2, 0);
\draw (-0.6, -3)--(-0.6, 0) (0.6, -3)--(0.6, 0);
\begin{scope}[shift={(0,-3)}]
\draw [dashed](0,0) [partial ellipse=0:180:0.6 and 0.3];
\draw (0,0) [partial ellipse=180:360:0.6 and 0.3];
\end{scope}
\draw [red, dashed](0,0) [partial ellipse=0:180:0.6 and 0.3]; 
\draw [white, line width=4pt]  (0,0) [partial ellipse=220:270:0.6 and 0.3];
\draw [red]  (0,0) [partial ellipse=180:280:0.6 and 0.3];
\draw [red]  (0,0) [partial ellipse=300:360:0.6 and 0.3];
\end{tikzpicture}}}}
\cong
\vcenter{\hbox{\scalebox{0.7}{
\begin{tikzpicture}[xscale=0.8, yscale=0.6]
\begin{scope}[shift={(0,3)}]
\draw (0,0) [partial ellipse=0:360:0.6 and 0.3];
\end{scope}
\path [fill=white] (-0.6, 0) rectangle (0.6, 2.7);
\draw [blue, ->-=0.5] (0, 2.8)--(0, 0) node [left, pos=0.6] {\tiny $X_k$} ;
\draw (-0.6, 3)--(-0.6, 0) (0.6, 3)--(0.6, 0); 
\draw [blue] (0, -3.2)--(0, 0);
\draw (-0.6, -3)--(-0.6, 0) (0.6, -3)--(0.6, 0);
\begin{scope}[shift={(0,-3)}]
\draw [dashed](0,0) [partial ellipse=0:180:0.6 and 0.3];
\draw (0,0) [partial ellipse=180:360:0.6 and 0.3];
\end{scope}
\draw [red, dashed](0,0) [partial ellipse=0:180:0.6 and 0.3]; 
\draw [red]  (0,0) [partial ellipse=180:260:0.6 and 0.3];
\draw [red]  (0,0) [partial ellipse=280:360:0.6 and 0.3];
\end{tikzpicture}}}}
\vcenter{\hbox{\scalebox{0.7}{
\begin{tikzpicture}[xscale=0.8, yscale=0.6]
\begin{scope}[shift={(0,3)}]
\draw (0,0) [partial ellipse=0:360:0.6 and 0.3];
\end{scope}
\path [fill=white] (-0.6, 0) rectangle (0.6, 2.7);
\draw [blue, -<-=0.5] (0, 2.8)--(0, 0) node [left, pos=0.6] {\tiny $X_j$} ;
\draw (-0.6, 3)--(-0.6, 0) (0.6, 3)--(0.6, 0); 
\draw [blue] (0, -3.2)--(0, 0);
\draw (-0.6, -3)--(-0.6, 0) (0.6, -3)--(0.6, 0);
\begin{scope}[shift={(0,-3)}]
\draw [dashed](0,0) [partial ellipse=0:180:0.6 and 0.3];
\draw (0,0) [partial ellipse=180:360:0.6 and 0.3];
\end{scope}
\draw [red, dashed](0,0) [partial ellipse=0:180:0.6 and 0.3]; 
\draw [white, line width=6pt]  (0,0) [partial ellipse=220:270:0.6 and 0.3];
\draw [red]  (0,0) [partial ellipse=180:270:0.6 and 0.3];
\draw [red]  (0,0) [partial ellipse=270:360:0.6 and 0.3];
\end{tikzpicture}}}}.
\end{align*}
Here the black strings indicate the boundary of the torus, the red strings indicate the Kirby color and the blue strings indicate the objects in the category.
In particular, we have that $\RT_{\mathcal{Z(C)}}(\mathcal{M})=\RT_{\mathcal{C}}(\mathcal{M}) \RT_{\mathcal{C}^{op}}(\mathcal{M})$ for the Reshetikhin-Turaev invariant $\RT$ of a closed compact 3-manifold $\mathcal{M}$, where was first proved in \cite{Wal91, Tur92}.
The following is a series of applications of the topological moves in the alterfold TQFT which are useful in the next sections.

 \begin{lemma}\label{lem:decomposition1}
Suppose $\mathcal{C}$ is a modular fusion category. 
Locally, we have
\begin{align*}
\vcenter{\hbox{\scalebox{0.7}{
}}}=|\Irr( _\mathcal{D}\mathcal{M}_{\mathcal{E}})|
\end{align*}
\end{proposition}
\begin{proof}
 It follows from Lemma 2.8 in \cite{LMWW23b}.   
\end{proof}

\begin{remark}
We remark that the above computation realizes the diagram on the left hand side as an endomorphism of the tensor unit in the Drinfeld center. 
If we realize it as an alterfold with ambient manifold $\mathbb{S}^3$, the result should be $\displaystyle \frac{1}{\mu}|\Irr( _\mathcal{D}\mathcal{M}_{\mathcal{E}})|$. 
The extra factor $\frac{1}{\mu}$ is due to the fact that the $B$-colored $\mathbb{S}^3$ is evaluate to $\displaystyle \frac{1}{\mu}$ in the alterfold TQFT.
In this paper, we always adopt the first convention.
\end{remark}

We end this section by recalling the factorization of the RT space for torus associated to the Drinfeld center $\mathcal{Z(C)}$.
Let $\Sigma$ be the torus in 3-dimensional sphere $\mathbb{S}^3$.
Suppose $\Sigma\times[0,1]$ in $\mathbb{S}^3$ such that $\Sigma\times \{0,1\}$ are time boundaries, $\Sigma\times (0,1)$ is colored by $B$ and $\Sigma \times \left((-\infty, 0)\cup (1, \infty)\right)$ is colored by $A$.
By the fact that $\mathcal{Z(C)}=\mathcal{C}\boxtimes \mathcal{C}^{op}$, we have that 
\begin{align*}
\RT_{\mathcal{Z(C)}}(\Sigma\times\{0,1\})=\RT_{\mathcal{C}}(\Sigma\times\{0,1\})\otimes \RT_{\mathcal{C}^{op}}(\Sigma\times\{0,1\}),
\end{align*}
where $\RT_{\mathcal{Z(C)}}$ is the RT space in the alterfold TQFT defined in \cite{LMWW23b}.
Let $\{\xi_j\}_j$ be a basis in $\RT_{\mathcal{C}}(\Sigma\times\{0,1\})$ and $\{\xi_j^{op}\}_j$ the dual basis such that the image of $\xi_j$ in $\RT_{\mathcal{C}}(\Sigma\times\{0,1\})$ is depicted as
\begin{align*}
\vcenter{\hbox{\scalebox{0.5}{
\begin{tikzpicture}
\begin{scope}[shift={(1, 3)}, scale=1.8]
\draw (-1,0.5)--(2, 0.5) (-2,-0.5)--(1, -0.5);
\draw (-2,-0.5)--(-1, 0.5) (1,-0.5)--(2, 0.5) node [right] {\tiny Time Boundary}  node [below] {\tiny $B$};
\end{scope}
\draw [line width=0.7cm]  (-3,0)--(3, 0);
\draw [line width=0.7cm] (-1.5, -1.5)--(1.5, 1.5);
\draw [white, line width=0.65cm] (-1.75, -1.75)--(1.75, 1.75);
\draw [red] (-1.5, -1.5)--(1.5, 1.5);
\draw [white, line width=0.65cm]  (-3,0)--(3, 0);
\draw [blue] (-2.8, 0)--(2.8, 0);
\begin{scope}[shift={(3, 0)}]
\draw [fill=white] (0, 0) [partial ellipse=0:360:0.2 and 0.34];
\end{scope}
\begin{scope}[shift={(-3, 0)}]
\draw [fill=white] (0, 0) [partial ellipse=0:360:0.2 and 0.34];
\end{scope}
\begin{scope}[shift={(-1.5, -1.5)},rotate=45 ]
\draw [fill=white] (0, 0) [partial ellipse=0:360:0.2 and 0.34];
\end{scope}
\begin{scope}[shift={(1.5, 1.5)},rotate=45 ]
\draw [fill=white] (0, 0) [partial ellipse=0:360:0.2 and 0.34];
\end{scope}
\begin{scope}[shift={(1, 1)},rotate=45 ]
\end{scope}
\begin{scope}[shift={(1, -3)}, scale=1.8]
\draw (-1,0.5)--(2, 0.5) (-2,-0.5)--(1, -0.5);
\draw (-2,-0.5)--(-1, 0.5) (1,-0.5)--(2, 0.5) node [right] {\tiny Time Boundary} node [above] {\tiny $B$} ;
\end{scope}
\end{tikzpicture}}}}
\end{align*}
Suppose $f$ is an element in the mapping class group of $\Sigma\times\{0,1\}$.
Let $\rho(f)$ be the action of $f$ on $\RT_{\mathcal{Z(C)}}(\Sigma\times \{0,1\})$.
Let $f_{\mathcal{C}}$, $f_{\mathcal{C}^{op}}$ be the matrix representative of the actions of $f$ on $\RT_{\mathcal{C}}(\Sigma\times \{0,1\})$, $\RT_{\mathcal{C}^{op}}(\Sigma\times \{0,1\})$ respectively.

\section{Topological Modular Invariance and Dual Fusion Algebras}

In this section, we propose a topological interpretation of the modular invariance and the $\alpha$-induction for a Frobenius algebra in a modular fusion category $\mathcal{C}$ over a general field, using the alterfold TQFT of $\mathcal{C}$. 
Moreover, we give quick proofs of the results of B\"{o}ckenhauer, Evans and Kawahigashi, and their generalizations for modular fusion categories over a general field \cite{BE98,BE99,BE00,BEK99,BEK00}.

\subsection{Topological Modular Invariance}
In 1987, Cappelli, Itzykson, and Zuber \cite{CIZ87} classified minimal conformal invariant theories based on an $A$-$D$-$E$ classification of modular invariants of quantum $\mathfrak{sl}_2$.
The partition function of a torus is expended over the basis of characters as
\begin{align}\label{Equ: Z=Zjk1}
Z=\sum_{j,k} z_{jk} \chi_j \overline{\chi_k}.
\end{align}
The coefficients $z_{jk}$ are the anomalous dimensions and they form a matrix commuting with the modular transformation, called the modular invariance matrix.
In general, for a modular fusion category $\mathcal{C}$, a matrix $Z: \Irr(\mathcal{C})\times \Irr(\mathcal{C}) \to \mathbb{N}$ commuting with the modular $S$-matrix and $T$-matrix is called a modular invariant for $\mathcal{C}$.

We propose a topological analogue of the partition function $Z^{\mathcal{D}}$ of a torus and its expansion over the basis of simple objects of the $Z(\mathcal{C})$ as
\begin{align}\label{Z=Zjk top1}
Z^{\mathcal{D}}=\sum_{j,k}z_{jk}\xi_{j} \otimes \xi_{k}^{op},   
\end{align}
which is a topological analogue of Equation ~\eqref{Equ: Z=Zjk} in alterfold TQFT as follows.
\begin{align*}
\vcenter{\hbox{\scalebox{0.5}{
\begin{tikzpicture}
\begin{scope}[shift={(1, 1)}, scale=1.8]
\path [fill=brown!20!white] (-1,0.5)--(2, 0.5)--(1, -0.5)--(-2, -0.5)--cycle;
\draw (-1,0.5)--(2, 0.5) (-2,-0.5)--(1, -0.5);
\draw (-2,-0.5)--(-1, 0.5) (1,-0.5)--(2, 0.5) node [below] {\tiny $B$} node [above] {\tiny $A$};
\node at (0,0) { $\mathcal{D}$};
\end{scope}
\begin{scope}[shift={(1, -1)}, scale=1.8]
\draw (-1,0.5)--(2, 0.5) (-2,-0.5)--(1, -0.5);
\draw (-2,-0.5)--(-1, 0.5) (1,-0.5)--(2, 0.5) node [above] {\tiny $B$}; 
\node at (2, 0.5) [below right] {\tiny Time Boundary};
\end{scope}
\end{tikzpicture}}}}
=
\sum_{j,k} z_{jk} \frac{1}{\mu}
\vcenter{\hbox{\scalebox{0.5}{
\begin{tikzpicture}
\begin{scope}[shift={(1, 1)}, scale=1.8]
\draw (-1,0.5)--(2, 0.5) (-2,-0.5)--(1, -0.5);
\draw (-2,-0.5)--(-1, 0.5) (1,-0.5)--(2, 0.5) node [above] {\tiny $B$} node [below] {\tiny $A$};
\draw [red] (-0.5,-0.5)--(0.5, 0.5);
\path [fill=white] (0.2, 0.2) circle (0.2cm);
\draw [blue] (-1.3, 0.2)--(1.7, 0.2);
\draw [blue] (-1.7, -0.2)--(-0.4, -0.2) (0, -0.2) --(1.3, -0.2);
\end{scope}
\begin{scope}[shift={(1, -1)}, scale=1.8]
\draw (-1,0.5)--(2, 0.5) (-2,-0.5)--(1, -0.5);
\draw (-2,-0.5)--(-1, 0.5) (1,-0.5)--(2, 0.5) node [above] {\tiny $B$}; 
\node at (2, 0.5) [below right] {\tiny Time Boundary};
\end{scope}
\end{tikzpicture}}}},
\end{align*}

Here we shall present its topological definition in the alterfold TQFT.
We define the linear functional $\alpha_{\mathcal{D}}$ on $\RT(\Sigma\times\{0,1\})$ as follows:
\begin{align*}
\alpha_{\mathcal{D}}\left(\vcenter{\hbox{\scalebox{0.5}{
\begin{tikzpicture}
\begin{scope}[shift={(1, 1)}, scale=1.8]
\draw (-1,0.5)--(2, 0.5) (-2,-0.5)--(1, -0.5);
\draw (-2,-0.5)--(-1, 0.5) (1,-0.5)--(2, 0.5) node [right] {\tiny Time Boundary}  node [below] {\tiny $B$};
\end{scope}
\begin{scope}[shift={(1, -1)}, scale=1.8]
\draw (-1,0.5)--(2, 0.5) (-2,-0.5)--(1, -0.5);
\draw (-2,-0.5)--(-1, 0.5) (1,-0.5)--(2, 0.5) node [right] {\tiny Time Boundary} node [above] {\tiny $B$} ;
\node at (2, 1) { $\mathcal{Z(C)}$};
\end{scope}
\end{tikzpicture}}}}\right)
=\frac{1}{\mu}
\vcenter{\hbox{\scalebox{0.5}{
\begin{tikzpicture}
\begin{scope}[shift={(1, 1)}, scale=1.8]
\path [fill=brown!20!white] (-1,0.5)--(2, 0.5)--(1, -0.5)--(-2, -0.5)--cycle;
\draw (-1,0.5)--(2, 0.5) (-2,-0.5)--(1, -0.5);
\draw (-2,-0.5)--(-1, 0.5) (1,-0.5)--(2, 0.5)   node [below] {\tiny $B$} node [above] {\tiny $A$};
\node at (0,0) { $\mathcal{D}$};
\end{scope}
\begin{scope}[shift={(1, -1)}, scale=1.8]
\draw (-1,0.5)--(2, 0.5) (-2,-0.5)--(1, -0.5);
\draw (-2,-0.5)--(-1, 0.5) (1,-0.5)--(2, 0.5) node [above] {\tiny $B$} node [below] {\tiny $A$};
\node at (2, 1) { $\mathcal{Z(C)}$};
\end{scope}
\end{tikzpicture}}}}.
\end{align*}
Then
\begin{align*}
   \sum_{j,k=1}^r \alpha_{\mathcal{D}}(\xi_j\otimes\xi_k^{op})\xi_j\otimes \xi_k^{op}
\end{align*}
can be viewed as the partition function for the torus and the $Z$-matrix is $(\alpha_{\mathcal{D}}(\xi_j\otimes \xi_k^{op}))_{j,k}=(z_{j,k})_{j,k=1}^r$.
We then have that 
\begin{align*}
z_{jk}=\alpha_{\mathcal{D}}(\xi_j\otimes \xi_k^{op})=& 
\frac{1}{\mu^2}
\vcenter{\hbox{\scalebox{0.5}{
\begin{tikzpicture}
\begin{scope}[shift={(1, 1)}, scale=1.8]
\path [fill=brown!20!white] (-1,0.5)--(2, 0.5)--(1, -0.5)--(-2, -0.5)--cycle;
\draw (-1,0.5)--(2, 0.5) (-2,-0.5)--(1, -0.5);
\draw (-2,-0.5)--(-1, 0.5) (1,-0.5)--(2, 0.5) node [below] {\tiny $B$} node [above] {\tiny $A$};
\node at (0,0) { $\mathcal{D}$};
\end{scope}
\begin{scope}[shift={(1, -1)}, scale=1.8]
\draw (-1,0.5)--(2, 0.5) (-2,-0.5)--(1, -0.5);
\draw (-2,-0.5)--(-1, 0.5) (1,-0.5)--(2, 0.5) node [above] {\tiny $B$} node [below] {\tiny $A$};
\node at (2, 1) { $\mathcal{Z(C)}$};
\draw [red] (-0.5,-0.5)--(0.5, 0.5);
\path [fill=white] (0.2, 0.2) circle (0.2cm);
\draw [blue] (-1.3, 0.2)--(1.7, 0.2);
\draw [blue] (-1.7, -0.2)--(-0.4, -0.2) (0, -0.2) --(1.3, -0.2);
\end{scope}
\end{tikzpicture}}}}\\
= & \frac{1}{\mu^2} \vcenter{\hbox{\scalebox{0.7}{
\begin{tikzpicture}[scale=0.7]
\draw [line width=0.6cm, brown!20!white] (0,0) [partial ellipse=-0.1:180.1:2 and 1.5];
\begin{scope}[shift={(2.5, 0)}]
\draw [line width=0.6cm] (0,0) [partial ellipse=0:180:2 and 1.5];
\draw [white, line width=0.57cm] (0,0) [partial ellipse=-0.1:180.1:2 and 1.5];
\draw [blue] (0,0) [partial ellipse=0:180:2.15 and 1.65];
\draw [blue] (0,0) [partial ellipse=0:180:1.85 and 1.35];
\end{scope} 
\begin{scope}[shift={(2.5, 0)}]
\draw [line width=0.6cm] (0,0) [partial ellipse=180:360:2 and 1.5];
\draw [white, line width=0.57cm] (0,0) [partial ellipse=178:362:2 and 1.5];
\draw [blue, -<-=0.5] (0,0) [partial ellipse=178:362:2.15 and 1.65] node[black, pos=0.7,below ] {\tiny $X_k$}; 
\draw [blue, ->-=0.5] (0,0) [partial ellipse=178:362:1.85 and 1.35] node[black, pos=0.7,above ] {\tiny $X_j$};
\end{scope}
\draw [line width=0.6cm, brown!20!white] (0,0) [partial ellipse=180:360:2 and 1.5];
\begin{scope}[shift={(4.5, 0)}]
\draw [red, dashed](0,0) [partial ellipse=0:180:0.4 and 0.25] ;
\draw [white, line width=4pt] (0,0) [partial ellipse=290:270:0.4 and 0.25];
\draw [red] (0,0) [partial ellipse=260:360:0.4 and 0.25];
\draw [red] (0,0) [partial ellipse=180:230:0.4 and 0.25];
\end{scope}
\end{tikzpicture}}}}\\
=&\dim\hom_{\mathcal{Z(C)}} (I(\1_{\mathcal{D}})), X_j\boxtimes X_k^{op})
\end{align*}
is an integer.

\begin{proposition}\label{prop:z}
The $Z$-matrix is modular invariant, namely it commutes with the modular $S$-matrix and $T$-matrix and the entries are natural numbers.
\end{proposition}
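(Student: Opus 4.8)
\emph{Proof strategy.}
The integrality part is immediate. By the computation carried out just before the statement, $z_{jk}=\alpha_{\mathcal D}(\xi_j\otimes\xi_k^{op})=\dim\hom_{\mathcal Z(\mathcal C)}(I(\1_{\mathcal D}),X_j\boxtimes X_k^{op})$, which is the dimension of a morphism space in a fusion category over the ground field $\Bbbk$, hence a natural number. So only the modular invariance requires an argument, and the plan is to read it off from the topological origin of $Z^{\mathcal D}$.

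First I would make $Z^{\mathcal D}$ into a genuine vector: using the nondegenerate alterfold pairing between $\RT_{\mathcal Z(\mathcal C)}(\Sigma\times\{0,1\})$ and its dual, together with the dual bases $\{\xi_j\otimes\xi_k^{op}\}$, the functional $\alpha_{\mathcal D}$ is represented by the vector $Z^{\mathcal D}=\sum_{j,k}z_{jk}\,\xi_j\otimes\xi_k^{op}\in\RT_{\mathcal Z(\mathcal C)}(\Sigma\times\{0,1\})$, namely the one obtained by capping the $B$-colored thickened torus with the $\mathcal D$-colored region pictured above. Under the factorization $\RT_{\mathcal Z(\mathcal C)}(\Sigma\times\{0,1\})=\RT_{\mathcal C}(\Sigma\times\{0,1\})\otimes\RT_{\mathcal C^{op}}(\Sigma\times\{0,1\})$ recalled at the end of Section~2, a mapping class $f$ of $\Sigma$ acts by $\rho(f)=f_{\mathcal C}\otimes f_{\mathcal C^{op}}$. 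Since $\RT_{\mathcal C^{op}}(\Sigma\times\{0,1\})$ is canonically the $\Bbbk$-linear dual of $\RT_{\mathcal C}(\Sigma\times\{0,1\})$ for the alterfold pairing, the action $f_{\mathcal C^{op}}$ is the contragredient of $f_{\mathcal C}$, that is $f_{\mathcal C^{op}}=(f_{\mathcal C}^{-1})^{T}$. A direct unwinding then shows that, writing $z=(z_{jk})$, one has $\rho(f)\,Z^{\mathcal D}=Z^{\mathcal D}$ if and only if $f_{\mathcal C}\,z\,f_{\mathcal C}^{-1}=z$. Applying this to $f=S$ and $f=T$ gives precisely that $z$ commutes with the modular $S$- and $T$-matrices of $\mathcal C$, which is the assertion (note $S,T$ generate $\MCG(\Sigma)\cong\mathrm{SL}(2,\mathbb Z)$, so these two cases are all that is needed).

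It therefore remains to prove $\rho(S)Z^{\mathcal D}=Z^{\mathcal D}$ and $\rho(T)Z^{\mathcal D}=Z^{\mathcal D}$, and here the picture does the work: in the alterfold representing $Z^{\mathcal D}$ the only nontrivial datum is a $\mathcal D$-colored torus carrying the empty diagram. By the Remark following Lemma~\ref{lem:schange}, such a torus is interchanged with itself under the $S$-move swapping meridian and longitude, up to the scalar $\mu$; the factor $\tfrac1\mu$ built into the definition of $\alpha_{\mathcal D}$ is exactly what converts this into the honest identity $\rho(S)Z^{\mathcal D}=Z^{\mathcal D}$. On the other hand the Dehn twist generating $T$ leaves the empty $\mathcal D$-colored torus literally unchanged (there is no diagram to twist), so $\rho(T)Z^{\mathcal D}=Z^{\mathcal D}$ follows directly from the homeomorphism invariance of the alterfold TQFT (Proposition~\ref{thm:e partition function} and Proposition~\ref{thm:etqft}). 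Combining with the previous paragraph proves that $z$ is modular invariant.

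The main obstacle I anticipate is not conceptual but bookkeeping. One must track the normalization constants produced by Moves~0--3 (and the framing anomaly of the torus) carefully enough to be certain that $\rho(S)Z^{\mathcal D}$ equals $Z^{\mathcal D}$ on the nose rather than up to a power of $\mu$; and one must justify the identification $\rho(f)=f_{\mathcal C}\otimes f_{\mathcal C^{op}}$ with $f_{\mathcal C^{op}}=(f_{\mathcal C}^{-1})^{T}$ over an arbitrary field, where unitarity is unavailable and must be replaced by the duality pairing between $\RT_{\mathcal C}(\Sigma\times\{0,1\})$ and $\RT_{\mathcal C^{op}}(\Sigma\times\{0,1\})$ from Section~2.
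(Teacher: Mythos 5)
Your proposal is correct and follows essentially the same route as the paper: the paper's proof likewise observes that the empty $\mathcal{D}$-colored torus is invariant under the mapping class group (so $\alpha_{\mathcal D}\circ\rho(f)=\alpha_{\mathcal D}$), evaluates this on $(f_{\mathcal C}\xi_j)\otimes(f_{\mathcal C^{\op}}\xi_k^{\op})$, and unwinds the identity into commutation of $z$ with $S$ and $T$, with integrality read off from the hom-space computation preceding the statement. Your extra care about the normalization under the $S$-move and the identification $f_{\mathcal C^{\op}}=(f_{\mathcal C}^{-1})^{T}$ is exactly the bookkeeping the paper makes explicit later in Theorem \ref{thm:mcg1} via its choice of matrix representatives with $f_{\mathcal C^{\op}}=f_{\mathcal C}^{-1}$ on generators.
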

\begin{proof}
Suppose $f$ is an element in the mapping class group of $\Sigma\times\{0,1\}$.
Note that $\mathcal{D}$-colored and $\mathcal{C}$-colored tori are invariant under the mapping class group of the torus.
We have that $\alpha_{\mathcal{D}}\rho(f)=\alpha_{\mathcal{D}}$.
Then 
\begin{align*}
\alpha_{\mathcal{D}}((f_{\cC}\xi_j) \otimes (f_{\cC^{\op}}\xi_k^{\op})) 
= \alpha_{\mathcal{D}}(\rho(f)(\xi_j \otimes \xi_k^{\op})) 
= \alpha_{\mathcal{D}}(\xi_j \otimes \xi_{k}^{\op}).
\end{align*}
This implies that $Z$-matrix is invariant under the mapping class group of the torus and the $Z$ matrix is modular invariant.
\end{proof}

\begin{proposition}\label{prop:tracedim}
We have that
\begin{align*}
  Tr(Z) = |\Irr(\mathcal{M})|, \quad Tr(ZZ^t) =|\Irr(\mathcal{D})|.
\end{align*}
\end{proposition}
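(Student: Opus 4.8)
The plan is to re-express each of the two traces as the dimension of a single $\hom$-space in $\mathcal{Z(C)}$ between a ``$\mathcal{D}$-colored torus'' and a ``$\mathcal{C}$- (resp.\ $\mathcal{D}$-) colored torus'', and then to evaluate that dimension by Proposition~\ref{prop:dim}. The input is the identity already established above,
$$z_{jk}=\dim\hom_{\mathcal{Z(C)}}\bigl(I(\1_{\mathcal{D}}),\,X_j\boxtimes X_k^{\op}\bigr),$$
together with the fact that $\{X_j\boxtimes X_k^{\op}\}_{j,k}$ is a complete set of representatives of $\Irr(\mathcal{Z(C)})$ under $\mathcal{Z(C)}\simeq\mathcal{C}\boxtimes\mathcal{C}^{\op}$.

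For $Tr(Z)=\sum_j z_{jj}$, I would first add the summands to get $Tr(Z)=\dim\hom_{\mathcal{Z(C)}}\bigl(I(\1_{\mathcal{D}}),\,\bigoplus_j X_j\boxtimes X_j^{\op}\bigr)$, and then identify $\bigoplus_j X_j\boxtimes X_j^{\op}$ with $I(\1_{\mathcal{C}})$. Diagrammatically this identification is exactly Lemma~\ref{lem:decomposition3}: summing over $j$ (with the factor $1/\mu$) the torus decorated by a longitude $X_j$, a longitude $X_j^{*}$ and a Kirby meridian collapses to the undecorated torus, which is the diagram representing $I(\1_{\mathcal{C}})$. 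Thus $Tr(Z)=\dim\hom_{\mathcal{Z(C)}}(I(\1_{\mathcal{D}}),I(\1_{\mathcal{C}}))$; in the alterfold picture this is the $1/\mu$-normalized Hopf link of a $\mathcal{D}$-colored and a $\mathcal{C}$-colored solid torus, which is the case $\mathcal{E}=\mathcal{C}$ of Proposition~\ref{prop:dim} and therefore equals $|\Irr({}_{\mathcal{C}}\mathcal{M}_{\mathcal{D}})|=|\Irr(\mathcal{M})|$.

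For $Tr(ZZ^t)$, since $Z$ has natural-number entries and $(Z^t)_{kj}=z_{jk}$ we have $Tr(ZZ^t)=\sum_{j,k}z_{jk}^2$. Writing $I(\1_{\mathcal{D}})=\bigoplus_{j,k}(X_j\boxtimes X_k^{\op})^{\oplus z_{jk}}$ in the semisimple category $\mathcal{Z(C)}$ (valid since $z_{jk}=\dim\hom(I(\1_{\mathcal{D}}),X_j\boxtimes X_k^{\op})=\dim\hom(X_j\boxtimes X_k^{\op},I(\1_{\mathcal{D}}))$ for $X_j\boxtimes X_k^{\op}$ simple) gives $\sum_{j,k}z_{jk}^2=\dim\operatorname{End}_{\mathcal{Z(C)}}(I(\1_{\mathcal{D}}))$. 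In the alterfold picture this endomorphism space is the $1/\mu$-normalized self-Hopf link of the $\mathcal{D}$-colored torus, i.e.\ the case $\mathcal{E}=\mathcal{D}$ of Proposition~\ref{prop:dim}, so it equals $|\Irr({}_{\mathcal{D}}\mathcal{M}_{\mathcal{D}})|=|\Irr(\mathcal{D})|$, the last equality because $\operatorname{HOM}(\mathfrak{D},\mathfrak{D})=\operatorname{END}(\mathfrak{D})=\mathcal{D}$, so ${}_{\mathcal{D}}\mathcal{M}_{\mathcal{D}}$ is $\mathcal{D}$ as its own regular bimodule.

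The step I expect to be the main obstacle is the first identification $\bigoplus_j X_j\boxtimes X_j^{\op}\cong I(\1_{\mathcal{C}})$, and, relatedly, the bookkeeping needed to apply Lemma~\ref{lem:decomposition3} and Proposition~\ref{prop:dim} correctly: matching orientation conventions ($X_k^{\op}$ versus $X_k^{*}$ on the longitudes), tracking the powers of $1/\mu$ produced by Moves~0--3, and confirming that the undecorated torus output by Lemma~\ref{lem:decomposition3} is indeed the object representing $I(\1_{\mathcal{C}})$ and that the relevant linked configuration is the one appearing in Proposition~\ref{prop:dim}. Once these conventions are pinned down, both equalities follow immediately from Proposition~\ref{prop:dim} and semisimplicity of $\mathcal{Z(C)}$.
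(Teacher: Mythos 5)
Your proof is correct and follows essentially the same route as the paper: both arguments reduce the two traces to the $1/\mu$-normalized Hopf link of two colored solid tori and invoke Proposition~\ref{prop:dim}, the only difference being that the paper performs the reduction diagrammatically (decomposing the identity of $\RT_{\mathcal{Z(C)}}(\Sigma)$ as $\sum_j \xi_j\otimes\xi_j^{\op}$ and capping with $\mathcal{D}$- and $\mathcal{C}$-colored tori), while you phrase the same gluing algebraically as $\dim\hom_{\mathcal{Z(C)}}(I(\1_{\mathcal{D}}),I(\1_{\mathcal{C}}))$ and $\dim\operatorname{End}_{\mathcal{Z(C)}}(I(\1_{\mathcal{D}}))$ using semisimplicity. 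Your identification $\bigoplus_j X_j\boxtimes X_j^{\op}\cong I(\1_{\mathcal{C}})$ is consistent with the paper's conventions (it is exactly Lemma~\ref{lem:decomposition3}), so no gap remains.
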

\begin{proof}
Note that the identity in $\RT_{\mathcal{Z(C)}}(\Sigma)$ has decomposition $\sum_{j} \xi_j\otimes \xi_j^{\op}$.
By Proposition \ref{prop:dim}, we see that 
\begin{align*}
Tr(Z)=\frac{1}{\mu}
\vcenter{\hbox{\scalebox{0.5}{
\begin{tikzpicture}
\begin{scope}[shift={(1, 1)}, scale=1.8]
\path [fill=brown!20!white] (-1,0.5)--(2, 0.5)--(1, -0.5)--(-2, -0.5)--cycle;
\draw (-1,0.5)--(2, 0.5) (-2,-0.5)--(1, -0.5);
\draw (-2,-0.5)--(-1, 0.5) (1,-0.5)--(2, 0.5)   node [below] {\tiny $B$} node [above] {\tiny $A$};
\node at (0,0) { $\mathcal{D}$};
\end{scope}
\begin{scope}[shift={(1, -1)}, scale=1.8]
\draw (-1,0.5)--(2, 0.5) (-2,-0.5)--(1, -0.5);
\draw (-2,-0.5)--(-1, 0.5) (1,-0.5)--(2, 0.5) node [above] {\tiny $B$} node [below] {\tiny $A$};
\end{scope}
\end{tikzpicture}}}}=|\Irr(\mathcal{M})|.
\end{align*}
Similarly, we obtain that 
\begin{align*}
  Tr(ZZ^t)= \sum_{k,j} \alpha_{ \mathcal{D}}(\xi_j \otimes \xi_k^{\op})\alpha_{\mathcal{D}}(\xi_k\otimes \xi_j^{\op})
   =\frac{1}{\mu}\vcenter{\hbox{\scalebox{0.5}{
\begin{tikzpicture}
\begin{scope}[shift={(1, 1)}, scale=1.8]
\path [fill=brown!20!white] (-1,0.5)--(2, 0.5)--(1, -0.5)--(-2, -0.5)--cycle;
\draw (-1,0.5)--(2, 0.5) (-2,-0.5)--(1, -0.5);
\draw (-2,-0.5)--(-1, 0.5) (1,-0.5)--(2, 0.5)  node [below] {\tiny $B$} node [above] {\tiny $A$};
\node at (0,0) { $\mathcal{D}$};
\end{scope}
\begin{scope}[shift={(1, -1)}, scale=1.8]
\path [fill=brown!20!white] (-1,0.5)--(2, 0.5)--(1, -0.5)--(-2, -0.5)--cycle;
\draw (-1,0.5)--(2, 0.5) (-2,-0.5)--(1, -0.5);
\draw (-2,-0.5)--(-1, 0.5) (1,-0.5)--(2, 0.5) node [above] {\tiny $B$} node [below] {\tiny $A$};
\end{scope}
\end{tikzpicture}}}}=|\Irr(\mathcal{D})|.
\end{align*}
This completes the proof of the proposition.
\end{proof}

\subsection{Topological $\alpha$-Induction}
Suppose $\mathcal{C}$ is a modular fusion category and $Q$ is a Frobenius algebra in  $\mathcal{C}$, then the $Q$-$Q$ bimodule category $\mathcal{D}$ is a spherical fusion category Morita equivalent to $\mathcal{C}$.
Xu introduced the $\alpha$-induction for a commutative Frobenius algebra $Q$ in a unitary modular fusion category $\mathcal{C}$ in \cite{Xu98a}, which is a pair of induction functors of unitary fusion categories $\alpha_{\pm}:\mathcal{C}\to\mathcal{D}$, where the $\pm$ sign depends on the choice of the $\pm$ braids of $\mathcal{C}$.
B\"{o}ckenhauer, Evans and Kawahigashi proposed the modular invariant mass $Z$-matrix of $\mathcal{D}$ as 
\begin{align}\label{eq:BEKalpha}
z_{jk}=\dim \hom_{\mathcal{D}}(\alpha_+(X_j),\alpha_-(X_k))
\end{align}
for simple objects $X_j$ of $\mathcal{C}$. 
They proved that the $Z$-matrix is invariant under the action of the modular group, namely the genus-one mapping class group, which is generated by the modular $S$, $T$ matrices.
In addition to the modular invariant property, they studied the $\alpha$-induction and the $Z$-matrix systematically in \cite{BE98, BE99, BE00, BE99b, BEK99, BEK00} and they summarized the theoretical results as follows in \cite{BEK00},
\begin{enumerate}
\item Surjectivity of the $\alpha$-induction in $\mathcal{D}$.
\item The $Z$-matrix $Z$ is a modular invariance. (Theorem 5.7 in \cite{BEK00})
\item Characterize the matrix units of dual fusion algebra; (Theorem 6.8 in \cite{BEK00})
\item The Grothendieck ring $K_0(\mathcal{D})$ is commutative if and only if $z_{jk}\in \{0, 1\}$ (Corollary 6.9 in \cite{BEK00}).
\item The number of simple objects in $\mathcal{D}$ is the trace of $ZZ^t$ (Corollary 6.10 in \cite{BEK00}).
\item The diagonal entry $Z_{jj}$ is the dimension of the $j^{th}$ eigenspace of the commutative fusion algebra $K_0(\mathcal{C})\otimes_{\mathbb{Z}} \mathbb{C} $ acting on $K_0(\mathcal{M}) \otimes_{\mathbb{Z}} \mathbb{C}$, where $\mathcal{M}$ is the category of $\mathcal{C}$-$\mathcal{D}$ bimodules. (Theorem 6.12 in \cite{BEK00})
\item The number of simple objects in $\mathcal{M}$ is the trace of $Z$ (Corollary 6.13 in \cite{BEK00}). 
\end{enumerate}

We propose the topological interpretation of $\alpha$-induction in the alterfold TQFT and recover the definition of the modular invariant in Equation \eqref{eq:BEKalpha} and the results (1)-(7) of B\"{o}ckenhauer, Evans and Kawahigashi, and their generalizations for modular fusion categories over a general field. 
More precisely, Proposition \ref{prop:z} covers results (2); Proposition \ref{prop:tracedim} covers (5), (7); Theorem \ref{thm:minimalidempotent} covers result (3); Corollary \ref{cor:commdual} covers result (4); Corollary \ref{cor:exponent} covers result (6).
Note that the definition of the $\alpha$-induction and modular invariant $Z$-matrix for modular fusion categories over the complex field without unitary assumption can be found in \cite{Ost03m}.

Now let us introduce the definition of the $\alpha$-induction in the alterfold TQFT.
The $\alpha$-induction functors $\alpha_{\pm}$ are implemented as the forgetful functors from $\mathcal{C}\boxtimes \1$ and $\1 \boxtimes\mathcal{C}^{op}$ to $\mathcal{D}$ respectively.
The graphical calculus of $\mathcal{D}$ is on the plane colored by $\mathcal{D}$ and the object in the braided category $\mathcal{C}$ can be depicted as a tube in the $B$-colored region in the alterfold TQFT as follows:
\begin{align}\label{pic:alpha1}
\vcenter{\hbox{
\begin{tikzpicture}[rotate=-90, scale=1.1]
\path [fill=brown!20!white] (-1,0.5)--(2, 0.5)--(1, -0.5)--(-2, -0.5)--cycle;
\draw (-1,0.5)--(2, 0.5) (-2,-0.5)--(1, -0.5);
\draw (-2,-0.5)--(-1, 0.5) (1,-0.5)--(2, 0.5) node [right] {\tiny $B$} node [left] {\tiny $A$};
\node at (-0.9, 0.3) {\tiny $\mathcal{D}$};
\end{tikzpicture}}}
\hspace{1cm}
\vcenter{\hbox{\scalebox{1}{
\begin{tikzpicture}[xscale=0.8, yscale=0.6]
\begin{scope}[shift={(0,2)}]
\draw (0,0) [partial ellipse=0:360:0.6 and 0.3];
\end{scope}
\draw (-0.6, 2)--(-0.6, 0) (0.6, 2)--(0.6, 0); 
\draw (-0.6, -2)--(-0.6, 0) (0.6, -2)--(0.6, 0);
\draw [blue] (0, -2.3) --(0, 1.7);
\draw [line width=0.2cm,white] (0, -0.18) --(0, -0.38);
\draw [red] (0,0) [partial ellipse=180:360:0.6 and 0.3];
\draw [red, dashed] (0,0) [partial ellipse=0:180:0.6 and 0.3];
\begin{scope}[shift={(0,-2)}]
\draw [dashed](0,0) [partial ellipse=0:180:0.6 and 0.3];
\draw (0,0) [partial ellipse=180:360:0.6 and 0.3];
\end{scope}
\end{tikzpicture}}}}
\end{align}
The $\alpha$-induction in the alterfold TQFT is to attach the tube on the right hand side to the plane colored by $\mathcal{D}$.
The following is the definition of the tensor functors $\alpha_\pm$ in the alterfold TQFT:
\begin{align}\label{def:alpha11}
\alpha_+\left( \vcenter{\hbox{\begin{tikzpicture}
\draw [dashed] (-1, -1) rectangle (1, 1);
\draw[-<-=0.85, -<-=0.25,blue] (0, -1) node[black, black, below]{\tiny$X$}--(0, 1)node[black, black, above]{\tiny $X$};
\draw [fill=white] (-0.3, -0.3) rectangle (0.3, 0.3);
\node at (0,0) {\tiny $f$};
\end{tikzpicture}}}\right)=
\frac{1}{d_J}\vcenter{\hbox{\begin{tikzpicture}
\draw [dashed, fill=brown!20!white] (-1.5, -1.5) rectangle (1.5, 1.5);
\path [fill=white] (0,1.5) [partial ellipse=0:-180:1 and 1];
\draw [blue, ->-=0.15] (0,1.5) [partial ellipse=0:-180:1 and 1];
\path [fill=white] (0,-1.5) [partial ellipse=0:180:1 and 1];
\draw [blue, -<-=0.15] (0,-1.5) [partial ellipse=0:180:1 and 1];
\path [fill=white] (0.4, 1) .. controls +(0, -0.3) and +(0, 0.3).. (0.8, 0).. controls +(0, -0.3) and +(0, 0.3).. (0.4, -1)--
(-0.4, -1) .. controls +(0, 0.3) and +(0, -0.3).. (0, 0).. controls +(0, 0.3) and +(0, -0.3).. (-0.4, 1);
\draw[dashed] (0, 1) [partial ellipse=0:360:0.4 and 0.2];
\draw[dashed] (0, -1) [partial ellipse=0:360:0.4 and 0.2];
\draw (0.4, 1) .. controls +(0, -0.3) and +(0, 0.3).. (0.8, 0).. controls +(0, -0.3) and +(0, 0.3).. (0.4, -1);
\draw (-0.4, 1) .. controls +(0, -0.3) and +(0, 0.3).. (0, 0).. controls +(0, -0.3) and +(0, 0.3).. (-0.4, -1);
\draw [dashed, red] (0.4, 0) [partial ellipse=0:180:0.4 and 0.2];
\draw [red] (0.4, 0) [partial ellipse=180:250:0.4 and 0.2];
\draw [red] (0.4, 0) [partial ellipse=280:360:0.4 and 0.2];
\draw [blue, ->-=0.7] (0, 1.5)--(0, 1) .. controls +(0, -0.3) and +(0, 0.3).. (0.4, 0).. controls +(0, -0.3) and +(0, 0.3).. (0, -1)--(0, -1.5);
\begin{scope}[shift={(0.2, 0.5)}]
\draw [fill=white] (-0.2, -0.2) rectangle (0.2, 0.2);
\node at (0,0) {\tiny $f$};
\end{scope}
\node[black, above] at (0,1.5) {\tiny $X$};
\node[black, above] at (-1,1.5) {\tiny $\overline{J}$};
\node[black, above] at (1,1.5) {\tiny $J$};
\end{tikzpicture}}},
\end{align}
and
\begin{align}\label{def:alpha2}
\alpha_-\left( \vcenter{\hbox{\begin{tikzpicture}
\draw [dashed] (-1, -1) rectangle (1, 1);
\draw[-<-=0.85, -<-=0.25,blue] (0, -1) node[black, black, below]{\tiny$X$}--(0, 1)node[black, black, above]{\tiny $X$};
\draw [fill=white] (-0.3, -0.3) rectangle (0.3, 0.3);
\node at (0,0) {\tiny $f$};
\end{tikzpicture}}}\right)=
\frac{1}{d_J}
\vcenter{\hbox{\begin{tikzpicture}
\draw [dashed, fill=brown!20!white] (-1.5, -1.5) rectangle (1.5, 1.5);
\path [fill=white] (0,1.5) [partial ellipse=0:-180:1 and 1];
\draw [blue, ->-=0.15] (0,1.5) [partial ellipse=0:-180:1 and 1];
\path [fill=white] (0,-1.5) [partial ellipse=0:180:1 and 1];
\draw [blue, -<-=0.15] (0,-1.5) [partial ellipse=0:180:1 and 1];
\path [fill=white] (0.4, 1) .. controls +(0, -0.3) and +(0, 0.3).. (0.8, 0).. controls +(0, -0.3) and +(0, 0.3).. (0.4, -1)--
(-0.4, -1) .. controls +(0, 0.3) and +(0, -0.3).. (0, 0).. controls +(0, 0.3) and +(0, -0.3).. (-0.4, 1);
\draw[dashed] (0, 1) [partial ellipse=0:360:0.4 and 0.2];
\draw[dashed] (0, -1) [partial ellipse=0:360:0.4 and 0.2];
\draw (0.4, 1) .. controls +(0, -0.3) and +(0, 0.3).. (0.8, 0).. controls +(0, -0.3) and +(0, 0.3).. (0.4, -1);
\draw (-0.4, 1) .. controls +(0, -0.3) and +(0, 0.3).. (0, 0).. controls +(0, -0.3) and +(0, 0.3).. (-0.4, -1);
\draw [dashed, red] (0.4, 0) [partial ellipse=0:180:0.4 and 0.2];
\draw [blue, ->-=0.7] (0, 1.5)--(0, 1) .. controls +(0, -0.3) and +(0, 0.3).. (0.4, 0).. controls +(0, -0.3) and +(0, 0.3).. (0, -1)--(0, -1.5);
\begin{scope}[shift={(0.2, 0.5)}]
\draw [fill=white] (-0.2, -0.2) rectangle (0.2, 0.2);
\node at (0,0) {\tiny $f$};
\end{scope}
\draw [line width=2.5, white] (0.4, 0) [partial ellipse=200:320:0.4 and 0.2];
\draw [red] (0.4, 0) [partial ellipse=180:360:0.4 and 0.2];
\node[black, above] at (0,1.5) {\tiny $X$};
\node[black, above] at (-1,1.5) {\tiny $\overline{J}$};
\node[black, above] at (1,1.5) {\tiny $J$};
\end{tikzpicture}}}.
\end{align}
Note that the Drinfeld center $\mathcal{Z(D)}$ is $\mathcal{Z(C)}$ in the alterfold TQFT and $\mathcal{Z(C)}=\mathcal{C}\boxtimes \mathcal{C}^{op}$.
We see that $\alpha^\pm(X_j), j=1, \ldots, r$ generate $\mathcal{D}$.
By gluing the upside and downside of the right hand side of Equations \eqref{def:alpha11} and \eqref{def:alpha2}, and applying Move 1, we obtain that
\begin{align}\label{eq:alphatube}
\frac{1}{\mu}
\vcenter{\hbox{\scalebox{0.7}{
\begin{tikzpicture}[xscale=0.8, yscale=0.6]
\draw [line width=0.83cm] (0,0) [partial ellipse=-0.1:180.1:2 and 1.5];
\draw [white, line width=0.8cm] (0,0) [partial ellipse=-0.1:180.1:2 and 1.5];
\draw [blue] (0,0) [partial ellipse=-0.1:180.1:2 and 1.5];
\path [fill=brown!20!white](-0.65, -3) rectangle (0.65, 3);
\begin{scope}[shift={(0,3)}]
\path [fill=brown!20!white] (0,0) [partial ellipse=0:180:0.6 and 0.3];
\draw (0,0) [partial ellipse=0:360:0.6 and 0.3];
\end{scope}
\draw (-0.6, 3)--(-0.6, 0) (0.6, 3)--(0.6, 0); 
\draw (-0.6, -3)--(-0.6, 0) (0.6, -3)--(0.6, 0);
\draw [line width=0.83cm] (0,0) [partial ellipse=179:361:2 and 1.5];
\draw [white, line width=0.8cm] (0,0) [partial ellipse=177:363:2 and 1.5];
\draw [blue, ->-=0.5] (0,0) [partial ellipse=180:361:2 and 1.5] node[black, pos=0.5, below right] {\tiny $X$};
\begin{scope}[shift={(0,-3)}]
\path [fill=brown!20!white] (0,0) [partial ellipse=180:360:0.6 and 0.3];
\draw [dashed](0,0) [partial ellipse=0:180:0.6 and 0.3];
\draw (0,0) [partial ellipse=180:360:0.6 and 0.3];
\end{scope}
\begin{scope}[shift={(-2, 0)}]
\draw [red, dashed] (0,0) [partial ellipse=0:180:0.46 and 0.25];
\draw [red] (0,0) [partial ellipse=180:270:0.46 and 0.25] ;
\draw [red] (0,0) [partial ellipse=290:360:0.46 and 0.25];
\end{scope}
\end{tikzpicture}}}}
 =\vcenter{\hbox{\scalebox{0.7}{
\begin{tikzpicture}[xscale=1.2, yscale=0.6]
\path [fill=brown!20!white](-0.65, -3) rectangle (0.65, 3);
\begin{scope}[shift={(0,3)}]
\path [fill=brown!20!white] (0,0) [partial ellipse=0:180:0.6 and 0.3];
\draw (0,0) [partial ellipse=0:360:0.6 and 0.3];
\end{scope}
\draw (-0.6, 3)--(-0.6, 0) (0.6, 3)--(0.6, 0); 
\draw (-0.6, -3)--(-0.6, 0) (0.6, -3)--(0.6, 0);
\draw [dashed, blue](0,0) [partial ellipse=0:180:0.6 and 0.3];
\draw [blue, ->-=0.5](0,0) [partial ellipse=180:360:0.6 and 0.3];
\begin{scope}[shift={(0,-3)}]
\path [fill=brown!20!white] (0,0) [partial ellipse=180:360:0.6 and 0.3];
\draw [dashed](0,0) [partial ellipse=0:180:0.6 and 0.3];
\draw (0,0) [partial ellipse=180:360:0.6 and 0.3];
\end{scope}
\begin{scope}[shift={(0, -0.2)}]
\draw [fill=white] (-0.4, -0.3) rectangle (0.4, 0.3);
 \node at (0,0) {\tiny $\alpha_+(X)$};
\end{scope}
\end{tikzpicture}}}}, 
\quad
\frac{1}{\mu}
\vcenter{\hbox{\scalebox{0.7}{
\begin{tikzpicture}[xscale=0.8, yscale=0.6]
\draw [line width=0.83cm] (0,0) [partial ellipse=-0.1:180.1:2 and 1.5];
\draw [white, line width=0.8cm] (0,0) [partial ellipse=-0.1:180.1:2 and 1.5];
\draw [blue] (0,0) [partial ellipse=-0.1:180.1:2 and 1.5];
\path [fill=brown!20!white](-0.65, -3) rectangle (0.65, 3);
\begin{scope}[shift={(0,3)}]
\path [fill=brown!20!white] (0,0) [partial ellipse=0:180:0.6 and 0.3];
\draw (0,0) [partial ellipse=0:360:0.6 and 0.3];
\end{scope}
\draw (-0.6, 3)--(-0.6, 0) (0.6, 3)--(0.6, 0); 
\draw (-0.6, -3)--(-0.6, 0) (0.6, -3)--(0.6, 0);
\draw [line width=0.83cm] (0,0) [partial ellipse=179:361:2 and 1.5];
\draw [white, line width=0.8cm] (0,0) [partial ellipse=177:363:2 and 1.5];
\draw [blue, ->-=0.5] (0,0) [partial ellipse=192:361:2 and 1.5] node[black, pos=0.5, below] {\tiny $X$};
\begin{scope}[shift={(0,-3)}]
\path [fill=brown!20!white] (0,0) [partial ellipse=180:360:0.6 and 0.3];
\draw [dashed](0,0) [partial ellipse=0:180:0.6 and 0.3];
\draw (0,0) [partial ellipse=180:360:0.6 and 0.3];
\end{scope}
\begin{scope}[shift={(-2, 0)}]
\draw [red, dashed] (0,0) [partial ellipse=0:180:0.46 and 0.25];
\draw [red] (0,0) [partial ellipse=180:360:0.46 and 0.25] ;
\end{scope}
\end{tikzpicture}}}}
= \vcenter{\hbox{\scalebox{0.7}{
\begin{tikzpicture}[xscale=1.2, yscale=0.6]
\path [fill=brown!20!white](-0.65, -3) rectangle (0.65, 3);
\begin{scope}[shift={(0,3)}]
\path [fill=brown!20!white] (0,0) [partial ellipse=0:180:0.6 and 0.3];
\draw (0,0) [partial ellipse=0:360:0.6 and 0.3];
\end{scope}
\draw (-0.6, 3)--(-0.6, 0) (0.6, 3)--(0.6, 0); 
\draw (-0.6, -3)--(-0.6, 0) (0.6, -3)--(0.6, 0);
\draw [dashed, blue](0,0) [partial ellipse=0:180:0.6 and 0.3];
\draw [blue, ->-=0.5](0,0) [partial ellipse=180:360:0.6 and 0.3];
\begin{scope}[shift={(0,-3)}]
\path [fill=brown!20!white] (0,0) [partial ellipse=180:360:0.6 and 0.3];
\draw [dashed](0,0) [partial ellipse=0:180:0.6 and 0.3];
\draw (0,0) [partial ellipse=180:360:0.6 and 0.3];
\end{scope}
\begin{scope}[shift={(0, -0.2)}]
\draw [fill=white] (-0.4, -0.3) rectangle (0.4, 0.3);
 \node at (0,0) {\tiny $\alpha_-(X)$};
\end{scope}
\end{tikzpicture}}}}.
\end{align}

Now the entries $z_{jk}$ of the $Z$-matrix can also expressed as
\begin{equation}\label{eq:modularinv}
\begin{aligned}
z_{jk}
=& \alpha_{\mathcal{D}}(\xi_j\otimes \xi_k^{op})\\
=&\frac{1}{\mu^3}\vcenter{\hbox{\scalebox{0.7}{
\begin{tikzpicture}[scale=0.7]
\draw [line width=0.6cm, brown!20!white] (0,0) [partial ellipse=-0.1:180.1:2 and 1.5];
\begin{scope}[shift={(2.5, 0)}]
\draw [double distance=0.57cm] (0,0) [partial ellipse=-0.1:180.1:2 and 1.5];
\draw [blue] (0,0) [partial ellipse=0:180:2 and 1.5];
\end{scope} 
\begin{scope}[shift={(2.5, 0)}]
\draw [line width=0.6cm] (0,0) [partial ellipse=180:360:2 and 1.5];
\draw [white, line width=0.57cm] (0,0) [partial ellipse=178:362:2 and 1.5];
\draw [blue, ->-=0.5] (0,0) [partial ellipse=178:362:2 and 1.5] node[black, pos=0.7,below ] {\tiny $X_k$};
\end{scope}
\begin{scope}[shift={(-2.5, 0)}]
\draw [double distance=0.57cm] (0,0) [partial ellipse=-0.1:180.1:2 and 1.5];
\draw [blue, ->-=0.5] (0,0) [partial ellipse=0:180:2 and 1.5] node[black, pos=0.6, above] {\tiny $X_j$};
\end{scope} 
\begin{scope}[shift={(-2.5, 0)}]
\draw [line width=0.6cm] (0,0) [partial ellipse=180:360:2 and 1.5];
\draw [white, line width=0.57cm] (0,0) [partial ellipse=178:362:2 and 1.5];
\draw [blue] (0,0) [partial ellipse=178:362:2 and 1.5];
\end{scope}
\draw [line width=0.6cm, brown!20!white] (0,0) [partial ellipse=180:360:2 and 1.5];
\begin{scope}[shift={(4.5, 0)}]
\draw [red, dashed](0,0) [partial ellipse=0:180:0.37 and 0.25] ;
\draw [white, line width=4pt] (0,0) [partial ellipse=270:250:0.37 and 0.25];
\draw [red] (0,0) [partial ellipse=180:360:0.37 and 0.25];
\end{scope}
\begin{scope}[shift={(-4.5, 0)}]
\draw [red, dashed] (0,0) [partial ellipse=0:180:0.37 and 0.25];
\draw [red] (0,0) [partial ellipse=180:250:0.37 and 0.25];
\draw [red] (0,0) [partial ellipse=290:360:0.37 and 0.25];
\end{scope}
\end{tikzpicture}}}}\\
=& \dim\hom_{\mathcal{D}}(\alpha_+(X_j), \alpha_-(X_k)),
\end{aligned}
\end{equation}
which is the definition of the modular invariant mass matrix of  B\"{o}ckenhauer, Evans and Kawahigashi.

The topological interpretation of the modular invariance and the $\alpha$-induction can be generalized from the genus-one case to the higher-genus case which will be discussed in Section \ref{sec:alpha}.

\subsection{Minimal Idempotents in the Dual Fusion Algebra}
In the following, we shall present a topological construction of the minimal idempotents in the tube algebra of $\mathcal{D}$ in the alterfold TQFT. 
Note that the minimal idempotents in the tube algebra of $\mathcal{D}$ were characterized in \cite{BEK99}.
To begin the characterization, we recall Corollary 5.6 in \cite{LMWW23}, which states that
\begin{align*}
\vcenter{\hbox{\scalebox{0.7}{
}}}
\end{align*}
which is a minimal central idempotent in $\text{End}(O)$, where $O$ is $I(\1_{\mathcal{C}})$ in the tube category associated to $\mathcal{C}$.
The multiplicity of $\chi_j$ in $\mathbb{F}(\mathcal{M})$ is the dimension of $p_j \mathbb{F}(\mathcal{M})$.

\begin{corollary}\label{cor:exponent}
With the notations above, for any $1 \le j \le r$, the multiplicity of the character $\chi_j$ in $\mathbb{F}(\mathcal{M})$ is the $Z$-matrix entry $z_{jj}$.
\end{corollary}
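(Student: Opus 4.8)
The plan is to compute the multiplicity of $\chi_j$ as the rank of the idempotent $p_j$ acting on $\mathbb{F}(\mathcal{M})$, and to evaluate that rank by closing it up into an alterfold diagram that collapses to the $z_{jj}$-picture. Since $p_j$ is an idempotent, the multiplicity of $\chi_j$ in $\mathbb{F}(\mathcal{M})$ is $\dim p_j\mathbb{F}(\mathcal{M}) = \mathrm{Tr}_{\mathbb{F}(\mathcal{M})}(p_j)$, so it suffices to identify this trace topologically. First I would realize $\mathbb{F}(\mathcal{M})$ inside the alterfold TQFT: the module-action picture displayed above exhibits $\mathbb{F}(\mathcal{M})$ as the Hom space $\hom_{\mathcal{Z(C)}}(O, I(\1_{\mathcal{D}}))$ with $O = I(\1_{\mathcal{C}})$, the fusion algebra $\mathbb{F}(\mathcal{C}) \cong \mathrm{End}_{\mathcal{Z(C)}}(O)$ acting by composition via the $X$-loop around the $\mathcal{C}$-colored tube shown there. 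Under $\mathcal{Z(C)} = \mathcal{C}\boxtimes\mathcal{C}^{\op}$ one has $O \cong \bigoplus_k X_k\boxtimes X_k^{\op}$, and Equation~\eqref{eq:charactermodular} shows that the $X_k$-loop acts on the summand $X_j\boxtimes X_j^{\op}$ by the scalar $S_{jk}/d_j = \chi_j([X_k])$; hence the minimal central idempotent $p_j$ is precisely the projection of $O$ onto $X_j\boxtimes X_j^{\op}$.

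Granting this, $p_j\mathbb{F}(\mathcal{M}) \cong \hom_{\mathcal{Z(C)}}(X_j\boxtimes X_j^{\op}, I(\1_{\mathcal{D}}))$, whose dimension is $z_{jj}$ by the identity $z_{jk}=\dim\hom_{\mathcal{Z(C)}}(I(\1_{\mathcal{D}}), X_j\boxtimes X_k^{\op})$ from the discussion preceding Proposition~\ref{prop:z}, specialized to $k = j$. Concretely I would run the same manipulation as in Equation~\eqref{eq:modularinv}: stack the $p_j$-picture (namely $d_j^2/\mu^2$ times two parallel $X_j$-loops Hopf-linked with the Kirby color on a $\mathcal{C}$-torus) against the $\mathcal{D}$-colored tube representing $I(\1_{\mathcal{D}})$, close up the $\mathcal{M}$-strand into a closed alterfold, and simplify with Moves $1,2$ and Lemmas~\ref{lem:decomposition1} and \ref{lem:decomposition3}; the outcome is the closed diagram of value $\dim\hom_{\mathcal{D}}(\alpha_+(X_j),\alpha_-(X_j)) = z_{jj}$. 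Summing over $j$ recovers $\mathrm{Tr}(Z) = |\Irr(\mathcal{M})|$ of Proposition~\ref{prop:tracedim}, which serves as a consistency check.

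An alternative that bypasses the explicit closure mirrors the proof of Theorem~\ref{thm:rank}: the rank of the left action of $p_j$ on $\mathbb{F}(\mathcal{M})$ is $\sum_{M\in\Irr(\mathcal{M})}$ of the corresponding scalar, which is the value of a closed alterfold; handle-sliding the Kirby color as in Lemmas~\ref{lem:zrank} and \ref{lem:schange} collapses it directly to $z_{jj}$. Note that here, because $\mathbb{F}(\mathcal{M})$ is a module rather than an algebra, the trace of the idempotent comes out as $z_{jj}$ itself, not $z_{jj}^2$ as in the $\mathrm{End}$-computations of Theorem~\ref{thm:rank}.

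The main obstacle is the first step: pinning down the identification of $\mathbb{F}(\mathcal{M})$, together with its $\mathbb{F}(\mathcal{C})$-module structure, with a Hom space in $\mathcal{Z(C)}$ in a manner strictly compatible with the displayed module-action diagram, and — in the closure argument — setting up the "trace over $\Irr(\mathcal{M})$" correctly, since $\mathcal{M}$ is only a module category and one must close with a $\mathcal{D}$-colored handle (Proposition~\ref{prop:dim}-style) rather than a naive Kirby color on $\mathcal{M}$. Once this bookkeeping is in place, the remainder is the short diagram chase already essentially carried out in Equation~\eqref{eq:modularinv} and Theorem~\ref{thm:rank}.
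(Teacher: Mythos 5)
Your proof is correct, and your ``alternative'' route is in fact exactly the paper's proof: the authors compute $\mathrm{Tr}_{\mathbb{F}(\mathcal{M})}(p_j)$ as a sum over $\Irr(\mathcal{M})$ of closed diagrams, convert that sum into the $\mathcal{D}$-colored tube by Moves $1$ and $2$, identify the result with the minimal central idempotent $\frac{d_j^2}{\mu^2}[\cdots]$ via the computation in Lemma \ref{lem:decomposition3}, and then invoke Theorem \ref{thm:rank} to read off $z_{jj}$ — including your observation that one lands on $z_{jj}$ rather than the $z_{jj}^2$ of the $\mathrm{End}$-space computation, precisely because the trace here runs over $\Irr(\mathcal{M})$ rather than $\Irr(\mathcal{D})$.

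Your primary route — identifying $\mathbb{F}(\mathcal{M})$ with $\hom_{\mathcal{Z(C)}}(I(\1_{\mathcal{C}}), I(\1_{\mathcal{D}}))$ as a module over $\mathrm{End}(I(\1_{\mathcal{C}}))$, recognizing $p_j$ as the projection onto the summand $X_j\boxtimes X_j^{\op}$ of $I(\1_{\mathcal{C}})$, and reading off $\dim p_j\mathbb{F}(\mathcal{M}) = \dim\hom_{\mathcal{Z(C)}}(X_j\boxtimes X_j^{\op}, I(\1_{\mathcal{D}})) = z_{jj}$ — is an equivalent algebraic repackaging rather than a genuinely different argument: the isomorphism $\mathbb{F}(\mathcal{M})\cong\hom_{\mathcal{Z(C)}}(I(\1_{\mathcal{C}}), I(\1_{\mathcal{D}}))$ compatible with the module structure is established by the very same topological input (the weighted sum over $\Irr(\mathcal{M})$ reassembling the $\mathcal{D}$-colored surface, plus linear independence of the simple-object tubes as in Proposition \ref{prop:dim} and \cite[Theorem 4.29]{LMWW23b}) that the paper deploys diagrammatically. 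What your phrasing buys is conceptual transparency — the corollary becomes the tautology that the multiplicity of a simple summand in the source object equals the corresponding Hom-space dimension — at the cost of having to justify that identification carefully, which you rightly flag as the main obstacle; the paper's version avoids stating the isomorphism explicitly by keeping everything inside one diagram chase. No gap in either route.
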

\begin{proof}
The trace of the projection $p_j$ restricted on $\mathbb{F}(\mathcal{M})$ is the trace of the following morphism
\begin{align*}
\frac{d_{j}}{\mu^2}\sum_{k}
\vcenter{\hbox{\scalebox{0.7}{
\begin{tikzpicture}[xscale=0.8, yscale=0.6]
\path [fill=brown!20!white](-0.65, -3) rectangle (0.65, -1);
\path [fill=brown!20!white](0, -1) [partial ellipse=0:360:0.6 and 0.3];
\draw [blue, -<-=0.5](0, -1) [partial ellipse=0:-180:0.6 and 0.3];
\draw [dashed, blue](0, -1) [partial ellipse=0:180:0.6 and 0.3];
\path [fill=brown!20!white](-0.65, 3) rectangle (0.65, 1);
\path [fill=brown!20!white](0, 1) [partial ellipse=0:360:0.6 and 0.3];
\draw [blue, ->-=0.5](0, 1) [partial ellipse=0:-180:0.6 and 0.3];
\draw [dashed, blue](0, 1) [partial ellipse=0:180:0.6 and 0.3];
\draw [red] (0, 0) [partial ellipse=-60:280:0.6 and 0.3];
\draw (0.6, -1) node[right]{\tiny$M_{k}$};
\draw (0.6, 1) node[right]{\tiny$M_{k}$};
\begin{scope}[shift={(0,1.5)}]
\end{scope}
\begin{scope}[shift={(0,0)}]
\end{scope}
\begin{scope}[shift={(0,3)}]
\path [fill=brown!20!white] (0,0) [partial ellipse=0:180:0.6 and 0.3];
\draw (0,0) [partial ellipse=0:360:0.6 and 0.3];
\end{scope}
\draw (-0.6, 3)--(-0.6, 0) (0.6, 3)--(0.6, 0); 
\draw (-0.6, -3)--(-0.6, 0) (0.6, -3)--(0.6, 0);
\begin{scope}[shift={(0,-3)}]
\path [fill=brown!20!white] (0,0) [partial ellipse=180:360:0.6 and 0.3];
\draw [dashed](0,0) [partial ellipse=0:180:0.6 and 0.3];
\draw (0,0) [partial ellipse=180:360:0.6 and 0.3];
\end{scope}
\draw[blue, -<-=0] (0, -0.3) [partial ellipse=0:170:0.2 and 0.45];
\draw[blue] (0, -0.3) [partial ellipse=190:360:0.2 and 0.45];
\end{tikzpicture}}}}
=\frac{d_{j}}{\mu^2}\vcenter{\hbox{\scalebox{0.7}{
\begin{tikzpicture}[xscale=0.8, yscale=0.6]
\draw [line width=0.8cm] (0,0) [partial ellipse=0:180:2 and 1.5];
\draw [white, line width=0.77cm] (0,0) [partial ellipse=-0.1:180.1:2 and 1.5];
\draw [red] (0,0) [partial ellipse=0:180:2 and 1.5];
\path [fill=brown!20!white](-0.65, -3) rectangle (0.65, 3);
\begin{scope}[shift={(0,1.5)}]
\end{scope}
\begin{scope}[shift={(0,0)}]
\end{scope}
\begin{scope}[shift={(0,3)}]
\path [fill=brown!20!white] (0,0) [partial ellipse=0:180:0.6 and 0.3];
\draw (0,0) [partial ellipse=0:360:0.6 and 0.3];
\end{scope}
\draw [line width=0.8cm] (0,0) [partial ellipse=180:360:2 and 1.5];
\draw (-0.6, 3)--(-0.6, 0) (0.6, 3)--(0.6, 0); 
\draw (-0.6, -3)--(-0.6, 0) (0.6, -3)--(0.6, 0);
\draw [white, line width=0.77cm] (0,0) [partial ellipse=178:362:2 and 1.5];
\draw [red] (0,0) [partial ellipse=280:362:2 and 1.5];
\draw [red] (0,0) [partial ellipse=178:270:2 and 1.5];
\begin{scope}[shift={(0,-3)}]
\path [fill=brown!20!white] (0,0) [partial ellipse=180:360:0.6 and 0.3];
\draw [dashed](0,0) [partial ellipse=0:180:0.6 and 0.3];
\draw (0,0) [partial ellipse=180:360:0.6 and 0.3];
\end{scope}
\draw[blue, -<-=0] (0, -1.5) [partial ellipse=0:170:0.2 and 0.45];
\draw[blue] (0, -1.5) [partial ellipse=190:360:0.2 and 0.45];
\end{tikzpicture}}}}
=
\frac{d_j^2}{\mu^2}
\vcenter{\hbox{\scalebox{0.7}{
\begin{tikzpicture}[xscale=0.8, yscale=0.6]
\draw [line width=0.8cm] (0,0) [partial ellipse=0:180:2 and 1.5];
\draw [white, line width=0.77cm] (0,0) [partial ellipse=-0.1:180.1:2 and 1.5];
\draw [red] (0,0) [partial ellipse=0:180:2 and 1.5];
\path [fill=brown!20!white](-0.65, -3) rectangle (0.65, 3);
\begin{scope}[shift={(0,1.5)}]
\end{scope}
\begin{scope}[shift={(0,0)}]
\end{scope}
\begin{scope}[shift={(0,3)}]
\path [fill=brown!20!white] (0,0) [partial ellipse=0:180:0.6 and 0.3];
\draw (0,0) [partial ellipse=0:360:0.6 and 0.3];
\end{scope}
\draw [line width=0.8cm] (0,0) [partial ellipse=180:360:2 and 1.5];
\draw (-0.6, 3)--(-0.6, 0) (0.6, 3)--(0.6, 0); 
\draw (-0.6, -3)--(-0.6, 0) (0.6, -3)--(0.6, 0);
\draw [white, line width=0.77cm] (0,0) [partial ellipse=178:362:2 and 1.5];
\draw [red] (0,0) [partial ellipse=178:362:2 and 1.5];
\begin{scope}[shift={(-1.95, 0.3)}]
\draw [blue, -<-=0.4, dashed](0,0) [partial ellipse=0:180:0.47 and 0.25] node [pos=0.4, above] {\tiny $X_j$};
\draw [white, line width=4pt] (0,0) [partial ellipse=290:270:0.47 and 0.25];
\draw [blue] (0,0) [partial ellipse=280:360:0.47 and 0.25];
\draw [blue] (0,0) [partial ellipse=180:260:0.47 and 0.25];
\end{scope}
\begin{scope}[shift={(-1.95, -0.3)}]
\draw [blue, dashed](0,0) [partial ellipse=0:180:0.47 and 0.25] ;
\draw [white, line width=4pt] (0,0) [partial ellipse=290:270:0.47 and 0.25];
\draw [blue, -<-=0.2] (0,0) [partial ellipse=180:360:0.47 and 0.25] node [pos=0.2, below] {\tiny $X_j$};
\end{scope}
\begin{scope}[shift={(0,-3)}]
\path [fill=brown!20!white] (0,0) [partial ellipse=180:360:0.6 and 0.3];
\draw [dashed](0,0) [partial ellipse=0:180:0.6 and 0.3];
\draw (0,0) [partial ellipse=180:360:0.6 and 0.3];
\end{scope}
\end{tikzpicture}}}},
\end{align*}
where the first equality follows from  the Move $1$ and Move $2$ and the second equality follows from the computations in the proof of Lemma \ref{lem:decomposition3}.
Now by Theorem \ref{thm:rank}, we see that the multiplicity of the character $\chi_j$ in $\mathbb{F}(\mathcal{M})$.
\end{proof}

\begin{remark}
Suppose $z_{jk}\le 1$ for all $j,k=1, \ldots, r$.
Here is an algebraic proof that the Grothendieck ring $K_0(\mathcal{D})$ is commutative.
We only need to prove $\text{End}(I(\1_{\mathcal{D}}))$ is commutative. 
It is semisimple, hence we only need to check it is diagonalizable, equivalently, we show all simple objects in $\mathcal{Z(D)}=\mathcal{Z(C)}$ has multiplicity at most $1$ in $I(\1_{\mathcal{D}})$.

Since $z_{jk}\le 1$ is equivalent to the condition that $\alpha_{+}(X_j)\otimes \alpha_{-}(X_k)$ has at most one copy of $\1_{\mathcal{D}}$ in $\mathcal{D}$.
In the case that $\mathcal{C}$ is a modular fusion category, the tensor $\alpha$-induction functor $\alpha_{-}\boxtimes \alpha_{+}$ factors through the center, and simple objects in $\mathcal{C}\boxtimes \mathcal{C}^{op}$ spans the simple objects in the center. 
This proves $I(\1_{\mathcal{D}})$ is multiplicity free.
\end{remark}

\begin{corollary}\label{cor:commdual}
Suppose $\mathcal{C}$ is a modular fusion category.
We have that the fusion algebra of $\mathcal{D}$ is commutative if and only if $z_{jk}\in \{0,1\}$ for all $j,k=1, \ldots, r$.
\end{corollary}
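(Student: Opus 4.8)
The plan is to read commutativity off the block decomposition of the dual fusion algebra. Recall that in the alterfold TQFT the fusion algebra of $\mathcal{D}$ is realized as $\mathrm{End}(I(\1_{\mathcal{D}}))$, the algebra of $\mathcal{D}$-colored tubes, so it suffices to decide when this algebra is commutative (the preceding Remark reduces commutativity of the Grothendieck ring $K_0(\mathcal{D})$ to commutativity of $\mathrm{End}(I(\1_{\mathcal{D}}))$; the reverse direction is handled in the same spirit, or via the identification of $\mathrm{End}(I(\1_{\mathcal{D}}))$ with $K_0(\mathcal{D})$ up to base change from \cite{LMWW23}). First I would invoke the decomposition recalled just before Lemma~\ref{lem:zrank} (Corollary 5.6 of \cite{LMWW23}): the identity of $\mathrm{End}(I(\1_{\mathcal{D}}))$ is the sum over $1\le j,k\le r$ of the elements $e_{jk}$ (the factor $\frac{d_jd_k}{\mu^2}$ times the torus diagram labelled by $X_j$ and $X_k$), which are pairwise orthogonal and each either $0$ or a minimal central idempotent. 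Hence $\mathrm{End}(I(\1_{\mathcal{D}}))\cong\bigoplus_{(j,k):\,z_{jk}\ne 0} e_{jk}\mathrm{End}(I(\1_{\mathcal{D}}))$, a finite product of its simple two-sided blocks, one block for each pair $(j,k)$ with $z_{jk}\ne 0$.

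The crucial input is Theorem~\ref{thm:rank}: the rank of $e_{jk}$ equals $z_{jk}$, so the block $e_{jk}\mathrm{End}(I(\1_{\mathcal{D}}))$ is a matrix algebra $\mathrm{Mat}_{z_{jk}}$ of size $z_{jk}$ (equivalently, $X_j\boxtimes X_k^{op}$ occurs in $I(\1_{\mathcal{D}})$ with multiplicity $z_{jk}$). Thus $\mathrm{End}(I(\1_{\mathcal{D}}))\cong\bigoplus_{(j,k):\,z_{jk}\ne 0}\mathrm{Mat}_{z_{jk}}$, and a finite direct sum of matrix algebras over the ground field is commutative exactly when every summand has size $1$. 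Therefore $\mathrm{End}(I(\1_{\mathcal{D}}))$ — equivalently the fusion algebra of $\mathcal{D}$ — is commutative if and only if $z_{jk}\le 1$ for all $j,k$, i.e.\ if and only if $z_{jk}\in\{0,1\}$. As a consistency check, $\dim\mathrm{End}(I(\1_{\mathcal{D}}))=\sum_{j,k}z_{jk}^2=\mathrm{Tr}(ZZ^t)=|\Irr(\mathcal{D})|$ by Proposition~\ref{prop:tracedim}, and when all $z_{jk}\in\{0,1\}$ this also equals $\sum_{j,k}z_{jk}$, as it should for a commutative algebra of that dimension.

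Since Lemma~\ref{lem:zrank} and Theorem~\ref{thm:rank} are already in hand, the corollary is now almost immediate; the step that deserves any care — and the one I would flag as the main (mild) obstacle — is making precise that commutativity of $K_0(\mathcal{D})$ coincides with commutativity of the $\mathcal{D}$-colored tube algebra $\mathrm{End}(I(\1_{\mathcal{D}}))$. One direction is exactly the preceding Remark (via $I(\1_{\mathcal{D}})$ being multiplicity free); for the other it is cleanest to use that the two are semisimple algebras of the same dimension $|\Irr(\mathcal{D})|$ with matching block structure — the multiplicities of the simples $X_j\boxtimes X_k^{op}$ of $\mathcal{Z(C)}$ in $I(\1_{\mathcal{D}})$ are precisely the dimensions of the irreducible representations of the fusion ring of $\mathcal{D}$ — an identification that can be imported from \cite{LMWW23}. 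No new diagrammatic computation is required beyond what is already available in the text.
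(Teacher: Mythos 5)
Your proof is correct and follows essentially the same route as the paper: both arguments rest on the decomposition of $\mathrm{End}(I(\1_{\mathcal{D}}))$ into the central idempotents $e_{jk}$ and on Theorem~\ref{thm:rank} identifying the rank of $e_{jk}$ with $z_{jk}$, so that commutativity is equivalent to every block having rank at most one. The extra care you take with the identification of $K_0(\mathcal{D})$ with the $\mathcal{D}$-colored tube algebra, and the dimension consistency check via Proposition~\ref{prop:tracedim}, are sensible elaborations of steps the paper treats implicitly (via the preceding Remark), not a different argument.
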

\begin{proof}
Suppose the fusion algebra of $\mathcal{D}$ is commutative.
Then $\displaystyle \frac{d_jd_k}{\mu^2}
\vcenter{\hbox{\scalebox{0.5}{
}}}.
\end{align*}
This completes the proof of theorem.
\end{proof}

\section{Flat bi-invertible Connection}
Subfactors with index less than 4 have been classified in \cite{Ocn88, GHJ89, BN91, Izu91, Izu94,Kaw95}.
The principal graph of a subfactor $\mathcal{N}\subseteq \mathcal{M}$ is an induction-restriction graph of $\mathcal{N}$-$\mathcal{N}$ bimodules and $\mathcal{N}$-$\mathcal{M}$ bimodule with respect to the action of the generating $\mathcal{N}$-$\mathcal{M}$  bimodule $\mathcal{M}$.
The graph norm of the (finite) principal graph is the square root of the Jones index, which has to be an $A$-$D$-$E$ Dynkin diagram. 
It was surprising that only $A_n$-$D_{2n}$-$E_6$-$E_8$ appeared as the principal graph of a subfactor.
Ocneanu showed that a finite bipartite graph $\Gamma$ is a principal graph if and only if $\Gamma$ has a flat connection. Moreover, $D_{2n+1}$ and $E_7$ have connections but not flat. 
Solving flat connections became a major method to construct sub factors.
For $A$-$D$-$E$ Dynkin diagram, it is easy to solve the connection, but checking the flatness requires tedious computations. 
Xu proposed the $\alpha$-induction method to construct $E_6$ and $E_8$ subfactors from commutative Frobenius algebras $Q$ in the type $A$ modular fusion category, and constructed such commutative Frobenius algebras from conformal inclusions.
This construction from commutative Frobenius algebras avoided checking the flatness of the connection. 
Recently, Kawahigashi established the equivalence between the commutativity of a Frobenius algebra $Q$ in a unitary modular fusion category and the flatness of the bi-unitary connection induced by $Q$ (in Theorem 3.1 in\cite{Kaw24}).

In this section, we propose a non-trivial generalization of Kawahigashi's result to a Frobenius algebra in a modular fusion category over an arbitrary field. 
We prove the equivalence between the commutativity of a Frobenius algebra $Q$ in a modular fusion category and the flatness of the bi-invertible connection induced by $Q$, based our topological formalization in the previous sections.

We begin with the definition of bi-invertible connection.
Suppose that $\mathcal{G}, \mathcal{G}', \mathcal{H}, \mathcal{H}'$ are bipartite graphs such that the vertex set $V_0$ is common for even vertices of $\mathcal{G}$ and $\mathcal{H}$. Similarly, the vertex sets $V_1,V_2,V_3$ are common for odd vertices of $\mathcal{H}$ and $\mathcal{G}'$, even vertices of $\mathcal{G}'$ and $\mathcal{H}'$, and odd vertices of $\mathcal{G}$ and $\mathcal{H}'$, respectively. Graphically, the relation of the graphs and their shared vertices could be summarized in the following square:

$$\begin{tikzpicture}[scale=0.7]
\draw (-1, -1) rectangle (1, 1);
\draw (-1.2, 1.2) node{\tiny $V_0$};
\draw (1.2, 1.2) node{\tiny  $V_1$};
\draw (-1.2, -1.2) node{\tiny  $V_3$};
\draw (1.2, -1.2) node{\tiny  $V_2$};
\draw (0, 1.3) node{$\mathcal{H}$};
\draw (0, -1.3) node{$\mathcal{H}'$};
\draw (-1.3, 0) node{$\mathcal{G}$};
\draw (1.3, 0) node{$\mathcal{G}'$};
\end{tikzpicture}$$

A choice of edges $T_1\in \mathcal{H}, T_2\in \mathcal{G}', T_3\in \mathcal{G}$ and $T_4 \in \mathcal{H}'$ such that $T_{2}\circ T_{1}=T_{4}\circ T_{3}$ is called a cell. 
A connection on the four bipartite graphs is an assignment of numbers to each cell. The function has an involution with respect to taking the mirror image. In the unitary context, the involution is the complex conjugate.
Equivalently, it is a 4-tensor defined on the vector spaces spanned by edges, together with an involution. 
These 4-tensors could be composed vertically and horizontally, in a way that one mirrors the square vertically and horizontally, and glue the square on the bottom and right respectively, to form a tensor of higher degree. To be precise:
$$\vcenter{\hbox{\begin{tikzpicture}
\draw (0, 0) rectangle (1, 1);
\draw (1, 0) rectangle (2, 1);
\draw (0, 1) node [left] {\tiny$v_0$};
\draw (1, 1) node [above]{\tiny$v_1$};
\draw (0, 0) node [left]{\tiny$v_3$};
\draw (1, 0) node [below]{\tiny$v_4$};
\draw (2, 1) node [right]{\tiny$v_5$};
\draw (2, 0) node [right]{\tiny$v_6$};
\end{tikzpicture}}}:=\sum_{\partial{T}=\{v_2, v_3\}}\vcenter{\hbox{\begin{tikzpicture}
\draw (0, 0) rectangle (1, 1);
\draw (0, 1) node[left]{\tiny$v_0$};
\draw (1, 1) node [right]{\tiny$v_1$};
\draw (0, 0) node[left]{\tiny$v_3$};
\draw (1, 0) node [right]{\tiny$v_4$};
\draw (1, 0.5) node[right]{\tiny$T$};
\end{tikzpicture}}}
\left(\vcenter{\hbox{\begin{tikzpicture}
\draw (0, 0) rectangle (1, 1);
\draw (0, 1) node[left]{\tiny$v_5$};
\draw (1, 1) node[right]{\tiny$v_1$};
\draw (0, 0) node[left]{\tiny$v_6$};
\draw (1, 0) node[right]{\tiny$v_4$};
\draw (1, 0.5) node[right]{\tiny$T$};
\end{tikzpicture}}}\right)^{*}$$
For details, see \cite{EK98, Kaw23, Kaw24}.
A connection is called bi-invertible if these numbers form a invertible matrix in two ways:
\begin{enumerate}
    \item the rows and columns paramatrized by triples 
\begin{align*}
\{(T_{2}, T_{4}, v_2)\in (\mathcal{G}', \mathcal{H}', V_2)|T_2\cap T_4=v_2\}.
\end{align*}
\item or by
\begin{align*}
\{(T_{2}, T_{1}, v_1)\in (\mathcal{G}', \mathcal{H}, V_2)|T_2\cap T_1=v_1\}.
\end{align*}
\end{enumerate}
and the inverse matrix is given by the involution.
A bi-invertible connection is called flat with respect to a vertex $\star\in V_{0}$ if the following equation is true:
$$\vcenter{\hbox{\begin{tikzpicture}
\draw (0, 0) node{$\star$};
\draw[->] (0, 0)--(1, 0);
\draw (1.5, 0) node{$\ldots$};
\draw (0, -2) node{$\star$};
\draw[->] (0, 0)--(0, -1);
\draw[->] (2, 0)--(2, -1);
\draw[->] (0, -2)--(1, -2);
\draw (1.5, -2) node{$\ldots$};
\draw (0, -1.3) node{$\vdots$};
\draw (2, -1.3) node{$\vdots$};
\draw (2, 0) node{$\star$};
\draw (2, -2) node{$\star$};
\draw (1, 0.2) node{$\sigma$};
\draw (1, -2.2) node{$\sigma'$};
\draw (-0.2, -1) node{$\rho$};
\draw (2.2, -1) node{$\rho'$};
\end{tikzpicture}}}=\delta_{\rho,\rho'}\delta_{\sigma,\sigma'}$$

Typical examples of flat bi-invertible(resp. bi-unitary) connections arise from spherical(resp. unitary) fusion categories. Indeed, given a spherical fusion category $\cC$, we can choose two tensor generators $\lambda$ and $\mu$, and let $V_{0}=V_{1}=V_{2}=V_{3}$ be the set of isomorphism classes of simple objects and let $\star$ be the tensor unit. Then the numbers that are assigned to cells are exactly the $6j$-symbols:
$$\left\{\begin{array}{ccc}
     v_0&v_1& \lambda \\
     v_2&v_3& \mu
\end{array}\right\}$$
with respect to a choice of basis in the corresponding morphism spaces. The involution is given by taking the dual basis.
The bi-invertible condition correspond to the orthogonality condition of a spherical fusion category, and the flatness condition implies the pentagon equation in an implicit way \cite{EK98}. The 4-tensors have the following pictorial presentation in the alterfold TQFT:

\begin{equation}\label{pic:4tensor}
\vcenter{\hbox{\scalebox{1.2}{\begin{tikzpicture}
\draw (-0.3, 1)..controls (-0.3, 0.5) and (-0.5, 0.3)..(-1, 0.3);
\draw (0.3, 1)..controls (0.3, 0.5) and (0.5, 0.3)..(1, 0.3);
\draw (0.3, -1)..controls (0.3, -0.5) and (0.5, -0.3)..(1, -0.3);
\draw (-0.3, -1)..controls (-0.3, -0.5) and (-0.5, -0.3)..(-1, -0.3);
\draw[blue] (-1.115, 0.2)..controls (-0.4, 0.2) and (-0.2, 0.3) ..(-0.2, 0.885);
\draw[blue] (0.2, 0.885)..controls (0.2, 0.3) and (0.3, 0.2)..(0.885, 0.2);
\draw[blue] (0.2, -1.115)..controls (0.2, -0.4) and (0.3, -0.2) ..(0.885, -0.2);
\draw[blue] (-1.115, -0.2)..controls (-0.4, -0.2) and (-0.2, -0.4)..(-0.2, -1.115);
\draw[blue] (-1.15, 0)--(0.85, 0);
\draw[blue, dashed] (0, 1.15) to (0, -0.85);
\draw (-1, 0) [partial ellipse=90:270:0.15 and 0.3];
\draw[dashed] (-1, 0) [partial ellipse=90:-90:0.15 and 0.3];
\draw (0, 1) [partial ellipse=0:360:0.3 and 0.15];
\draw[dashed] (0, -1) [partial ellipse=0:180:0.3 and 0.15];
\draw (0, -1) [partial ellipse=180:360:0.3 and 0.15];
\draw (1, 0) [partial ellipse=0:360:0.15 and 0.3];
\end{tikzpicture}}}}
\end{equation}
where the inside is colored by $B$ and the outside is colored by $A$, the blue strand is colored by the object in the fusion category $\mathcal{C}$
and the circles on the corner should be considered as coupons filled with the vector in the chosen basis. The involution is defined to be reversing all arrows for the objects and replacing the vector by its dual.

As for the bi-invertible connection, the diagram \eqref{pic:4tensor} can also be considered as a 4-tensor, it could be compose in an obvious way that the corners are glued together, we remark that the basis is normalized in a way such that when four of such diagrams composed together, the closed circle in the middle is summed up to the $\Omega$-color in the appropriate category, hence Move $1$ could be applied:

$$\vcenter{\hbox{
\begin{tikzpicture}
\draw (-0.3, 1)..controls (-0.3, 0.5) and (-0.5, 0.3)..(-1, 0.3);
\draw (0.3, 1)..controls (0.3, 0.5) and (0.5, 0.3)..(1, 0.3);
\draw (0.3, -1)..controls (0.3, -0.5) and (0.5, -0.3)..(1, -0.3);
\draw (-0.3, -1)..controls (-0.3, -0.5) and (-0.5, -0.3)..(-1, -0.3);
\draw[blue] (-1.115, 0.2)..controls (-0.4, 0.2) and (-0.2, 0.3) ..(-0.2, 0.885);
\draw[blue] (0.2, 0.885)..controls (0.2, 0.3) and (0.3, 0.2)..(0.885, 0.2);
\draw[red] (0.2, -1.115)..controls (0.2, -0.4) and (0.3, -0.2) ..(0.885, -0.2);
\draw[blue] (-1.115, -0.2)..controls (-0.4, -0.2) and (-0.2, -0.4)..(-0.2, -1.115);
\draw[blue] (-1.15, 0)--(0.85, 0);
\draw[blue, dashed] (0, 1.15) to (0, -0.85);
\draw[blue, dashed] (0, 1.15) to (0, -0.85);
\draw (-1, 0) [partial ellipse=90:270:0.15 and 0.3];
\draw[dashed] (-1, 0) [partial ellipse=90:-90:0.15 and 0.3];
\draw (0, 1) [partial ellipse=0:360:0.3 and 0.15];
\begin{scope}[shift={(2, 0)}]
\draw (-0.3, 1)..controls (-0.3, 0.5) and (-0.5, 0.3)..(-1, 0.3);
\draw (0.3, 1)..controls (0.3, 0.5) and (0.5, 0.3)..(1, 0.3);
\draw (0.3, -1)..controls (0.3, -0.5) and (0.5, -0.3)..(1, -0.3);
\draw (-0.3, -1)..controls (-0.3, -0.5) and (-0.5, -0.3)..(-1, -0.3);
\draw[blue] (-1.115, 0.2)..controls (-0.4, 0.2) and (-0.2, 0.3) ..(-0.2, 0.885);
\draw[blue] (0.2, 0.885)..controls (0.2, 0.3) and (0.3, 0.2)..(0.885, 0.2);
\draw[blue] (0.2, -1.115)..controls (0.2, -0.4) and (0.3, -0.2) ..(0.885, -0.2);
\draw[red] (-1.115, -0.2)..controls (-0.4, -0.2) and (-0.2, -0.4)..(-0.2, -1.115);
\draw[blue] (-1.15, 0)--(0.85, 0);
\draw[blue, dashed] (0, 1.15) to (0, -0.85);
\draw (0, 1) [partial ellipse=0:360:0.3 and 0.15];
\draw (1, 0) [partial ellipse=0:360:0.15 and 0.3];
\end{scope}
\begin{scope}[shift={(2, -2)}]
\draw (-0.3, 1)..controls (-0.3, 0.5) and (-0.5, 0.3)..(-1, 0.3);
\draw (0.3, 1)..controls (0.3, 0.5) and (0.5, 0.3)..(1, 0.3);
\draw (0.3, -1)..controls (0.3, -0.5) and (0.5, -0.3)..(1, -0.3);
\draw (-0.3, -1)..controls (-0.3, -0.5) and (-0.5, -0.3)..(-1, -0.3);
\draw[red] (-1.115, 0.2)..controls (-0.4, 0.2) and (-0.2, 0.3) ..(-0.2, 0.885);
\draw[blue] (0.2, 0.885)..controls (0.2, 0.3) and (0.3, 0.2)..(0.885, 0.2);
\draw[blue] (0.2, -1.115)..controls (0.2, -0.4) and (0.3, -0.2) ..(0.885, -0.2);
\draw[blue] (-1.115, -0.2)..controls (-0.4, -0.2) and (-0.2, -0.4)..(-0.2, -1.115);
\draw[blue] (-1.15, 0)--(0.85, 0);
\draw[blue, dashed] (0, 1.15) to (0, -0.85);
\draw (1, 0) [partial ellipse=0:360:0.15 and 0.3];
\draw[dashed] (0, -1) [partial ellipse=0:180:0.3 and 0.15];
\draw (0, -1) [partial ellipse=180:360:0.3 and 0.15];
\end{scope}
\begin{scope}[shift={(0, -2)}]
\draw (-0.3, 1)..controls (-0.3, 0.5) and (-0.5, 0.3)..(-1, 0.3);
\draw (0.3, 1)..controls (0.3, 0.5) and (0.5, 0.3)..(1, 0.3);
\draw (0.3, -1)..controls (0.3, -0.5) and (0.5, -0.3)..(1, -0.3);
\draw (-0.3, -1)..controls (-0.3, -0.5) and (-0.5, -0.3)..(-1, -0.3);
\draw[blue] (-1.115, 0.2)..controls (-0.4, 0.2) and (-0.2, 0.3) ..(-0.2, 0.885);
\draw[red] (0.2, 0.885)..controls (0.2, 0.3) and (0.3, 0.2)..(0.885, 0.2);
\draw[blue] (0.2, -1.115)..controls (0.2, -0.4) and (0.3, -0.2) ..(0.885, -0.2);
\draw[blue] (-1.115, -0.2)..controls (-0.4, -0.2) and (-0.2, -0.4)..(-0.2, -1.115);
\draw[blue] (-1.15, 0)--(0.85, 0);
\draw[blue, dashed] (0, 1.15) to (0, -0.85);
\draw (-1, 0) [partial ellipse=90:270:0.15 and 0.3];
\draw[dashed] (-1, 0) [partial ellipse=90:-90:0.15 and 0.3];
\draw (-1, 0) [partial ellipse=90:270:0.15 and 0.3];
\draw[dashed] (-1, 0) [partial ellipse=90:-90:0.15 and 0.3];
\draw[dashed] (0, -1) [partial ellipse=0:180:0.3 and 0.15];
\draw (0, -1) [partial ellipse=180:360:0.3 and 0.15];
\end{scope}
\end{tikzpicture}}}
\xrightarrow{\text{Move } 1}
\vcenter{\hbox{
\begin{tikzpicture}
\draw (-0.3, 1)..controls (-0.3, 0.5) and (-0.5, 0.3)..(-1, 0.3);
\draw (0.3, 1)..controls (0.3, 0.5) and (0.5, 0.3)..(1, 0.3);
\draw (0.3, -1)..controls (0.3, -0.5) and (0.5, -0.3)..(1, -0.3);
\draw (-0.3, -1)..controls (-0.3, -0.5) and (-0.5, -0.3)..(-1, -0.3);
\draw[blue] (-1.115, 0.2)..controls (-0.4, 0.2) and (-0.2, 0.3) ..(-0.2, 0.885);
\draw[blue] (0.2, 0.885)..controls (0.2, 0.3) and (0.3, 0.2)..(0.885, 0.2);
\draw[red] (0.2, -1.115)..controls (0.2, -0.4) and (0.3, -0.2) ..(0.885, -0.2);
\draw[blue] (-1.115, -0.2)..controls (-0.4, -0.2) and (-0.2, -0.4)..(-0.2, -1.115);
\draw[blue] (-1.15, 0)--(0.85, 0);
\draw[blue, dashed] (0, 1.15) to (0, -0.85);
\draw[blue, dashed] (0, 1.15) to (0, -0.85);
\draw (-1, 0) [partial ellipse=90:270:0.15 and 0.3];
\draw[dashed] (-1, 0) [partial ellipse=90:-90:0.15 and 0.3];
\draw (0, 1) [partial ellipse=0:360:0.3 and 0.15];
\begin{scope}[shift={(2, 0)}]
\draw (-0.3, 1)..controls (-0.3, 0.5) and (-0.5, 0.3)..(-1, 0.3);
\draw (0.3, 1)..controls (0.3, 0.5) and (0.5, 0.3)..(1, 0.3);
\draw (0.3, -1)..controls (0.3, -0.5) and (0.5, -0.3)..(1, -0.3);
\draw (-0.3, -1)..controls (-0.3, -0.5) and (-0.5, -0.3)..(-1, -0.3);
\draw[blue] (-1.115, 0.2)..controls (-0.4, 0.2) and (-0.2, 0.3) ..(-0.2, 0.885);
\draw[blue] (0.2, 0.885)..controls (0.2, 0.3) and (0.3, 0.2)..(0.885, 0.2);
\draw[blue] (0.2, -1.115)..controls (0.2, -0.4) and (0.3, -0.2) ..(0.885, -0.2);
\draw[red] (-1.115, -0.2)..controls (-0.4, -0.2) and (-0.2, -0.4)..(-0.2, -1.115);
\draw[blue] (-1.15, 0)--(0.85, 0);
\draw[blue, dashed] (0, 1.15) to (0, -0.85);
\draw (0, 1) [partial ellipse=0:360:0.3 and 0.15];
\draw (1, 0) [partial ellipse=0:360:0.15 and 0.3];
\end{scope}
\begin{scope}[shift={(2, -2)}]
\draw (-0.3, 1)..controls (-0.3, 0.5) and (-0.5, 0.3)..(-1, 0.3);
\draw (0.3, 1)..controls (0.3, 0.5) and (0.5, 0.3)..(1, 0.3);
\draw (0.3, -1)..controls (0.3, -0.5) and (0.5, -0.3)..(1, -0.3);
\draw (-0.3, -1)..controls (-0.3, -0.5) and (-0.5, -0.3)..(-1, -0.3);
\draw[red] (-1.115, 0.2)..controls (-0.4, 0.2) and (-0.2, 0.3) ..(-0.2, 0.885);
\draw[blue] (0.2, 0.885)..controls (0.2, 0.3) and (0.3, 0.2)..(0.885, 0.2);
\draw[blue] (0.2, -1.115)..controls (0.2, -0.4) and (0.3, -0.2) ..(0.885, -0.2);
\draw[blue] (-1.115, -0.2)..controls (-0.4, -0.2) and (-0.2, -0.4)..(-0.2, -1.115);
\draw[blue] (-1.15, 0)--(0.85, 0);
\draw[blue, dashed] (0, 1.15) to (0, -0.85);
\draw (1, 0) [partial ellipse=0:360:0.15 and 0.3];
\draw[dashed] (0, -1) [partial ellipse=0:180:0.3 and 0.15];
\draw (0, -1) [partial ellipse=180:360:0.3 and 0.15];
\end{scope}
\begin{scope}[shift={(0, -2)}]
\draw (-0.3, 1)..controls (-0.3, 0.5) and (-0.5, 0.3)..(-1, 0.3);
\draw (0.3, 1)..controls (0.3, 0.5) and (0.5, 0.3)..(1, 0.3);
\draw (0.3, -1)..controls (0.3, -0.5) and (0.5, -0.3)..(1, -0.3);
\draw (-0.3, -1)..controls (-0.3, -0.5) and (-0.5, -0.3)..(-1, -0.3);
\draw[blue] (-1.115, 0.2)..controls (-0.4, 0.2) and (-0.2, 0.3) ..(-0.2, 0.885);
\draw[red] (0.2, 0.885)..controls (0.2, 0.3) and (0.3, 0.2)..(0.885, 0.2);
\draw[blue] (0.2, -1.115)..controls (0.2, -0.4) and (0.3, -0.2) ..(0.885, -0.2);
\draw[blue] (-1.115, -0.2)..controls (-0.4, -0.2) and (-0.2, -0.4)..(-0.2, -1.115);
\draw[blue] (-1.15, 0)--(0.85, 0);
\draw[blue, dashed] (0, 1.15) to (0, -0.85);
\draw (-1, 0) [partial ellipse=90:270:0.15 and 0.3];
\draw[dashed] (-1, 0) [partial ellipse=90:-90:0.15 and 0.3];
\draw (-1, 0) [partial ellipse=90:270:0.15 and 0.3];
\draw[dashed] (-1, 0) [partial ellipse=90:-90:0.15 and 0.3];
\draw[dashed] (0, -1) [partial ellipse=0:180:0.3 and 0.15];
\draw (0, -1) [partial ellipse=180:360:0.3 and 0.15];
\end{scope}
\fill[white] (0.1, -0.1) rectangle (1.9, -1.9);
\end{tikzpicture}}}
$$
To check the flatness of the bi-invertible connections associated to a spherical fusion category with respect to the tensor unit, we need to verify the alterfold above with the following decoration been evaluated to $1$:
\begin{enumerate}
\item The four corners strands are transparent, i.e., colored by tensor unit.
\item  The opposite strands are labelled by dual objects and the opposite coupons are labelled by a pair of dual morphism.
\end{enumerate}
The condition (1) makes the tensor diagram disconnected, the vertical strands in the middle together with the horizontal strands on the boundary form one connected component and the rest of the tensor diagram form the other. Then the condition (2) ensures that each connected components evaluate to $1$.

Given a spherical Morita context generated by a braided spherical fusion category $\mathcal{C}$ and $\mathcal{D}=J\overline{J}$-bimodules. Following \cite{Kaw23}, The $\alpha$-induced bi-invertible connection in the alterfold is given by the following graph:
\begin{equation}\label{pic:alpha4tensor}
\vcenter{\hbox{\scalebox{1}{\begin{tikzpicture}[scale=1.2]
\draw (-0.3, 1)..controls (-0.3, 0.5) and (-0.5, 0.3)..(-1, 0.3) node [above] {};
\draw (0.3, 1)..controls (0.3, 0.5) and (0.5, 0.3)..(1, 0.3) node [above] {};
\draw (0.3, -1)..controls (0.3, -0.5) and (0.5, -0.3)..(1, -0.3) node [below] {};
\draw (-0.3, -1)..controls (-0.3, -0.5) and (-0.5, -0.3)..(-1, -0.3) node [below] {};
\draw[blue] (-1.115, 0.2)..controls (-0.4, 0.2) and (-0.2, 0.3) ..(-0.2, 0.885);
\draw[blue] (0.2, 0.885)..controls (0.2, 0.3) and (0.3, 0.2)..(0.885, 0.2);
\draw[blue] (0.2, -1.115)..controls (0.2, -0.4) and (0.3, -0.2) ..(0.885, -0.2);
\draw[blue] (-1.115, -0.2)..controls (-0.4, -0.2) and (-0.2, -0.4)..(-0.2, -1.115);
\draw[blue, dashed] (0, 1.15) to (0, -0.85);
\draw [line width=0.3cm, white] (-0.2, 0)--(0.2, 0);
\draw[blue, dashed] (-0.85, 0)--(1.15, 0);
\draw (-1, 0) [partial ellipse=90:270:0.15 and 0.3];
\draw[dashed] (-1, 0) [partial ellipse=90:-90:0.15 and 0.3];
\draw (0, 1) [partial ellipse=0:360:0.3 and 0.15];
\draw[dashed] (0, -1) [partial ellipse=0:180:0.3 and 0.15];
\draw (0, -1) [partial ellipse=180:360:0.3 and 0.15];
\draw (1, 0) [partial ellipse=0:360:0.15 and 0.3];
\end{tikzpicture}}}},
\end{equation}
where the inside is colored by $B$ and the outside is colored by $A$, and the strand on the corner(resp. on the back) are labelled by object in the $\mathcal{C}J$(resp. objects in $\mathcal{C}$)
Applying Move $2$ to connect adjacent corners then apply Move $1$ to cancel the hole in the middle. It is clear that \eqref{pic:alpha4tensor} is a bi-unitary connection.

\begin{remark}
Given a unitary spherical fusion category, the connection defined via \eqref{pic:alpha4tensor} coincides with the $\alpha$-induced bi-unitary connection introduced by Kawahigashi in \cite{Kaw23,Kaw24} (See Figure 28 in \cite{Kaw23}) if the dual basis coming from orthonormal basis and its dagger. Such identifications has been carefully explained in \cite{LMWW23b}.
\end{remark}

The Proposition \ref{prop:kaw1} and Proposition \ref{prop:comflat} with their proofs follows verbatim that in \cite{Kaw23}. We put it here for completeness.
\begin{proposition}\label{prop:kaw1}
The $\alpha$-induced bi-invertible connection is flat if and only if 
$$
\vcenter{\hbox{\begin{tikzpicture}[xscale=0.8, yscale=0.6]
\draw[brown] (0, 0)--(0, 1)--(1, 2);
\draw[blue, ->-=0.5] (1, 2)--(1, -2);
\draw (1.3, 0) node{\tiny $Y$};
\draw (-1.3, 0) node{\tiny $X$};
\draw[brown] (1, -2)..controls (0, -1) and (-1, -1)..(0, 0);
\draw[fill=white] (0.6, 1.5) rectangle (1.4, 2.3);
\draw[fill=white] (0.6, -1.5) rectangle (1.4, -2.3);
\draw (1, 1.9) node{\tiny$\varphi$};
\draw (1, -1.9) node{\tiny$\varphi^{'*}$};
\fill[white] (0, -1.2) circle(0.2);
\begin{scope}[xscale=-1]
\draw[brown] (0, 1)--(1, 2);
\draw[blue, ->-=0.5] (1, 2)--(1, -2);
\draw[brown] (1, -2)..controls (0, -1) and (-1, -1)..(0, 0);
\draw[fill=white] (0.6, 1.5) rectangle (1.4, 2.3);
\draw[fill=white] (0.6, -1.5) rectangle (1.4, -2.3);
\draw (1, 1.9) node{\tiny$\psi$};
\draw (1, -1.9) node{\tiny$\psi^{'*}$};
\end{scope}
\end{tikzpicture}}}=\frac{1}{d_J}\delta_{\varphi, \varphi'}\delta_{\psi, \psi'}$$
for any dual base $\varphi, \varphi^*$, $\psi, \psi^*$.
\end{proposition}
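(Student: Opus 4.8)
The plan is to follow Kawahigashi's argument \cite{Kaw23,Kaw24} but transported into the alterfold formalism, where the flatness condition becomes the statement that a large grid of copies of the $4$-tensor \eqref{pic:alpha4tensor} contracts to a single fixed diagram after the topological moves. First I would unwind the definition of flatness with respect to the distinguished vertex $\star=\1$: by definition it asks that, for all $n$ and $m$, the $(n\times m)$-fold composite of the connection cell — with the four outer corners of the grid labelled by $\star$, the outer horizontal edges by paths $\sigma,\sigma'$ and the outer vertical edges by paths $\rho,\rho'$ — equals $\delta_{\sigma,\sigma'}\delta_{\rho,\rho'}$. Rendered in the alterfold, the grid is a large $\mathcal{D}$-coloured diagram whose only coloured data are the $\cC$-coloured "back" strand and the $\cC J$-coloured corner/front strands of \eqref{pic:alpha4tensor}. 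Setting the corner strands transparent ($\star=\1$) makes the picture reducible: exactly as in the verification of bi-invertibility preceding the Proposition, I apply Move $2$ to glue adjacent corners and Move $1$ to cancel the resulting holes, collapsing the grid to a single $\mathcal{D}$-coloured handlebody diagram carrying only the remaining back strands.

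The key step is then to identify this collapsed diagram. Because $\cC$ is modular, the braidings used in the definitions \eqref{def:alpha11} and \eqref{def:alpha2} of $\alpha_\pm$ allow the back strands to be pushed past one another and unknotted; after using Equation \eqref{eq:alphatube} to recognise a capped-off back strand as $\alpha_\pm(X)$ inside $\mathcal{D}$, the grid composite reduces --- independently of $n$ and $m$ --- to the left-hand diagram of the Proposition, with one simple object $X\in\Irr(\cC)$ threading the left tube through $\psi,\psi'^{\,*}$ and one simple object $Y$ threading the right tube through $\varphi,\varphi'^{\,*}$, the couplings living in the $\alpha$-induction Hom-spaces $\hom_{\mathcal{D}}(\alpha_+(\cdot),\alpha_-(\cdot))$ that appear in \eqref{eq:modularinv}. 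Thus "grid $=\delta_{\sigma,\sigma'}\delta_{\rho,\rho'}$ for all grids" holds if and only if the collapsed diagram equals $\tfrac{1}{d_J}\delta_{\varphi,\varphi'}\delta_{\psi,\psi'}$; the factor $\tfrac{1}{d_J}$ is forced by the normalisation of the dual bases --- chosen so that four $4$-tensors glued around a hole sum the internal circle to the $\Omega$-colour, enabling Move $1$ --- which contributes a compensating $d_J$ on the other side of the equation. For the converse one runs the reduction backwards: assuming the stated identity, each elementary window of the grid contributes a factor $\tfrac{1}{d_J}$ times a Kronecker delta on its intermediate edges, and these factors telescope against the basis normalisation to leave precisely $\delta_{\sigma,\sigma'}\delta_{\rho,\rho'}$.

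I expect the main obstacle to be the bookkeeping: tracking orientations of the $\cC$- and $\cC J$-coloured strands through the moves, and above all verifying that the grid composite genuinely collapses to the single fixed bigon diagram of the Proposition rather than to some $n,m$-dependent expression. This is exactly where modularity of $\cC$ is essential --- it guarantees $\mathcal{Z}(\mathcal{D})=\mathcal{Z}(\cC)=\cC\boxtimes\cC^{\op}$, so that the back strands decouple after handle-slides over the Kirby colour --- and it is the content of Kawahigashi's computation that we are repackaging topologically. A secondary point requiring care is that flatness is demanded only at the basepoint $\star=\1$: translating this basepoint restriction into the appearance of the $\alpha$-induction Hom-spaces (rather than general morphism spaces) uses the identification of $\alpha_\pm$ with the forgetful functors from $\cC\boxtimes\1$ and $\1\boxtimes\cC^{\op}$ to $\mathcal{D}$, together with Equation \eqref{eq:alphatube}.

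Finally, as the text preceding the statement already notes, modulo replacing complex conjugation by the involution given by passing to the dual basis and unitarity by sphericality plus bi-invertibility, every step above is Kawahigashi's argument verbatim; the alterfold picture \eqref{pic:alpha4tensor} is precisely what makes the string-diagram manipulations on the torus in \cite{Kaw23} into a short sequence of Moves $1$ and $2$, which is the only substantive difference. The same reduction, pushed one step further by the commutativity relation \eqref{eq:comm}, will give Proposition~\ref{prop:comflat}.
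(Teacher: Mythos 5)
Your overall skeleton --- unwind the flatness grid, use the transparency of the corner strands ($\star=\1$) to collapse it via Moves 1 and 2 to the single bigon diagram, and observe that the normalisation of the dual bases produces the factor $1/d_J$ --- is the route the paper takes (the paper's proof is explicitly Kawahigashi's argument verbatim: the tensor diagram is connected, and shrinking all morphisms and strands on each edge reduces the $n\times m$ grid to the displayed identity). So the plan is right in outline.

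However, the justification you give for the crucial collapse step is not the correct one, and it would not survive in the generality the proposition needs. You claim that modularity of $\cC$ is ``essential'' because it gives $\mathcal{Z(D)}=\mathcal{Z(C)}=\cC\boxtimes\cC^{\op}$ and lets the back strands ``decouple after handle-slides over the Kirby colour''. No Kirby colour and no modularity enter here: the proposition concerns the bi-invertible connection attached to a spherical Morita context generated by a \emph{braided} spherical fusion category, and it feeds into Theorem \ref{thm:flatequi}, which is precisely the generalisation beyond the unitary/modular setting; importing modularity at this step would undercut that. The collapse is a purely planar, local computation: because the four outer corners carry the tensor unit, every internal corner of the grid is a dual-basis pair that resolves to the identity (this is the normalisation you correctly cite, which is what licenses Move 1 after Move 2), the $J$-strands of adjacent cells concatenate into a single loop around the grid, and the edge morphisms along each of the four boundary paths compose into the single coupons $\varphi,\psi,\varphi'^{*},\psi'^{*}$. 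Relatedly, the coupons are connection cells (morphisms in $\cC$ involving $J$ and the edge labels), not elements of $\hom_{\mathcal D}(\alpha_+(\cdot),\alpha_-(\cdot))$, and Equation \eqref{eq:alphatube} plays no role. Finally, your converse direction (``each elementary window contributes $\tfrac{1}{d_J}$ times a Kronecker delta \ldots\ these factors telescope'') misstates the logic: the individual cells are not copies of the bigon and nothing telescopes; rather, the grid composite \emph{equals} the bigon diagram by the graphical calculus, so flatness for all grids is equivalent, in one stroke and in both directions, to the displayed identity.
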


\begin{proof}

The tensor diagram is connected, shrinking all morphisms and strands on each edges, the flatness is equivalent to the following equality:
$$
\vcenter{\hbox{\begin{tikzpicture}
\draw[rounded corners, blue, ->-=0.3] (-2, 0) rectangle (2, 1.5);
\draw(0, 1.5) node[above]{\tiny$J$};
\draw[rounded corners, blue] (-1.5, 0)--(-1.5, -1)--(0.5, -1)--(0.5, 0);
\fill[white] (-0.6, -1.1) rectangle (-0.4, -0.9);
\draw[rounded corners, blue] (-.5, 0)--(-.5, -1.5)--(1.5, -1.5)--(1.5, 0);
\begin{scope}[shift={(-1.5, 0)}]
\draw[fill=white] (-0.4, -0.4) rectangle (0.4, 0.4);
\draw (0, 0) node{\tiny$\varphi$};
\end{scope}
\begin{scope}[shift={(-.5, 0)}]
\draw[fill=white] (-0.4, -0.4) rectangle (0.4, 0.4);
\draw (0, 0) node{\tiny$\psi$};
\end{scope}
\begin{scope}[shift={(.5, 0)}]
\draw[fill=white] (-0.4, -0.4) rectangle (0.4, 0.4);
\draw (0, 0) node{\tiny$\varphi^{'*}$};
\end{scope}
\begin{scope}[shift={(1.5, 0)}]
\draw[fill=white] (-0.4, -0.4) rectangle (0.4, 0.4);
\draw (0, 0) node{\tiny$\psi^{'*}$};
\end{scope}
\end{tikzpicture}}}=\frac{1}{d_J}\delta_{\varphi, \varphi'}\delta_{\psi, \psi'}.
$$
Here, it is understood that the horizontal edges connecting the pairs ($\varphi$, $\psi$), ($\psi$, $\varphi^*$), and ($\varphi^*$, $\psi^*$) are labeled by $J$. 
In this case, the tensor diagram is connected, shrinking all morphisms and strands on each edges, the flatness is equivalent to the following equality:
$$
\vcenter{\hbox{\begin{tikzpicture}[xscale=0.8, yscale=0.6]
\draw[brown] (0, 0)--(0, 1)--(1, 2);
\draw[blue, ->-=0.5] (1, 2)--(1, -2);
\draw[brown] (1, -2)..controls (0, -1) and (-1, -1)..(0, 0);
\draw[fill=white] (0.6, 1.5) rectangle (1.4, 2.3);
\draw[fill=white] (0.6, -1.5) rectangle (1.4, -2.3);
\draw (1, 1.9) node{\tiny$\varphi$};
\draw (1, -1.9) node{\tiny$\varphi^{'*}$};
\fill[white] (0, -1.2) circle(0.2);
\begin{scope}[xscale=-1]
\draw[brown] (0, 1)--(1, 2);
\draw[blue, ->-=0.5] (1, 2)--(1, -2);
\draw[brown] (1, -2)..controls (0, -1) and (-1, -1)..(0, 0);
\draw[fill=white] (0.6, 1.5) rectangle (1.4, 2.3);
\draw[fill=white] (0.6, -1.5) rectangle (1.4, -2.3);
\draw (1, 1.9) node{\tiny$\psi$};
\draw (1, -1.9) node{\tiny$\psi^{'*}$};
\end{scope}
\end{tikzpicture}}}=\frac{1}{d_J}\delta_{\varphi, \varphi'}\delta_{\psi, \psi'}.$$
This completes the proof of the proposition.
\end{proof}

\begin{proposition}\label{prop:comflat}
Suppose that the Frobenius algebra $Q=J\overline{J}$ is commutative.
Then the $\alpha$-induced bi-unitary connection is flat.
\end{proposition}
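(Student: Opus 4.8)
The plan is to reduce the statement to the diagrammatic criterion of Proposition~\ref{prop:kaw1} and then use the commutativity relation~\eqref{eq:comm} to untangle the single crossing appearing in that criterion. First I would invoke Proposition~\ref{prop:kaw1}, which reduces flatness of the $\alpha$-induced connection (with respect to the tensor unit) to the one diagrammatic identity displayed there: the closed alterfold diagram assembled from two mirror-image pieces --- one carrying the blue strand labelled $X$ together with the coupons $\varphi,\varphi^{'*}$, the other carrying the blue strand labelled $Y$ together with the coupons $\psi,\psi^{'*}$, the two pieces being joined only along the single crossing of the brown (module) strands that is hidden by the small white disc --- must evaluate to $\frac{1}{d_J}\delta_{\varphi,\varphi'}\delta_{\psi,\psi'}$. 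So it suffices to establish this identity under the commutativity hypothesis.

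Next I would observe that the $X$-piece and the $Y$-piece of that diagram interact only at the brown crossing; everywhere else the picture is their disjoint union. The commutativity hypothesis~\eqref{eq:comm} states precisely that pulling the generator $J$, and hence $\overline J$, through the multiplication of $Q=J\overline J$ is trivial, and in the alterfold this is exactly the assertion that the brown crossing can be replaced by two parallel brown strands at no cost in scalar. I would also check that the handedness of the crossing occurring in~\eqref{pic:alpha4tensor} is the one governed by~\eqref{eq:comm} rather than its mirror; this follows from the conventions~\eqref{def:alpha11}--\eqref{def:alpha2} that fix $\alpha_+$ and $\alpha_-$ via the $\pm$-braids. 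After this replacement the diagram becomes the genuine disjoint union of the $X$-piece and the $Y$-piece.

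Each of these pieces is then a closed alterfold diagram in which a single blue strand is capped top and bottom by a pair of coupons and surrounded by brown caps. Collapsing the brown caps by Move~$2$ and Move~$1$, exactly as in the reduction of~\eqref{pic:alpha4tensor} and in the proof of Theorem~\ref{thm:minimalidempotent}, I would identify the $X$-piece with the pairing of $\varphi$ against $\varphi^{'*}$ inside $\hom_{\mathcal D}(\alpha_+(X_j),\alpha_-(X_k))$, which equals $\delta_{\varphi,\varphi'}$ by the choice of dual bases, and likewise the $Y$-piece with $\delta_{\psi,\psi'}$. The residual scalar coming from the brown caps --- a closed $J$-loop weighed against the normalizations $d_J^{-1}$ carried by $\alpha_\pm$ in~\eqref{def:alpha11}--\eqref{def:alpha2} and the factors produced by Move~$1$ --- works out to $\frac{1}{d_J}$, which yields the required identity and hence flatness.

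The main obstacle I anticipate is twofold, and both parts are essentially bookkeeping: first, making precise that~\eqref{eq:comm} is exactly the relation needed to undo the crossing, i.e.\ that the two coupled $\alpha$-inductions in the diagram differ only by the self-braiding of $Q$ and by nothing else; and second, tracking the powers of $d_J$ through Moves~$1$ and~$2$ so that the final scalar comes out to be $\frac{1}{d_J}$ on the nose. Once these are pinned down, the remainder is a direct application of the topological moves and the orthogonality of dual bases, following \cite{Kaw23} verbatim as the text indicates.
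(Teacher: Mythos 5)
Your proposal is correct and follows essentially the same route as the paper: reduce to the diagrammatic flatness criterion of Proposition~\ref{prop:kaw1}, use the commutativity relation~\eqref{eq:comm} to undo the single crossing, then separate the two pieces (picking up the factor $\frac{1}{d_J}$ from cutting the connecting $Q$-strand, i.e.\ the connectedness of the Frobenius algebra) and evaluate each as a dual-basis pairing. The only cosmetic difference is that after removing the crossing the two pieces are still joined by a brown edge rather than already disjoint, but you correctly account for the resulting $\frac{1}{d_J}$ in your final step.
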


\begin{proof}
The Frobenius algebra is commutative immediately implies the following:
$$
\vcenter{\hbox{\begin{tikzpicture}[xscale=0.8, yscale=0.6]
\draw[brown] (0, 0)--(0, 1)--(1, 2);
\draw[blue, ->-=0.5] (1, 2)--(1, -2);
\draw[brown] (1, -2)..controls (0, -1) and (-1, -1)..(0, 0);
\draw[fill=white] (0.6, 1.5) rectangle (1.4, 2.3);
\draw[fill=white] (0.6, -1.5) rectangle (1.4, -2.3);
\draw (1, 1.9) node{\tiny$\varphi$};
\draw (1, -1.9) node{\tiny$\varphi^{*}$};
\fill[white] (0, -1.2) circle(0.2);
\begin{scope}[xscale=-1]
\draw[brown] (0, 1)--(1, 2);
\draw[blue, ->-=0.5] (1, 2)--(1, -2);
\draw[brown] (1, -2)..controls (0, -1) and (-1, -1)..(0, 0);
\draw[fill=white] (0.6, 1.5) rectangle (1.4, 2.3);
\draw[fill=white] (0.6, -1.5) rectangle (1.4, -2.3);
\draw (1, 1.9) node{\tiny$\psi$};
\draw (1, -1.9) node{\tiny$\psi^{'*}$};
\end{scope}
\end{tikzpicture}}}=\vcenter{\hbox{\begin{tikzpicture}[scale=0.8]
\draw[brown] (0, 0)--(0, 1)--(1, 2);
\draw[blue, ->-=0.5] (1, 2)--(1, -1);
\draw[brown] (1, -1)--(0, 0);
\draw[fill=white] (0.6, 1.5) rectangle (1.4, 2.3);
\draw[fill=white] (0.6, -1.3) rectangle (1.4, -0.5);
\draw (1, 1.9) node{\tiny$\varphi$};
\draw (1, -.9) node{\tiny$\varphi^{*}$};
\begin{scope}[xscale=-1]
\draw[brown] (0, 1)--(1, 2);
\draw[blue, ->-=0.5] (1, 2)--(1, -1);
\draw[brown] (1, -1)--(0, 0);
\draw[fill=white] (0.6, 1.5) rectangle (1.4, 2.3);
\draw[fill=white] (0.6, -1.3) rectangle (1.4, -0.5);
\draw (1, 1.9) node{\tiny$\psi$};
\draw (1, -.9) node{\tiny$\psi^{'*}$};
\end{scope}
\end{tikzpicture}}}
=
\vcenter{\hbox{\begin{tikzpicture}[xscale=0.8]
\begin{scope}[shift={(0.5, 0)}]
\draw[brown] (0, 0)--(1, 1);
\draw[blue, ->-=0.5] (1, 1)--(1, -1);
\draw[brown] (1, -1)--(0, 0);
\draw[fill=white] (0.6, .5) rectangle (1.4, 1.3);
\draw[fill=white] (0.6, -1.3) rectangle (1.4, -0.5);
\draw (1, .9) node{\tiny$\varphi$};
\draw (1, -.9) node{\tiny$\varphi^{'*}$};
\end{scope}
\begin{scope}[xscale=-1, shift={(0.5, 0)}]
\draw[brown] (0, 0)--(1, 1);
\draw[blue, ->-=0.5] (1, 1)--(1, -1);
\draw[brown] (1, -1)--(0, 0);
\draw[fill=white] (0.6, .5) rectangle (1.4, 1.3);
\draw[fill=white] (0.6, -1.3) rectangle (1.4, -0.5);
\draw (1, .9) node{\tiny$\psi$};
\draw (1, -.9) node{\tiny$\psi^{'*}$};
\end{scope}
\draw[brown] (-0.5, 0)--(0.5, 0);
\end{tikzpicture}}}
=\frac{1}{d_J}
\vcenter{\hbox{\begin{tikzpicture}[xscale=0.8]
\begin{scope}[shift={(0.5, 0)}]
\draw[brown] (0, 0)--(1, 1);
\draw[blue, ->-=0.5] (1, 1)--(1, -1);
\draw[brown] (1, -1)--(0, 0);
\draw[fill=white] (0.6, .5) rectangle (1.4, 1.3);
\draw[fill=white] (0.6, -1.3) rectangle (1.4, -0.5);
\draw (1, .9) node{\tiny$\varphi$};
\draw (1, -.9) node{\tiny$\varphi^{'*}$};
\end{scope}
\begin{scope}[xscale=-1, shift={(0.5, 0)}]
\draw[brown] (0, 0)--(1, 1);
\draw[blue, ->-=0.5] (1, 1)--(1, -1);
\draw[brown] (1, -1)--(0, 0);
\draw[fill=white] (0.6, .5) rectangle (1.4, 1.3);
\draw[fill=white] (0.6, -1.3) rectangle (1.4, -0.5);
\draw (1, .9) node{\tiny$\psi$};
\draw (1, -.9) node{\tiny$\psi^{'*}$};
\end{scope}
\end{tikzpicture}}}=\frac{1}{d_J}\delta_{\varphi, \varphi'}\delta_{\psi, \psi'}.
$$
The second equality comes from the definition of Frobenius algebra and the last equality comes from the connectivity of the Frobenius algebra.
\end{proof}

\begin{proposition}\label{prop:flatcom}
    Suppose the $\alpha$-induced bi-unitary connection is flat.
    Then \begin{align*}
\vcenter{\hbox{\scalebox{0.8}{
\begin{tikzpicture}[xscale=0.8, yscale=0.6]
\draw [line width=0.8cm] (0,0) [partial ellipse=0:180:2 and 1.5];
\draw [white, line width=0.77cm] (0,0) [partial ellipse=-0.1:180.1:2 and 1.5];
\draw [line width=0.8cm] (0,0) [partial ellipse=180:360:2 and 1.5];
\draw [white, line width=0.77cm] (0,0) [partial ellipse=178:362:2 and 1.5];
\draw [brown] (0,0) [partial ellipse=90:180:2 and 1.5];
\draw [brown] (0,0) [partial ellipse=180:270:2 and 1.5];
\draw [brown] (0,0) [partial ellipse=0:90:2 and 1.5];
\draw [brown] (0,0) [partial ellipse=270:360:2 and 1.5];
\begin{scope}[shift={(-2, 0)}]
\path [fill=brown] (0.05, -0.27) circle (0.08cm);
\draw [brown, dashed] (0,0) [partial ellipse=0:180:0.5 and 0.3];
\draw [brown] (0,0) [partial ellipse=180:360:0.5 and 0.3];
\end{scope}
\begin{scope}[shift={(2, 0)}]
\draw [red, dashed] (0,0) [partial ellipse=0:180:0.5 and 0.3];
\draw [red] (0,0) [partial ellipse=180:250:0.5 and 0.3];
\draw [red] (0,0) [partial ellipse=275:360:0.5 and 0.3];
\end{scope}
\end{tikzpicture}}}}=\vcenter{\hbox{\scalebox{0.8}{
\begin{tikzpicture}[xscale=0.8, yscale=0.6]
\draw [line width=0.8cm] (0,0) [partial ellipse=0:180:2 and 1.5];
\draw [white, line width=0.77cm] (0,0) [partial ellipse=-0.1:180.1:2 and 1.5];
\draw [line width=0.8cm] (0,0) [partial ellipse=180:360:2 and 1.5];
\draw [white, line width=0.77cm] (0,0) [partial ellipse=178:362:2 and 1.5];
\draw [brown] (0,0) [partial ellipse=90:180:2 and 1.5];
\draw [brown] (0,0) [partial ellipse=180:270:2 and 1.5];
\draw [brown] (0,0) [partial ellipse=0:90:2 and 1.5];
\draw [brown] (0,0) [partial ellipse=270:360:2 and 1.5];
\begin{scope}[shift={(-2, 0)}]
\path [fill=brown] (0.05, -0.27) circle (0.08cm);
\draw [brown, dashed] (0,0) [partial ellipse=0:180:0.5 and 0.3];
\draw [brown] (0,0) [partial ellipse=180:360:0.5 and 0.3];
\end{scope}
\begin{scope}[shift={(2, 0)}]
\draw [red, dashed] (0,0) [partial ellipse=0:180:0.5 and 0.3];
\draw [white, line width=0.2cm] (0,0) [partial ellipse=260:275:0.5 and 0.3];
\draw [red] (0,0) [partial ellipse=180:360:0.5 and 0.3];
\end{scope}
\end{tikzpicture}}}}= d_J^3\mu,
\end{align*}
where the inside is colored by $A$, and the outside is colored by $B$.
\end{proposition}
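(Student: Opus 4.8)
The plan is to evaluate the alterfold picture by cutting the $\mathcal{D}$-colored torus open and rewriting its handle in terms of the $\alpha$-induced connection tensor~\eqref{pic:alpha4tensor} (equivalently the $\alpha$-induction tube of~\eqref{eq:alphatube}), so that the flatness hypothesis enters exactly through the identity of Proposition~\ref{prop:kaw1}.

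First I would dispose of the left-hand equality, which should not use flatness: the two pictures differ only in the way the $\Omega$-colored (Kirby) circle threads the $\mathcal{D}$-colored handle, and one is obtained from the other by a single handle slide of the Kirby color over that handle, legitimate by invariance of $Z_e$ under the topological moves (Proposition~\ref{thm:e partition function}) and the handle-slide property of the Kirby color already exploited in the proofs of Lemmas~\ref{lem:decomposition1} and~\ref{lem:schange} (alternatively, one evaluates both sides independently by the argument below). To then compute the common value, I would open the surface along the disk bounded by the small $\mathcal{D}$-colored circle that carries the trivial coupon: Move~2 there followed by Move~1 to cancel the hole replaces the $\mathcal{D}$-colored longitude by the $\alpha$-induction tube of~\eqref{eq:alphatube}, after which the whole diagram becomes the closed-up $\alpha$-induced $4$-tensor~\eqref{pic:alpha4tensor} wrapped around the torus with the Kirby circle forcing the two opposite corners onto the tensor unit — i.e.\ exactly the flatness loop based at $\star=\1$. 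Proposition~\ref{prop:kaw1} collapses this loop, and what is left is a disjoint union of an unknotted Kirby-colored circle with a few $\mathcal{C}$- and $\mathcal{D}$-colored spheres and $J$-bubbles; evaluating these by Moves~0, 1, 3 and the value $\mu$ of a clean Kirby circle, and collecting the normalizations — the three factors $1/d_J$ carried by the $\alpha$-induction couplings, the factors $d_J$ produced by the $J$-bubbles, and the $\mu$ from the removed Kirby circle — yields $d_J^3\mu$.

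The step I expect to be the main obstacle is exactly this middle reduction: checking that, once the torus is cut, the sub-tangle that appears is \emph{verbatim} the configuration appearing in Proposition~\ref{prop:kaw1} — with the correct orientations and labels on the $J$-strands and the dual-basis coupons correctly paired — and then tracking every power of $d_J$ and $\mu$ so that the spurious factors cancel precisely to $d_J^3\mu$. A secondary point to keep in view throughout is that the argument must invoke flatness only (never commutativity of $Q$), since this proposition is the half of the flatness–commutativity equivalence that runs from flatness back toward commutativity.
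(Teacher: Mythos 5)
Your overall strategy --- recognizing the torus alterfold as the closed-up $\alpha$-induced $4$-tensor \eqref{pic:alpha4tensor}, invoking the flatness identity of Proposition \ref{prop:kaw1} to collapse the resulting loop, and then tracking the powers of $d_J$ and $\mu$ --- is essentially the paper's proof run in the opposite direction: the paper starts from the flatness identity (Equation \eqref{eq:flatness}), sums over the dual bases of $\hom_{\mathcal{C}}(\1, QX_j)$ weighted by $d_j$ to generate the Kirby-colored circles, and switches the inside/outside coloring to pick up the factor of $\mu$, arriving at $d_J^3\mu$ for each picture.

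However, your treatment of the left-hand equality is not correct. The two pictures do not differ by a handle slide: they differ by a crossing change between the Kirby circle and the $\mathcal{D}$-colored longitude (in one the brown strand passes over the red circle, in the other under), which is precisely the $\alpha_+$ versus $\alpha_-$ distinction of Equation \eqref{eq:alphatube} and is not a topological move --- if it were, the quantities $\dim\hom_{\mathcal{D}}(\alpha_+(X_j),\alpha_-(X_k))$ would never see the braiding at all. Flatness is genuinely used for both sides: the paper evaluates the first picture from Equation \eqref{eq:flatness} and the second from its $90^{\circ}$ rotation, i.e.\ the flatness loop traversed along the other cycle. Your fallback remark (``evaluate both sides independently'') is the correct route, but you should drop the handle-slide claim and make the rotated flatness identity explicit for the second picture; the rest of your normalization bookkeeping then goes through as in the paper.
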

\begin{proof}
By the flatness, we have that 
\begin{align}\label{eq:flatness}
    \vcenter{\hbox{\scalebox{1}{\begin{tikzpicture}[scale=1.2]
\draw (-0.3, 1)..controls (-0.3, 0.5) and (-0.5, 0.3)..(-1, 0.3) node [above] {\tiny $J$};
\draw (0.3, 1)..controls (0.3, 0.5) and (0.5, 0.3)..(1, 0.3) node [above] {\tiny $J$};
\draw (0.3, -1)..controls (0.3, -0.5) and (0.5, -0.3)..(1, -0.3) node [below] {\tiny $\overline{J}$};
\draw (-0.3, -1)..controls (-0.3, -0.5) and (-0.5, -0.3)..(-1, -0.3) node [below] {\tiny $\overline{J}$};
\draw[blue] (-1.115, 0.2)..controls (-0.4, 0.2) and (-0.2, 0.3) ..(-0.2, 0.885);
\draw[blue] (0.2, 0.885)..controls (0.2, 0.3) and (0.3, 0.2)..(0.885, 0.2);
\draw[blue] (0.2, -1.115)..controls (0.2, -0.4) and (0.3, -0.2) ..(0.885, -0.2);
\draw[blue] (-1.115, -0.2)..controls (-0.4, -0.2) and (-0.2, -0.4)..(-0.2, -1.115);
\draw[blue, dashed] (0, 1.15) to (0, -0.85);
\draw [line width=0.3cm, white] (-0.2, 0)--(0.2, 0);
\draw[blue, dashed] (-0.85, 0)--(0.85, 0);
\draw (-1, 0) node {\tiny $\varphi$} [partial ellipse=90:270:0.15 and 0.3];
\draw[dashed] (-1, 0) [partial ellipse=90:-90:0.15 and 0.3];
\draw (0, 1) node {\tiny $\psi$}  [partial ellipse=0:360:0.3 and 0.15];
\draw[dashed] (0, -1) [partial ellipse=0:180:0.3 and 0.15];
\draw (0, -1) node {\tiny $\psi^*$}  [partial ellipse=180:360:0.3 and 0.15];
\draw (1, 0) node {\tiny $\varphi^*$}  [partial ellipse=0:360:0.15 and 0.3];
\end{tikzpicture}}}}=\frac{1}{d_J}.
\end{align}
By taking the sum of the basis $\{\psi\}$ of $\hom_{\mathcal{C}}(1, QX_j)$ and $j$ with coefficient $d_j$, we have that 
\begin{align*}
\vcenter{\hbox{\scalebox{0.7}{
\begin{tikzpicture}[xscale=0.8, yscale=0.6]
\draw [line width=0.8cm] (0,0) [partial ellipse=0:180:2 and 1.5];
\draw [white, line width=0.77cm] (0,0) [partial ellipse=-0.1:180.1:2 and 1.5];
\draw [line width=0.8cm] (0,0) [partial ellipse=180:360:2 and 1.5];
\draw [white, line width=0.77cm] (0,0) [partial ellipse=178:362:2 and 1.5];
\draw [red] (0,0) [partial ellipse=0:180:1.7 and 1.2];
\draw [red] (0,0) [partial ellipse=180:360:1.7 and 1.2];
\draw [brown] (0,0) [partial ellipse=0:180:2.2 and 1.7];
\draw [brown] (0,0) [partial ellipse=180:360:2.2 and 1.7];
\draw [white, line width=0.2cm]  (0,0) [partial ellipse=185:195:1.7 and 1.2];
\begin{scope}[shift={(-2, 0)}]
\path [fill=brown] (-0.2, -0.25) circle (0.08cm);
\draw [brown, dashed] (0,0) [partial ellipse=80:180:0.5 and 0.3];
\draw [blue, dashed] (0,0) [partial ellipse=0:80:0.5 and 0.3];
\draw [brown] (0,0) [partial ellipse=180:270:0.5 and 0.3];
\draw [blue] (0,0) [partial ellipse=270:360:0.5 and 0.3];
\end{scope}
\end{tikzpicture}}}}= d_J,
\end{align*}
where the inside is colored by $B$ and the outside is colored by $A$.
Now we switch the inside to the outside and obtain that 
\begin{align*}
\vcenter{\hbox{\scalebox{0.7}{
\begin{tikzpicture}[xscale=0.8, yscale=0.6]
\draw [line width=0.8cm] (0,0) [partial ellipse=0:180:2 and 1.5];
\draw [white, line width=0.77cm] (0,0) [partial ellipse=-0.1:180.1:2 and 1.5];
\draw [line width=0.8cm] (0,0) [partial ellipse=180:360:2 and 1.5];
\draw [white, line width=0.77cm] (0,0) [partial ellipse=178:362:2 and 1.5];
\draw [brown] (0,0) [partial ellipse=90:180:2 and 1.5];
\draw [brown] (0,0) [partial ellipse=180:270:2 and 1.5];
\draw [blue] (0,0) [partial ellipse=0:90:2 and 1.5];
\draw [blue] (0,0) [partial ellipse=270:360:2 and 1.5];
\begin{scope}[shift={(-2, 0)}]
\path [fill=brown] (0.05, -0.27) circle (0.08cm);
\draw [brown, dashed] (0,0) [partial ellipse=0:180:0.5 and 0.3];
\draw [brown] (0,0) [partial ellipse=180:360:0.5 and 0.3];
\end{scope}
\begin{scope}[shift={(2, 0)}]
\draw [red, dashed] (0,0) [partial ellipse=0:180:0.5 and 0.3];
\draw [red] (0,0) [partial ellipse=180:250:0.5 and 0.3];
\draw [red] (0,0) [partial ellipse=275:360:0.5 and 0.3];
\end{scope}
\end{tikzpicture}}}}=d_J\mu,
\end{align*}
where the inside is colored by $A$ and the outside is colored by $B$.
By taking the sum of the basis $\{\varphi\}$ of $\hom_{\mathcal{C}}(1, QX_j)$ and $j$ with coefficient $d_j$, we have that 
\begin{align*}
\vcenter{\hbox{\scalebox{0.7}{
\begin{tikzpicture}[xscale=0.8, yscale=0.6]
\draw [line width=0.8cm] (0,0) [partial ellipse=0:180:2 and 1.5];
\draw [white, line width=0.77cm] (0,0) [partial ellipse=-0.1:180.1:2 and 1.5];
\draw [line width=0.8cm] (0,0) [partial ellipse=180:360:2 and 1.5];
\draw [white, line width=0.77cm] (0,0) [partial ellipse=178:362:2 and 1.5];
\draw [brown] (0,0) [partial ellipse=90:180:2 and 1.5];
\draw [brown] (0,0) [partial ellipse=180:270:2 and 1.5];
\draw [brown] (0,0) [partial ellipse=0:90:2 and 1.5];
\draw [brown] (0,0) [partial ellipse=270:360:2 and 1.5];
\begin{scope}[shift={(-2, 0)}]
\path [fill=brown] (0.05, -0.27) circle (0.08cm);
\draw [brown, dashed] (0,0) [partial ellipse=0:180:0.5 and 0.3];
\draw [brown] (0,0) [partial ellipse=180:360:0.5 and 0.3];
\end{scope}
\begin{scope}[shift={(2, 0)}]
\draw [red, dashed] (0,0) [partial ellipse=0:180:0.5 and 0.3];
\draw [red] (0,0) [partial ellipse=180:250:0.5 and 0.3];
\draw [red] (0,0) [partial ellipse=275:360:0.5 and 0.3];
\end{scope}
\end{tikzpicture}}}}
=d_J^3\mu.
\end{align*}
Rotating Equation \eqref{eq:flatness} by 90$^\circ$, we have that
\begin{align*}
\vcenter{\hbox{\scalebox{0.7}{
\begin{tikzpicture}[xscale=0.8, yscale=0.6]
\draw [line width=0.8cm] (0,0) [partial ellipse=0:180:2 and 1.5];
\draw [white, line width=0.77cm] (0,0) [partial ellipse=-0.1:180.1:2 and 1.5];
\draw [line width=0.8cm] (0,0) [partial ellipse=180:360:2 and 1.5];
\draw [white, line width=0.77cm] (0,0) [partial ellipse=178:362:2 and 1.5];
\draw [brown] (0,0) [partial ellipse=90:180:2 and 1.5];
\draw [brown] (0,0) [partial ellipse=180:270:2 and 1.5];
\draw [brown] (0,0) [partial ellipse=0:90:2 and 1.5];
\draw [brown] (0,0) [partial ellipse=270:360:2 and 1.5];
\begin{scope}[shift={(-2, 0)}]
\path [fill=brown] (0.05, -0.27) circle (0.08cm);
\draw [brown, dashed] (0,0) [partial ellipse=0:180:0.5 and 0.3];
\draw [brown] (0,0) [partial ellipse=180:360:0.5 and 0.3];
\end{scope}
\begin{scope}[shift={(2, 0)}]
\draw [red, dashed] (0,0) [partial ellipse=0:180:0.5 and 0.3];
\draw [white, line width=0.2cm] (0,0) [partial ellipse=260:275:0.5 and 0.3];
\draw [red] (0,0) [partial ellipse=180:360:0.5 and 0.3];
\end{scope}
\end{tikzpicture}}}}=d_J^3\mu.
\end{align*}
This completes the proof of the proposition.
\end{proof}

In the following, we assume that the input Morita context is unitary hence the alterfold topological quantum field theory is unitary. We list equivalent statements for the flatness of the $\alpha$-induced bi-unitary connection. 
The equivalence of (1) and (5) are the main theorem of \cite{Kaw24}.
\begin{theorem}\label{thm:flatequiv}
   Suppose that the spherical fusion category $\mathcal{C}$ is unitary and $Q$ is a $Q$-system in $\mathcal{C}$.
   Then the following statements are equivalent:
   \begin{enumerate}[(1)]
   \item the Frobenius algebra $Q$ is commutative, ( See Equation \eqref{eq:comm});
   \item \begin{align}\label{eq:qtwist}
\vcenter{\hbox{\scalebox{1}{
\begin{tikzpicture}[scale=1.2]
\draw [blue, line width=0.2cm] (-0.3, 0.5)--(0.3, 0.5) ;
\draw [white, line width=0.17cm] (-0.31, 0.5)--(0.31, 0.5) ;
\draw [blue, line width=0.2cm] (0, -0.5) --(0, 1)  node [above] {\tiny $J\overline{J}$} (-0.3, 0)--(0.3, 0);
\draw [white, line width=0.17cm] (0, -0.51) --(0, 1.1) (-0.3, 0)--(0.3, 0);
\begin{scope}[shift={(-0.3, 0.25)}]
\draw [blue, line width=0.2cm] (0,0) [partial ellipse=90:270:0.25 and 0.25];
\draw [white, line width=0.17cm] (0,0) [partial ellipse=89:271:0.25 and 0.25];
\end{scope}
\begin{scope}[shift={(0.3, 0.25)}]
\draw [blue, line width=0.2cm] (0,0) [partial ellipse=90:-90:0.25 and 0.25];
\draw [white, line width=0.17cm] (0,0) [partial ellipse=92:-92:0.25 and 0.25];
\end{scope}
\end{tikzpicture}}}}
=
\vcenter{\hbox{\scalebox{1}{
\begin{tikzpicture}[scale=1.2]
\draw [blue, line width=0.2cm] (0, -0.5) --(0, 1)  node [above] {\tiny $J\overline{J}$} (-0.3, 0)--(0.3, 0);
\draw [white, line width=0.17cm] (0, -0.51) --(0, 1.1) (-0.3, 0)--(0.3, 0);
\begin{scope}[shift={(-0.3, 0.25)}]
\draw [blue, line width=0.2cm] (0,0) [partial ellipse=90:270:0.25 and 0.25];
\draw [white, line width=0.17cm] (0,0) [partial ellipse=89:271:0.25 and 0.25];
\end{scope}
\begin{scope}[shift={(0.3, 0.25)}]
\draw [blue, line width=0.2cm] (0,0) [partial ellipse=90:-90:0.25 and 0.25];
\draw [white, line width=0.17cm] (0,0) [partial ellipse=92:-92:0.25 and 0.25];
\end{scope}
\draw [white, line width=0.25cm] (-0.25, 0.5)--(0.25, 0.5) ;
\draw [blue, line width=0.2cm] (-0.3, 0.5)--(0.3, 0.5) ;
\draw [white, line width=0.17cm] (-0.31, 0.5)--(0.31, 0.5) ;
\end{tikzpicture}}}}
=d_J
\vcenter{\hbox{\scalebox{1}{
\begin{tikzpicture}[scale=1.2]
\draw [blue, line width=0.2cm] (0, -0.5) --(0, 0.5) node [above] {\tiny $J\overline{J}$};
\draw [white, line width=0.17cm] (0, -0.51) --(0, 0.51);
\end{tikzpicture}}}}.
\end{align}
\item \begin{align}\label{eq:tubetwist}
\vcenter{\hbox{\scalebox{0.7}{
\begin{tikzpicture}[xscale=0.8, yscale=0.6]
\draw [brown] (0,0) [partial ellipse=180:360:0.6 and 0.3];
\draw [brown, dashed] (0,0) [partial ellipse=0:180:0.6 and 0.3];
\path [fill=brown] (0, -0.3) circle (0.07cm);
\begin{scope}[shift={(0,2)}]
\draw (0,0) [partial ellipse=0:360:0.6 and 0.3];
\end{scope}
\draw (-0.6, 2)--(-0.6, 0) (0.6, 2)--(0.6, 0); 
\draw (-0.6, -2)--(-0.6, 0) (0.6, -2)--(0.6, 0);
\draw [brown] (0, -2.3) --(0, 1.7);
\begin{scope}[shift={(0,-2)}]
\draw [dashed](0,0) [partial ellipse=0:180:0.6 and 0.3];
\draw (0,0) [partial ellipse=180:360:0.6 and 0.3];
\end{scope}
\begin{scope}[shift={(0, 1)}]
\draw [red] (0,0) [partial ellipse=180:265:0.6 and 0.3];
\draw [red] (0,0) [partial ellipse=275:360:0.6 and 0.3];
\draw [red, dashed] (0,0) [partial ellipse=0:180:0.6 and 0.3];
\end{scope}
\end{tikzpicture}}}}
=d_J
\vcenter{\hbox{\scalebox{0.7}{
\begin{tikzpicture}[xscale=0.8, yscale=0.6]
\draw [red] (0,0) [partial ellipse=180:265:0.6 and 0.3];
\draw [red] (0,0) [partial ellipse=275:360:0.6 and 0.3];
\draw [red, dashed] (0,0) [partial ellipse=0:180:0.6 and 0.3];
\begin{scope}[shift={(0,2)}]
\draw (0,0) [partial ellipse=0:360:0.6 and 0.3];
\end{scope}
\draw (-0.6, 2)--(-0.6, 0) (0.6, 2)--(0.6, 0); 
\draw (-0.6, -2)--(-0.6, 0) (0.6, -2)--(0.6, 0);
\draw [brown] (0, -2.3) --(0, 1.7);
\begin{scope}[shift={(0,-2)}]
\draw [dashed](0,0) [partial ellipse=0:180:0.6 and 0.3];
\draw (0,0) [partial ellipse=180:360:0.6 and 0.3];
\end{scope}
\end{tikzpicture}}}},
\quad
\vcenter{\hbox{\scalebox{0.7}{
\begin{tikzpicture}[xscale=0.8, yscale=0.6]
\draw [brown] (0,0) [partial ellipse=180:360:0.6 and 0.3];
\draw [brown, dashed] (0,0) [partial ellipse=0:180:0.6 and 0.3];
\path [fill=brown] (0, -0.3) circle (0.07cm);
\begin{scope}[shift={(0,2)}]
\draw (0,0) [partial ellipse=0:360:0.6 and 0.3];
\end{scope}
\draw (-0.6, 2)--(-0.6, 0) (0.6, 2)--(0.6, 0); 
\draw (-0.6, -2)--(-0.6, 0) (0.6, -2)--(0.6, 0);
\draw [brown] (0, -2.3) --(0, 1.7);
\begin{scope}[shift={(0,-2)}]
\draw [dashed](0,0) [partial ellipse=0:180:0.6 and 0.3];
\draw (0,0) [partial ellipse=180:360:0.6 and 0.3];
\end{scope}
\begin{scope}[shift={(0, 1)}]
\draw [line width=0.2cm,white] (0, -0.18) --(0, -0.38);
\draw [red] (0,0) [partial ellipse=180:360:0.6 and 0.3];
\draw [red, dashed] (0,0) [partial ellipse=0:180:0.6 and 0.3];
\end{scope}
\end{tikzpicture}}}}
=d_J
\vcenter{\hbox{\scalebox{0.7}{
\begin{tikzpicture}[xscale=0.8, yscale=0.6]
\begin{scope}[shift={(0,2)}]
\draw (0,0) [partial ellipse=0:360:0.6 and 0.3];
\end{scope}
\draw (-0.6, 2)--(-0.6, 0) (0.6, 2)--(0.6, 0); 
\draw (-0.6, -2)--(-0.6, 0) (0.6, -2)--(0.6, 0);
\draw [brown] (0, -2.3) --(0, 1.7);
\draw [line width=0.2cm,white] (0, -0.18) --(0, -0.38);
\draw [red] (0,0) [partial ellipse=180:360:0.6 and 0.3];
\draw [red, dashed] (0,0) [partial ellipse=0:180:0.6 and 0.3];
\begin{scope}[shift={(0,-2)}]
\draw [dashed](0,0) [partial ellipse=0:180:0.6 and 0.3];
\draw (0,0) [partial ellipse=180:360:0.6 and 0.3];
\end{scope}
\end{tikzpicture}}}}.
\end{align}
\item locally, 
\begin{align*}
\vcenter{\hbox{\scalebox{0.7}{
\begin{tikzpicture}[xscale=0.8, yscale=0.6]
\draw [line width=0.8cm] (0,0) [partial ellipse=0:180:2 and 1.5];
\draw [white, line width=0.77cm] (0,0) [partial ellipse=-0.1:180.1:2 and 1.5];
\draw [line width=0.8cm] (0,0) [partial ellipse=180:360:2 and 1.5];
\draw [white, line width=0.77cm] (0,0) [partial ellipse=178:362:2 and 1.5];
\draw [brown] (0,0) [partial ellipse=90:180:2 and 1.5];
\draw [brown] (0,0) [partial ellipse=180:270:2 and 1.5];
\draw [brown] (0,0) [partial ellipse=0:90:2 and 1.5];
\draw [brown] (0,0) [partial ellipse=270:360:2 and 1.5];
\begin{scope}[shift={(-2, 0)}]
\path [fill=brown] (0.05, -0.27) circle (0.08cm);
\draw [brown, dashed] (0,0) [partial ellipse=0:180:0.5 and 0.3];
\draw [brown] (0,0) [partial ellipse=180:360:0.5 and 0.3];
\end{scope}
\begin{scope}[shift={(2, 0)}]
\draw [red, dashed] (0,0) [partial ellipse=0:180:0.5 and 0.3];
\draw [white, line width=0.2cm] (0,0) [partial ellipse=260:275:0.5 and 0.3];
\draw [red] (0,0) [partial ellipse=180:360:0.5 and 0.3];
\end{scope}
\end{tikzpicture}}}}
=&
d_J
\vcenter{\hbox{\scalebox{0.7}{
\begin{tikzpicture}[xscale=0.8, yscale=0.6]
\draw [line width=0.83cm] (0,0) [partial ellipse=-0.1:180.1:2 and 1.5];
\draw [white, line width=0.8cm] (0,0) [partial ellipse=-0.1:180.1:2 and 1.5];
\draw [brown] (0,0) [partial ellipse=-0.1:180.1:2 and 1.5];
\draw [line width=0.83cm] (0,0) [partial ellipse=180:360:2 and 1.5];
\draw [white, line width=0.8cm] (0,0) [partial ellipse=179:361:2 and 1.5];
\draw [brown] (0,0) [partial ellipse=192:361:2 and 1.5];
\begin{scope}[shift={(-2, 0)}]
\draw [red, dashed] (0,0) [partial ellipse=0:180:0.46 and 0.25];
\draw [red] (0,0) [partial ellipse=180:360:0.46 and 0.25];
\end{scope}
\end{tikzpicture}}}},\\
\vcenter{\hbox{\scalebox{0.7}{
\begin{tikzpicture}[xscale=0.8, yscale=0.6]
\draw [line width=0.8cm] (0,0) [partial ellipse=0:180:2 and 1.5];
\draw [white, line width=0.77cm] (0,0) [partial ellipse=-0.1:180.1:2 and 1.5];
\draw [line width=0.8cm] (0,0) [partial ellipse=180:360:2 and 1.5];
\draw [white, line width=0.77cm] (0,0) [partial ellipse=178:362:2 and 1.5];
\draw [brown] (0,0) [partial ellipse=90:180:2 and 1.5];
\draw [brown] (0,0) [partial ellipse=180:270:2 and 1.5];
\draw [brown] (0,0) [partial ellipse=0:90:2 and 1.5];
\draw [brown] (0,0) [partial ellipse=270:360:2 and 1.5];
\begin{scope}[shift={(-2, 0)}]
\path [fill=brown] (0.05, -0.27) circle (0.08cm);
\draw [brown, dashed] (0,0) [partial ellipse=0:180:0.5 and 0.3];
\draw [brown] (0,0) [partial ellipse=180:360:0.5 and 0.3];
\end{scope}
\begin{scope}[shift={(2, 0)}]
\draw [red, dashed] (0,0) [partial ellipse=0:180:0.5 and 0.3];
\draw [red] (0,0) [partial ellipse=180:260:0.5 and 0.3];
\draw [red] (0,0) [partial ellipse=280:360:0.5 and 0.3];
\end{scope}
\end{tikzpicture}}}}
=&
d_J
\vcenter{\hbox{\scalebox{0.7}{
\begin{tikzpicture}[xscale=0.8, yscale=0.6]
\draw [line width=0.83cm] (0,0) [partial ellipse=-0.1:180.1:2 and 1.5];
\draw [white, line width=0.8cm] (0,0) [partial ellipse=-0.1:180.1:2 and 1.5];
\draw [brown] (0,0) [partial ellipse=-0.1:180.1:2 and 1.5];
\draw [line width=0.83cm] (0,0) [partial ellipse=180:360:2 and 1.5];
\draw [white, line width=0.8cm] (0,0) [partial ellipse=179:361:2 and 1.5];
\draw [brown] (0,0) [partial ellipse=178:361:2 and 1.5];
\begin{scope}[shift={(-2, 0)}]
\draw [red, dashed] (0,0) [partial ellipse=0:180:0.46 and 0.25];
\draw [red] (0,0) [partial ellipse=180:265:0.46 and 0.25];
\draw [red] (0,0) [partial ellipse=285:360:0.46 and 0.25];
\end{scope}
\end{tikzpicture}}}}.
\end{align*}
   \item the $\alpha$-induced bi-unitary connection is flat.
   \end{enumerate}
\end{theorem}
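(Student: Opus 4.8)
The plan is to prove the cycle $(1)\Rightarrow(5)\Rightarrow(4)\Rightarrow(3)\Rightarrow(2)\Rightarrow(1)$. The first implication is already Proposition~\ref{prop:comflat}; the second is in essence Proposition~\ref{prop:flatcom} (which invokes flatness through the $4$-tensor identity of Proposition~\ref{prop:kaw1}); so the genuinely new work is the topological descent $(4)\Rightarrow(3)\Rightarrow(2)$ followed by the algebraic extraction $(2)\Rightarrow(1)$. The organizing principle will be that a $\mathcal{D}$-colored torus, a $\mathcal{D}$-colored tube, and a $\mathcal{D}$-colored disk are interconverted by attaching and detaching $1$-handles (Moves~1 and~2), and that the modularity of $\mathcal{C}$ lets one expand a Kirby-colored circle over $\Irr(\mathcal{C})$ as in Lemma~\ref{lem:decomposition2}; the relation \eqref{eq:qtwist} is then exactly the $\mathcal{C}$-valued relation that survives on the innermost disk.

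\textbf{$(5)\Rightarrow(4)$.} By Proposition~\ref{prop:kaw1} flatness is equivalent to the $4$-tensor identity \eqref{eq:flatness}, and Proposition~\ref{prop:flatcom} already packages this as the statement that the $\mathcal{D}$-colored torus carrying the brown-dot decoration on one handle and an $S$-rotated Kirby circle on the other evaluates to $d_J^{3}\mu$. To recover the normalization displayed in~(4) I would in addition evaluate, using Lemma~\ref{lem:decomposition2} together with Proposition~\ref{prop:dim}, the $\mathcal{D}$-colored torus carrying a single plain Kirby circle on one handle; this equals $d_J^{2}\mu$, so the ratio of the two sides of~(4) is $d_J$, as asserted. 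The second identity in~(4) follows from the first by one further $S$-move on the Kirby circle, exactly as in the closing line of the proof of Proposition~\ref{prop:flatcom}.

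\textbf{$(4)\Rightarrow(3)\Rightarrow(2)$.} To pass from the torus identities~(4) to the tube identities \eqref{eq:tubetwist}, cut the $\mathcal{D}$-colored torus along a meridian of the undecorated handle; this is precisely the inverse of Move~1 (the Kirby circle on that handle being its belt $\Omega$-circle), and it turns the torus into a $\mathcal{D}$-colored tube while carrying the remaining decorations over verbatim. To pass from~(3) to the local identity \eqref{eq:qtwist}, use that the $\mathcal{D}$-colored tube represents $I(\1_{\mathcal{D}})$ in the tube category and push the $Q=J\overline{J}$-colored strand out of the $B$-colored bulk onto the $\mathcal{D}$-colored boundary plane by planar graphical calculus; reading that plane as $\mathcal{C}$ through the $J$-bimodule identification $\mathcal{D}\simeq{}_{Q}\mathcal{C}_{Q}$, the tube relation becomes \eqref{eq:qtwist}. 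Since each move is reversible, this in fact yields $(4)\Leftrightarrow(3)\Leftrightarrow(2)$.

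\textbf{$(2)\Rightarrow(1)$.} Cap the two ends of the vertical $J\overline{J}$-strand in \eqref{eq:qtwist} with the counit $Q\to\1$ and the unit $\1\to Q$ of the $Q$-system and bend the diagram using rigidity: the racetrack loop unlinks, the two crossings reassemble into a single copy of the braiding $c_{Q,Q}$ composed with the multiplication $m$ of $Q$, and the stray $d_J$ is cancelled by the value $d_J$ of a circle labelled $J$ — leaving precisely the commutativity square \eqref{eq:comm}. As every intermediate step is a Frobenius-relation or rigidity rewrite, the deduction runs in both directions, and the reverse direction $(1)\Rightarrow(2)$ is in any case immediate by substituting \eqref{eq:comm} and sliding strands; this closes the cycle and simultaneously records $(1)\Leftrightarrow(2)$.

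\textbf{Main obstacle.} I expect the delicate step to be the bookkeeping in $(5)\Rightarrow(4)$ and $(4)\Rightarrow(3)$. The flatness $4$-tensor \eqref{eq:flatness} sits ``inside out'' relative to the torus pictures in~(4), so one must switch the roles of the $A$- and $B$-colorings — using, for instance, that the $B$-colored $\mathbb{S}^3$ evaluates to $1/\mu$ — and then keep strict track of which handle carries which decoration and of the powers of $\mu$ and $d_J$. Producing the exact normalizing constant $d_J$ in~(4) and matching the brown-dot decoration on both sides is where the computation is most easily botched; once this dictionary is in place the remaining steps are routine graphical manipulation, and it is the same dictionary, recast with the hypothesis $\mu\neq0$ in place of the unitary normalizations, that one would adapt to obtain the general-field statement of Theorem~\ref{thm:flatequi}.
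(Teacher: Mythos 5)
Your overall architecture (a cycle through $(1)\Rightarrow(5)\Rightarrow(4)\Rightarrow(3)\Rightarrow(2)\Rightarrow(1)$, with $(1)\Rightarrow(5)$ and $(5)\Rightarrow(4)$ delegated to Propositions~\ref{prop:comflat} and~\ref{prop:flatcom}) is reasonable and close to the paper's, but the two downward steps you describe as ``reversible topological moves'' are exactly the places where the paper has to do real work, and your versions of them have genuine gaps. The implication $(4)\Rightarrow(3)$ is not obtained by cutting the torus along a meridian and ``carrying the decorations over verbatim'': statement (4) is an equality of \emph{numbers} (closed evaluations), while statement (3) is an equality of \emph{morphisms} in the tube category, and a closed evaluation only computes a trace. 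The paper bridges this by noting that $P_1$ and $P_2$ are idempotents, computing $\operatorname{tr}\bigl((P_1P_2-P_2)(P_1P_2-P_2)^{*}\bigr)=\operatorname{tr}(P_2)-\operatorname{tr}(P_1P_2)=0$ from the value $d_J^{3}\mu$ in (4), and then invoking faithfulness of the positive trace to conclude $P_1P_2=P_2$. Without that positivity argument the inference fails, and this is precisely where the hypothesis that the Morita context is \emph{unitary} enters; it is also why the general-field version (Theorem~\ref{thm:flatequi}) requires a separate and more elaborate proof rather than the same moves ``recast with $\mu\neq 0$,'' as you suggest at the end.

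The same problem recurs, more seriously, in your $(2)\Rightarrow(1)$. Equation~\eqref{eq:qtwist} is obtained from the commutativity relation~\eqref{eq:comm} by capping, which destroys information; you cannot recover~\eqref{eq:comm} by ``bending the diagram using rigidity'' and unlinking the racetrack loop, because~\eqref{eq:qtwist} only asserts that the pairing $RW^{*}$ of the braided comultiplication $R$ with the plain comultiplication $W$ equals $d_J\cdot\mathrm{id}$, not that $R=W$. The paper's argument is again a positivity argument: from $RR^{*}=WW^{*}=RW^{*}=WR^{*}=d_J\cdot\mathrm{id}$ one gets $\operatorname{tr}\bigl((R-W)(R-W)^{*}\bigr)=0$, hence $R=W$ by faithfulness of the trace in the unitary setting. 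You should replace your diagrammatic reversal by this trace computation (or an equivalent Cauchy--Schwarz-type argument), and correspondingly flag that $(2)\Rightarrow(1)$ and $(4)\Rightarrow(3)$ are the unitarity-dependent steps rather than routine rewrites. Your $(3)\Rightarrow(2)$ via handle slides and your $(1)\Rightarrow(2)$ by capping are fine and match the paper.
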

\begin{proof}
 (1)$\Rightarrow$(2):
 By taking the cap from the left hand side, we see that (2) is true.

 (2)$\Rightarrow$(1):
 Let $R=\vcenter{\hbox{\scalebox{0.6}{
\begin{tikzpicture}[scale=0.9]
\begin{scope}[shift={(0,1)}]
\draw [blue] (1, 1.5)--(-0.5, 0) (1, 2)--(-1, 0);
\path [fill=white] (0, 0.75) circle (0.4cm);
\draw [blue] (-1, 1.5) node [left] {\tiny $J$}--(0.5, 0) (-1, 2) node [left] {\tiny $\overline{J}$} --(1, 0);
\end{scope}
\draw [blue](-1, -0.1)--(-1, 1) (1, -0.1)--(1, 1);
\begin{scope}[shift={(0, 1)}]
\draw [blue](0,0) [partial ellipse=180:360:0.5 and 0.7];
\end{scope}
\end{tikzpicture}}}}$ and $W= \vcenter{\hbox{\scalebox{0.6}{
\begin{tikzpicture}[scale=0.9]
\draw [blue](-1, -0.1)--(-1, 1) node [above] {\tiny $J$} (1, -0.1)--(1, 1);
\begin{scope}[shift={(0, 1)}]
\draw [blue](0,0) [partial ellipse=180:360:0.7 and 0.8] node [pos=0, above] {\tiny $\overline{J}$} ;
\end{scope}
\end{tikzpicture}}}}$. 
Then $RR^*=WW^*=d_J \vcenter{\hbox{\scalebox{1}{
\begin{tikzpicture}[scale=1.2]
\draw [blue, line width=0.2cm] (0, -0.5) --(0, 0.5) node [above] {\tiny $J\overline{J}$};
\draw [white, line width=0.17cm] (0, -0.51) --(0, 0.51);
\end{tikzpicture}}}}$ and $RW^*=\vcenter{\hbox{\scalebox{0.6}{
\begin{tikzpicture}[scale=0.9]
\begin{scope}[shift={(0,1)}]
\draw [blue] (1, 1.5)--(-0.5, 0) (0.5, 1.5)--(-1, 0);
\path [fill=white] (0, 0.75) circle (0.4cm);
\draw [blue] (-1, 1.5) node [left] {\tiny $J$}--(0.5, 0) (-0.5, 1.5) node [left] {\tiny $\overline{J}$} --(1, 0);
\end{scope}
\draw [blue](-1, -0.1)--(-1, 1) (1, -0.1)--(1, 1);
\begin{scope}[shift={(0, 1)}]
\draw [blue](0,0) [partial ellipse=180:360:0.5 and 0.7];
\end{scope}
\begin{scope}[shift={(0,2.5)}]
\draw [blue](-1, 0)--(-1, 1) (1, 0)--(1, 1);
\draw [blue](0,0) [partial ellipse=0:180:0.5 and 0.7];
\end{scope}
\end{tikzpicture}}}}$.
Now by the assumption, we have that 
\begin{align*}
    tr((R-W)(R^*-W^*))=& tr(RR^*)+tr(WW^*)-tr(RW^*)-tr(WR^*) \\
    =& d_J^3+d_J^3-d_J^3-d_J^3=0.
\end{align*}
This implies that (1) holds.

(2)$\Rightarrow$(3)
It is clear that 
\begin{align*}
P_1=\frac{1}{d_J}\vcenter{\hbox{\scalebox{0.7}{
\begin{tikzpicture}[xscale=0.8, yscale=0.6]
\draw [brown] (0,0) [partial ellipse=180:360:0.6 and 0.3];
\draw [brown, dashed] (0,0) [partial ellipse=0:180:0.6 and 0.3];
\path [fill=brown] (0, -0.3) circle (0.07cm);
\begin{scope}[shift={(0,2)}]
\draw (0,0) [partial ellipse=0:360:0.6 and 0.3];
\end{scope}
\draw (-0.6, 2)--(-0.6, 0) (0.6, 2)--(0.6, 0); 
\draw (-0.6, -2)--(-0.6, 0) (0.6, -2)--(0.6, 0);
\draw [brown] (0, -2.3) --(0, 1.7);
\begin{scope}[shift={(0,-2)}]
\draw [dashed](0,0) [partial ellipse=0:180:0.6 and 0.3];
\draw (0,0) [partial ellipse=180:360:0.6 and 0.3];
\end{scope}
\end{tikzpicture}}}}, \quad P_2=
\frac{1}{\mu}\vcenter{\hbox{\scalebox{0.7}{
\begin{tikzpicture}[xscale=0.8, yscale=0.6]
\draw [red] (0,0) [partial ellipse=180:265:0.6 and 0.3];
\draw [red] (0,0) [partial ellipse=275:360:0.6 and 0.3];
\draw [red, dashed] (0,0) [partial ellipse=0:180:0.6 and 0.3];
\begin{scope}[shift={(0,2)}]
\draw (0,0) [partial ellipse=0:360:0.6 and 0.3];
\end{scope}
\draw (-0.6, 2)--(-0.6, 0) (0.6, 2)--(0.6, 0); 
\draw (-0.6, -2)--(-0.6, 0) (0.6, -2)--(0.6, 0);
\draw [brown] (0, -2.3) --(0, 1.7);
\begin{scope}[shift={(0,-2)}]
\draw [dashed](0,0) [partial ellipse=0:180:0.6 and 0.3];
\draw (0,0) [partial ellipse=180:360:0.6 and 0.3];
\end{scope}
\end{tikzpicture}}}}
\end{align*}
are idempotents.
By handle slides and Equation \eqref{eq:qtwist}, we have that 
\begin{align}\label{eq:handletwist}
\vcenter{\hbox{\scalebox{0.7}{
\begin{tikzpicture}[xscale=0.8, yscale=0.6]
\draw [brown] (0,0) [partial ellipse=180:360:0.6 and 0.3];
\draw [brown, dashed] (0,0) [partial ellipse=0:180:0.6 and 0.3];
\path [fill=brown] (0, -0.3) circle (0.07cm);
\begin{scope}[shift={(0,2)}]
\draw (0,0) [partial ellipse=0:360:0.6 and 0.3];
\end{scope}
\draw (-0.6, 2)--(-0.6, 0) (0.6, 2)--(0.6, 0); 
\draw (-0.6, -2)--(-0.6, 0) (0.6, -2)--(0.6, 0);
\draw [brown] (0, -2.3) --(0, 1.7);
\begin{scope}[shift={(0,-2)}]
\draw [dashed](0,0) [partial ellipse=0:180:0.6 and 0.3];
\draw (0,0) [partial ellipse=180:360:0.6 and 0.3];
\end{scope}
\begin{scope}[shift={(0, 1)}]
\draw [red] (0,0) [partial ellipse=180:265:0.6 and 0.3];
\draw [red] (0,0) [partial ellipse=275:360:0.6 and 0.3];
\draw [red, dashed] (0,0) [partial ellipse=0:180:0.6 and 0.3];
\end{scope}
\end{tikzpicture}}}}
=
\vcenter{\hbox{\scalebox{0.7}{
\begin{tikzpicture}[xscale=0.8, yscale=0.6]
\begin{scope}[shift={(0, -0.5)}]
\draw [brown] (0,0) [partial ellipse=80:-90:0.4 and 0.3];
\draw [brown] (0,0) [partial ellipse=100:270:0.4 and 0.3];
\path [fill=brown] (0, -0.3) circle (0.07cm);
\end{scope}
\begin{scope}[shift={(0,2)}]
\draw (0,0) [partial ellipse=0:360:0.6 and 0.3];
\end{scope}
\draw (-0.6, 2)--(-0.6, 0) (0.6, 2)--(0.6, 0); 
\draw (-0.6, -2)--(-0.6, 0) (0.6, -2)--(0.6, 0);
\draw [brown] (0, -2.3) --(0, 1.7);
\begin{scope}[shift={(0,-2)}]
\draw [dashed](0,0) [partial ellipse=0:180:0.6 and 0.3];
\draw (0,0) [partial ellipse=180:360:0.6 and 0.3];
\end{scope}
\begin{scope}[shift={(0, 1)}]
\draw [red] (0,0) [partial ellipse=180:265:0.6 and 0.3];
\draw [red] (0,0) [partial ellipse=275:360:0.6 and 0.3];
\draw [red, dashed] (0,0) [partial ellipse=0:180:0.6 and 0.3];
\end{scope}
\end{tikzpicture}}}}
= d_J\vcenter{\hbox{\scalebox{0.7}{
\begin{tikzpicture}[xscale=0.8, yscale=0.6]
\draw [red] (0,0) [partial ellipse=180:265:0.6 and 0.3];
\draw [red] (0,0) [partial ellipse=275:360:0.6 and 0.3];
\draw [red, dashed] (0,0) [partial ellipse=0:180:0.6 and 0.3];
\begin{scope}[shift={(0,2)}]
\draw (0,0) [partial ellipse=0:360:0.6 and 0.3];
\end{scope}
\draw (-0.6, 2)--(-0.6, 0) (0.6, 2)--(0.6, 0); 
\draw (-0.6, -2)--(-0.6, 0) (0.6, -2)--(0.6, 0);
\draw [brown] (0, -2.3) --(0, 1.7);
\begin{scope}[shift={(0,-2)}]
\draw [dashed](0,0) [partial ellipse=0:180:0.6 and 0.3];
\draw (0,0) [partial ellipse=180:360:0.6 and 0.3];
\end{scope}
\end{tikzpicture}}}}.
\end{align}
This implies that $P_2$ is a sub-idmepotent of $P_1$.

(3)$\Rightarrow$(2):
By the assumption, we have Equation \eqref{eq:handletwist} and this implies (2) directly.

(3)$\Rightarrow$(4): It follows directly from the assumption.

(4)$\Rightarrow$(3): 
By the assumption, we see that $\vcenter{\hbox{\scalebox{0.8}{
\begin{tikzpicture}[xscale=0.8, yscale=0.6]
\draw [line width=0.8cm] (0,0) [partial ellipse=0:180:2 and 1.5];
\draw [white, line width=0.77cm] (0,0) [partial ellipse=-0.1:180.1:2 and 1.5];
\draw [line width=0.8cm] (0,0) [partial ellipse=180:360:2 and 1.5];
\draw [white, line width=0.77cm] (0,0) [partial ellipse=178:362:2 and 1.5];
\draw [brown] (0,0) [partial ellipse=90:180:2 and 1.5];
\draw [brown] (0,0) [partial ellipse=180:270:2 and 1.5];
\draw [brown] (0,0) [partial ellipse=0:90:2 and 1.5];
\draw [brown] (0,0) [partial ellipse=270:360:2 and 1.5];
\begin{scope}[shift={(-2, 0)}]
\path [fill=brown] (0.05, -0.27) circle (0.08cm);
\draw [brown, dashed] (0,0) [partial ellipse=0:180:0.5 and 0.3];
\draw [brown] (0,0) [partial ellipse=180:360:0.5 and 0.3];
\end{scope}
\begin{scope}[shift={(2, 0)}]
\draw [red, dashed] (0,0) [partial ellipse=0:180:0.5 and 0.3];
\draw [white, line width=0.2cm] (0,0) [partial ellipse=260:275:0.5 and 0.3];
\draw [red] (0,0) [partial ellipse=180:360:0.5 and 0.3];
\end{scope}
\end{tikzpicture}}}}=d_J^3\mu$.
Hence, we obtain that 
\begin{align*}
    tr((P_1P_2-P_2)^2)=& tr(P_2)-tr(P_1P_2)\\
    =& \frac{\mu d_J^2}{\mu}- \frac{1}{d_J\mu} d_J^3\mu=0.
\end{align*}
This implies that (3) holds.

(5)$\Rightarrow$(3): It follows from Proposition \ref{prop:flatcom}.

(1)$\Rightarrow$(5): It is proved in Proposition \ref{prop:comflat}.
Finally, we see that the statements are equivalent.
\end{proof}

\begin{remark}
Note that the picture in the statement (3) of Theorem \ref{thm:flatequiv} can be reformulated as
\begin{align*}
\frac{d_J}{\mu}\vcenter{\hbox{\scalebox{0.7}{
\begin{tikzpicture}[scale=0.7]
\begin{scope}[shift={(5, 0)}]
\draw [double distance=0.57cm] (0,0) [partial ellipse=-0.1:180.1:2 and 1.5];
\draw [red] (0,0) [partial ellipse=-0.1:180.1:2 and 1.5];
\end{scope} 
\begin{scope}[shift={(2.5, 0)}]
\draw [white, line width=0.63cm] (0,0) [partial ellipse=-0.1:180.1:2 and 1.5];
\draw [line width=0.58cm, brown!20!white] (0,0) [partial ellipse=-0.1:180.1:2 and 1.5];
\end{scope} 
\begin{scope}[shift={(2.5, 0)}]
\draw [white, line width=0.63cm] (0,0) [partial ellipse=180:360:2 and 1.5];
\draw [brown!20!white, line width=0.58cm] (0,0) [partial ellipse=178:362:2 and 1.5];
\end{scope}
\begin{scope}[shift={(5, 0)}]
\draw [line width=0.6cm] (0,0) [partial ellipse=180:360:2 and 1.5];
\draw [white, line width=0.57cm] (0,0) [partial ellipse=178:362:2 and 1.5];
\draw [red] (0,0) [partial ellipse=178:362:2 and 1.5];
\end{scope}
\begin{scope}[shift={(7, 0)}]
\draw [red, dashed] (0,0) [partial ellipse=0:180:0.42 and 0.25];
\draw [red] (0,0) [partial ellipse=180:260:0.42 and 0.25];
\draw [red] (0,0) [partial ellipse=280:360:0.42 and 0.25];
\end{scope}
\end{tikzpicture}}}}
= \vcenter{\hbox{\scalebox{0.7}{
\begin{tikzpicture}[xscale=0.8, yscale=0.6]
\draw [line width=0.8cm] (0,0) [partial ellipse=0:180:2 and 1.5];
\draw [white, line width=0.77cm] (0,0) [partial ellipse=-0.1:180.1:2 and 1.5];
\draw [line width=0.8cm] (0,0) [partial ellipse=180:360:2 and 1.5];
\draw [white, line width=0.77cm] (0,0) [partial ellipse=178:362:2 and 1.5];
\draw [brown] (0,0) [partial ellipse=90:180:2 and 1.5];
\draw [brown] (0,0) [partial ellipse=180:270:2 and 1.5];
\draw [brown] (0,0) [partial ellipse=0:90:2 and 1.5];
\draw [brown] (0,0) [partial ellipse=270:360:2 and 1.5];
\begin{scope}[shift={(-2, 0)}]
\path [fill=brown] (0.05, -0.27) circle (0.08cm);
\draw [brown, dashed] (0,0) [partial ellipse=0:180:0.5 and 0.3];
\draw [brown] (0,0) [partial ellipse=180:360:0.5 and 0.3];
\end{scope}
\begin{scope}[shift={(2, 0)}]
\draw [red, dashed] (0,0) [partial ellipse=0:180:0.5 and 0.3];
\draw [white, line width=0.2cm] (0,0) [partial ellipse=260:275:0.5 and 0.3];
\draw [red] (0,0) [partial ellipse=180:360:0.5 and 0.3];
\end{scope}
\end{tikzpicture}}}}
=\sum_{j} d_J d_j
\vcenter{\hbox{\scalebox{0.7}{
\begin{tikzpicture}[xscale=0.8, yscale=0.6]
\draw [brown!20!white,line width=0.83cm] (0,0) [partial ellipse=-0.1:180.1:2 and 1.5];
\draw [brown!20!white, line width=0.83cm] (0,0) [partial ellipse=179:361:2 and 1.5];
\begin{scope}[shift={(-2, 0)}]
\draw [blue, dashed] (0,0) [partial ellipse=0:180:0.46 and 0.25];
\draw [blue] (0,0) [partial ellipse=180:360:0.46 and 0.25];
\node [below] at (0, -0.25) {\tiny $\alpha_+(X_j)$};
\end{scope}
\end{tikzpicture}}}}
.
\end{align*}

\end{remark}

Now we show that the results of Kawahigashi are true for general spherical fusion categories in the context of bi-invertible connections.
\begin{theorem}\label{thm:flatequi}
 The $\alpha$-induced bi-invertible connection is flat if and only if the Frobenius algebra $Q=J\overline{J}$ is commutative.
\end{theorem}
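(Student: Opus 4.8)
The plan is to follow the cycle of implications established in the proof of Theorem~\ref{thm:flatequiv}, observing that every step there is diagrammatic — hence valid over an arbitrary field with $\mu\neq 0$, with the dual-basis involution and ``bi-invertible'' in place of the $\ast$-involution and ``bi-unitary'' — except for the two implications $(2)\Rightarrow(1)$ and $(4)\Rightarrow(3)$, which invoke positivity of the Markov trace. Since only the implications on the cycle $(1)\Rightarrow(5)\Rightarrow(3)\Rightarrow(2)\Rightarrow(1)$ are needed for the statement ``flat $\Leftrightarrow$ $Q$ commutative'', I would retain $(1)\Rightarrow(5)$ (Proposition~\ref{prop:comflat}, whose proof uses only the Frobenius relations, the commutativity identity~\eqref{eq:comm}, and the normalization of the dual bases) and $(5)\Rightarrow(3)$ (flatness implies statement~$(4)$ by Proposition~\ref{prop:flatcom}, reformulated via Proposition~\ref{prop:kaw1}, together with the positivity-free substitute for $(4)\Rightarrow(3)$ described next), and then supply a positivity-free substitute for $(2)\Rightarrow(1)$.

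For $(4)\Rightarrow(3)$ the substitute is clean. The idempotent $P_2=\tfrac1\mu(\text{Kirby-coloured tube})$ of $\operatorname{End}(I(\1_{\mathcal{D}}))$ is the isotypic projection onto the summand $\1_{\mathcal{Z(D)}}$, which occurs in $I(\1_{\mathcal{D}})$ with multiplicity $\dim\hom_{\mathcal D}(\1_{\mathcal{D}},\1_{\mathcal{D}})=1$; hence $P_2$ is a minimal central idempotent of the semisimple algebra $\operatorname{End}(I(\1_{\mathcal{D}}))$ (semisimplicity being guaranteed by $\mu\neq 0$). Therefore $P_1P_2=P_2P_1$ lies in $P_2\operatorname{End}(I(\1_{\mathcal{D}}))P_2$, which is spanned by $P_2$, so $P_1P_2=cP_2$ for a scalar $c$; taking categorical traces and using statement~$(4)$ together with $d_J\neq 0$ forces $c=1$, i.e.\ $P_1P_2=P_2$, which is statement~$(3)$. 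The remaining link $(3)\Rightarrow(2)$ is the diagrammatic handle-slide identity~\eqref{eq:handletwist}.

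For $(2)\Rightarrow(1)$ — the main obstacle — the sum-of-squares argument $\operatorname{tr}((R-W)(R-W)^{\ast})=0$ has no analogue over a general field, so I would argue instead via the non-degeneracy of the Frobenius pairing $\lambda$ of $Q$, which is a structural fact, not a metric one. Unwinding Proposition~\ref{prop:kaw1}, flatness is the orthogonality relation $\langle\varphi\otimes\psi,\varphi'\otimes\psi'\rangle_{\theta}=\tfrac1{d_J}\delta_{\varphi\varphi'}\delta_{\psi\psi'}$ on $V:=\bigoplus_j\hom_{\mathcal D}(\alpha_+(X_j),\alpha_-(X_j))$, where $\langle\cdot,\cdot\rangle_{\theta}$ is the theta-pairing; the latter differs from the diagonal pairing coming from the dual bases precisely by the monodromy operator $\Theta$ on $V$ built from the self-braiding $c_{Q,Q}$. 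Both pairings are non-degenerate — the theta-pairing by bi-invertibility, the diagonal one by construction — so flatness is equivalent to $\Theta=\operatorname{id}_V$. Since $J$ is a tensor generator of $\mathcal C$, the space $V$ detects morphisms out of $Q$, and $\Theta=\operatorname{id}_V$ unwinds, using once more the non-degeneracy of $\lambda$, to $m\circ c_{Q,Q}=m$, i.e.\ the commutativity identity~\eqref{eq:comm}.

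The delicate point, which is where this argument must be made fully rigorous, is exactly this last translation: one must verify that the obstruction to~\eqref{eq:comm} is genuinely detected by its pairings against the basis of $V$ appearing in the flatness relation — that is, that the relevant pairing is non-degenerate rather than merely definite — so that the passage from the unitary to the arbitrary-field setting survives. Granting this, assembling $(1)\Rightarrow(5)\Rightarrow(3)\Rightarrow(2)\Rightarrow(1)$ gives the equivalence of flatness of the $\alpha$-induced bi-invertible connection with commutativity of $Q=J\overline J$.
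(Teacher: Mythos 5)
Your overall architecture is sound, and you have correctly isolated the two places in the proof of Theorem \ref{thm:flatequiv} where positivity is invoked. Your replacement for $(4)\Rightarrow(3)$ is valid and arguably cleaner than the trace computation: since $\dim\hom_{\mathcal{Z(D)}}(\1_{\mathcal{Z(D)}}, I(\1_{\mathcal{D}}))=1$, the Kirby-coloured idempotent $P_2$ is a minimal central idempotent of the semisimple algebra $\operatorname{End}(I(\1_{\mathcal{D}}))$, so $P_1P_2=cP_2$, and the trace identity of statement $(4)$ together with $d_J\neq 0$ forces $c=1$. Likewise $(1)\Rightarrow(5)$ (Proposition \ref{prop:comflat}), flatness $\Rightarrow(4)$ (Proposition \ref{prop:flatcom}), and $(3)\Rightarrow(2)$ are purely diagrammatic and survive over an arbitrary field.

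The gap is exactly where you flag it: the passage from the twist identities $(2)$/$(3)$ back to commutativity $(1)$, which is the entire content of this theorem beyond Theorem \ref{thm:flatequiv}. Your reduction of flatness to ``$\Theta=\operatorname{id}_V$'' only records that a certain double braiding acts trivially on hom-spaces after the diagram has been closed up; the assertion that this ``unwinds to $m\circ c_{Q,Q}=m$'' is precisely the morphism-level statement to be proved, and ``granting this'' leaves the decisive step open. The paper does not attempt such an unwinding. It first obtains \eqref{eq:qtwist} and \eqref{eq:tubetwist} directly from the flatness relation of Proposition \ref{prop:kaw1}, by cancelling the $\varphi$-coupons against their duals (using that $\varphi'^*(W\varphi)=0$ for all $\varphi,\varphi'$ forces $W=0$) and summing over the dual basis $\{\psi\}$; it then runs a concrete three-dimensional argument: the four-holed-sphere alterfold underlying the $4$-tensor is rewritten by Moves $1$ and $2$, tube segments carrying a $J$-circle are glued onto its corners, the tube-twist identity \eqref{eq:tubetwist} is applied at each corner to convert the $Q$-coloured circles into Kirby circles, and the resulting configuration is pushed onto the $\mathcal{D}$-coloured plane, where it reads off Equation \eqref{eq:comm}. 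If you prefer your algebraic route, you must actually carry out the unwinding, e.g.\ by showing that the trace pairing on the spaces $\hom_{\mathcal{C}}(J\overline{J},X_jX_k)$ is non-degenerate and that the closures occurring in $V$ separate the morphism $m\circ c_{Q,Q}-m$; as written, the proof is incomplete at its central point.
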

\begin{proof}
It suffices to show the commutativity under the assumption that the connection is flat.
By the flatness, we have that 
\begin{align*}
    \vcenter{\hbox{\begin{tikzpicture}[xscale=0.8, yscale=0.6]
\draw[brown] (0, 0)--(0, 1)--(1, 2);
\draw[blue, ->-=0.5] (1, 2)--(1, -2);
\draw[brown] (1, -2)..controls (0, -1) and (-1, -1)..(0, 0);
\draw[fill=white] (0.6, 1.5) rectangle (1.4, 2.3);
\draw[fill=white] (0.6, -1.5) rectangle (1.4, -2.3);
\draw (1, 1.9) node{\tiny$\varphi$};
\draw (1, -1.9) node{\tiny$\varphi^{'*}$};
\fill[white] (0, -1.2) circle(0.2);
\begin{scope}[xscale=-1]
\draw[brown] (0, 1)--(1, 2);
\draw[blue, ->-=0.5] (1, 2)--(1, -2);
\draw[brown] (1, -2)..controls (0, -1) and (-1, -1)..(0, 0);
\draw[fill=white] (0.6, 1.5) rectangle (1.4, 2.3);
\draw[fill=white] (0.6, -1.5) rectangle (1.4, -2.3);
\draw (1, 1.9) node{\tiny$\psi$};
\draw (1, -1.9) node{\tiny$\psi^{*}$};
\end{scope}
\end{tikzpicture}}}
=\vcenter{\hbox{\begin{tikzpicture}[scale=0.8]
\draw[brown] (0, 0)--(0, 1)--(1, 2);
\draw[blue, ->-=0.5] (1, 2)--(1, -1);
\draw[brown] (1, -1)--(0, 0);
\draw[fill=white] (0.6, 1.5) rectangle (1.4, 2.3);
\draw[fill=white] (0.6, -1.3) rectangle (1.4, -0.5);
\draw (1, 1.9) node{\tiny$\varphi$};
\draw (1, -.9) node{\tiny$\varphi^{'*}$};
\begin{scope}[xscale=-1]
\draw[brown] (0, 1)--(1, 2);
\draw[blue, ->-=0.5] (1, 2)--(1, -1);
\draw[brown] (1, -1)--(0, 0);
\draw[fill=white] (0.6, 1.5) rectangle (1.4, 2.3);
\draw[fill=white] (0.6, -1.3) rectangle (1.4, -0.5);
\draw (1, 1.9) node{\tiny$\psi$};
\draw (1, -.9) node{\tiny$\psi^{*}$};
\end{scope}
\end{tikzpicture}}}
\end{align*}
Note that if $\varphi^{'*}(W\varphi)=0$ for all $\varphi, \varphi'$, then $W=0$.
We have that 
\begin{align*}
    \vcenter{\hbox{\begin{tikzpicture}[xscale=0.8, yscale=0.6]
\draw[brown] (0, 0)--(0, 1)--(1, 2);
\draw[brown] (1, -2)..controls (0, -1) and (-1, -1)..(0, 0);
\fill[white] (0, -1.2) circle(0.2);
\begin{scope}[xscale=-1]
\draw[brown] (0, 1)--(1, 2);
\draw[blue, ->-=0.5] (1, 2)--(1, -2);
\draw[brown] (1, -2)..controls (0, -1) and (-1, -1)..(0, 0);
\draw[fill=white] (0.6, 1.5) rectangle (1.4, 2.3);
\draw[fill=white] (0.6, -1.5) rectangle (1.4, -2.3);
\draw (1, 1.9) node{\tiny$\psi$};
\draw (1, -1.9) node{\tiny$\psi^{*}$};
\end{scope}
\end{tikzpicture}}}
=\vcenter{\hbox{\begin{tikzpicture}[scale=0.8]
\draw[brown] (0, 0)--(0, 1)--(1, 2);
\draw[brown] (1, -1)--(0, 0);
\begin{scope}[xscale=-1]
\draw[brown] (0, 1)--(1, 2);
\draw[blue, ->-=0.5] (1, 2)--(1, -1);
\draw[brown] (1, -1)--(0, 0);
\draw[fill=white] (0.6, 1.5) rectangle (1.4, 2.3);
\draw[fill=white] (0.6, -1.3) rectangle (1.4, -0.5);
\draw (1, 1.9) node{\tiny$\psi$};
\draw (1, -.9) node{\tiny$\psi^{*}$};
\end{scope}
\end{tikzpicture}}}
\end{align*}
By taking sum of the dual base, we have that Equation \eqref{eq:qtwist} is true.
Then we see that Equation \eqref{eq:tubetwist} is true.
Note that by Move 1 and Move 2, we have that
\begin{align*}
\vcenter{\hbox{\scalebox{0.7}{
\begin{tikzpicture}[xscale=0.8, yscale=0.6]
\draw [line width=0.73cm] (0, 4.5)--(0, 6.5);
\draw [brown!20!white, line width=0.7cm] (0, 4.5)--(0, 6.5);
\draw [line width=0.63cm] (-1.5, 0) --(1.5, 3);
\draw [brown!20!white, line width=0.6cm] (-1.5, 0) --(1.5, 3);
\draw [white, line width=0.72cm] (1.5, 0) --(-1.5, 3);
\draw [line width=0.63cm] (1.5, 0) --(-1.5, 3);
\draw [brown!20!white, line width=0.6cm] (1.5, 0) --(-1.5, 3);
\draw [line width=0.63cm] (0,2.6) [partial ellipse=0:180:1.38 and 1.5];
\draw [brown!20!white, line width=0.6cm] (0,2.6) [partial ellipse=0:180:1.38 and 1.5];
\draw [brown!20!white, line width=0.7cm] (0, 4.5)--(0, 5.5);
\draw [brown!20!white, line width=0.6cm] (0.75,2.25) --(1.5, 3);
\draw [brown!20!white, line width=0.6cm] (-0.75,2.25) --(-1.5, 3);
\begin{scope}[shift={(0,6.5)}]
\path [fill=brown!20!white] (0,0) [partial ellipse=0:360:0.45 and 0.2];
\draw (0,0) [partial ellipse=0:360:0.45 and 0.2];
\end{scope}
\path [fill=white] (-2, 0.35) rectangle (2,-1);
\begin{scope}[shift={(-1.2,0.35)}]
\path [fill=brown!20!white] (0,0) [partial ellipse=0:360:0.6 and 0.2];
\draw [dashed] (0,0) [partial ellipse=0:180:0.6 and 0.2];
\draw (0,0) [partial ellipse=180:360:0.6 and 0.2];
\end{scope}
\begin{scope}[shift={(1.2,0.35)}]
\path [fill=brown!20!white] (0,0) [partial ellipse=0:360:0.6 and 0.2];
\draw [dashed] (0,0) [partial ellipse=0:180:0.6 and 0.2];
\draw (0,0) [partial ellipse=180:360:0.6 and 0.2];
\end{scope}
\end{tikzpicture}}}}
=
\vcenter{\hbox{\scalebox{0.7}{
\begin{tikzpicture}[xscale=0.8, yscale=0.6]
\path [fill=brown!20!white] (-0.6, 2)--(-0.6, 0) .. controls +(0, -1) and +(0, 1).. (-1.5, -2)--(-1.5, -4) -- (1.5, -4)--(1.5, -2) .. controls +(0, 1) and +(0, -1).. (0.6, 0)--(0.6, 2)--(-0.6, 2) ;
\begin{scope}[shift={(0,2)}]
\path [fill=brown!20!white] (0,0) [partial ellipse=0:360:0.6 and 0.3];
\draw (0,0) [partial ellipse=0:360:0.6 and 0.3];
\end{scope}
\draw (-0.6, 2)--(-0.6, 0) .. controls +(0, -1) and +(0, 1).. (-1.5, -2)--(-1.5, -4);
\draw (0.6, 2)--(0.6, 0).. controls +(0, -1) and +(0, 1).. (1.5, -2)--(1.5, -4); 
\path [fill=white] (-0.3, -4)--(-0.3, -2).. controls +(0, 0.8) and +(0, 0.8) .. (0.3, -2)--(0.3, -4)--(-0.3, -4);
\draw (-0.3, -4)--(-0.3, -2).. controls +(0, 0.8) and +(0, 0.8) .. (0.3, -2)--(0.3, -4);
\begin{scope}[shift={(-0.9,-4)}]
\path [fill=brown!20!white] (0,0) [partial ellipse=180:360:0.6 and 0.3];
\draw [dashed](0,0) [partial ellipse=0:180:0.6 and 0.3];
\draw (0,0) [partial ellipse=180:360:0.6 and 0.3];
\end{scope}
\begin{scope}[shift={(0.9,-4)}]
\path [fill=brown!20!white] (0,0) [partial ellipse=180:360:0.6 and 0.3];
\draw [dashed](0,0) [partial ellipse=0:180:0.6 and 0.3];
\draw (0,0) [partial ellipse=180:360:0.6 and 0.3];
\end{scope}
\end{tikzpicture}}}}.
\end{align*}
By gluing $\vcenter{\hbox{\scalebox{0.7}{
\begin{tikzpicture}[xscale=0.8, yscale=0.6]
\path  [fill=brown!20!white] (-0.6, -2) rectangle (0.6, 1);
\path  [fill=white] (-0.6, -1) rectangle (0.6, 1);
\begin{scope}[shift={(0,-1)}]
\path  [fill=white] (0,0) [partial ellipse=180:360:0.6 and 0.3];
\end{scope}
\path [fill=brown!20!white] (-0.2, -1) rectangle (0.2, 0.8);
\begin{scope}[shift={(0,1)}]
\path  [fill=white] (0,0) [partial ellipse=0:360:0.6 and 0.3];
\draw (0,0) [partial ellipse=0:360:0.6 and 0.3];
\end{scope}
\draw (-0.6, 1)--(-0.6, 0) (0.6, 1)--(0.6, 0); 
\draw (-0.6, -2)--(-0.6, 0) (0.6, -2)--(0.6, 0);
\begin{scope}[shift={(0,-2)}]
\path  [fill=brown!20!white] (0,0) [partial ellipse=180:360:0.6 and 0.3];
\draw [dashed](0,0) [partial ellipse=0:180:0.6 and 0.3];
\draw (0,0) [partial ellipse=180:360:0.6 and 0.3];
\end{scope}
\begin{scope}[shift={(0,-1)}]
\draw [dashed, blue](0,0) [partial ellipse=0:180:0.6 and 0.3];
\draw [blue] (0,0) [partial ellipse=180:360:0.6 and 0.3];
\end{scope}
\path [fill=brown!20!white] (-0.2, -1.4) rectangle (0.2, 0);
\draw [blue, -<-=0.75] (-0.2, 0.7)--(-0.2, -1.3) (0.2, 0.7)--(0.2, -1.3) node[black, pos=0.5, right]{\tiny $J$};
\end{tikzpicture}}}}$ to each corner and create a circle labelled by $J$ on the surface, we obtain that
\begin{align*}
    \vcenter{\hbox{\scalebox{0.7}{
\begin{tikzpicture}[xscale=0.8, yscale=0.6]
\draw [line width=0.73cm] (0, 4.5)--(0, 6.5);
\draw [white, line width=0.7cm] (0, 4.5)--(0, 6.5);
\draw [line width=0.63cm] (-1.5, 0) --(1.5, 3);
\draw [white, line width=0.6cm] (-1.5, 0) --(1.5, 3);
\draw [brown] (-1.5, 0) --(1.5, 3);
\draw [white, line width=0.72cm] (1.5, 0) --(-1.5, 3);
\draw [line width=0.63cm] (1.5, 0) --(-1.5, 3);
\draw [white, line width=0.6cm] (1.5, 0) --(-1.5, 3);
\draw [line width=0.63cm] (0,2.6) [partial ellipse=0:180:1.38 and 1.5];
\draw [white, line width=0.6cm] (0,2.6) [partial ellipse=0:180:1.38 and 1.5];
\draw [white, line width=0.7cm] (0, 4.5)--(0, 5.5);
\draw [white, line width=0.6cm] (0.75,2.25) --(1.5, 3);
\draw [brown] (0.75,2.25) --(1.5, 3);
\draw [white, line width=0.6cm] (-0.75,2.25) --(-1.5, 3);
\draw [brown] (0, 4.2)--(0, 6.3);
\draw [brown] (1.5, 0) --(-1.5, 3);
\draw [brown] (0,2.7) [partial ellipse=10:170:1.48 and 1.5];
\begin{scope}[shift={(0,6.5)}]
\draw (0,0) [partial ellipse=0:360:0.45 and 0.2];
\end{scope}
\draw [brown,dashed] (0,6) [partial ellipse=0:180:0.45 and 0.2];
\draw [brown] (0,6) [partial ellipse=180:360:0.45 and 0.2];
\path [fill=white] (-2, 0.35) rectangle (2,-1);
\begin{scope}[shift={(-1.2,0.35)}]
\draw [dashed] (0,0) [partial ellipse=0:180:0.6 and 0.2];
\draw (0,0) [partial ellipse=180:360:0.6 and 0.2];
\end{scope}
\begin{scope}[shift={(1.2,0.35)}]
\draw [dashed] (0,0) [partial ellipse=0:180:0.6 and 0.2];
\draw (0,0) [partial ellipse=180:360:0.6 and 0.2];
\end{scope}
\begin{scope}[shift={(0.7,0.85)}]
\draw [brown, dashed] (0,0) [partial ellipse=0:180:0.6 and 0.2];
\draw [brown] (0,0) [partial ellipse=180:360:0.6 and 0.2];
\end{scope}
\begin{scope}[shift={(-0.7,0.85)}]
\draw [brown, dashed] (0,0) [partial ellipse=0:180:0.6 and 0.2];
\draw [brown] (0,0) [partial ellipse=180:360:0.6 and 0.2];
\end{scope}
\end{tikzpicture}}}}
=
\vcenter{\hbox{\scalebox{0.7}{
\begin{tikzpicture}[xscale=0.8, yscale=0.6]
\begin{scope}[shift={(0,2)}]
\draw (0,0) [partial ellipse=0:360:0.6 and 0.3];
\end{scope}
\draw (-0.6, 2)--(-0.6, 0) .. controls +(0, -1) and +(0, 1).. (-1.5, -2)--(-1.5, -4);
\draw (0.6, 2)--(0.6, 0).. controls +(0, -1) and +(0, 1).. (1.5, -2)--(1.5, -4); 
\draw (-0.3, -4)--(-0.3, -2).. controls +(0, 0.8) and +(0, 0.8) .. (0.3, -2)--(0.3, -4);
\draw [brown] (0, 1.7)--(0, 0) .. controls +(0, -1) and +(0, 1).. (-0.9, -2)--(-0.9, -4.3);
\draw [brown] (0, 1.7)--(0, 0) .. controls +(0, -1) and +(0, 1).. (0.9, -2)--(0.9, -4.3);
\path [fill=brown] (0, -0.2) circle (0.06cm);
\begin{scope}[shift={(-0.9,-4)}]
\draw [dashed](0,0) [partial ellipse=0:180:0.6 and 0.3];
\draw (0,0) [partial ellipse=180:360:0.6 and 0.3];
\end{scope}
\begin{scope}[shift={(0.9,-4)}]
\draw [dashed](0,0) [partial ellipse=0:180:0.6 and 0.3];
\draw (0,0) [partial ellipse=180:360:0.6 and 0.3];
\end{scope}
\begin{scope}[shift={(0.9,-3)}]
\draw [brown, dashed](0,0) [partial ellipse=0:180:0.6 and 0.3];
\draw [brown] (0,0) [partial ellipse=180:360:0.6 and 0.3];
\end{scope}
\begin{scope}[shift={(-0.9,-3)}]
\draw [brown, dashed](0,0) [partial ellipse=0:180:0.6 and 0.3];
\draw [brown] (0,0) [partial ellipse=180:360:0.6 and 0.3];
\end{scope}
\begin{scope}[shift={(0,1)}]
\draw [brown, dashed](0,0) [partial ellipse=0:180:0.6 and 0.3];
\draw [brown] (0,0) [partial ellipse=180:360:0.6 and 0.3];
\end{scope}
\end{tikzpicture}}}}
\end{align*}
Now we glue $\vcenter{\hbox{\scalebox{0.7}{
\begin{tikzpicture}[xscale=0.8, yscale=0.6]
\draw [red] (0,0) [partial ellipse=180:265:0.6 and 0.3];
\draw [red] (0,0) [partial ellipse=275:360:0.6 and 0.3];
\draw [red, dashed] (0,0) [partial ellipse=0:180:0.6 and 0.3];
\begin{scope}[shift={(0,2)}]
\draw (0,0) [partial ellipse=0:360:0.6 and 0.3];
\end{scope}
\draw (-0.6, 2)--(-0.6, 0) (0.6, 2)--(0.6, 0); 
\draw (-0.6, -2)--(-0.6, 0) (0.6, -2)--(0.6, 0);
\draw [brown] (0, -2.3) --(0, 1.7);
\begin{scope}[shift={(0,-2)}]
\draw [dashed](0,0) [partial ellipse=0:180:0.6 and 0.3];
\draw (0,0) [partial ellipse=180:360:0.6 and 0.3];
\end{scope}
\end{tikzpicture}}}}$ to each corner, apply Equation \eqref{eq:tubetwist}, we obtain that 
\begin{align*}
    \vcenter{\hbox{\scalebox{0.7}{
\begin{tikzpicture}[xscale=0.8, yscale=0.6]
\draw [line width=0.73cm] (0, 4.5)--(0, 6.5);
\draw [white, line width=0.7cm] (0, 4.5)--(0, 6.5);
\draw [line width=0.63cm] (-1.5, 0) --(1.5, 3);
\draw [white, line width=0.6cm] (-1.5, 0) --(1.5, 3);
\draw [brown] (-1.5, 0) --(1.5, 3);
\draw [white, line width=0.72cm] (1.5, 0) --(-1.5, 3);
\draw [line width=0.63cm] (1.5, 0) --(-1.5, 3);
\draw [white, line width=0.6cm] (1.5, 0) --(-1.5, 3);
\draw [line width=0.63cm] (0,2.6) [partial ellipse=0:180:1.38 and 1.5];
\draw [white, line width=0.6cm] (0,2.6) [partial ellipse=0:180:1.38 and 1.5];
\draw [white, line width=0.7cm] (0, 4.5)--(0, 5.5);
\draw [white, line width=0.6cm] (0.75,2.25) --(1.5, 3);
\draw [brown] (0.75,2.25) --(1.5, 3);
\draw [white, line width=0.6cm] (-0.75,2.25) --(-1.5, 3);
\draw [brown] (0, 4.2)--(0, 6.3);
\draw [brown] (1.5, 0) --(-1.5, 3);
\draw [brown] (0,2.7) [partial ellipse=10:170:1.48 and 1.5];
\begin{scope}[shift={(0,6.5)}]
\draw (0,0) [partial ellipse=0:360:0.45 and 0.2];
\end{scope}
\draw [red,dashed] (0,6) [partial ellipse=0:180:0.45 and 0.2];
\draw [red] (0,6) [partial ellipse=180:360:0.45 and 0.2];
\path [fill=white] (-2, 0.35) rectangle (2,-2);
\begin{scope}[shift={(-1.2,0.35)}]
\draw [dashed] (0,0) [partial ellipse=0:180:0.6 and 0.2];
\draw (0,0) [partial ellipse=180:360:0.6 and 0.2];
\end{scope}
\begin{scope}[shift={(1.2,0.35)}]
\draw [dashed] (0,0) [partial ellipse=0:180:0.6 and 0.2];
\draw (0,0) [partial ellipse=180:360:0.6 and 0.2];
\end{scope}
\begin{scope}[shift={(0.7,0.85)}]
\draw [red, dashed] (0,0) [partial ellipse=0:180:0.6 and 0.2];
\draw [red] (0,0) [partial ellipse=180:360:0.6 and 0.2];
\end{scope}
\begin{scope}[shift={(-0.7,0.85)}]
\draw [red, dashed] (0,0) [partial ellipse=0:180:0.6 and 0.2];
\draw [red] (0,0) [partial ellipse=180:360:0.6 and 0.2];
\end{scope}
\end{tikzpicture}}}}
=
\vcenter{\hbox{\scalebox{0.7}{
\begin{tikzpicture}[xscale=0.8, yscale=0.6]
\begin{scope}[shift={(0,2)}]
\draw (0,0) [partial ellipse=0:360:0.6 and 0.3];
\end{scope}
\draw (-0.6, 2)--(-0.6, 0) .. controls +(0, -1) and +(0, 1).. (-1.5, -2)--(-1.5, -4);
\draw (0.6, 2)--(0.6, 0).. controls +(0, -1) and +(0, 1).. (1.5, -2)--(1.5, -4); 
\draw (-0.3, -4)--(-0.3, -2).. controls +(0, 0.8) and +(0, 0.8) .. (0.3, -2)--(0.3, -4);
\draw [brown] (0, 1.7)--(0, 0) .. controls +(0, -1) and +(0, 1).. (-0.9, -2)--(-0.9, -4.3);
\draw [brown] (0, 1.7)--(0, 0) .. controls +(0, -1) and +(0, 1).. (0.9, -2)--(0.9, -4.3);
\path [fill=brown] (0, -0.2) circle (0.06cm);
\begin{scope}[shift={(-0.9,-4)}]
\draw [dashed](0,0) [partial ellipse=0:180:0.6 and 0.3];
\draw (0,0) [partial ellipse=180:360:0.6 and 0.3];
\end{scope}
\begin{scope}[shift={(0.9,-4)}]
\draw [dashed](0,0) [partial ellipse=0:180:0.6 and 0.3];
\draw (0,0) [partial ellipse=180:360:0.6 and 0.3];
\end{scope}
\begin{scope}[shift={(0.9,-3)}]
\draw [red, dashed](0,0) [partial ellipse=0:180:0.6 and 0.3];
\draw [red] (0,0) [partial ellipse=180:360:0.6 and 0.3];
\end{scope}
\begin{scope}[shift={(-0.9,-3)}]
\draw [red, dashed](0,0) [partial ellipse=0:180:0.6 and 0.3];
\draw [red] (0,0) [partial ellipse=180:360:0.6 and 0.3];
\end{scope}
\begin{scope}[shift={(0,1)}]
\draw [red, dashed](0,0) [partial ellipse=0:180:0.6 and 0.3];
\draw [red] (0,0) [partial ellipse=180:360:0.6 and 0.3];
\end{scope}
\end{tikzpicture}}}}
\end{align*}

Now we attach it to a plane, we see that Equation \eqref{eq:comm} is true, i.e. the Frobenius algebra is commutative.

\end{proof}

\section{Topological Full Center}
The full center studied in \cite{KR08} of a Frobenius algebra is nothing but a $\mathcal{D}$-color tube in the alterfold TQFT.
We shall describe the interpretation as follows.
Let $\cC$ be a modular fusion category and $Q$ Frobenius algebra in $\cC$, recall that by M\"uger's construction, we obtain a Morita context with the dual fusion category $\cD$ equivalent to the opposite category of the category of $Q$-$Q$-bimodules in $\cC$, which is denoted by $(_Q\cC_Q)^{op}$. 
In this case, the full center $Z(Q)$ of $Q$ is defined to be the object $\displaystyle Z(Q)=\bigoplus_{j,k=1}^{r} z_{kj} X_k \boxtimes X_j^*$ in $\mathcal{Z(C)} = \cC \boxtimes\cC^{op}$, where $(z_{kj})_{k,j}$ is the modular invariant matrix associated to $Q$ \cite{FFRS06}. 
In the alterfold TQFT, the full center $Z(Q)$ is depicted as
\begin{align}\label{eq:fullcenter}
 \sum_{j,k=1}^r   \frac{1}{\mu^2} \vcenter{\hbox{\scalebox{0.7}{
\begin{tikzpicture}[scale=0.7]
\draw [line width=0.6cm, brown!20!white] (0,0) [partial ellipse=-0.1:180.1:2 and 1.5];
\begin{scope}[shift={(2.5, 0)}]
\draw [line width=0.6cm] (0,0) [partial ellipse=0:180:2 and 1.5];
\draw [white, line width=0.57cm] (0,0) [partial ellipse=-0.1:180.1:2 and 1.5];
\draw [blue] (0,0) [partial ellipse=0:180:2.15 and 1.65];
\draw [blue] (0,0) [partial ellipse=0:180:1.85 and 1.35];
\end{scope} 
\begin{scope}[shift={(2.5, 0)}]
\draw [line width=0.6cm] (0,0) [partial ellipse=180:360:2 and 1.5];
\draw [white, line width=0.57cm] (0,0) [partial ellipse=178:362:2 and 1.5];
\draw [blue, -<-=0.5] (0,0) [partial ellipse=178:362:2.15 and 1.65] node[black, pos=0.7,below ] {\tiny $X_j$}; 
\draw [blue, ->-=0.5] (0,0) [partial ellipse=178:362:1.85 and 1.35] node[black, pos=0.7,above ] {\tiny $X_k$};
\end{scope}
\draw [line width=0.6cm, brown!20!white] (0,0) [partial ellipse=180:360:2 and 1.5];
\begin{scope}[shift={(4.5, 0)}]
\draw [red, dashed](0,0) [partial ellipse=0:180:0.4 and 0.25] ;
\draw [white, line width=4pt] (0,0) [partial ellipse=290:270:0.4 and 0.25];
\draw [red] (0,0) [partial ellipse=260:360:0.4 and 0.25];
\draw [red] (0,0) [partial ellipse=180:230:0.4 and 0.25];
\end{scope}
\end{tikzpicture}}}}
\frac{1}{\mu}
\vcenter{\hbox{\scalebox{0.7}{
\begin{tikzpicture}[xscale=0.8, yscale=0.6]
\begin{scope}[shift={(0,3)}]
\draw (0,0) [partial ellipse=0:360:0.6 and 0.3];
\end{scope}
\path [fill=white] (-0.6, 0) rectangle (0.6, 2.7);
\draw [blue, ->-=0.5] (-0.2, 2.8)--(-0.2, 0) node [left, pos=0.6] {\tiny $X_j$} ;
\draw [blue, -<-=0.5] (0.2, 2.8)--(0.2, 0) node [right, pos=0.6] {\tiny $X_k$}; 
\draw (-0.6, 3)--(-0.6, 0) (0.6, 3)--(0.6, 0); 
\draw [blue] (-0.2, -3.2)--(-0.2, 0) (0.2, -3.2)--(0.2, 0);
\draw (-0.6, -3)--(-0.6, 0) (0.6, -3)--(0.6, 0);
\begin{scope}[shift={(0,-3)}]
\draw [dashed](0,0) [partial ellipse=0:180:0.6 and 0.3];
\draw (0,0) [partial ellipse=180:360:0.6 and 0.3];
\end{scope}
\draw [red, dashed](0,0) [partial ellipse=0:180:0.6 and 0.3]; 
\draw [white, line width=4pt]  (0,0) [partial ellipse=220:270:0.6 and 0.3];
\draw [red]  (0,0) [partial ellipse=180:280:0.6 and 0.3];
\draw [red]  (0,0) [partial ellipse=300:360:0.6 and 0.3];
\end{tikzpicture}}}}
\end{align}
Applying the Kirby color, we see the full center is isomorphic to the following morphism
\begin{align*}
\vcenter{\hbox{\scalebox{0.6}{
\begin{tikzpicture}[xscale=0.8, yscale=0.6]
\path [fill=brown!20!white](-0.65, -3) rectangle (0.65, 3);
\begin{scope}[shift={(0,3)}]
\path [fill=brown!20!white] (0,0) [partial ellipse=0:180:0.6 and 0.3];
\draw (0,0) [partial ellipse=0:360:0.6 and 0.3];
\end{scope}
\draw (-0.6, 3)--(-0.6, 0) (0.6, 3)--(0.6, 0); 
\draw (-0.6, -3)--(-0.6, 0) (0.6, -3)--(0.6, 0);
\begin{scope}[shift={(0,-3)}]
\path [fill=brown!20!white] (0,0) [partial ellipse=180:360:0.6 and 0.3];
\draw [dashed](0,0) [partial ellipse=0:180:0.6 and 0.3];
\draw (0,0) [partial ellipse=180:360:0.6 and 0.3];
\end{scope}
\end{tikzpicture}}}}
\end{align*}
via $\vcenter{\hbox{\scalebox{0.7}{
\begin{tikzpicture}[xscale=0.8, yscale=0.6]
\path  [fill=brown!20!white] (-0.6, -2) rectangle (0.6, 1);
\path  [fill=white] (-0.6, -1) rectangle (0.6, 1);
\begin{scope}[shift={(0,-1)}]
\path  [fill=white] (0,0) [partial ellipse=180:360:0.6 and 0.3];
\end{scope}
\path [fill=brown!20!white] (-0.2, -1) rectangle (0.2, 0.8);
\begin{scope}[shift={(0,1)}]
\path  [fill=white] (0,0) [partial ellipse=0:360:0.6 and 0.3];
\draw (0,0) [partial ellipse=0:360:0.6 and 0.3];
\end{scope}
\draw (-0.6, 1)--(-0.6, 0) (0.6, 1)--(0.6, 0); 
\draw (-0.6, -2)--(-0.6, 0) (0.6, -2)--(0.6, 0);
\begin{scope}[shift={(0,-2)}]
\path  [fill=brown!20!white] (0,0) [partial ellipse=180:360:0.6 and 0.3];
\draw [dashed](0,0) [partial ellipse=0:180:0.6 and 0.3];
\draw (0,0) [partial ellipse=180:360:0.6 and 0.3];
\end{scope}
\begin{scope}[shift={(0,-1)}]
\draw [dashed, blue](0,0) [partial ellipse=0:180:0.6 and 0.3];
\draw [blue] (0,0) [partial ellipse=180:360:0.6 and 0.3];
\end{scope}
\path [fill=brown!20!white] (-0.2, -1.4) rectangle (0.2, 0);
\draw [blue, -<-=0.75] (-0.2, 0.7)--(-0.2, -1.3) (0.2, 0.7)--(0.2, -1.3) node[black, pos=0.5, right]{\tiny $J$};
\end{tikzpicture}}}}$.
We shall call it as the topological full center of $Q$ in the alterfold TQFT.
Note that the definition of a Frobenius algebra of a modular fusion category in this article is the same as that in \cite{Mug03b}, which are automatically nondegenerate in the sense of \cite[Definition 2.1]{KR08}. 

Two algebras $Q$ and $Q'$ in a monoidal category $\cC$ are Morita equivalent if there exist an $Q$-$Q'$ bimodule $M \in {}_Q\cC_{Q'}$ and an $Q'$-$Q$-bimodule $N\in {}_{Q'}\cC_Q$, such that $Q\otimes_{Q'} N\cong Q$ as $Q$-$Q$-bimodules and $N\otimes_{Q} M\cong Q'$ as $Q'$-$Q'$-bimodules. 
In \cite{KR08}, Kong and Runkel showed that if $\cC$ is a modular fusion category and $Q$, $Q'$ are Frobenius algebras in $\cC$, then $Q$ and $Q'$ are Morita equivalent if and only if their full centers $Z(Q)$ and $Z(Q')$ are isomorphic as algebras in $\mathcal{Z(C)}$. 
In this section, we shall present a simple topological proof in the alterfold TQFT.

The Morita equivalence of algebras above admits the following graphical calculus in alterfold TQFT. 
Let $Q$, $Q'$ be Morita equivalent Frobenius algebras in a modular fusion category $\cC$ with bimodules $M \in {}_Q\cC_{Q'}$ and $N\in {}_{Q'}\cC_Q$ as above. 
Then in the multi-colored tube category, we color surfaces by $\mathcal{D} = {}_{Q}\cC_{Q}$ and $\mathcal{E}={}_{Q'}\cC_{Q'}$, and place tensor diagrams of $\cD$ and $\cE$ in the corresponding colored region. 
The bimodule $M$ is drawn as a domain wall from the $\mathcal{D}$-colored region to the $\mathcal{E}$-colored region when reading from top to bottom; and similarly, $N$ is drawn as a domain wall from the $\cE$-colored region to $\cD$-colored region. The condition that $M\otimes_{Q'} N\cong Q$ and $N \otimes_Q M \cong Q'$ implies the following equalities of alterfolds:
\begin{equation}\label{eq:Mrt}
\vcenter{\hbox{\scalebox{0.5}{
\begin{tikzpicture}[yscale=0.7 ]
\path [fill=brown!20!white](-0.65, -3) rectangle (0.65, 3);
\begin{scope}[shift={(0,3)}]
\path [fill=brown!20!white] (0,0) [partial ellipse=0:180:0.6 and 0.3];
\draw (0,0) [partial ellipse=0:360:0.6 and 0.3];
\end{scope}
\draw (-0.6, 3)--(-0.6, 0) (0.6, 3)--(0.6, 0); 
\draw (-0.6, -3)--(-0.6, 0) (0.6, -3)--(0.6, 0);
\begin{scope}[shift={(0,-3)}]
\path [fill=brown!20!white] (0,0) [partial ellipse=180:360:0.6 and 0.3];
\draw [dashed](0,0) [partial ellipse=0:180:0.6 and 0.3];
\draw (0,0) [partial ellipse=180:360:0.6 and 0.3];
\end{scope}
\draw[blue, ->-=0.75] (0, 0.8) [partial ellipse=0:360:0.6 and 0.3];
\draw[blue, ->-=0.75] (0, -0.8) [partial ellipse=0:360:0.6 and 0.3];
\draw (0.8, 0.8) node{\tiny$M$};
\draw (0.8, -0.8) node{\tiny$N$};
\draw (0, 1.6) node{\tiny $\mathcal{D}$};
\draw (0, -0.1) node{\tiny $\mathcal{E}$};
\draw (0, -2) node{\tiny $\mathcal{D}$};
\end{tikzpicture}}}}
=
\vcenter{\hbox{\scalebox{0.6}{
\begin{tikzpicture}[yscale=0.7 ]
\path [fill=brown!20!white](-0.65, -3) rectangle (0.65, 3);
\begin{scope}[shift={(0,3)}]
\path [fill=brown!20!white] (0,0) [partial ellipse=0:180:0.6 and 0.3];
\draw (0,0) [partial ellipse=0:360:0.6 and 0.3];
\end{scope}
\draw (-0.6, 3)--(-0.6, 0) (0.6, 3)--(0.6, 0); 
\draw (-0.6, -3)--(-0.6, 0) (0.6, -3)--(0.6, 0);
\begin{scope}[shift={(0,-3)}]
\path [fill=brown!20!white] (0,0) [partial ellipse=180:360:0.6 and 0.3];
\draw [dashed](0,0) [partial ellipse=0:180:0.6 and 0.3];
\draw (0,0) [partial ellipse=180:360:0.6 and 0.3];
\end{scope}
\draw (0, 0) node{\tiny $\mathcal{D}$};
\end{tikzpicture}}}}\,,\quad
\vcenter{\hbox{\scalebox{0.6}{
\begin{tikzpicture}[yscale=0.7 ]
\path [fill=brown!20!white](-0.65, -3) rectangle (0.65, 3);
\begin{scope}[shift={(0,3)}]
\path [fill=brown!20!white] (0,0) [partial ellipse=0:180:0.6 and 0.3];
\draw (0,0) [partial ellipse=0:360:0.6 and 0.3];
\end{scope}
\draw (-0.6, 3)--(-0.6, 0) (0.6, 3)--(0.6, 0); 
\draw (-0.6, -3)--(-0.6, 0) (0.6, -3)--(0.6, 0);
\begin{scope}[shift={(0,-3)}]
\path [fill=brown!20!white] (0,0) [partial ellipse=180:360:0.6 and 0.3];
\draw [dashed](0,0) [partial ellipse=0:180:0.6 and 0.3];
\draw (0,0) [partial ellipse=180:360:0.6 and 0.3];
\end{scope}
\draw[blue, ->-=0.75] (0, 0.8) [partial ellipse=0:360:0.6 and 0.3];
\draw[blue, ->-=0.75] (0, -0.8) [partial ellipse=0:360:0.6 and 0.3];
\draw (0.8, 0.8) node{\tiny$N$};
\draw (0.8, -0.8) node{\tiny$M$};
\draw (0, 1.6) node{\tiny $\mathcal{E}$};
\draw (0, -0.1) node{\tiny $\mathcal{D}$};
\draw (0, -2) node{\tiny $\mathcal{E}$};
\end{tikzpicture}}}}
=
\vcenter{\hbox{\scalebox{0.6}{
\begin{tikzpicture}[yscale=0.7 ]
\path [fill=brown!20!white](-0.65, -3) rectangle (0.65, 3);
\begin{scope}[shift={(0,3)}]
\path [fill=brown!20!white] (0,0) [partial ellipse=0:180:0.6 and 0.3];
\draw (0,0) [partial ellipse=0:360:0.6 and 0.3];
\end{scope}
\draw (-0.6, 3)--(-0.6, 0) (0.6, 3)--(0.6, 0); 
\draw (-0.6, -3)--(-0.6, 0) (0.6, -3)--(0.6, 0);
\begin{scope}[shift={(0,-3)}]
\path [fill=brown!20!white] (0,0) [partial ellipse=180:360:0.6 and 0.3];
\draw [dashed](0,0) [partial ellipse=0:180:0.6 and 0.3];
\draw (0,0) [partial ellipse=180:360:0.6 and 0.3];
\end{scope}
\draw (0, 0) node{\tiny $\mathcal{E}$};
\end{tikzpicture}}}}\,,
\end{equation}
which simply means that the morphism 
$\vcenter{\hbox{\scalebox{0.5}{
\begin{tikzpicture}[yscale=0.7 ]
\path [fill=brown!20!white](-0.65, -3) rectangle (0.65, 3);
\begin{scope}[shift={(0,3)}]
\path [fill=brown!20!white] (0,0) [partial ellipse=0:180:0.6 and 0.3];
\draw (0,0) [partial ellipse=0:360:0.6 and 0.3];
\end{scope}
\draw (-0.6, 3)--(-0.6, 0) (0.6, 3)--(0.6, 0); 
\draw (-0.6, -3)--(-0.6, 0) (0.6, -3)--(0.6, 0);
\begin{scope}[shift={(0,-3)}]
\path [fill=brown!20!white] (0,0) [partial ellipse=180:360:0.6 and 0.3];
\draw [dashed](0,0) [partial ellipse=0:180:0.6 and 0.3];
\draw (0,0) [partial ellipse=180:360:0.6 and 0.3];
\end{scope}
\draw[blue, ->-=0.75] (0, 0) [partial ellipse=0:360:0.6 and 0.3];
\draw (0, -0.6) node{\tiny$M$};
\draw (0, 1.6) node{\tiny $\mathcal{D}$};
\draw (0, -1.6) node{\tiny $\mathcal{E}$};
\end{tikzpicture}}}}$
is an isomorphism between the full centers $Z(Q)$ and $Z(Q')$ as objects. 
To verify that it is an algebra homomorphism, note that Equation \eqref{eq:Mrt} implies that the Frobenius-Perron dimensions of $M$ and $N$ are both $1$, and this invertibility can be easily translated into the equality 
$$\vcenter{\hbox{\scalebox{0.6}{
\begin{tikzpicture}[xscale=0.8, yscale=-0.6]
\path [fill=brown!20!white] (-0.6, 2)--(-0.6, 0) .. controls +(0, -1) and +(0, 1).. (-1.5, -2)--(-1.5, -4) -- (1.5, -4)--(1.5, -2) .. controls +(0, 1) and +(0, -1).. (0.6, 0)--(0.6, 2)--(-0.6, 2) ;
\begin{scope}[shift={(0,2)}]
\path [fill=brown!20!white] (0,0) [partial ellipse=0:360:0.6 and 0.3];
\draw [dashed](0,0) [partial ellipse=0:360:0.6 and 0.3];
\draw (0,0) [partial ellipse=0:180:0.6 and 0.3];
\end{scope}
\draw (-0.6, 2)--(-0.6, 0) .. controls +(0, -1) and +(0, 1).. (-1.5, -2)--(-1.5, -4);
\draw (0.6, 2)--(0.6, 0).. controls +(0, -1) and +(0, 1).. (1.5, -2)--(1.5, -4); 
\path [fill=white] (-0.3, -4)--(-0.3, -2).. controls +(0, 0.8) and +(0, 0.8) .. (0.3, -2)--(0.3, -4)--(-0.3, -4);
\draw (-0.3, -4)--(-0.3, -2).. controls +(0, 0.8) and +(0, 0.8) .. (0.3, -2)--(0.3, -4);
\begin{scope}[shift={(-0.9,-4)}]
\path [fill=brown!20!white] (0,0) [partial ellipse=180:360:0.6 and 0.3];
\draw (0,0) [partial ellipse=0:180:0.6 and 0.3];
\draw (0,0) [partial ellipse=180:360:0.6 and 0.3];
\end{scope}
\begin{scope}[shift={(0.9,-4)}]
\path [fill=brown!20!white] (0,0) [partial ellipse=180:360:0.6 and 0.3];
\draw (0,0) [partial ellipse=0:180:0.6 and 0.3];
\draw (0,0) [partial ellipse=180:360:0.6 and 0.3];
\end{scope}
\draw[blue, ->-=0.75] (0, 0) [partial ellipse=360:0:0.6 and 0.3];
\node at (0,-0.9) {$\cD$};\node at (0,1) {$\cE$};
\node at (1,0.2) {$M$};
\end{tikzpicture}}}}
=
\vcenter{\hbox{\scalebox{0.6}{
\begin{tikzpicture}[xscale=0.8, yscale=-0.6]
\path [fill=brown!20!white] (-0.6, 2)--(-0.6, 0) .. controls +(0, -1) and +(0, 1).. (-1.5, -2)--(-1.5, -4) -- (1.5, -4)--(1.5, -2) .. controls +(0, 1) and +(0, -1).. (0.6, 0)--(0.6, 2)--(-0.6, 2) ;
\begin{scope}[shift={(0,2)}]
\path [fill=brown!20!white] (0,0) [partial ellipse=0:360:0.6 and 0.3];
\draw [dashed](0,0) [partial ellipse=0:360:0.6 and 0.3];
\draw (0,0) [partial ellipse=0:180:0.6 and 0.3];
\end{scope}
\draw (-0.6, 2)--(-0.6, 0) .. controls +(0, -1) and +(0, 1).. (-1.5, -2)--(-1.5, -4);
\draw (0.6, 2)--(0.6, 0).. controls +(0, -1) and +(0, 1).. (1.5, -2)--(1.5, -4); 
\path [fill=white] (-0.3, -4)--(-0.3, -2).. controls +(0, 0.8) and +(0, 0.8) .. (0.3, -2)--(0.3, -4)--(-0.3, -4);
\draw (-0.3, -4)--(-0.3, -2).. controls +(0, 0.8) and +(0, 0.8) .. (0.3, -2)--(0.3, -4);
\begin{scope}[shift={(-0.9,-4)}]
\path [fill=brown!20!white] (0,0) [partial ellipse=180:360:0.6 and 0.3];
\draw (0,0) [partial ellipse=0:180:0.6 and 0.3];
\draw (0,0) [partial ellipse=180:360:0.6 and 0.3];
\end{scope}
\begin{scope}[shift={(0.9,-4)}]
\path [fill=brown!20!white] (0,0) [partial ellipse=180:360:0.6 and 0.3];
\draw (0,0) [partial ellipse=0:180:0.6 and 0.3];
\draw (0,0) [partial ellipse=180:360:0.6 and 0.3];
\end{scope}
\draw[blue, ->-=0.75] (-0.9,-2.5) [partial ellipse=360:0:0.6 and 0.3];
\draw[blue, ->-=0.75] (0.9,-2.5) [partial ellipse=360:0:0.6 and 0.3];
\node at (-0.9,-3.2) {$\cD$};\node at (0.9,-3.2) {$\cD$};
\node at (0,0) {$\cE$}; 
\node at (-2,-2.5) {$Q$};\node at (2,-2.5) {$Q$};
\end{tikzpicture}}}}\,.
$$
Therefore, two Morita equivalent Frobenius algebras have isomorphic full centers.

Conversely, assume that $\phi: Z(Q) \xrightarrow{\cong} Z(Q')$ as algebras in $\mathcal{Z(C)}$, we argue that $Q$ and $Q'$ are Morita equivalent. 
Note that the isomorphism $\phi$ induces a functor $\cD = \mathcal{Z(C)}_{Z(Q)} \to \mathcal{Z(C)}_{Z(Q')} = \cE$, which, by the Eilenberg-Watts theorem (see \cite[Proposition 7.11.1]{EGNO15}), can be identified as an object $\phi \in {}_{Q}\cC_{Q'}$ by an abuse of notations. 
Since ${}_{Q}\cC_{Q'}$ is semisimple, we can write $\displaystyle \phi = \bigoplus_{i} a_i M_i$ for some nonnegative integers $a_i$, where the sum is taken over the (isomorphism classes) of simple objects of ${}_{Q}\cC_{Q'}$. 
Then $\phi$ is depicted in the alterfold TQFT as follows:
\begin{equation}\label{eq:phi}
\phi=\sum_{i}a_i
\vcenter{\hbox{\scalebox{0.6}{
\begin{tikzpicture}[yscale=0.7 ]
\path [fill=brown!20!white](-0.65, -3) rectangle (0.65, 3);
\begin{scope}[shift={(0,3)}]
\path [fill=brown!20!white] (0,0) [partial ellipse=0:180:0.6 and 0.3];
\draw (0,0) [partial ellipse=0:360:0.6 and 0.3];
\end{scope}
\draw (-0.6, 3)--(-0.6, 0) (0.6, 3)--(0.6, 0); 
\draw (-0.6, -3)--(-0.6, 0) (0.6, -3)--(0.6, 0);
\begin{scope}[shift={(0,-3)}]
\path [fill=brown!20!white] (0,0) [partial ellipse=180:360:0.6 and 0.3];
\draw [dashed](0,0) [partial ellipse=0:180:0.6 and 0.3];
\draw (0,0) [partial ellipse=180:360:0.6 and 0.3];
\end{scope}
\draw[blue, ->-=0.75] (0, 0) [partial ellipse=0:360:0.6 and 0.3];
\draw (0, -0.6) node{\tiny$M_i$};
\draw (0, 1.6) node{\tiny $\mathcal{D}$};
\draw (0, -1.6) node{\tiny $\mathcal{E}$};
\end{tikzpicture}}}}\,,
\end{equation}
where we again identified the object in ${}_{Q}\cC_{Q'}$ with morphisms from $Z(Q)$ to $Z(Q')$. 
Now the assumption that $\phi$ is an algebra homeomorphism is depicted as:
\[\vcenter{\hbox{\scalebox{0.6}{
\begin{tikzpicture}[xscale=0.8, yscale=-0.6]
\path [fill=brown!20!white] (-0.6, 2)--(-0.6, 0) .. controls +(0, -1) and +(0, 1).. (-1.5, -2)--(-1.5, -4) -- (1.5, -4)--(1.5, -2) .. controls +(0, 1) and +(0, -1).. (0.6, 0)--(0.6, 2)--(-0.6, 2) ;
\begin{scope}[shift={(0,2)}]
\path [fill=brown!20!white] (0,0) [partial ellipse=0:360:0.6 and 0.3];
\draw [dashed](0,0) [partial ellipse=0:360:0.6 and 0.3];
\draw (0,0) [partial ellipse=0:180:0.6 and 0.3];
\end{scope}
\draw (-0.6, 2)--(-0.6, 0) .. controls +(0, -1) and +(0, 1).. (-1.5, -2)--(-1.5, -4);
\draw (0.6, 2)--(0.6, 0).. controls +(0, -1) and +(0, 1).. (1.5, -2)--(1.5, -4); 
\path [fill=white] (-0.3, -4)--(-0.3, -2).. controls +(0, 0.8) and +(0, 0.8) .. (0.3, -2)--(0.3, -4)--(-0.3, -4);
\draw (-0.3, -4)--(-0.3, -2).. controls +(0, 0.8) and +(0, 0.8) .. (0.3, -2)--(0.3, -4);
\begin{scope}[shift={(-0.9,-4)}]
\path [fill=brown!20!white] (0,0) [partial ellipse=180:360:0.6 and 0.3];
\draw (0,0) [partial ellipse=0:180:0.6 and 0.3];
\draw (0,0) [partial ellipse=180:360:0.6 and 0.3];
\end{scope}
\begin{scope}[shift={(0.9,-4)}]
\path [fill=brown!20!white] (0,0) [partial ellipse=180:360:0.6 and 0.3];
\draw (0,0) [partial ellipse=0:180:0.6 and 0.3];
\draw (0,0) [partial ellipse=180:360:0.6 and 0.3];
\end{scope}
\draw[blue, ->-=0.75] (0, 0) [partial ellipse=360:0:0.6 and 0.3];
\node at (0,-0.9) {$\cD$};\node at (0,1) {$\cE$};
\node at (1,0.2) {$\phi$};
\end{tikzpicture}}}}
=
\vcenter{\hbox{\scalebox{0.6}{
\begin{tikzpicture}[xscale=0.8, yscale=-0.6]
\path [fill=brown!20!white] (-0.6, 2)--(-0.6, 0) .. controls +(0, -1) and +(0, 1).. (-1.5, -2)--(-1.5, -4) -- (1.5, -4)--(1.5, -2) .. controls +(0, 1) and +(0, -1).. (0.6, 0)--(0.6, 2)--(-0.6, 2) ;
\begin{scope}[shift={(0,2)}]
\path [fill=brown!20!white] (0,0) [partial ellipse=0:360:0.6 and 0.3];
\draw [dashed](0,0) [partial ellipse=0:360:0.6 and 0.3];
\draw (0,0) [partial ellipse=0:180:0.6 and 0.3];
\end{scope}
\draw (-0.6, 2)--(-0.6, 0) .. controls +(0, -1) and +(0, 1).. (-1.5, -2)--(-1.5, -4);
\draw (0.6, 2)--(0.6, 0).. controls +(0, -1) and +(0, 1).. (1.5, -2)--(1.5, -4); 
\path [fill=white] (-0.3, -4)--(-0.3, -2).. controls +(0, 0.8) and +(0, 0.8) .. (0.3, -2)--(0.3, -4)--(-0.3, -4);
\draw (-0.3, -4)--(-0.3, -2).. controls +(0, 0.8) and +(0, 0.8) .. (0.3, -2)--(0.3, -4);
\begin{scope}[shift={(-0.9,-4)}]
\path [fill=brown!20!white] (0,0) [partial ellipse=180:360:0.6 and 0.3];
\draw (0,0) [partial ellipse=0:180:0.6 and 0.3];
\draw (0,0) [partial ellipse=180:360:0.6 and 0.3];
\end{scope}
\begin{scope}[shift={(0.9,-4)}]
\path [fill=brown!20!white] (0,0) [partial ellipse=180:360:0.6 and 0.3];
\draw (0,0) [partial ellipse=0:180:0.6 and 0.3];
\draw (0,0) [partial ellipse=180:360:0.6 and 0.3];
\end{scope}
\draw[blue, ->-=0.75] (-0.9,-2.5) [partial ellipse=360:0:0.6 and 0.3];
\draw[blue, ->-=0.75] (0.9,-2.5) [partial ellipse=360:0:0.6 and 0.3];
\node at (-0.9,-3.2) {$\cD$};\node at (0.9,-3.2) {$\cD$};
\node at (0,0) {$\cE$}; 
\node at (-2,-2.5) {$\phi$};\node at (2,-2.5) {$\phi$};
\end{tikzpicture}}}}\,,
\]
which, combined with Equation \eqref{eq:phi} and projected to the plane that contains the disc at the bottom (imagine we stretch the disc at the bottom on both sides and push the top tubes down to the plane that contains the bottom disc), results in the following equality:
\begin{equation}\label{eq:phi2}
\sum_{k} a_k\cdot \vcenter{\hbox{\scalebox{0.6}{\begin{tikzpicture}
\draw[fill=brown!20!white](0, 0) circle (2);
\draw[fill=white](-0.8, 0) circle (0.4);
\draw[fill=white](0.8, 0) circle (0.4);
\draw[blue, ->-=0.5] (0, 0) circle (1.6);
\draw (0, 1.3) node{\tiny$M_k$};
\end{tikzpicture}}}}
=
\sum_{i, j} a_ia_j \cdot \vcenter{\hbox{\scalebox{0.6}{\begin{tikzpicture}
\draw[fill=brown!20!white](0, 0) circle (2);
\draw[fill=white](-0.8, 0) circle (0.4);
\draw[fill=white](0.8, 0) circle (0.4);
\draw[blue, ->-=0.5] (-0.8, 0) circle (0.6);
\draw (1.7, 0) node{\tiny$M_i$};
\draw (-1.7, 0) node{\tiny$M_j$};
\draw[blue, ->-=0.5] (0.8, 0) circle (0.6);
\end{tikzpicture}}}}\,,
\end{equation}
where we view both sides, via the alterfold TQFT in \cite{LMWW23b}, as vectors in the vector space associated to a 2-alterfold whose $B$-colored region is a pair of pants. 

Applying fusion on the right hand side of Equation \eqref{eq:phi2}, we have
$$
\sum_{k} a_k\cdot \vcenter{\hbox{\scalebox{0.6}{\begin{tikzpicture}
\draw[fill=brown!20!white](0, 0) circle (2);
\draw[fill=white](-0.8, 0) circle (0.4);
\draw[fill=white](0.8, 0) circle (0.4);
\draw[blue, ->-=0.5] (0, 0) circle (1.6);
\draw (0, 1.3) node{\tiny$M_k$};
\end{tikzpicture}}}}
=
\sum_{i, j,\ell,\alpha} a_ia_j \cdot \vcenter{\hbox{\scalebox{0.6}{\begin{tikzpicture}
\draw[fill=brown!20!white](0, 0) circle (2);
\draw[fill=white](-0.8, 0) circle (0.4);
\draw[fill=white](0.8, 0) circle (0.4);
\draw[blue, ->-=0, ->-=0.5] (0, 0) [partial ellipse=0:360:1.4 and 0.8];
\draw[blue, ->-=0.5] (0, 0.8) to (0, -0.8);
\draw (0.2, -0.2) node{\tiny$X_\ell$};
\draw (1.7, 0) node{\tiny$M_i$};
\draw (-1.7, 0) node{\tiny$M_j$};
\draw[fill=white] (-0.1, 0.7) rectangle (0.1, 0.9);
\node[above] at (0,0.9) {\tiny$\alpha$};
\draw[fill=white] (-0.1, -0.7) rectangle (0.1, -0.9);
\node[below] at (0,-0.9) {\tiny$\alpha^*$};
\end{tikzpicture}}}}$$
where $\{\alpha, \alpha^*\}$ is the dual basis in the hom-space $\hom_{\mathcal{C}}(M_i\otimes M_{j^*}, X_\ell)$ afforded by the semisimplicity of ${}_Q\cC_{Q'}$. 
By \cite[Theorem 4.29]{LMWW23b}, as vectors in the vector space associated to a 2-alterfold, all summands on the left hand side of the above equation are linearly independent, and the same is true for summands on the right hand side. 
Therefore, it is easy to see that to make the equality hold, the right hand side only allows one summand, and the corresponding object $M_i$ has to be an invertible object. 
In the other words, we have shown that $\phi$ corresponds to an invertible $Q$-$Q'$-bimodule, which implies the Morita equivalence of $Q$ and $Q'$.

The above arguments imply the following result:
\begin{proposition}
    Suppose $Q,Q'$ are Frobenius algebras.
    \begin{align*}
       \dim \hom_{\mathcal{Z}(C)}(Z(Q), Z(Q')) =|\ _Q\mathcal{C}_{Q'}|,
    \end{align*}
    where $|\  _Q\mathcal{C}_{Q'}|$ is the number of irreducible $Q$-$Q'$-bimodules.
\end{proposition}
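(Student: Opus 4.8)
The plan is to compute $\dim\hom_{\mathcal{Z(C)}}(Z(Q), Z(Q'))$ directly in the alterfold TQFT by realizing both full centers as $\mathcal{D}$- and $\mathcal{E}$-colored tubes respectively, and then gluing them up-to-down and bottom-to-top to form a closed configuration whose evaluation counts the desired dimension. Concretely, recall from the preceding discussion that $Z(Q)$ is (isomorphic to) the $\mathcal{D}$-colored tube $\vcenter{\hbox{\scalebox{0.4}{
\begin{tikzpicture}
\path [fill=brown!20!white] (0,0) [partial ellipse=0:360:0.6 and 0.3];
\end{tikzpicture}}}}$ and $Z(Q')$ is the $\mathcal{E}$-colored tube, both regarded as objects of the tube category $\mathcal{A}\cong\mathcal{Z(C)}$. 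The $\hom$-space between them in $\mathcal{Z(C)}$ is, by the identification of $\mathcal{Z(C)}$ with the tube category, computed by stacking the two tubes with a domain wall in between and closing them off; this is exactly the diagram whose $B$-colored region sees a $\mathcal{D}$-colored annulus glued to an $\mathcal{E}$-colored annulus along two circles.

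First I would set up the precise diagram: take the $\mathcal{D}$-colored cylinder representing $Z(Q)$ and the $\mathcal{E}$-colored cylinder representing $Z(Q')$, glue them along one pair of boundary circles through a morphism (a domain-wall region), and then close the configuration into a closed alterfold by identifying the remaining top and bottom circles. The resulting closed alterfold has a $B$-colored region that is a torus-like piece whose separating surface carries an annulus colored $\mathcal{D}$ on one side and $\mathcal{E}$ on the other; equivalently, this is the configuration computing $\operatorname{Tr}$ of the identity on the space of domain-wall diagrams from $\mathcal{D}$-tube to $\mathcal{E}$-tube. Second, I would invoke Proposition~\ref{prop:dim}: that proposition precisely evaluates such a $\mathcal{D}$-$\mathcal{E}$ double-annulus configuration to $|\Irr({}_\mathcal{D}\mathcal{M}_\mathcal{E})|$. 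Here the relevant module category is ${}_\mathcal{D}\mathcal{M}_\mathcal{E}$, which under the Morita-context dictionary translates to the category ${}_Q\cC_{Q'}$ of $Q$-$Q'$-bimodules in $\cC$; hence the count is $|{}_Q\cC_{Q'}|$, the number of irreducible $Q$-$Q'$-bimodules. Third, I would confirm that the $\hom$-space being computed really is $\hom_{\mathcal{Z(C)}}(Z(Q),Z(Q'))$ and not some twisted variant: since both $Z(Q)$ and $Z(Q')$ are genuine objects of $\mathcal{Z(C)}$ (via $\cD=\mathcal{Z(C)}_{Z(Q)}$ and $\cE=\mathcal{Z(C)}_{Z(Q')}$) and the tube-category morphism spaces compute $\hom_{\mathcal{Z(C)}}$ as in Theorem~4.33 of \cite{LMWW23}, the identification is immediate.

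The main obstacle I anticipate is bookkeeping the orientations and the precise identification of which module category appears in Proposition~\ref{prop:dim} after the gluing — i.e. making sure that ${}_\mathcal{D}\mathcal{M}_\mathcal{E}$ is exactly ${}_Q\cC_{Q'}$ under M\"uger's equivalence $\cD\simeq({}_Q\cC_Q)^{\op}$ and $\cE\simeq({}_{Q'}\cC_{Q'})^{\op}$, rather than its opposite or a dual. This is really a matter of carefully chasing the Morita-context conventions fixed in Section~2 and matching the coloring conventions (white $=\cC$, brown $=\cD$, etc.) used in the diagrams, and it should not present a genuine mathematical difficulty once the conventions are pinned down. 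A short sanity check: taking $Q'=Q$ should recover $\dim\hom_{\mathcal{Z(C)}}(Z(Q),Z(Q))=|{}_Q\cC_Q|=|\Irr(\cD)|$, which is consistent with $\operatorname{Tr}(ZZ^t)=|\Irr(\cD)|$ from Proposition~\ref{prop:tracedim} since $Z(Q)=\bigoplus z_{kj}X_k\boxtimes X_j^*$ gives $\dim\hom(Z(Q),Z(Q))=\sum_{k,j}z_{kj}^2=\operatorname{Tr}(ZZ^t)$; I would include this consistency remark as a soft verification of the orientation choices.
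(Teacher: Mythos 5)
Your proof is correct and takes essentially the same route as the paper: both identify $Z(Q)$ and $Z(Q')$ with the $\cD$- and $\cE$-colored tubes and reduce $\hom_{\mathcal{Z(C)}}(Z(Q),Z(Q'))$ to the space of ${}_{\cD}\mathcal{M}_{\cE}$-domain walls, i.e.\ to $Q$-$Q'$-bimodules. The only cosmetic difference is that you extract the count by closing the diagram and invoking Proposition~\ref{prop:dim}, whereas the paper reads it off from the spanning set of domain walls labelled by simple objects of ${}_Q\cC_{Q'}$ (with linear independence supplied by Theorem~4.29 of \cite{LMWW23b}) built in the paragraphs immediately preceding the proposition; your $\operatorname{Tr}(ZZ^t)$ sanity check is consistent with Proposition~\ref{prop:tracedim}, and the $(-)^{\op}$ ambiguity you flag is harmless since only the number of simple objects is being counted.
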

When $Q$, $Q'$ run through all Frobenius algebras, we have that
$\displaystyle \bigoplus_{Q, Q'}\hom_{\mathcal{Z}(C)}(Z(Q), Z(Q'))$ is isomorphic to the fusion bi-algebra of $\displaystyle \bigoplus_{Q, Q'}\  _Q\mathcal{C}_{Q'}$.

\section{Topological Characters}
By previous sections, we see that the $\mathcal{D}$-colored surface plays important role in the study of the alterfold TQFT for Morita equivalence.
In this section, inspired by topological interpretation of the character $\chi_j$ in Equation \eqref{eq:charactermodular}, we change the color of a tube to be $\mathcal{D}$-color and obtain a topological character in the alterfold TQFT for Morita equivalence.
This is also known as the first torus action in the introduction.
We assume that $\mathcal{C}$ is a braided spherical fusion category and $\mathcal{D}$ is a spherical fusion category.

\begin{definition}[Character]
 Suppose $\mathcal{C}$ is a braided spherical fusion category and $X\in \Irr(\mathcal{C})$.
 We define the character $\chi_X^{\pm}$ to be the following graph:
 \begin{align*}
\chi_X^+= \frac{1}{\mu^2}
\vcenter{\hbox{\scalebox{0.7}{
\begin{tikzpicture}[xscale=0.8, yscale=0.6]
\draw [line width=0.83cm] (0,0) [partial ellipse=-0.1:180.1:2 and 1.5];
\draw [white, line width=0.8cm] (0,0) [partial ellipse=-0.1:180.1:2 and 1.5];
\draw [red] (0,0) [partial ellipse=-0.1:180.1:2 and 1.5];
\path [fill=brown!20!white](-0.65, -3) rectangle (0.65, 3);
\begin{scope}[shift={(0,3)}]
\path [fill=brown!20!white] (0,0) [partial ellipse=0:180:0.6 and 0.3];
\draw (0,0) [partial ellipse=0:360:0.6 and 0.3];
\end{scope}
\draw (-0.6, 3)--(-0.6, 0) (0.6, 3)--(0.6, 0); 
\draw (-0.6, -3)--(-0.6, 0) (0.6, -3)--(0.6, 0);
\draw [line width=0.83cm] (0,0) [partial ellipse=179:361:2 and 1.5];
\draw [white, line width=0.8cm] (0,0) [partial ellipse=177:363:2 and 1.5];
\draw [red] (0,0) [partial ellipse=192:361:2 and 1.5];
\begin{scope}[shift={(0,-3)}]
\path [fill=brown!20!white] (0,0) [partial ellipse=180:360:0.6 and 0.3];
\draw [dashed](0,0) [partial ellipse=0:180:0.6 and 0.3];
\draw (0,0) [partial ellipse=180:360:0.6 and 0.3];
\end{scope}
\begin{scope}[shift={(-2, 0)}]
\draw [blue, dashed] (0,0) [partial ellipse=0:180:0.46 and 0.25];
\draw [blue, ->-=0.5] (0,0) [partial ellipse=180:360:0.46 and 0.25] node[black, pos=0.5, below] {\tiny $X$};
\end{scope}
\end{tikzpicture}}}}, \quad 
\chi_X^-= \frac{1}{\mu^2}
\vcenter{\hbox{\scalebox{0.7}{
\begin{tikzpicture}[xscale=0.8, yscale=0.6]
\draw [line width=0.83cm] (0,0) [partial ellipse=-0.1:180.1:2 and 1.5];
\draw [white, line width=0.8cm] (0,0) [partial ellipse=-0.1:180.1:2 and 1.5];
\draw [red] (0,0) [partial ellipse=-0.1:180.1:2 and 1.5];
\path [fill=brown!20!white](-0.65, -3) rectangle (0.65, 3);
\begin{scope}[shift={(0,3)}]
\path [fill=brown!20!white] (0,0) [partial ellipse=0:180:0.6 and 0.3];
\draw (0,0) [partial ellipse=0:360:0.6 and 0.3];
\end{scope}
\draw (-0.6, 3)--(-0.6, 0) (0.6, 3)--(0.6, 0); 
\draw (-0.6, -3)--(-0.6, 0) (0.6, -3)--(0.6, 0);
\draw [line width=0.83cm] (0,0) [partial ellipse=179:361:2 and 1.5];
\draw [white, line width=0.8cm] (0,0) [partial ellipse=177:363:2 and 1.5];
\draw [red] (0,0) [partial ellipse=180:361:2 and 1.5];
\begin{scope}[shift={(0,-3)}]
\path [fill=brown!20!white] (0,0) [partial ellipse=180:360:0.6 and 0.3];
\draw [dashed](0,0) [partial ellipse=0:180:0.6 and 0.3];
\draw (0,0) [partial ellipse=180:360:0.6 and 0.3];
\end{scope}
\begin{scope}[shift={(-2, 0)}]
\draw [blue, dashed] (0,0) [partial ellipse=0:180:0.46 and 0.25];
\draw [blue, ->-=0.8] (0,0) [partial ellipse=180:270:0.46 and 0.25] node[black, pos=0.5, below right] {\tiny $X$};
\draw [blue] (0,0) [partial ellipse=290:360:0.46 and 0.25];
\end{scope}
\end{tikzpicture}}}}, 
\end{align*}
 \end{definition}
 
 \begin{remark}
 Suppose $Y\in \Irr(\mathcal{D})$.
 We can define $\chi_X^\pm(Y)$ as follows:
 \begin{align*}
\chi_X^+(Y)=\frac{1}{\mu^2}\vcenter{\hbox{\scalebox{0.6}{
\begin{tikzpicture}[scale=0.9]
\draw [line width=0.77cm, brown!20!white] (0,0) [partial ellipse=-0.1:180.1:2 and 1.5];
\begin{scope}[shift={(2.5, 0)}]
\draw [line width=0.8cm] (0,0) [partial ellipse=0:180:2 and 1.5];
\draw [white, line width=0.77cm] (0,0) [partial ellipse=-0.1:180.1:2 and 1.5];
\draw [red] (0,0) [partial ellipse=-0.1:180.1:2 and 1.5];
\end{scope} 
\begin{scope}[shift={(2.5, 0)}]
\draw [line width=0.8cm] (0,0) [partial ellipse=180:360:2 and 1.5];
\draw [white, line width=0.77cm] (0,0) [partial ellipse=178:362:2 and 1.5];
\draw [red] (0,0) [partial ellipse=195:362:2 and 1.5];
\end{scope}
\draw [line width=0.77cm, brown!20!white] (0,0) [partial ellipse=180:360:2 and 1.5];
\begin{scope}[shift={(-2, 0)}]
\draw [blue, dashed](0,0) [partial ellipse=0:180:0.4 and 0.25] ;
\draw [blue, ->-=0.5] (0,0) [partial ellipse=180:360:0.4 and 0.25] node[black, pos=0.5, below] {\tiny $Y$};
\end{scope}
\begin{scope}[shift={(0.5, 0)}]
\draw [blue, dashed] (0,0) [partial ellipse=0:180:0.42 and 0.25];
\draw [blue, ->-=0.5] (0,0) [partial ellipse=180:360:0.42 and 0.25] node[black, pos=0.5, below] {\tiny $X$};
\end{scope}
\end{tikzpicture}}}}, \quad 
\chi_X^-(Y)=\frac{1}{\mu^2}\vcenter{\hbox{\scalebox{0.6}{
\begin{tikzpicture}[scale=0.9]
\draw [line width=0.77cm, brown!20!white] (0,0) [partial ellipse=-0.1:180.1:2 and 1.5];
\begin{scope}[shift={(2.5, 0)}]
\draw [line width=0.8cm] (0,0) [partial ellipse=0:180:2 and 1.5];
\draw [white, line width=0.77cm] (0,0) [partial ellipse=-0.1:180.1:2 and 1.5];
\draw [red] (0,0) [partial ellipse=-0.1:180.1:2 and 1.5];
\end{scope} 
\begin{scope}[shift={(2.5, 0)}]
\draw [line width=0.8cm] (0,0) [partial ellipse=180:360:2 and 1.5];
\draw [white, line width=0.77cm] (0,0) [partial ellipse=178:362:2 and 1.5];
\draw [red] (0,0) [partial ellipse=180:362:2 and 1.5];
\end{scope}
\draw [line width=0.77cm, brown!20!white] (0,0) [partial ellipse=180:360:2 and 1.5];
\begin{scope}[shift={(-2, 0)}]
\draw [blue, dashed](0,0) [partial ellipse=0:180:0.4 and 0.25] ;
\draw [blue, ->-=0.5] (0,0) [partial ellipse=180:360:0.4 and 0.25] node[black, pos=0.5, below] {\tiny $Y$};
\end{scope}
\begin{scope}[shift={(0.5, 0)}]
\draw [blue, dashed] (0,0) [partial ellipse=0:180:0.4 and 0.25];
\draw [blue, ->-=0.5] (0,0) [partial ellipse=180:270:0.4 and 0.25] node[black, pos=0.3, below right] {\tiny $X$};
\draw [blue] (0,0) [partial ellipse=290:360:0.4 and 0.25];
\end{scope}
\end{tikzpicture}}}} .
\end{align*}
By the fact that the $S$-coefficients are algebraic integers, we have that $\mu\chi_X^\pm(Y)$ is an algebraic integer.
We also have that $\displaystyle \frac{1}{\mu}\vcenter{\hbox{\scalebox{0.6}{
\begin{tikzpicture}[scale=0.9]
\draw [line width=0.77cm, brown!20!white] (0,0) [partial ellipse=-0.1:180.1:2 and 1.5];
\begin{scope}[shift={(2.5, 0)}]
\draw [line width=0.8cm] (0,0) [partial ellipse=0:180:2 and 1.5];
\draw [white, line width=0.77cm] (0,0) [partial ellipse=-0.1:180.1:2 and 1.5];
\end{scope} 
\begin{scope}[shift={(2.5, 0)}]
\draw [line width=0.8cm] (0,0) [partial ellipse=180:360:2 and 1.5];
\draw [white, line width=0.77cm] (0,0) [partial ellipse=178:362:2 and 1.5];
\end{scope}
\draw [line width=0.77cm, brown!20!white] (0,0) [partial ellipse=180:360:2 and 1.5];
\begin{scope}[shift={(-2, 0)}]
\draw [blue, dashed](0,0) [partial ellipse=0:180:0.4 and 0.25] ;
\draw [blue, ->-=0.5] (0,0) [partial ellipse=180:360:0.4 and 0.25] node[black, pos=0.5, below] {\tiny $Y$};
\end{scope}
\begin{scope}[shift={(0.5, 0)}]
\draw [blue, dashed] (0,0) [partial ellipse=0:180:0.4 and 0.25];
\draw [blue, ->-=0.5] (0,0) [partial ellipse=180:360:0.4 and 0.25] node[black, pos=0.3, below right] {\tiny $X$};
\end{scope}
\end{tikzpicture}}}} $ is an algebraic integer.
 \end{remark}
 
 \begin{remark}
 Suppose $Y, Y'\in \Irr(\mathcal{D})$.
 By flipping the graph vertically, we have that 
 \begin{align*}
 \overline{\chi_X^\pm(Y)}=\chi_{X^*}^{\pm}(Y)=\chi_X^{\pm}(Y^*).
 \end{align*}
 By moving objects along the torus, we have that 
 \begin{align*}
 \chi_X^\pm(YY')=\chi_X^{\pm}(Y'Y).
 \end{align*}
 Note that the entries of $S$-matrix are algebraic integers.
 We have that $\mu\chi_X(Y)$ is an algebraic integer.
 \end{remark}
 
 \begin{remark}
 If $\mathcal{C}$ is symmetric, we then have that $\chi_X^+=\chi_X^-$ and we will denote it by $\chi_X$.
 \end{remark}

 \begin{example}
 Suppose $G$ is a finite group and $\rho$ is a representation of $G$.
 We have that 
 \begin{align*}
 \frac{1}{|G|}\vcenter{\hbox{\scalebox{0.6}{
\begin{tikzpicture}[scale=0.9]
\draw [line width=0.77cm, brown!20!white] (0,0) [partial ellipse=-0.1:180.1:2 and 1.5];
\begin{scope}[shift={(2.5, 0)}]
\draw [line width=0.8cm] (0,0) [partial ellipse=0:180:2 and 1.5];
\draw [white, line width=0.77cm] (0,0) [partial ellipse=-0.1:180.1:2 and 1.5];
\end{scope} 
\begin{scope}[shift={(2.5, 0)}]
\draw [line width=0.8cm] (0,0) [partial ellipse=180:360:2 and 1.5];
\draw [white, line width=0.77cm] (0,0) [partial ellipse=178:362:2 and 1.5];
\end{scope}
\draw [line width=0.77cm, brown!20!white] (0,0) [partial ellipse=180:360:2 and 1.5];
\begin{scope}[shift={(4.5, 0)}]
\draw [white, line width=4pt] (0,0) [partial ellipse=290:270:0.42 and 0.25];
\end{scope}
\begin{scope}[shift={(-2, 0)}]
\draw [blue, dashed](0,0) [partial ellipse=0:180:0.4 and 0.25] ;
\draw [blue, ->-=0.5] (0,0) [partial ellipse=180:360:0.4 and 0.25] node[black, pos=0.5, below] {\tiny $\bC_g$};
\end{scope}
\begin{scope}[shift={(4.5, 0)}]
\draw [blue, dashed] (0,0) [partial ellipse=0:180:0.42 and 0.25];
\draw [blue, ->-=0.5] (0,0) [partial ellipse=180:360:0.42 and 0.25] node[black, pos=0.5, below] {\tiny $\rho$};
\end{scope}
\end{tikzpicture}}}}    = & Tr(\rho(g))=\chi_\rho(g),
\end{align*}
where $\chi_\rho$ is the character in the group theory.
Note that for $\rho\in \Irr(\rep(G))$, we have
\begin{align*}
\frac{1}{|G|}
\vcenter{\hbox{\scalebox{0.6}{
\begin{tikzpicture}[xscale=0.8, yscale=0.6]
\draw [line width=0.83cm] (0,0) [partial ellipse=-0.1:180.1:2 and 1.5];
\draw [white, line width=0.8cm] (0,0) [partial ellipse=-0.1:180.1:2 and 1.5];
\draw [red] (0,0) [partial ellipse=-0.1:180.1:2 and 1.5];
\path [fill=brown!20!white](-0.65, -3) rectangle (0.65, 3);
\begin{scope}[shift={(0,3)}]
\path [fill=brown!20!white] (0,0) [partial ellipse=0:180:0.6 and 0.3];
\draw (0,0) [partial ellipse=0:360:0.6 and 0.3];
\end{scope}
\draw (-0.6, 3)--(-0.6, 0) (0.6, 3)--(0.6, 0); 
\draw (-0.6, -3)--(-0.6, 0) (0.6, -3)--(0.6, 0);
\draw [line width=0.83cm] (0,0) [partial ellipse=180:360:2 and 1.5];
\draw [white, line width=0.8cm] (0,0) [partial ellipse=179:361:2 and 1.5];
\draw [red] (0,0) [partial ellipse=192:361:2 and 1.5];
\begin{scope}[shift={(0,-3)}]
\path [fill=brown!20!white] (0,0) [partial ellipse=180:360:0.6 and 0.3];
\draw [dashed](0,0) [partial ellipse=0:180:0.6 and 0.3];
\draw (0,0) [partial ellipse=180:360:0.6 and 0.3];
\end{scope}
\begin{scope}[shift={(-2, 0)}]
\draw [blue, dashed] (0,0) [partial ellipse=0:180:0.46 and 0.25];
\draw [blue, ->-=0.5] (0,0) [partial ellipse=180:360:0.46 and 0.25] node[black, pos=0.5, below] {\tiny $\rho$};
\end{scope}
\end{tikzpicture}}}}=
\vcenter{\hbox{\scalebox{0.6}{
\begin{tikzpicture}[xscale=0.8, yscale=0.6]
\draw [line width=0.83cm] (0,0) [partial ellipse=-0.1:180.1:2 and 1.5];
\draw [white, line width=0.8cm] (0,0) [partial ellipse=-0.1:180.1:2 and 1.5];
\path [fill=brown!20!white](-0.65, -3) rectangle (0.65, 3);
\begin{scope}[shift={(0,3)}]
\path [fill=brown!20!white] (0,0) [partial ellipse=0:180:0.6 and 0.3];
\draw (0,0) [partial ellipse=0:360:0.6 and 0.3];
\end{scope}
\draw (-0.6, 3)--(-0.6, 0) (0.6, 3)--(0.6, 0); 
\draw (-0.6, -3)--(-0.6, 0) (0.6, -3)--(0.6, 0);
\draw [line width=0.83cm] (0,0) [partial ellipse=180:360:2 and 1.5];
\draw [white, line width=0.8cm] (0,0) [partial ellipse=179:361:2 and 1.5];
\begin{scope}[shift={(0,-3)}]
\path [fill=brown!20!white] (0,0) [partial ellipse=180:360:0.6 and 0.3];
\draw [dashed](0,0) [partial ellipse=0:180:0.6 and 0.3];
\draw (0,0) [partial ellipse=180:360:0.6 and 0.3];
\end{scope}
\begin{scope}[shift={(-2, 0)}]
\draw [blue, dashed] (0,0) [partial ellipse=0:180:0.46 and 0.25];
\draw [blue, ->-=0.5] (0,0) [partial ellipse=180:360:0.46 and 0.25] node[black, pos=0.5, below] {\tiny $\rho$};
\end{scope}
\end{tikzpicture}}}}.
\end{align*}
Our characters coincide with the characters in the group theory.
 \end{example}

 \begin{definition}[$S$-Character]
 Suppose $\mathcal{C}$ is a braided spherical fusion category and $X\in \Irr(\mathcal{C})$. 
 We define a transformation $\mathfrak{S}$ in the following way:
 \begin{align*}
 \mathfrak{S}\vcenter{\hbox{\scalebox{0.6}{
\begin{tikzpicture}[xscale=0.8, yscale=0.6]
\draw [line width=0.83cm] (0,0) [partial ellipse=-0.1:180.1:2 and 1.5];
\draw [white, line width=0.8cm] (0,0) [partial ellipse=-0.1:180.1:2 and 1.5];
\draw [blue] (0,0) [partial ellipse=-0.1:180.1:2 and 1.5];
\path [fill=brown!20!white](-0.65, -3) rectangle (0.65, 3);
\begin{scope}[shift={(0,3)}]
\path [fill=brown!20!white] (0,0) [partial ellipse=0:180:0.6 and 0.3];
\draw (0,0) [partial ellipse=0:360:0.6 and 0.3];
\end{scope}
\draw (-0.6, 3)--(-0.6, 0) (0.6, 3)--(0.6, 0); 
\draw (-0.6, -3)--(-0.6, 0) (0.6, -3)--(0.6, 0);
\draw [line width=0.83cm] (0,0) [partial ellipse=179:361:2 and 1.5];
\draw [white, line width=0.8cm] (0,0) [partial ellipse=177:363:2 and 1.5];
\draw [blue, ->-=0.5] (0,0) [partial ellipse=192:361:2 and 1.5];
\begin{scope}[shift={(0,-3)}]
\path [fill=brown!20!white] (0,0) [partial ellipse=180:360:0.6 and 0.3];
\draw [dashed](0,0) [partial ellipse=0:180:0.6 and 0.3];
\draw (0,0) [partial ellipse=180:360:0.6 and 0.3];
\end{scope}
\begin{scope}[shift={(-2, 0)}]
\draw [purple, dashed] (0,0) [partial ellipse=0:180:0.46 and 0.25];
\draw [purple] (0,0) [partial ellipse=180:360:0.46 and 0.25];
\begin{scope}[shift={(0, -0.2)}]
\draw [fill=white] (-0.25, -0.25) rectangle (0.25, 0.25);
\node at (0, 0) {\tiny $f$};
\end{scope}
\end{scope}
\end{tikzpicture}}}}
=
\vcenter{\hbox{\scalebox{0.6}{
\begin{tikzpicture}[xscale=0.8, yscale=0.6]
\draw [line width=0.83cm] (0,0) [partial ellipse=-0.1:180.1:2 and 1.5];
\draw [white, line width=0.8cm] (0,0) [partial ellipse=-0.1:180.1:2 and 1.5];
\draw [purple] (0,0) [partial ellipse=-0.1:180.1:2 and 1.5];
\path [fill=brown!20!white](-0.65, -3) rectangle (0.65, 3);
\begin{scope}[shift={(0,3)}]
\path [fill=brown!20!white] (0,0) [partial ellipse=0:180:0.6 and 0.3];
\draw (0,0) [partial ellipse=0:360:0.6 and 0.3];
\end{scope}
\draw (-0.6, 3)--(-0.6, 0) (0.6, 3)--(0.6, 0); 
\draw (-0.6, -3)--(-0.6, 0) (0.6, -3)--(0.6, 0);
\draw [line width=0.83cm] (0,0) [partial ellipse=179:361:2 and 1.5];
\draw [white, line width=0.8cm] (0,0) [partial ellipse=177:363:2 and 1.5];
\draw [purple] (0,0) [partial ellipse=192:361:2 and 1.5];
\begin{scope}[shift={(0,-3)}]
\path [fill=brown!20!white] (0,0) [partial ellipse=180:360:0.6 and 0.3];
\draw [dashed](0,0) [partial ellipse=0:180:0.6 and 0.3];
\draw (0,0) [partial ellipse=180:360:0.6 and 0.3];
\end{scope}
\begin{scope}[shift={(-2, 0)}]
\draw [blue, dashed] (0,0) [partial ellipse=0:180:0.46 and 0.25];
\draw [blue, ->-=0.9] (0,0) [partial ellipse=180:360:0.46 and 0.25];
\begin{scope}[shift={(0, -0.2)}]
\draw [fill=white] (-0.25, -0.25) rectangle (0.25, 0.25);
\node at (0, 0) {\tiny $f$};
\end{scope}
\end{scope}
\end{tikzpicture}}}}
 \end{align*}
 by switching the longitude and median on the tube.
 The $S$-character $\Schi_X^\pm$ is defined by the following graph:
\begin{align*}
\Schi_X^+= \frac{1}{\mu^2}
\vcenter{\hbox{\scalebox{0.6}{
\begin{tikzpicture}[xscale=0.8, yscale=0.6]
\draw [line width=0.83cm] (0,0) [partial ellipse=-0.1:180.1:2 and 1.5];
\draw [white, line width=0.8cm] (0,0) [partial ellipse=-0.1:180.1:2 and 1.5];
\draw [blue] (0,0) [partial ellipse=-0.1:180.1:2 and 1.5];
\path [fill=brown!20!white](-0.65, -3) rectangle (0.65, 3);
\begin{scope}[shift={(0,3)}]
\path [fill=brown!20!white] (0,0) [partial ellipse=0:180:0.6 and 0.3];
\draw (0,0) [partial ellipse=0:360:0.6 and 0.3];
\end{scope}
\draw (-0.6, 3)--(-0.6, 0) (0.6, 3)--(0.6, 0); 
\draw (-0.6, -3)--(-0.6, 0) (0.6, -3)--(0.6, 0);
\draw [line width=0.83cm] (0,0) [partial ellipse=179:361:2 and 1.5];
\draw [white, line width=0.8cm] (0,0) [partial ellipse=177:363:2 and 1.5];
\draw [blue, ->-=0.5] (0,0) [partial ellipse=180:361:2 and 1.5] node[black, pos=0.5, below right] {\tiny $X$};
\begin{scope}[shift={(0,-3)}]
\path [fill=brown!20!white] (0,0) [partial ellipse=180:360:0.6 and 0.3];
\draw [dashed](0,0) [partial ellipse=0:180:0.6 and 0.3];
\draw (0,0) [partial ellipse=180:360:0.6 and 0.3];
\end{scope}
\begin{scope}[shift={(-2, 0)}]
\draw [red, dashed] (0,0) [partial ellipse=0:180:0.46 and 0.25];
\draw [red] (0,0) [partial ellipse=180:270:0.46 and 0.25] ;
\draw [red] (0,0) [partial ellipse=290:360:0.46 and 0.25];
\end{scope}
\end{tikzpicture}}}}, 
\quad
\Schi_X^-= \frac{1}{\mu^2}
\vcenter{\hbox{\scalebox{0.6}{
\begin{tikzpicture}[xscale=0.8, yscale=0.6]
\draw [line width=0.83cm] (0,0) [partial ellipse=-0.1:180.1:2 and 1.5];
\draw [white, line width=0.8cm] (0,0) [partial ellipse=-0.1:180.1:2 and 1.5];
\draw [blue] (0,0) [partial ellipse=-0.1:180.1:2 and 1.5];
\path [fill=brown!20!white](-0.65, -3) rectangle (0.65, 3);
\begin{scope}[shift={(0,3)}]
\path [fill=brown!20!white] (0,0) [partial ellipse=0:180:0.6 and 0.3];
\draw (0,0) [partial ellipse=0:360:0.6 and 0.3];
\end{scope}
\draw (-0.6, 3)--(-0.6, 0) (0.6, 3)--(0.6, 0); 
\draw (-0.6, -3)--(-0.6, 0) (0.6, -3)--(0.6, 0);
\draw [line width=0.83cm] (0,0) [partial ellipse=179:361:2 and 1.5];
\draw [white, line width=0.8cm] (0,0) [partial ellipse=177:363:2 and 1.5];
\draw [blue, ->-=0.5] (0,0) [partial ellipse=192:361:2 and 1.5] node[black, pos=0.5, below] {\tiny $X$};
\begin{scope}[shift={(0,-3)}]
\path [fill=brown!20!white] (0,0) [partial ellipse=180:360:0.6 and 0.3];
\draw [dashed](0,0) [partial ellipse=0:180:0.6 and 0.3];
\draw (0,0) [partial ellipse=180:360:0.6 and 0.3];
\end{scope}
\begin{scope}[shift={(-2, 0)}]
\draw [red, dashed] (0,0) [partial ellipse=0:180:0.46 and 0.25];
\draw [red] (0,0) [partial ellipse=180:360:0.46 and 0.25] ;
\end{scope}
\end{tikzpicture}}}}.
\end{align*}
\end{definition}

\begin{remark}
    In \cite{BEK99}, the $S$-character is also called chiral generator.
    By Equation \eqref{eq:alphatube}, we see that the $S$-character encodes the $\alpha$-induction.
\end{remark}

The vertical multiplication is denoted by $\circ$. We have that
 \begin{align*}
 \Schi_j^+\circ \Schi_k^-=
\frac{1}{\mu^3}
\vcenter{\hbox{\scalebox{0.7}{
\begin{tikzpicture}[xscale=0.8, yscale=0.6]
\draw [line width=0.8cm] (0,0) [partial ellipse=0:180:2 and 1.5];
\draw [white, line width=0.77cm] (0,0) [partial ellipse=-0.1:180.1:2 and 1.5];
\draw [blue] (0,0) [partial ellipse=0:180:2.15 and 1.65];
\draw [blue] (0,0) [partial ellipse=0:180:1.85 and 1.35];
\path [fill=brown!20!white](-0.65, -3) rectangle (0.65, 3);
\begin{scope}[shift={(0,3)}]
\path [fill=brown!20!white] (0,0) [partial ellipse=0:180:0.6 and 0.3];
\draw (0,0) [partial ellipse=0:360:0.6 and 0.3];
\end{scope}
\draw [line width=0.8cm] (0,0) [partial ellipse=180:360:2 and 1.5];
\draw (-0.6, 3)--(-0.6, 0) (0.6, 3)--(0.6, 0); 
\draw (-0.6, -3)--(-0.6, 0) (0.6, -3)--(0.6, 0);
\draw [white, line width=0.77cm] (0,0) [partial ellipse=178:362:2 and 1.5];
\draw [blue, ->-=0.5] (0,0) [partial ellipse=178:362:2.15 and 1.65] node[black, pos=0.7,below ] {\tiny $X_k$}; 
\draw [blue, ->-=0.5] (0,0) [partial ellipse=178:362:1.85 and 1.35] node[black, pos=0.7,above ] {\tiny $X_j$};
\begin{scope}[shift={(2, 0)}]
\draw [red, dashed](0,0) [partial ellipse=0:180:0.44 and 0.25] ;
\draw [white, line width=4pt] (0,0) [partial ellipse=290:270:0.44 and 0.25];
\draw [red] (0,0) [partial ellipse=260:360:0.44 and 0.25];
\draw [red] (0,0) [partial ellipse=180:230:0.44 and 0.25];
\end{scope}
\begin{scope}[shift={(0,-3)}]
\path [fill=brown!20!white] (0,0) [partial ellipse=180:360:0.6 and 0.3];
\draw [dashed](0,0) [partial ellipse=0:180:0.6 and 0.3];
\draw (0,0) [partial ellipse=180:360:0.6 and 0.3];
\end{scope}
\end{tikzpicture}}}}, 
\quad
\mathfrak{S}(\Schi_j^+\circ \Schi_k^-)=
\frac{1}{\mu^3}
\vcenter{\hbox{\scalebox{0.7}{
\begin{tikzpicture}[xscale=0.8, yscale=0.6]
\draw [line width=0.8cm] (0,0) [partial ellipse=0:180:2 and 1.5];
\draw [white, line width=0.77cm] (0,0) [partial ellipse=-0.1:180.1:2 and 1.5];
\draw [red] (0,0) [partial ellipse=0:180:2 and 1.5];
\path [fill=brown!20!white](-0.65, -3) rectangle (0.65, 3);
\begin{scope}[shift={(0,3)}]
\path [fill=brown!20!white] (0,0) [partial ellipse=0:180:0.6 and 0.3];
\draw (0,0) [partial ellipse=0:360:0.6 and 0.3];
\end{scope}
\draw [line width=0.8cm] (0,0) [partial ellipse=180:360:2 and 1.5];
\draw (-0.6, 3)--(-0.6, 0) (0.6, 3)--(0.6, 0); 
\draw (-0.6, -3)--(-0.6, 0) (0.6, -3)--(0.6, 0);
\draw [white, line width=0.77cm] (0,0) [partial ellipse=178:362:2 and 1.5];
\draw [red] (0,0) [partial ellipse=178:362:2 and 1.5]; 
\begin{scope}[shift={(-1.95, 0.3)}]
\draw [blue, ->-=0.4, dashed](0,0) [partial ellipse=0:180:0.47 and 0.25] node[black, pos=0.4, above] {\tiny $X_k$};
\draw [white, line width=4pt] (0,0) [partial ellipse=290:270:0.47 and 0.25];
\draw [blue] (0,0) [partial ellipse=280:360:0.47 and 0.25];
\draw [blue] (0,0) [partial ellipse=180:260:0.47 and 0.25];
\end{scope}
\begin{scope}[shift={(-1.95, -0.3)}]
\draw [blue, dashed](0,0) [partial ellipse=0:180:0.47 and 0.25] ;
\draw [white, line width=4pt] (0,0) [partial ellipse=290:270:0.47 and 0.25];
\draw [blue, ->-=0.2] (0,0) [partial ellipse=180:360:0.47 and 0.25] node[black, pos=0.2, below] {\tiny $X_j$};
\end{scope}
\begin{scope}[shift={(0,-3)}]
\path [fill=brown!20!white] (0,0) [partial ellipse=180:360:0.6 and 0.3];
\draw [dashed](0,0) [partial ellipse=0:180:0.6 and 0.3];
\draw (0,0) [partial ellipse=180:360:0.6 and 0.3];
\end{scope}
\end{tikzpicture}}}}.
\end{align*}
 
 \begin{remark}
 We have that 
\begin{align*}
 \mu^2( \Schi_j^+\circ \Schi_k^-)(\mathbbm{1})=\dim\hom_{\mathcal{D}}(\alpha_+(X_j), \alpha_-(X_k)).
 \end{align*}
\end{remark}

\begin{remark}
    Suppose $\mathcal{C},\mathcal{D}$ are unitary fusion categories.
    Then we have that 
    \begin{align*}
    \vcenter{\hbox{\scalebox{0.6}{
\begin{tikzpicture}[scale=0.9]
\draw [line width=0.77cm, brown!20!white] (0,0) [partial ellipse=-0.1:180.1:2 and 1.5];
\draw [line width=0.77cm, brown!20!white] (0,0) [partial ellipse=178:362:2 and 1.5];
\end{tikzpicture}}}}
\leq 
\vcenter{\hbox{\scalebox{0.6}{
\begin{tikzpicture}[scale=0.9]
\draw [line width=0.77cm, brown!20!white] (0,0) [partial ellipse=0:180.1:2 and 1.5];
\draw [line width=0.77cm, brown!20!white] (0,0) [partial ellipse=178:362:2 and 1.5];
\begin{scope}[shift={(-2, 0)}]
\path [fill=white] (-0.5, -0.3) rectangle (0.5, 0.3);
\end{scope}
\begin{scope}[shift={(-1.95, 0.3)}]
\path [fill=white](0,0) [partial ellipse=0:180:0.47 and 0.25];
\draw [blue, dashed](0,0) [partial ellipse=0:180:0.47 and 0.25];
\draw [blue, ->-=0.5] (0,0) [partial ellipse=180:360:0.47 and 0.25] node [below] {\tiny $J$};
\end{scope}
\begin{scope}[shift={(-1.95, -0.3)}]
\draw [blue, dashed](0,0) [partial ellipse=0:180:0.47 and 0.25] ;
\path [fill=white] (0,0) [partial ellipse=180:360:0.47 and 0.25];
\draw [blue, ->-=0.5] (0,0) [partial ellipse=180:360:0.47 and 0.25];
\end{scope}
\end{tikzpicture}}}}
\end{align*}
which follows from a simple fact that 
\begin{align*}
 \frac{1}{d_J}   \vcenter{\hbox{\scalebox{0.6}{
\begin{tikzpicture}[scale=0.9]
\begin{scope}[shift={(0, -1)}]
\draw [blue](0,0) [partial ellipse=0:180:0.7 and 0.8];
\end{scope}
\begin{scope}[shift={(0, 1)}]
\draw [blue](0,0) [partial ellipse=180:360:0.7 and 0.8];
\end{scope}
\end{tikzpicture}}}}
\leq 
 \vcenter{\hbox{\scalebox{0.6}{
\begin{tikzpicture}[scale=0.9]
\draw [blue](-0.5, -1)--(-0.5, 1) (0.5, -1)--(0.5, 1);
\end{tikzpicture}}}}.
\end{align*}
This implies that 
\begin{align*}
   \dim\hom_{\mathcal{D}}(\alpha_{\pm}(X_j), \alpha_{\pm}(X_k)) \leq \dim\hom_{\mathcal{C}}(J\overline{J}X_j, X_k).
\end{align*}
If the Frobenius algebra $Q$ satisfies the chiral relation, i.e. $Q=J\overline{J}$ is a commutative Frobenius algebra, then 
\begin{align*}
   \dim\hom_{\mathcal{D}}(\alpha_{\pm}(X_j), \alpha_{\pm}(X_k)) = \dim\hom_{\mathcal{C}}(J\overline{J}X_j, X_k).
\end{align*}
\end{remark}

 \begin{proposition}
 Suppose that $\mathcal{C}$ is a modular fusion category.
 Then we have that 
 \begin{align*}
 \sum_{j,k=1}^r d_jd_k \Schi_j^+\circ \Schi_k^-=
  \mu^2\vcenter{\hbox{\scalebox{0.6}{
\begin{tikzpicture}[xscale=0.8, yscale=0.6]
\begin{scope}[shift={(0,2)}]
\path [fill=brown!20!white] (0,0) [partial ellipse=0:180:1 and 0.6];
\path [fill=brown!20!white]  (0,0) [partial ellipse=180:360:1 and 1.2];
\draw (0,0) [partial ellipse=0:360:1 and 0.6];
\draw (0,0) [partial ellipse=180:360:1 and 1.2];
\end{scope}
\begin{scope}[shift={(0,-2)}]
\path [fill=brown!20!white] (0,0) [partial ellipse=180:360:1 and 0.6];
\path [fill=brown!20!white]  (0,0) [partial ellipse=0:180:1 and 1.2];
\draw [dashed](0,0) [partial ellipse=0:180:1 and 0.6];
\draw (0,0) [partial ellipse=180:360:1 and 0.6];
\draw (0,0) [partial ellipse=0:180:1 and 1.2];
\end{scope}
\end{tikzpicture}}}},
 \quad
  \sum_{j,k=1}^r d_jd_k\mathfrak{S}( \Schi_j^+\circ \Schi_k^- )= \mu^2
\vcenter{\hbox{\scalebox{0.6}{
\begin{tikzpicture}[xscale=0.8, yscale=0.6]
\path [fill=brown!20!white](-0.65, -3) rectangle (0.65, 3);
\begin{scope}[shift={(0,3)}]
\path [fill=brown!20!white] (0,0) [partial ellipse=0:180:0.6 and 0.3];
\draw (0,0) [partial ellipse=0:360:0.6 and 0.3];
\end{scope}
\draw (-0.6, 3)--(-0.6, 0) (0.6, 3)--(0.6, 0); 
\draw (-0.6, -3)--(-0.6, 0) (0.6, -3)--(0.6, 0);
\begin{scope}[shift={(0,-3)}]
\path [fill=brown!20!white] (0,0) [partial ellipse=180:360:0.6 and 0.3];
\draw [dashed](0,0) [partial ellipse=0:180:0.6 and 0.3];
\draw (0,0) [partial ellipse=180:360:0.6 and 0.3];
\end{scope}
\end{tikzpicture}}}}.
 \end{align*}
 \end{proposition}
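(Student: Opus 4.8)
The plan is to prove both identities by interpreting the sum $\sum_{j,k} d_j d_k \, \Schi_j^+ \circ \Schi_k^-$ topologically and then recognizing the resulting diagram. Recall from the displayed formula just before the statement that $\Schi_j^+ \circ \Schi_k^-$ is (up to the factor $1/\mu^3$) the $\mathcal{D}$-colored tube carrying two longitudinal blue strings labeled $X_j$ (inner) and $X_k$ (outer), all wrapped inside a solid torus whose $B$-colored region contains a Kirby-colored circle. So the sum $\sum_{j,k} d_j d_k\, \Schi_j^+ \circ \Schi_k^-$ is $\frac{1}{\mu^3}$ times the same picture with the pair of blue strings replaced by $\sum_{j,k} d_j d_k\, (X_j \text{-circle}) \otimes (X_k\text{-circle})$, i.e.\ by two parallel Kirby-colored (red) circles running along the longitude of the torus.

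First I would apply Lemma~\ref{lem:decomposition1} in reverse: the middle equality there says precisely that $\sum_{j,k=1}^r \frac{d_j d_k}{\mu}$ times the torus diagram with two longitudinal $X_j, X_k$ strings equals $\frac{1}{\mu}$ times the torus diagram with two longitudinal red (Kirby) circles, which in turn equals the torus diagram with a single red longitudinal circle. Carrying this through the geometry of the $\Schi_j^+ \circ \Schi_k^-$ picture, the sum $\sum_{j,k} d_j d_k\, \Schi_j^+ \circ \Schi_k^-$ collapses to a $\mathcal{D}$-colored solid torus whose $B$-colored meridian disk region carries one Kirby circle sitting next to the original Kirby circle. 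Next I would use handle sliding of one Kirby circle along the other (as in the proof of Lemma~\ref{lem:decomposition1}, or Lemma~\ref{lem:decomposition3}) together with Move~1 to absorb the extra Kirby-colored circle, leaving a clean $\mathcal{D}$-colored surface with no diagram inside. Tracking the scalars: the $1/\mu^3$ from the two $\Schi$'s, the $d_j d_k$ in the sum, the $1/\mu$ factors from Lemma~\ref{lem:decomposition1}, and the $1/\zeta = 1$ normalization from Move~1 should combine to give the overall factor $\mu^2$ in the statement.

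For the first identity I expect the result to be the ``solid Hopf link'' presentation of $I(\1_{\mathcal{D}})$ appearing as the left-hand picture (the two interlocked $\mathcal{D}$-colored half-tori), obtained by recognizing that after removing the Kirby circles the remaining $\mathcal{D}$-colored torus-with-tube is exactly that standard diagram; for the second identity, applying the transformation $\mathfrak{S}$ (switching longitude and meridian, Lemma~\ref{lem:schange}) turns this into the straight $\mathcal{D}$-colored tube $I(\1_{\mathcal{D}})$ on the right-hand side. Since by the Remark after Lemma~\ref{lem:schange} the empty $\mathcal{D}$-colored torus is $\mathfrak{S}$-invariant, the only effect of $\mathfrak{S}$ on the reduced diagram is the claimed change of shape, with no extra scalar. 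The main obstacle will be bookkeeping: carefully matching the orientations of the blue strings against the orientation conventions in Lemma~\ref{lem:decomposition1}, making sure the Kirby-circle absorption via Move~1 introduces no stray factor of $\mu$ or $1/\mu$, and confirming that the handle slides are legitimate in the $\mathcal{D}$-colored setting (which they are, since sphericality of the Morita context guarantees the same graphical calculus). I would finish by writing out the chain of diagrammatic equalities with the scalar at each step, citing Lemma~\ref{lem:decomposition1}, Lemma~\ref{lem:schange}, the Remark after it, and Move~1.
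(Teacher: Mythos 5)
Your proposal is correct and follows essentially the same route as the paper, whose entire proof is the citation ``It follows from Lemma \ref{lem:decomposition1}'': you resum $\sum_{j,k}d_jd_k\,X_j\boxtimes X_k$ into Kirby circles via that lemma and then clean up the remaining $\Omega$-circle with handle slides and Move 1, exactly as the paper intends. The only cosmetic slip is calling the Hopf-link picture ``the solid Hopf link presentation of $I(\1_{\mathcal{D}})$'' --- it is the $\mathfrak{S}$-transform of $I(\1_{\mathcal{D}})$, with $I(\1_{\mathcal{D}})$ itself being the straight tube on the right of the second identity, which you in fact recover correctly by applying $\mathfrak{S}$.
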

 \begin{proof}
 It follows from Lemma \ref{lem:decomposition1}.
 \end{proof}

 \begin{theorem}[First Orthogonality]\label{thm:ortho1}
 Suppose that $\mathcal{C}$ is a braided fusion category and $\widetilde{Y}\in \mathcal{D}$. We have that 
 \begin{align*}
 \sum_{Y\in \Irr(\mathcal{D})}\chi_j^{\pm}(\widetilde{Y}Y)\chi_k^{\pm}(Y^*)=\frac{\mu}{d_j}\eta_{j}\delta_{j,k}\chi_j^\pm(\widetilde{Y}),
 \end{align*}
 where $\eta_{j}=1$ if $X_{j}$ is in the M\"{u}ger center and $\eta_{j}=0$ otherwise.
 \end{theorem}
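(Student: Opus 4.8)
The plan is to prove the first orthogonality relation by a pure manipulation of the alterfold diagrams, reducing the left-hand side to a closed diagram that can be evaluated in two steps. First I would write out $\sum_{Y\in\Irr(\mathcal{D})}\chi_j^{\pm}(\widetilde{Y}Y)\chi_k^{\pm}(Y^*)$ as a single alterfold diagram: the sum over $Y\in\Irr(\mathcal{D})$ of the two $\mathcal{D}$-colored character tori (one carrying $X_j$ and $\widetilde{Y}Y$, the other carrying $X_k$ and $Y^*$) glued along the common $Y$-strand. Summing over $Y$ with the trivial coefficient (which is exactly what appears here, since the formula has no $d_Y$ weight) amounts to fusing the two $\mathcal{D}$-colored tubes along a disk by Move 2, so the two tori merge into one $\mathcal{D}$-colored surface carrying both $X_j$ and $X_k$ strands together with $\widetilde{Y}$. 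After this fusion I would apply the $S$-character / longitude–median exchange (Lemma~\ref{lem:schange} and the $\mathfrak{S}$ transformation) to bring the $X_j$ and $X_k$ strands into a position where they link the Kirby-colored circle as in Equation~\eqref{eq:charactermodular}.

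The key intermediate step is to recognize that after this rearrangement, the $X_j$ and $X_k$ strands wrap the same meridian of the merged torus, so the diagram contains a "double braiding" of $X_j$ around $X_k$ closed up against the Kirby color. Here I would invoke the modularity of $\mathcal{C}$ in the form already used in Lemma~\ref{lem:decomposition3}: a Kirby-colored circle linking $X_j\otimes X_k^*$ (or $X_j\otimes X_k$ with appropriate orientation) collapses, forcing $X_j\cong X_k$ and contributing the Kronecker delta $\delta_{j,k}$ together with a scalar proportional to $\mu/d_j$. The surviving factor $\eta_j$, indicating whether $X_j$ lies in the Müger center, arises precisely because the non-degeneracy pairing of the $S$-matrix degenerates exactly on the transparent objects; an $X_j$ that is transparent decouples from the braiding so the linking circle does not kill it but instead produces the residual $\chi_j^{\pm}(\widetilde{Y})$ diagram, whereas a non-transparent $X_j$ (with $j\neq 1$ in a modular category it would be handled differently, but here $\mathcal{C}$ is only assumed braided) is annihilated unless $X_j$ is transparent. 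I would make this precise by splitting into the two cases $X_j$ transparent and $X_j$ not transparent and computing the linking value in each.

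Concretely, the step-by-step plan is: (1) express the left-hand side as a single alterfold, (2) perform the Move 2 fusion over $Y\in\Irr(\mathcal{D})$ to merge the two $\mathcal{D}$-tori, (3) slide the $X_j,X_k$ strands using handle slides and the $\mathfrak{S}$-exchange so that they encircle a common Kirby circle, (4) evaluate the resulting Hopf-link-type subdiagram: if $X_j$ is transparent the braiding is trivial and the diagram reduces, after removing the Kirby circle via Move~1 (contributing $\mu$) and normalizing by the quantum dimension $d_j$, to $\frac{\mu}{d_j}\delta_{j,k}\chi_j^{\pm}(\widetilde{Y})$; if $X_j$ is not transparent, modularity of the full category (or rather the non-degeneracy on the non-transparent part) forces the whole expression to vanish, giving the factor $\eta_j$. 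The main obstacle I expect is step (4): getting the scalar and the orientation bookkeeping exactly right when the $\pm$ braidings are inserted, and in particular justifying cleanly that a non-transparent simple object encircled by the Kirby color evaluates to zero in the $\mathcal{D}$-colored setting — this requires carefully tracking how the $\mathcal{D}$-coloring interacts with the Kirby color, which is where the earlier lemmas (Lemma~\ref{lem:decomposition2}, Lemma~\ref{lem:decomposition3}, Proposition~\ref{prop:dim}) will have to be combined. Once the transparent/non-transparent dichotomy is established, the identification of the surviving diagram with $\chi_j^{\pm}(\widetilde{Y})$ is immediate from the definition of the character.
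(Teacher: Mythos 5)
Your proposal is correct and follows essentially the same route as the paper: both realize the sum over $Y\in\Irr(\mathcal{D})$ as a single closed alterfold in which, by Lemma~\ref{lem:decomposition2} (Moves 1 and 2), the gluing along $Y$ is traded for a Kirby-colored torus separating the two character tubes, and then evaluate that Kirby color as a projection in the tube category, with the killing-ring property of $\Omega$ producing $\delta_{j,k}$, the normalization $\mu/d_j$, and the transparency factor $\eta_j$. Your intermediate $\mathfrak{S}$-exchange step is not needed in the paper's arrangement of the diagram, but it does not affect the validity of the argument.
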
 
  \begin{proof}
 We consider the following graph:
\begin{align*}
\vcenter{\hbox{\scalebox{0.7}{
\begin{tikzpicture}[scale=0.7]
\draw [line width=0.6cm, brown!20!white] (0,0) [partial ellipse=-0.1:180.1:2 and 1.5];
\draw [blue, -<-=0.5] (0,1.5) [partial ellipse=-90:90:0.25 and 0.44] node[black, pos=0.5, right] {\tiny $\widetilde{Y}$};
\draw [blue, dashed] (0,1.5) [partial ellipse=90:270:0.25 and 0.44];
\begin{scope}[shift={(2.5, 0)}]
\draw [double distance=0.57cm] (0,0) [partial ellipse=-0.1:180.1:2 and 1.5];
\draw [red] (0,0) [partial ellipse=0:180:2 and 1.5];
\end{scope} 
\begin{scope}[shift={(2.5, 0)}]
\draw [line width=0.6cm] (0,0) [partial ellipse=180:360:2 and 1.5];
\draw [white, line width=0.57cm] (0,0) [partial ellipse=178:362:2 and 1.5];
\draw [red] (0,0) [partial ellipse=178:362:2 and 1.5] ;
\end{scope}
\begin{scope}[shift={(-2.5, 0)}]
\draw [double distance=0.57cm] (0,0) [partial ellipse=-0.1:180.1:2 and 1.5];
\draw [red] (0,0) [partial ellipse=0:180:2 and 1.5] ;
\end{scope} 
\begin{scope}[shift={(-2.5, 0)}]
\draw [line width=0.6cm] (0,0) [partial ellipse=180:360:2 and 1.5];
\draw [white, line width=0.57cm] (0,0) [partial ellipse=178:362:2 and 1.5];
\draw [red] (0,0) [partial ellipse=178:362:2 and 1.5];
\end{scope}
\draw [line width=0.6cm, brown!20!white] (0,0) [partial ellipse=180:360:2 and 1.5];
\begin{scope}[shift={(4.5, 0)}]
\draw [blue, dashed](0,0) [partial ellipse=0:180:0.4 and 0.25] ;
\draw [white, line width=4pt] (0,0) [partial ellipse=270:250:0.4 and 0.25];
\draw [blue] (0,0) [partial ellipse=180:360:0.4 and 0.25] node[black, pos=0.7,below ] {\tiny $X_j$};
\end{scope}
\begin{scope}[shift={(-4.5, 0)}]
\draw [blue, dashed] (0,0) [partial ellipse=0:180:0.4 and 0.25];
\draw [blue] (0,0) [partial ellipse=180:250:0.4 and 0.25];
\draw [blue] (0,0) [partial ellipse=290:360:0.4 and 0.25] node[black, pos=0.6, above] {\tiny $X_k$};
\end{scope}
\end{tikzpicture}}}}
\end{align*}
By applying the Kirby color to the middle torus and Lemma \ref{lem:decomposition2}, we have that
\begin{align*}
 \delta_{\pm, \pm} \delta_{j,k} \frac{\mu^2}{d_j} \mu^2\chi_j^\pm(\widetilde{Y}) = \frac{1}{\mu^2} \mu \mu^4 \sum_{Y\in \Irr(\mathcal{D})}\chi_j^{\pm}(\widetilde{Y}Y)\chi_k^{\pm}(Y^*).
\end{align*}
This completes the proof.
 \end{proof}

\begin{remark}
For $\tilde{Y}=\1$, Theorem \ref{thm:ortho1} appeared in the proof of Theorem 5.5 in \cite{ENO21}.
\end{remark}

\begin{remark}
Theorem \ref{thm:ortho1} generalizes the orthogonality for the characters of finite groups. To be specific, taking $\tilde{Y}\in \text{Vec}(G)$ and $X_j, X_k\in \text{Rep}(G)$ be simple objects, the formula specialize to
$$\sum_{g\in G}\mathrm{Tr}\rho_{k}(gh)\overline{\mathrm{Tr}\rho_{j}(g)}=\frac{1}{\dim(\rho_{j})}\delta_{j, k}\text{Tr}\rho_{j}(g).$$
\end{remark}

\begin{theorem}[Second Orthogonality] \label{thm:ortho2}
Suppose that $\mathcal{C}$ is a braided spherical fusion category and $Y, Y'\in \mathcal{D}$. We have that 
\begin{align*}
 \sum_{j=1}^r \mu \chi_j^\pm(Y)\chi_j^\pm(Y')= &\sum_{k=1}^{r'}\sum_{j=1}^r \chi_j^\pm(YY_kY'Y_k^*) \\
=& \sum_{k=1}^{r'} \sum_{j=1}^r  d_j  \dim\hom_{\mathcal{D}}(YY_kY'Y_k^*, \alpha_{\mp}(X_j)).
\end{align*}
\end{theorem}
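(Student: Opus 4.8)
The plan is to prove this ``second orthogonality'' by the same topological strategy used for Theorem \ref{thm:ortho1}, but now summing over the simple objects of $\mathcal{C}$ rather than over $\Irr(\mathcal{D})$. First I would set up the alterfold picture: take two parallel $\mathcal{D}$-colored tubes carrying the objects $Y$ and $Y'$, sitting inside a $B$-colored region which is surrounded by a $\mathcal{C}$-colored ``outer'' handlebody, and place two Kirby-colored (red) tori between them. Concretely, the configuration is the one obtained from the definition of $\chi_X^\pm$ by gluing two copies of the character tube along a common $\mathcal{C}$-colored torus that is cabled with the Kirby color. Evaluating this configuration in two different ways will produce the two sides of the identity.

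Evaluating one way: apply Lemma \ref{lem:decomposition1} (or rather the local move that expands a Kirby-colored $\mathcal{C}$-torus into $\sum_{j} \frac{d_j}{\mu}\,(\text{$X_j$-loop})$ with an appropriate twist, exactly as in the $\mathfrak{S}$-character computations) to the torus separating the two tubes. This breaks the picture into a sum over $j$ of two disjoint character tubes, one reading $\chi_j^\pm(Y)$ and the other $\chi_j^\pm(Y')$, with a numerical factor that I expect to be $\mu$ after tracking the normalizations of the two $\frac{1}{\mu^2}$-factors in the definition of $\chi^\pm$ and the $\frac{d_j}{\mu}$ from the Kirby color expansion; sphericality guarantees the two $X_j$-strands close up compatibly. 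This yields the left-hand side $\sum_{j=1}^r \mu\,\chi_j^\pm(Y)\chi_j^\pm(Y')$.

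Evaluating the other way: instead of resolving the $\mathcal{C}$-torus first, apply Move 1 and Move 2 to the $\mathcal{D}$-colored part to insert a sum over $\Irr(\mathcal{D})=\{Y_k\}$ running around a meridian disk that links both tubes (this is the ``$\mathcal{D}$-colored'' analogue of Lemma \ref{lem:decomposition2}). Sliding $Y_k$ around brings $Y$, $Y'$ into a single tube in the order $YY_kY'Y_k^*$, and the remaining $\mathcal{C}$-torus with Kirby color now collapses using modularity (Lemma \ref{lem:decomposition3}) to give a single character $\chi_j^\pm$ summed over $j$. The bookkeeping of the scalar prefactors from Moves 1, 2 and the two Kirby colors should again balance to $1$, giving the middle expression $\sum_{k=1}^{r'}\sum_{j=1}^{r}\chi_j^\pm(YY_kY'Y_k^*)$. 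Finally, the last equality is just the identity $\mu^2\,\chi_j^\pm(Z)(\1_{\mathcal{D}}) = \dim\hom_{\mathcal{D}}(Z,\alpha_{\mp}(X_j))$ read off from Equation \eqref{eq:alphatube} and the remark following the $S$-character definition, applied with $Z = YY_kY'Y_k^*$; the quantum dimension $d_j$ appears from closing the $X_j$-loop in the character tube (compare the remark $\mu^2(\Schi_j^+\circ\Schi_k^-)(\1)=\dim\hom_{\mathcal{D}}(\alpha_+(X_j),\alpha_-(X_k))$ and the $\frac{d_j}{\mu}$ weighting).

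The main obstacle will be getting the normalization constants exactly right: there are several competing factors of $\mu$, $d_j$, and the $\frac{1}{\mu^2}$ in the definition of the characters, together with the $\frac{1}{\zeta}$/$\zeta$ weights of Moves 1--3 (here $\zeta=1$), and one must be careful which torus is ``inside'' (color $A$) versus ``outside'' (color $B$) since that flips a factor of $\mu$ as noted in the remark after Proposition \ref{prop:dim}. I would pin these down by first checking the identity against the finite-group special case $\mathcal{C}=\rep(G)$, $\mathcal{D}=\Vc(G)$, where it must reduce to the usual second orthogonality relation for class sums, $\sum_{\rho}\mathrm{Tr}\rho(g)\mathrm{Tr}\rho(g') = \sum_{h\in G}(\text{indicator of }g'\text{ conjugate to }g^{-1}\ \text{via }h)$, suitably weighted; this fixes the constant unambiguously. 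A secondary subtlety is orientation: the $\pm$ in $\chi_j^\pm$ must match the $\mp$ in $\alpha_{\mp}$, which comes from the direction in which the Kirby-colored strand is pushed past the $X_j$-strand when the torus is resolved, so I would track the crossing signs carefully through the handle slides.
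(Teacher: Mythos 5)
Your proposal is correct and follows essentially the same route as the paper: both evaluate the single 3-alterfold consisting of the two $\mathcal{D}$-colored tubes carrying $Y$ and $Y'$ linked by a Kirby-colored $\mathcal{C}$-torus, once by resolving the Kirby color into $\sum_j d_j X_j$ to split the picture into a product of characters, and once by applying Lemma \ref{lem:decomposition2} to merge the tubes into a single one carrying $YY_kY'Y_k^*$, with the final equality read off from Equation \eqref{eq:alphatube}. The only difference is that you defer the normalization constants to a consistency check against the finite-group case, whereas the paper tracks the factors of $\mu$ and $d_j$ explicitly through the moves.
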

\begin{proof}
We consider the following graph:
\begin{align*}
    \vcenter{\hbox{\scalebox{0.8}{
\begin{tikzpicture}[scale=0.7]
\draw [double distance=0.57cm] (0,0) [partial ellipse=-0.1:180.1:2 and 1.5];
\draw [red] (0,0) [partial ellipse=0:180:2.05 and 1.55];
\draw [red] (0,1.5) [partial ellipse=-90:-10:0.25 and 0.43];
\draw [red] (0,1.5) [partial ellipse=15:90:0.25 and 0.43];
\draw [red, dashed] (0,1.5) [partial ellipse=90:270:0.25 and 0.43];
\begin{scope}[shift={(2.5, 0)}]
\draw [line width=0.6cm, brown!20!white] (0,0) [partial ellipse=-0.1:180.1:2 and 1.5];
\draw [blue] (0,0) [partial ellipse=-0.1:180.1:2 and 1.5];
\end{scope} 
\begin{scope}[shift={(2.5, 0)}]
\draw [line width=0.6cm, brown!20!white] (0,0) [partial ellipse=180:360:2 and 1.5];
\draw [blue,  ->-=0.5] (0,0) [partial ellipse=178:362:2 and 1.5] node[black, pos=0.5, below] {\tiny $Y'$};
\end{scope}
\begin{scope}[shift={(-2.5, 0)}]
\draw [line width=0.6cm, brown!20!white] (0,0) [partial ellipse=-0.1:180.1:2 and 1.5];
\draw [blue] (0,0) [partial ellipse=-0.1:180.1:2 and 1.5];
\end{scope} 
\begin{scope}[shift={(-2.5, 0)}]
\draw [line width=0.6cm, brown!20!white] (0,0) [partial ellipse=180:360:2 and 1.5];
\draw [blue, ->-=0.5] (0,0) [partial ellipse=180:360:2 and 1.5] node[black, pos=0.5, below] {\tiny $Y$};
\end{scope}
\draw [line width=0.6cm] (0,0) [partial ellipse=180:360:2 and 1.5];
\draw [white, line width=0.57cm] (0,0) [partial ellipse=178:362:2 and 1.5];
\draw [red] (0,0) [partial ellipse=178:362:2.05 and 1.55];
\end{tikzpicture}}}}
\end{align*}
Applying the Kirby color to the middle torus and Lemma \ref{lem:decomposition2}, we have that 
\begin{align*}
& \sum_{k=1}^{r'}    \vcenter{\hbox{\scalebox{0.8}{
\begin{tikzpicture}[scale=0.7]
\draw [double distance=0.57cm] (0,0) [partial ellipse=-0.1:180.1:2 and 1.5];
\draw [red] (0,0) [partial ellipse=0:180:2.05 and 1.55];
\draw [red] (0,1.5) [partial ellipse=-90:-10:0.25 and 0.43];
\draw [red] (0,1.5) [partial ellipse=15:90:0.25 and 0.43];
\draw [red, dashed] (0,1.5) [partial ellipse=90:270:0.25 and 0.43];
\begin{scope}[shift={(-2.5, 0)}]
\draw [line width=0.75cm, brown!20!white] (0,0) [partial ellipse=-0.1:180.1:2 and 1.5];
\draw [blue] (0,0) [partial ellipse=-0.1:180.1:1.6 and 1.1];
\draw [blue] (0,0) [partial ellipse=-0.1:180.1:1.8 and 1.3];
\draw [blue] (0,0) [partial ellipse=-0.1:180.1:2.2 and 1.7];
\draw [blue] (0,0) [partial ellipse=-0.1:180.1:2 and 1.5];
\end{scope} 
\begin{scope}[shift={(-2.5, 0)}]
\draw [line width=0.75cm, brown!20!white] (0,0) [partial ellipse=180:360:2 and 1.5];
\draw [blue, ->-=0.5] (0,0) [partial ellipse=180:360:1.6 and 1.1] node[black, pos=0.5, above] {\tiny $Y'$};
\draw [blue] (0,0) [partial ellipse=180:360:1.8 and 1.3] node[black, pos=0.3, above] {\tiny $Y_k^*$};
\draw [blue ] (0,0) [partial ellipse=180:360:2.2 and 1.7] node[black, pos=0.3, below] {\tiny $Y_k$};
\draw [blue, ->-=0.5] (0,0) [partial ellipse=180:360:2 and 1.5] node[black, pos=0.5, below] {\tiny $Y$};
\end{scope}
\draw [line width=0.6cm] (0,0) [partial ellipse=180:360:2 and 1.5];
\draw [white, line width=0.57cm] (0,0) [partial ellipse=178:362:2 and 1.5];
\draw [red] (0,0) [partial ellipse=178:362:2.05 and 1.55];
\end{tikzpicture}}}}\\
=&\sum_{j=1}^r
    \vcenter{\hbox{\scalebox{0.8}{
\begin{tikzpicture}[scale=0.7]
\draw [double distance=0.57cm] (0,0) [partial ellipse=-0.1:180.1:2 and 1.5];
\draw [blue] (0,0) [partial ellipse=0:180:2.05 and 1.55];
\draw [red] (0,1.5) [partial ellipse=-90:-10:0.25 and 0.43];
\draw [red] (0,1.5) [partial ellipse=15:90:0.25 and 0.43];
\draw [red, dashed] (0,1.5) [partial ellipse=90:270:0.25 and 0.43];
\begin{scope}[shift={(-2.5, 0)}]
\draw [line width=0.6cm, brown!20!white] (0,0) [partial ellipse=-0.1:180.1:2 and 1.5];
\draw [blue] (0,0) [partial ellipse=-0.1:180.1:2 and 1.5];
\end{scope} 
\begin{scope}[shift={(-2.5, 0)}]
\draw [line width=0.6cm, brown!20!white] (0,0) [partial ellipse=180:360:2 and 1.5];
\draw [blue, ->-=0.5] (0,0) [partial ellipse=180:360:2 and 1.5] node[black, pos=0.5, below] {\tiny $Y$};
\end{scope}
\draw [line width=0.6cm] (0,0) [partial ellipse=180:360:2 and 1.5];
\draw [white, line width=0.57cm] (0,0) [partial ellipse=178:362:2 and 1.5];
\draw [blue, -<-=0.5] (0,0) [partial ellipse=178:362:2.05 and 1.55] node[black, pos=0.5, below] {\tiny $X_j$};
\end{tikzpicture}}}} 
 \vcenter{\hbox{\scalebox{0.8}{
\begin{tikzpicture}[scale=0.7]
\draw [double distance=0.57cm] (0,0) [partial ellipse=-0.1:180.1:2 and 1.5];
\draw [blue] (0,0) [partial ellipse=0:180:2.05 and 1.55];
\draw [red] (0,1.5) [partial ellipse=-90:-10:0.25 and 0.43];
\draw [red] (0,1.5) [partial ellipse=15:90:0.25 and 0.43];
\draw [red, dashed] (0,1.5) [partial ellipse=90:270:0.25 and 0.43];
\begin{scope}[shift={(2.5, 0)}]
\draw [line width=0.6cm, brown!20!white] (0,0) [partial ellipse=-0.1:180.1:2 and 1.5];
\draw [blue] (0,0) [partial ellipse=-0.1:180.1:2 and 1.5];
\end{scope} 
\begin{scope}[shift={(2.5, 0)}]
\draw [line width=0.6cm, brown!20!white] (0,0) [partial ellipse=180:360:2 and 1.5];
\draw [blue,  ->-=0.5] (0,0) [partial ellipse=178:362:2 and 1.5] node[black, pos=0.5, below] {\tiny $Y'$};
\end{scope}
\draw [line width=0.6cm] (0,0) [partial ellipse=180:360:2 and 1.5];
\draw [white, line width=0.57cm] (0,0) [partial ellipse=178:362:2 and 1.5];
\draw [blue, ->-=0.5] (0,0) [partial ellipse=178:362:2.05 and 1.55] node[black, pos=0.5, below] {\tiny $X_j$};
\end{tikzpicture}}}} \\ 
=& \sum_{j=1}^r \mu^3\chi_j^+(Y)\chi_j^+(Y').
\end{align*}
Note that 
\begin{align*}
\sum_{k=1}^{r'}    \vcenter{\hbox{\scalebox{0.8}{
\begin{tikzpicture}[scale=0.7]
\draw [double distance=0.57cm] (0,0) [partial ellipse=-0.1:180.1:2 and 1.5];
\draw [red] (0,0) [partial ellipse=0:180:2.05 and 1.55];
\draw [red] (0,1.5) [partial ellipse=-90:-10:0.25 and 0.43];
\draw [red] (0,1.5) [partial ellipse=15:90:0.25 and 0.43];
\draw [red, dashed] (0,1.5) [partial ellipse=90:270:0.25 and 0.43];
\begin{scope}[shift={(-2.5, 0)}]
\draw [line width=0.75cm, brown!20!white] (0,0) [partial ellipse=-0.1:180.1:2 and 1.5];
\draw [blue] (0,0) [partial ellipse=-0.1:180.1:1.6 and 1.1];
\draw [blue] (0,0) [partial ellipse=-0.1:180.1:1.8 and 1.3];
\draw [blue] (0,0) [partial ellipse=-0.1:180.1:2.2 and 1.7];
\draw [blue] (0,0) [partial ellipse=-0.1:180.1:2 and 1.5];
\end{scope} 
\begin{scope}[shift={(-2.5, 0)}]
\draw [line width=0.75cm, brown!20!white] (0,0) [partial ellipse=180:360:2 and 1.5];
\draw [blue, ->-=0.5] (0,0) [partial ellipse=180:360:1.6 and 1.1] node[black, pos=0.5, above] {\tiny $Y'$};
\draw [blue] (0,0) [partial ellipse=180:360:1.8 and 1.3] node[black, pos=0.3, above] {\tiny $Y_k^*$};
\draw [blue ] (0,0) [partial ellipse=180:360:2.2 and 1.7] node[black, pos=0.3, below] {\tiny $Y_k$};
\draw [blue, ->-=0.5] (0,0) [partial ellipse=180:360:2 and 1.5] node[black, pos=0.5, below] {\tiny $Y$};
\end{scope}
\draw [line width=0.6cm] (0,0) [partial ellipse=180:360:2 and 1.5];
\draw [white, line width=0.57cm] (0,0) [partial ellipse=178:362:2 and 1.5];
\draw [red] (0,0) [partial ellipse=178:362:2.05 and 1.55];
\end{tikzpicture}}}}
=&
\sum_{k=1}^{r'}    \vcenter{\hbox{\scalebox{0.8}{
\begin{tikzpicture}[scale=0.7]
\draw [double distance=0.57cm] (0,0) [partial ellipse=-0.1:180.1:2 and 1.5];
\draw [red] (0,0) [partial ellipse=0:80:2.05 and 1.55];
\draw [red] (0,0) [partial ellipse=90:180:2.05 and 1.55];
\draw [red] (0,1.5) [partial ellipse=-90:90:0.25 and 0.43];
\draw [red, dashed] (0,1.5) [partial ellipse=90:270:0.25 and 0.43];
\begin{scope}[shift={(-2.5, 0)}]
\draw [line width=0.6cm, brown!20!white] (0,0) [partial ellipse=-0.1:180.1:2 and 1.5];
\end{scope} 
\begin{scope}[shift={(-2.5, 0)}]
\draw [line width=0.6cm, brown!20!white] (0,0) [partial ellipse=180:360:2 and 1.5];
\end{scope}
\draw [line width=0.6cm] (0,0) [partial ellipse=180:360:2 and 1.5];
\draw [white, line width=0.57cm] (0,0) [partial ellipse=178:362:2 and 1.5];
\draw [red] (0,0) [partial ellipse=178:362:2.05 and 1.55];
\begin{scope}[shift={(-4.5, 0)}]
\draw [blue, dashed] (0,0) [partial ellipse=0:180:0.4 and 0.25];
\draw [blue] (0,0) [partial ellipse=180:360:0.4 and 0.25] node[black, pos=0.6, below] {\tiny $YY_kY'Y_k^*$};
\end{scope}
\end{tikzpicture}}}}\\
=&\sum_{k=1}^{r'} \sum_{j=1}^r d_j \mu^2 \dim\hom_{\mathcal{D}}(YY_kY'Y_k^*, \alpha_-(X_j)).
\end{align*}
Alternatively, we have that 
\begin{align*}
\sum_{k=1}^{r'}    \vcenter{\hbox{\scalebox{0.8}{
\begin{tikzpicture}[scale=0.7]
\draw [double distance=0.57cm] (0,0) [partial ellipse=-0.1:180.1:2 and 1.5];
\draw [red] (0,0) [partial ellipse=0:80:2.05 and 1.55];
\draw [red] (0,0) [partial ellipse=90:180:2.05 and 1.55];
\draw [red] (0,1.5) [partial ellipse=-90:90:0.25 and 0.43];
\draw [red, dashed] (0,1.5) [partial ellipse=90:270:0.25 and 0.43];
\begin{scope}[shift={(-2.5, 0)}]
\draw [line width=0.6cm, brown!20!white] (0,0) [partial ellipse=-0.1:180.1:2 and 1.5];
\end{scope} 
\begin{scope}[shift={(-2.5, 0)}]
\draw [line width=0.6cm, brown!20!white] (0,0) [partial ellipse=180:360:2 and 1.5];
\end{scope}
\draw [line width=0.6cm] (0,0) [partial ellipse=180:360:2 and 1.5];
\draw [white, line width=0.57cm] (0,0) [partial ellipse=178:362:2 and 1.5];
\draw [red] (0,0) [partial ellipse=178:362:2.05 and 1.55];
\begin{scope}[shift={(-4.5, 0)}]
\draw [blue, dashed] (0,0) [partial ellipse=0:180:0.4 and 0.25];
\draw [blue] (0,0) [partial ellipse=180:360:0.4 and 0.25] node[black, pos=0.6, below] {\tiny $YY_kY'Y_k^*$};
\end{scope}
\end{tikzpicture}}}}
= \sum_{k=1}^{r'}\sum_{j=1}^r \mu^2\chi_j^+(YY_kY'Y_k^*).
\end{align*}
The other identities follows from a similar argument.
\end{proof}

\begin{remark}
Theorem \ref{thm:ortho2} generalizes the orthogonality for the characters of finite groups by noting that the $\alpha$-induction is trivial.
\end{remark}

The second orthogonality is also true for multiple $2D$-colors. We denote the $\alpha$-induction with respect to $\mathcal{D}$ by $\alpha^{\mathcal{D}}_{\pm}$ and the character by $\chi_{j, \mathcal{D}}^\pm$.
\begin{theorem}[Second Orthogonality] \label{thm:ortho21}
Suppose that $\mathcal{C}$ is a braided spherical fusion category and $Y\in \mathcal{D}$, $Y'\in \mathcal{E}$. We have that 
\begin{align*}
 \sum_{j=1}^r \mu \chi_{j, \mathcal{D}}^\pm(Y)\chi_{j, \mathcal{E}}^\pm(Y')
=& \sum_{k=1}^{r'} \sum_{j=1}^r  d_j  \dim\hom(YM_kY'M_k^*, \alpha_{\mp}^{\mathcal{D}}(X_j)),
\end{align*}
where $M_k$ are simple objects in the module category $ _{\mathcal{D}}\mathcal{M}_{\mathcal{E}}$.
\end{theorem}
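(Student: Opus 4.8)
The plan is to follow the proof of Theorem~\ref{thm:ortho2} almost verbatim, the only change being that on the right-hand solid torus the $2D$-color $\mathcal{D}$ is replaced by $\mathcal{E}$, so that evaluating the linking Kirby-colored torus produces a $\mathcal{D}$-$\mathcal{E}$ bridge which decomposes over $\Irr({}_{\mathcal{D}}\mathcal{M}_{\mathcal{E}})$ rather than over $\Irr(\mathcal{D})=\Irr({}_{\mathcal{D}}\mathcal{M}_{\mathcal{D}})$. Concretely, I would set up the three-torus alterfold used in the proof of Theorem~\ref{thm:ortho2}: a solid torus colored by $\mathcal{D}$ carrying the loop $Y$, a solid torus colored by $\mathcal{E}$ carrying the loop $Y'$, each surrounded by a Kirby-colored torus, and a central Kirby-colored torus Hopf-linking the two. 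The only difference with the picture there is the pair of $2D$-colors $(\mathcal{D},\mathcal{E})$ on the two solid tori instead of $(\mathcal{D},\mathcal{D})$.

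Next I would record the two-color analogue of Lemma~\ref{lem:decomposition2}: a $\mathcal{D}$-colored tube and an $\mathcal{E}$-colored tube joined through the central full torus evaluate, by Move~$1$ and Move~$2$, to $\mu\sum_{k}$ of the same picture with a $\mathcal{D}$-$\mathcal{E}$ bridge labelled by $M_k$, where $M_k$ ranges over $\Irr({}_{\mathcal{D}}\mathcal{M}_{\mathcal{E}})$. The computation is identical to that of Lemma~\ref{lem:decomposition2}; the only bookkeeping change is that the $\Omega$-color inserted by Move~$1$ is now the one in $\Hom(\mathfrak{D},\mathfrak{E})$, and the index set is $\Irr({}_{\mathcal{D}}\mathcal{M}_{\mathcal{E}})$.

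Then I would evaluate the resulting alterfold in two ways, exactly as in the proof of Theorem~\ref{thm:ortho2}. Applying the Kirby color to the central torus and expanding its strand over $\Irr(\mathcal{C})$ (modularity of $\mathcal{C}$) separates the alterfold into a $\mathcal{D}$-colored character tube of $Y$ glued to an $\mathcal{E}$-colored character tube of $Y'$ along the Kirby-colored $X_j$-loop, giving $\sum_{j=1}^{r}\mu^{3}\,\chi_{j,\mathcal{D}}^{\pm}(Y)\,\chi_{j,\mathcal{E}}^{\pm}(Y')$. On the other hand, sliding the bridge $M_k\,Y'\,M_k^{*}$ together with $Y$ around the torus and pushing the diagram to the $\mathcal{D}$-colored boundary as in Equation~\eqref{eq:alphatube} produces $\sum_{k}\sum_{j=1}^{r}\mu^{2}\,d_j\,\dim\hom_{\mathcal{D}}(Y M_k Y' M_k^{*},\alpha_{\mp}^{\mathcal{D}}(X_j))$ (note $Y M_k Y' M_k^{*}\in\mathcal{D}$, and the sign flips to $\mp$ through Equation~\eqref{eq:alphatube}). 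Cancelling the common factor $\mu^{2}$ gives the stated identity; the same argument with the diagram pushed instead to the $\mathcal{E}$-colored boundary yields the companion identity with $\alpha_{\mp}^{\mathcal{E}}$, which explains the apparent asymmetry of the statement.

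The main obstacle is the second step: establishing the precise two-color version of Lemma~\ref{lem:decomposition2}, i.e.\ checking that the central Kirby torus really produces a sum over irreducible $\mathcal{D}$-$\mathcal{E}$ bimodules with the correct normalization $\mu$, and being careful about the order in which $Y$, $M_k$, $Y'$, $M_k^{*}$ compose along the bridge. Everything else is a routine transcription of the single-color proof, since the $\mathcal{D}$-colored and $\mathcal{E}$-colored character tubes, the Kirby-color evaluations, and the push-to-boundary move of Equation~\eqref{eq:alphatube} are all local and insensitive to the choice of $2D$-color.
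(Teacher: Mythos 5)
Your proposal is correct and matches the paper's intended argument: the paper's own proof of this theorem is literally the one-line remark that it is ``similar to the one of Theorem \ref{thm:ortho2},'' and your write-up is a faithful expansion of exactly that route, with the only genuine new ingredient being the two-color analogue of Lemma \ref{lem:decomposition2} (which follows from Move 1 with the $\Omega$-color taken in $\Hom(\mathfrak{D},\mathfrak{E})$, indexing the bridge by $\Irr({}_{\mathcal{D}}\mathcal{M}_{\mathcal{E}})$). Your bookkeeping of the powers of $\mu$ and the sign flip $\pm\mapsto\mp$ via Equation \eqref{eq:alphatube} also agrees with the single-color computation.
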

\begin{proof}
    The proof is similar to the one of Theorem \ref{thm:ortho2}.
\end{proof}

\begin{remark}
We can also consider the following graph:
\begin{align*}
    \vcenter{\hbox{\scalebox{0.7}{
}}}.
\end{align*}
We then see the proposition is true.
\end{proof}

\begin{remark}
In Proposition \ref{prop:orthos}, if $\mathcal{C}$ is orthogonal to $\mathcal{D}$, we have
\begin{align*}
\sum_{j=1}^r \frac{\mu^2}{d_j^2} \mathfrak{S}(\Schi_j^+\circ \Schi_j^-)(Y) \overline{\mathfrak{S}(\Schi_j^+\circ \Schi_j^-)(Y')}=d_Yd_{Y'}.
\end{align*}
\end{remark}

Based on the $\alpha$-induction, we have the following trace formula.
\begin{proposition}[Trace Formula]\label{prop:mtrace}
Suppose that $\mathcal{C}$ is a modular fusion category.
Then we have that 
\begin{align*}
 \dim \hom_{\mathcal{D}} (\alpha_\pm(X_j), Y) =  \mu \sum_{k, \ell=1}^r S_{j\ell} d_k \mathfrak{S} (\Schi_\ell^\pm\circ \Schi_k^\mp)(Y).
\end{align*}
\end{proposition}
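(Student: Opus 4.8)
The plan is to realize $\dim\hom_{\mathcal D}(\alpha_\pm(X_j),Y)$ as a closed alterfold evaluation and then to strip off the coefficients $S_{j\ell}$ and $d_k$ by the decomposition lemmas of Section~2. First I would use $\dim\hom_{\mathcal D}(\alpha_+(X_j),Y)=\dim\hom_{\mathcal D}(\1_{\mathcal D},\alpha_+(X_j)^{*}\otimes Y)$ together with the tube description of $\alpha$-induction in Equation~\eqref{eq:alphatube} to draw the left-hand side as a $\mathcal D$-colored torus surrounding a $\mathcal D$-colored tube that carries $Y$, into which an $X_j$-colored $\mathcal C$-loop has been inserted by the forgetful-functor recipe (at the cost of one factor $\tfrac1\mu$ and a Kirby circle). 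This is the same type of diagram that computes $z_{jk}$ in Equation~\eqref{eq:modularinv}, with the second $\alpha_\mp$-torus replaced by the single loop $Y$ on a tube.

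Next I would process the $\mathcal C$-colored torus carrying $X_j$. Applying Lemma~\ref{lem:schange}, the $S$-matrix exchanges its longitude and meridian up to the scalar $\mu$; combined with the Hopf-pairing identity~\eqref{eq:charactermodular} (that $\tfrac{d_j}{\mu}$ times the $X_j$-$X_\ell$ Hopf link equals $S_{j\ell}$), this replaces $X_j$ on one cycle by $\sum_\ell S_{j\ell}$ times $X_\ell$ on the complementary cycle and at the same time installs the longitude/meridian swap that the operator $\mathfrak S$ records in the statement. I would then resolve the $\mathcal D$-colored tube carrying $Y$ by encircling it with a Kirby-colored torus and invoking Lemma~\ref{lem:decomposition1}, producing the sum $\sum_k d_k$ over a second $\mathcal C$-loop $X_k$ lying on that torus (with the appropriate ambient power of $\mu$).

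At this point the diagram is, up to an overall power of $\mu$, exactly the defining picture of $\mathfrak S(\Schi_\ell^{\pm}\circ\Schi_k^{\mp})(Y)$: the two loops $X_\ell$ and $X_k$ now sit on a common Kirby-colored torus linking the $Y$-tube, with the braiding signs $\pm$ and $\mp$ dictated by the two strands of the $\alpha_\pm$-induction tube and with the longitude/meridian configuration being the $\mathfrak S$-transform. Collecting the scalars --- $\tfrac1\mu$ from \eqref{eq:alphatube}, a $\mu$ from the $S$-move of Lemma~\ref{lem:schange}, a $\mu$ from the Kirby insertion of Lemma~\ref{lem:decomposition1}, and the $\tfrac1{\mu^3}$ built into $\mathfrak S(\Schi_\ell^{\pm}\circ\Schi_k^{\mp})$ --- leaves exactly the single factor $\mu$ asserted in the proposition. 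The $-/+$ case follows by running the same argument with the opposite braiding of $\mathcal C$ throughout.

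The main obstacle is the diagram bookkeeping in the last step: one must verify that the cycles of each torus (meridian versus longitude), the orientations of the $X_\ell$- and $X_k$-loops, and the side of the $\mathcal D$-tube on which they lie match the definitions of $\Schi^{\pm}$, of the vertical product $\circ$, and of $\mathfrak S$ precisely --- and in particular that the two halves of the $\alpha_\pm$-tube produce the \emph{correlated} signs $\pm$ and $\mp$ rather than two identical signs. This has to be tracked together with the powers of $\mu$, since each Move $0,1,2$ and each handle-slide contributes a scalar; once the configuration is pinned down, every individual step is a direct citation of a lemma already established in Section~2.
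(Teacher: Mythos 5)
Your plan follows essentially the same route as the paper's proof: the paper likewise writes $\mu\dim\hom_{\mathcal D}(\alpha_\pm(X_j),Y)$ as a closed alterfold with the $X_j$-decorated torus linking the $\mathcal D$-colored $Y$-tube, converts it in one step (with coefficient $\tfrac{S_{j\ell}d_k}{\mu}$) into the Kirby-colored torus carrying $X_k$- and $X_\ell$-meridian circles via the Hopf-link relation \eqref{eq:charactermodular} and the Kirby decomposition of Lemma \ref{lem:decomposition1}, and then identifies the result with $\mu^2\sum S_{j\ell}d_k\,\mathfrak S(\Schi_\ell^\pm\circ\Schi_k^\mp)(Y)$. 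The only outstanding item in your write-up is the explicit power-of-$\mu$ count, which you correctly flag and which works out exactly as in the paper's displayed computation.
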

\begin{proof}
We have that 
\begin{align*}
\mu \dim \hom_{\mathcal{D}} (\alpha_+(X_j), Y)= & \vcenter{\hbox{\scalebox{0.7}{
\begin{tikzpicture}[scale=0.7]
\begin{scope}[shift={(5, 0)}]
\draw [double distance=0.57cm] (0,0) [partial ellipse=-0.1:180.1:2 and 1.5];
\draw [blue] (0,0) [partial ellipse=-0.1:180.1:2 and 1.5];
\end{scope} 
\begin{scope}[shift={(2.5, 0)}]
\draw [white, line width=0.63cm] (0,0) [partial ellipse=-0.1:180.1:2 and 1.5];
\draw [line width=0.58cm, brown!20!white] (0,0) [partial ellipse=-0.1:180.1:2 and 1.5];
\end{scope} 
\begin{scope}[shift={(2.5, 0)}]
\draw [white, line width=0.63cm] (0,0) [partial ellipse=180:360:2 and 1.5];
\draw [brown!20!white, line width=0.58cm] (0,0) [partial ellipse=178:362:2 and 1.5];
\end{scope}
\begin{scope}[shift={(5, 0)}]
\draw [line width=0.6cm] (0,0) [partial ellipse=180:360:2 and 1.5];
\draw [white, line width=0.57cm] (0,0) [partial ellipse=178:362:2 and 1.5];
\draw [blue, ->-=0.5] (0,0) [partial ellipse=178:362:2 and 1.5] node[black, pos=0.5, below] {\tiny $X_j$};
\end{scope}
\begin{scope}[shift={(7, 0)}]
\draw [red, dashed] (0,0) [partial ellipse=0:180:0.42 and 0.25];
\draw [red] (0,0) [partial ellipse=180:260:0.42 and 0.25];
\draw [red] (0,0) [partial ellipse=280:360:0.42 and 0.25];
\end{scope}
\begin{scope}[shift={(0.5, 0)}]
\draw [blue, dashed](0,0) [partial ellipse=0:180:0.4 and 0.25] ;
\draw [blue, ->-=0.5] (0,0) [partial ellipse=180:360:0.4 and 0.25] node[black, pos=0.5, below] {\tiny $Y$};
\end{scope}
\end{tikzpicture}}}}\\
=& \sum_{k, \ell=1}^r  \frac{S_{j\ell} d_k }{\mu}
\vcenter{\hbox{\scalebox{0.7}{
\begin{tikzpicture}[scale=0.7]
\begin{scope}[shift={(5, 0)}]
\draw [double distance=0.57cm] (0,0) [partial ellipse=-0.1:180.1:2 and 1.5];
\draw [red] (0,0) [partial ellipse=-0.1:180.1:2 and 1.5];
\end{scope} 
\begin{scope}[shift={(2.5, 0)}]
\draw [white, line width=0.63cm] (0,0) [partial ellipse=-0.1:180.1:2 and 1.5];
\draw [line width=0.58cm, brown!20!white] (0,0) [partial ellipse=-0.1:180.1:2 and 1.5];
\end{scope} 
\begin{scope}[shift={(2.5, 0)}]
\draw [white, line width=0.63cm] (0,0) [partial ellipse=180:360:2 and 1.5];
\draw [brown!20!white, line width=0.58cm] (0,0) [partial ellipse=178:362:2 and 1.5];
\end{scope}
\begin{scope}[shift={(5, 0)}]
\draw [line width=0.6cm] (0,0) [partial ellipse=180:360:2 and 1.5];
\draw [white, line width=0.57cm] (0,0) [partial ellipse=178:362:2 and 1.5];
\draw [red] (0,0) [partial ellipse=178:362:2 and 1.5];
\end{scope}
\begin{scope}[shift={(0.5, 0)}]
\draw [blue, dashed](0,0) [partial ellipse=0:180:0.4 and 0.25] ;
\draw [blue, ->-=0.5] (0,0) [partial ellipse=180:360:0.4 and 0.25] node[black, pos=0.5, below] {\tiny $Y$};
\end{scope}
\begin{scope}[shift={(3.05, 0.3)}]
\draw [blue, -<-=0.4, dashed](0,0) [partial ellipse=0:180:0.47 and 0.25] node[black, pos=0.4, above] {\tiny $X_k$};
\draw [white, line width=4pt] (0,0) [partial ellipse=290:270:0.47 and 0.25];
\draw [blue] (0,0) [partial ellipse=280:360:0.47 and 0.25];
\draw [blue] (0,0) [partial ellipse=180:260:0.47 and 0.25];
\end{scope}
\begin{scope}[shift={(3.05, -0.3)}]
\draw [blue, dashed](0,0) [partial ellipse=0:180:0.47 and 0.25] ;
\draw [white, line width=4pt] (0,0) [partial ellipse=290:270:0.47 and 0.25];
\draw [blue, -<-=0.2] (0,0) [partial ellipse=180:360:0.47 and 0.25] node[black, pos=0.2, below] {\tiny $X_\ell$};
\end{scope}
\end{tikzpicture}}}}\\
=& \mu^2\sum_{k, \ell=1}^r S_{j\ell} d_k \mathfrak{S} (\Schi_\ell^+\circ \Schi_k^-)(Y).
\end{align*}
This completes the proof.
\end{proof}

\begin{remark}
Suppose $G$ is a finite group and $H$ is a subgroup.
Let $\rho$ be the induced representation associated to the trivial representation of $H$.
The Poisson summation formula is 
\begin{align*}
    \sum_{\pi \in \text{Rep}(G)}\dim\hom(\rho, \pi) \hat{f}(\pi) = \frac{1}{|H|} \sum_{x\in G}\sum_{h\in H} f(xhx^{-1}),
\end{align*}
where $f$ is a function on $G$.
This formula can be interpreted in the alterfold TQFT as
\begin{align*}
\vcenter{\hbox{\scalebox{0.5}{
\begin{tikzpicture}[xscale=0.8, yscale=0.6]
\draw [line width=0.83cm] (0,0) [partial ellipse=-0.1:180.1:2 and 1.5];
\draw [white, line width=0.8cm] (0,0) [partial ellipse=-0.1:180.1:2 and 1.5];
\path [fill=brown!20!white](-0.65, -3) rectangle (0.65, 3);
\begin{scope}[shift={(0,3)}]
\path [fill=brown!20!white] (0,0) [partial ellipse=0:180:0.6 and 0.3];
\draw (0,0) [partial ellipse=0:360:0.6 and 0.3];
\end{scope}
\draw (-0.6, 3)--(-0.6, 0) (0.6, 3)--(0.6, 0); 
\draw (-0.6, -3)--(-0.6, 0) (0.6, -3)--(0.6, 0);
\draw [line width=0.83cm] (0,0) [partial ellipse=180:360:2 and 1.5];
\draw [white, line width=0.8cm] (0,0) [partial ellipse=179:361:2 and 1.5];
\begin{scope}[shift={(0,-3)}]
\path [fill=brown!20!white] (0,0) [partial ellipse=180:360:0.6 and 0.3];
\draw [dashed](0,0) [partial ellipse=0:180:0.6 and 0.3];
\draw (0,0) [partial ellipse=180:360:0.6 and 0.3];
\end{scope}
\begin{scope}[shift={(2, 0)}]
\draw [blue, dashed] (0,0) [partial ellipse=0:180:0.46 and 0.25];
\draw [blue, ->-=0.5] (0,0) [partial ellipse=180:360:0.46 and 0.25] node[black, pos=0.5, below] {\tiny $\rho$};
\end{scope}
\end{tikzpicture}}}}
=\frac{1}{|H|}
\sum_{h\in H}\vcenter{\hbox{\scalebox{0.5}{
\begin{tikzpicture}[xscale=0.8, yscale=0.6]
\draw [brown!20!white, line width=0.83cm] (0,0) [partial ellipse=-0.1:180.1:2 and 1.5];
\draw [blue] (0,0) [partial ellipse=-0.1:180.1:2 and 1.5];
\path [fill=brown!20!white](-0.65, -3) rectangle (0.65, 3);
\begin{scope}[shift={(0,3)}]
\path [fill=brown!20!white] (0,0) [partial ellipse=0:180:0.6 and 0.3];
\draw (0,0) [partial ellipse=0:360:0.6 and 0.3];
\end{scope}
\draw (-0.6, 3)--(-0.6, 0) (0.6, 3)--(0.6, 0); 
\draw (-0.6, -3)--(-0.6, 0) (0.6, -3)--(0.6, 0);
\draw [white, line width=0.88cm] (0,0) [partial ellipse=179:361:2 and 1.5];
\draw [brown!20!white, line width=0.83cm] (0,0) [partial ellipse=179:361:2 and 1.5];
\draw [blue, ->-=0.5] (0,0) [partial ellipse=179:361:2 and 1.5] node[black, pos=0.5, below] {\tiny $\bC_h$};
\begin{scope}[shift={(0,-3)}]
\path [fill=brown!20!white] (0,0) [partial ellipse=180:360:0.6 and 0.3];
\draw [dashed](0,0) [partial ellipse=0:180:0.6 and 0.3];
\draw (0,0) [partial ellipse=180:360:0.6 and 0.3];
\end{scope}
\end{tikzpicture}}}}.
\end{align*}
If $\rho$ is an induced representation associated to a representation $\pi_H$ of $H$, then 
\begin{align*}
\vcenter{\hbox{\scalebox{0.5}{
\begin{tikzpicture}[xscale=0.8, yscale=0.6]
\draw [line width=0.83cm] (0,0) [partial ellipse=-0.1:180.1:2 and 1.5];
\draw [white, line width=0.8cm] (0,0) [partial ellipse=-0.1:180.1:2 and 1.5];
\path [fill=brown!20!white](-0.65, -3) rectangle (0.65, 3);
\begin{scope}[shift={(0,3)}]
\path [fill=brown!20!white] (0,0) [partial ellipse=0:180:0.6 and 0.3];
\draw (0,0) [partial ellipse=0:360:0.6 and 0.3];
\end{scope}
\draw (-0.6, 3)--(-0.6, 0) (0.6, 3)--(0.6, 0); 
\draw (-0.6, -3)--(-0.6, 0) (0.6, -3)--(0.6, 0);
\draw [line width=0.83cm] (0,0) [partial ellipse=180:360:2 and 1.5];
\draw [white, line width=0.8cm] (0,0) [partial ellipse=179:361:2 and 1.5];
\begin{scope}[shift={(0,-3)}]
\path [fill=brown!20!white] (0,0) [partial ellipse=180:360:0.6 and 0.3];
\draw [dashed](0,0) [partial ellipse=0:180:0.6 and 0.3];
\draw (0,0) [partial ellipse=180:360:0.6 and 0.3];
\end{scope}
\begin{scope}[shift={(2, 0)}]
\draw [blue, dashed] (0,0) [partial ellipse=0:180:0.46 and 0.25];
\draw [blue, ->-=0.5] (0,0) [partial ellipse=180:360:0.46 and 0.25] node[black, pos=0.5, below] {\tiny $\rho$};
\end{scope}
\end{tikzpicture}}}}
=\frac{1}{|H|}
\sum_{h\in H} \chi_{\pi_H}(h)\vcenter{\hbox{\scalebox{0.5}{
\begin{tikzpicture}[xscale=0.8, yscale=0.6]
\draw [brown!20!white, line width=0.83cm] (0,0) [partial ellipse=-0.1:180.1:2 and 1.5];
\draw [blue] (0,0) [partial ellipse=-0.1:180.1:2 and 1.5];
\path [fill=brown!20!white](-0.65, -3) rectangle (0.65, 3);
\begin{scope}[shift={(0,3)}]
\path [fill=brown!20!white] (0,0) [partial ellipse=0:180:0.6 and 0.3];
\draw (0,0) [partial ellipse=0:360:0.6 and 0.3];
\end{scope}
\draw (-0.6, 3)--(-0.6, 0) (0.6, 3)--(0.6, 0); 
\draw (-0.6, -3)--(-0.6, 0) (0.6, -3)--(0.6, 0);
\draw [white, line width=0.88cm] (0,0) [partial ellipse=179:361:2 and 1.5];
\draw [brown!20!white, line width=0.83cm] (0,0) [partial ellipse=179:361:2 and 1.5];
\draw [blue, ->-=0.5] (0,0) [partial ellipse=179:361:2 and 1.5] node[black, pos=0.5, below] {\tiny $\bC_h$};
\begin{scope}[shift={(0,-3)}]
\path [fill=brown!20!white] (0,0) [partial ellipse=180:360:0.6 and 0.3];
\draw [dashed](0,0) [partial ellipse=0:180:0.6 and 0.3];
\draw (0,0) [partial ellipse=180:360:0.6 and 0.3];
\end{scope}
\end{tikzpicture}}}}.
\end{align*}
The trace formula in Proposition \ref{prop:mtrace} does not imply the Poisson summation formula.
\end{remark}

\section{Identities for Modular Invariance}

In this section, we establish several identities for the modular invariants associated with multiple Morita contexts. 
Not all modular invariants arise from Morita contexts. 
These identities serve as potential obstructions for determining whether a given modular invariant originates from a Morita context.

The topological interpretation of the modular invariant inspires us to study the spectral theory of the second torus action.
For a modular fusion category $\mathcal{C}$ and Morita context $\mathcal{D}$, we denote by $(z_{jk}^{\mathcal{D}})_{jk}$ the modular invariant.
\begin{lemma}\label{lem:zcoefficient}
Suppose $\mathcal{C}$ is a modular fusion category. Then we have that 
\begin{align*}
\vcenter{\hbox{\scalebox{0.7}{
\begin{tikzpicture}[xscale=0.8, yscale=0.6]
\draw [brown!20!white, line width=0.7cm] (0,0) [partial ellipse=0:180:2 and 1.5];
\node at (-2, 0.5) {$\mathcal{D}$};
\begin{scope}[shift={(0,3)}]
\draw (0,0) [partial ellipse=0:360:0.6 and 0.3];
\end{scope}
\path [fill=white] (-0.6, 0) rectangle (0.6, 2.7);
\draw [blue, ->-=0.5] (-0.2, 2.8)--(-0.2, 0) node [left, pos=0.6] {\tiny $X_k$} ;
\draw [blue, -<-=0.5] (0.2, 2.8)--(0.2, 0) node [right, pos=0.6] {\tiny $X_j$}; 
\draw (-0.6, 3)--(-0.6, 0) (0.6, 3)--(0.6, 0); 
\draw [blue] (-0.2, -3.2)--(-0.2, 0) (0.2, -3.2)--(0.2, 0);
\draw (-0.6, -3)--(-0.6, 0) (0.6, -3)--(0.6, 0);
\draw [brown!20!white, line width=0.7cm] (0,0) [partial ellipse=180:360:2 and 1.5];
\begin{scope}[shift={(0,-3)}]
\draw [dashed](0,0) [partial ellipse=0:180:0.6 and 0.3];
\draw (0,0) [partial ellipse=180:360:0.6 and 0.3];
\end{scope}
\draw [red, dashed](0,0) [partial ellipse=0:180:0.6 and 0.3]; 
\draw [white, line width=4pt]  (0,0) [partial ellipse=220:270:0.6 and 0.3];
\draw [red]  (0,0) [partial ellipse=180:280:0.6 and 0.3];
\draw [red]  (0,0) [partial ellipse=300:360:0.6 and 0.3];
\end{tikzpicture}}}}
=\frac{z_{jk}^{\mathcal{D}}\mu}{d_jd_k}
\vcenter{\hbox{\scalebox{0.7}{
\begin{tikzpicture}[xscale=0.8, yscale=0.6]
\begin{scope}[shift={(0,3)}]
\draw (0,0) [partial ellipse=0:360:0.6 and 0.3];
\end{scope}
\path [fill=white] (-0.6, 0) rectangle (0.6, 2.7);
\draw [blue, ->-=0.5] (-0.2, 2.8)--(-0.2, 0) node [left, pos=0.6] {\tiny $X_k$} ;
\draw [blue, -<-=0.5] (0.2, 2.8)--(0.2, 0) node [right, pos=0.6] {\tiny $X_j$}; 
\draw (-0.6, 3)--(-0.6, 0) (0.6, 3)--(0.6, 0); 
\draw [blue] (-0.2, -3.2)--(-0.2, 0) (0.2, -3.2)--(0.2, 0);
\draw (-0.6, -3)--(-0.6, 0) (0.6, -3)--(0.6, 0);
\begin{scope}[shift={(0,-3)}]
\draw [dashed](0,0) [partial ellipse=0:180:0.6 and 0.3];
\draw (0,0) [partial ellipse=180:360:0.6 and 0.3];
\end{scope}
\draw [red, dashed](0,0) [partial ellipse=0:180:0.6 and 0.3]; 
\draw [white, line width=4pt]  (0,0) [partial ellipse=220:270:0.6 and 0.3];
\draw [red]  (0,0) [partial ellipse=180:280:0.6 and 0.3];
\draw [red]  (0,0) [partial ellipse=300:360:0.6 and 0.3];
\end{tikzpicture}}}}.
\end{align*}
\end{lemma}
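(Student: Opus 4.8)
\textit{Proof plan.} The plan is to show that both sides are morphisms in the tube category of $\mathcal{C}$ supported on the simple summand $X_j\boxtimes X_k^{\op}$ of $\mathcal{Z(C)}\cong\mathcal{C}\boxtimes\mathcal{C}^{\op}$, so that they are proportional, and then to pin down the proportionality constant by closing up the cylinder and invoking the topological computation of $z_{jk}^{\mathcal{D}}$ already carried out in Section 3.

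First I would recall that the diagram $v_{jk}$ on the right-hand side — the $\mathcal{C}$-colored cylinder carrying the two strands $X_j,X_k$ together with the Kirby-colored meridian — is, under the identification $\mathcal{Z(C)}\cong\mathcal{C}\boxtimes\mathcal{C}^{\op}$ recalled in Section 2, a nonzero scalar multiple of the minimal idempotent of the tube category projecting onto $X_j\boxtimes X_k^{\op}$; in particular it spans the one-dimensional space of such endomorphisms. Next, enclosing the cylinder by a $\mathcal{D}$-colored torus with empty diagram is, after a standard isotopy in the $B$-region, the same as acting on the corresponding tube-category endomorphism by the element presented by the $\mathcal{D}$-colored solid torus, namely $I(\1_{\mathcal{D}})$; since this element is the image of the tensor unit under the induction functor it is central in the tube algebra, hence commutes with the idempotent decomposition. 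Therefore enclosing $v_{jk}$ produces $\lambda_{jk}\,v_{jk}$ for some scalar $\lambda_{jk}$ depending only on $j,k$ and $\mathcal{D}$. (Alternatively one expands the $\mathcal{D}$-colored torus using the decomposition recalled before Lemma \ref{lem:zrank} (Corollary 5.6 of \cite{LMWW23}) as $\sum_{j',k'}\tfrac{d_{j'}d_{k'}}{\mu^2}v_{j'k'}$ and uses that the $v_{j'k'}$ are mutually orthogonal central idempotents up to scalar; this route isolates $\lambda_{jk}$ and, via Theorem \ref{thm:rank}, simultaneously brings in $z_{jk}^{\mathcal{D}}$.)

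To evaluate $\lambda_{jk}$, I would close the cylinder by gluing its two time boundaries. On the right this turns $v_{jk}$ into a closed $\mathcal{C}$-alterfold — a $\mathcal{C}$-colored torus with longitudes $X_j,X_k$ and a Kirby meridian — whose evaluation, by Lemma \ref{lem:decomposition3} and the modularity of $\mathcal{C}$, is a nonzero number $t_{jk}$ computable by Kirby calculus. On the left the same gluing produces precisely the $\mathcal{D}$-colored torus Hopf-linked with that picture, which is exactly the configuration used in Section 3 to compute $z_{jk}^{\mathcal{D}}$; by Equation \eqref{eq:modularinv} together with the identification $z_{jk}^{\mathcal{D}}=\dim\hom_{\mathcal{Z(C)}}(I(\1_{\mathcal{D}}),X_j\boxtimes X_k^{\op})$, its evaluation is $z_{jk}^{\mathcal{D}}$ times a fixed power of $\mu$. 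Comparing the two evaluations gives $\lambda_{jk}\,t_{jk}$ in terms of $z_{jk}^{\mathcal{D}}$, and matching the $\mu$-powers yields $\lambda_{jk}=\dfrac{z_{jk}^{\mathcal{D}}\,\mu}{d_jd_k}$, as claimed.

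I expect the main obstacle to be precisely the bookkeeping of the powers of $\mu$ and the quantum-dimension factors introduced by Moves $0$–$2$ and the Kirby-color identities during the close-up, i.e. confirming that $t_{jk}=\mu\,d_jd_k$ in the normalization $\zeta=1$. The cleanest way to settle this is to test the identity on the special case $\mathcal{D}=\mathcal{C}$, where $z_{jk}^{\mathcal{C}}=\delta_{jk}$ and both sides can be evaluated directly from Lemma \ref{lem:decomposition3}, and then transport the normalization to the general case. A secondary point needing care is the rigorous justification that enclosing by the $\mathcal{D}$-colored torus is a central operation on the tube category, which relies on $I(\1_{\mathcal{D}})$ being the induced image of the unit together with the braided isomorphism between the tube category and $\mathcal{Z(C)}$ recalled in Section 2.
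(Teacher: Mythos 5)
Your proposal is correct and follows essentially the same route as the paper: the authors likewise observe that the inner cylinder is a simple idempotent in $\mathcal{Z(C)}$, so the left-hand side must be a scalar multiple of it, and they then fix the scalar by taking the trace (your ``closing up the time boundaries''), evaluating the left side to $z_{jk}^{\mathcal{D}}$ via the Hopf-link computation of Section 3 and the right side to the coefficient times $\mu d_jd_k$. Your extra remarks on the centrality of $I(\1_{\mathcal{D}})$ and the $\mu$-bookkeeping are sensible sanity checks but do not change the argument.
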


\begin{proof}
The inner tube is a simple idempotent in $\mathcal{Z(C)}$, hence the left hand side equals a multiple of identity. 
Hence we only need to verify the equality after taking the trace. 
The left hand side equals to $z_{jk}^{\mathcal{D}}$. 
The right hand side equals to 
$$\frac{z_{jk}^{\mathcal{D}}}{d_kd_j}\text{tr}\left(\vcenter{\hbox{\scalebox{0.7}{
\begin{tikzpicture}[xscale=0.8, yscale=0.6]
\begin{scope}[shift={(0,3)}]
\draw (0,0) [partial ellipse=0:360:0.6 and 0.3];
\end{scope}
\path [fill=white] (-0.6, 0) rectangle (0.6, 2.7);
\draw [blue, ->-=0.5] (-0.2, 2.8)--(-0.2, 0) node [left, pos=0.6] {\tiny $X_k$} ;
\draw [blue, -<-=0.5] (0.2, 2.8)--(0.2, 0) node [right, pos=0.6] {\tiny $X_j$}; 
\draw (-0.6, 3)--(-0.6, 0) (0.6, 3)--(0.6, 0); 
\draw [blue] (-0.2, -3.2)--(-0.2, 0) (0.2, -3.2)--(0.2, 0);
\draw (-0.6, -3)--(-0.6, 0) (0.6, -3)--(0.6, 0);
\begin{scope}[shift={(0,-3)}]
\draw [dashed](0,0) [partial ellipse=0:180:0.6 and 0.3];
\draw (0,0) [partial ellipse=180:360:0.6 and 0.3];
\end{scope}
\draw [red, dashed](0,0) [partial ellipse=0:180:0.6 and 0.3]; 
\draw [white, line width=4pt]  (0,0) [partial ellipse=220:270:0.6 and 0.3];
\draw [red]  (0,0) [partial ellipse=180:280:0.6 and 0.3];
\draw [red]  (0,0) [partial ellipse=300:360:0.6 and 0.3];
\end{tikzpicture}}}}\right)=\frac{z_{jk}^{\mathcal{D}}}{d_kd_j}\mu d_kd_j=z_{jk}^{\mathcal{D}}.$$
This completes the proof of the Lemma.
\end{proof}

Now let us consider multiple of $2D$-colors: $\mathcal{D}_1$, $\mathcal{D}_2$, \ldots, $\mathcal{D}_n$.
The corresponding $Z$-matrices are denoted by $\displaystyle \left(z^{\mathcal{D}_j}_{jk}\right)_{j,k=1}^r$. 
Recall that $I$ is the induction functor, and
\begin{align}\label{eq:G}
G^+\left(
\vcenter{\hbox{\scalebox{0.6}{
\begin{tikzpicture}[scale=0.35]
\draw[blue, ->-=0.5] (0, 4.2) node[above, black]{\tiny{${(X, e_X)}$}}->(0, 0.5);
\draw[blue, ->-=0.5] (0, -0.5)--(0, -4.8) node[below, black]{\tiny{${(Y, e_Y)}$}};
\node [draw, fill=white] (0, 0){\tiny $f$};
\end{tikzpicture}}}}\right)
=\frac{1}{\mu}
\vcenter{\hbox{\scalebox{0.6}{
\begin{tikzpicture}[scale=0.35]
\draw (0,5) [partial ellipse=0:360:2 and 0.8];
\draw (-2, 5)--(-2, -4);
\draw (2, 5)--(2, -4);
\draw[dashed] (0,-4) [partial ellipse=0:180:2 and 0.8];
\draw (0,-4) [partial ellipse=180:360:2 and 0.8];
\draw[blue, ->-=0.5] (0, 4.2) node[above, black]{\tiny{${X}$}}->(0, 0.5);
\draw[blue, ->-=0.5] (0, -0.5)--(0, -4.8)node[below, black]{\tiny{${Y}$}};
\draw [red](0, 3) [partial ellipse=-75:0:2 and 0.8];
\draw [red, dashed](0, 3) [partial ellipse=0:180:2 and 0.8];
\draw [red](0, 3) [partial ellipse=180:255:2 and 0.8];
\node [draw, fill=white] (0, 0){\tiny $f$};
\end{tikzpicture}}}},
\quad 
G^-\left(
\vcenter{\hbox{\scalebox{0.6}{
\begin{tikzpicture}[scale=0.35]
\draw[blue, ->-=0.5] (0, 4.2) node[above, black]{\tiny{${(X, e_X)}$}}->(0, 0.5);
\draw[blue, ->-=0.5] (0, -0.5)--(0, -4.8) node[below, black]{\tiny{${(Y, e_Y)}$}};
\node [draw, fill=white] (0, 0){\tiny $f$};
\end{tikzpicture}}}}\right)
=\frac{1}{\mu}
\vcenter{\hbox{\scalebox{0.6}{
\begin{tikzpicture}[scale=0.35]
\draw (0,5) [partial ellipse=0:360:2 and 0.8];
\draw (-2, 5)--(-2, -4);
\draw (2, 5)--(2, -4);
\draw[dashed] (0,-4) [partial ellipse=0:180:2 and 0.8];
\draw (0,-4) [partial ellipse=180:360:2 and 0.8];
\draw[blue, ->-=0.5] (0, 4.2) node[above, black]{\tiny{${X}$}}->(0, 0.5);
\draw[blue, ->-=0.5] (0, -0.5)--(0, -4.8)node[below, black]{\tiny{${Y}$}};
\path [fill=white] (0, 2.2) circle (0.2cm);
\draw [red](0, 3) [partial ellipse=-90:0:2 and 0.8];
\draw [red, dashed](0, 3) [partial ellipse=0:180:2 and 0.8];
\draw [red](0, 3) [partial ellipse=180:272:2 and 0.8];
\node [draw, fill=white] (0, 0){\tiny $f$};
\end{tikzpicture}}}}.
\end{align}

\begin{theorem}\label{thm:physical}
Suppose $\mathcal{C}$ is a modular fusion category and $\mathcal{D}_s$, $s=1, \ldots, n$ are fusion categories Morita equivalent to $\mathcal{C}$.
Then we have that 
\begin{enumerate}
\item $\displaystyle \sum_{j, k=1}^r \prod_{s=1}^nz_{jk}^{\mathcal{D}_s}\frac{\mu^{n-2}}{d_k^{n-2}d_j^{n-2}}=\dim\hom_{\mathcal{Z(C)}}(\1_{\mathcal{Z(C)}}, I(\1_{\mathcal{D}_1})\otimes \cdots \otimes I(\1_{\mathcal{D}_n}))$.
\item  $\displaystyle \sum_{j=1}^r\prod_{s=1}^nz_{jj}^{\mathcal{D}_s}\frac{\mu^{n-2}}{d_j^{2n-4}}= \dim\hom_{\mathcal{Z(C)}}(I(\1_{\mathcal{C}}), I(\1_{\mathcal{D}_1})\otimes \cdots \otimes I(\1_{\mathcal{D}_n}))$.
\item $\displaystyle \sum_{j=1}^r \prod_{s=1}^nz_{j1}^{\mathcal{D}_s}\frac{\mu^{n-2}}{d_j^{n-2}}=\dim\hom_{\mathcal{Z(C)}}(\sum_j G^+(X_j), I(\1_{\mathcal{D}_1})\otimes \cdots \otimes I(\1_{\mathcal{D}_n}))$.
\item $\displaystyle \sum_{j=1}^r \prod_{s=1}^nz_{1j}^{\mathcal{D}_s}\frac{\mu^{n-2}}{d_j^{n-2}} =\dim\hom_{\mathcal{Z(C)}}(\sum_j G^-(X_j), I(\1_{\mathcal{D}_1})\otimes \cdots \otimes I(\1_{\mathcal{D}_n}))$.
\end{enumerate}
\end{theorem}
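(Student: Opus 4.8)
The plan is to interpret each right-hand side as the value of the alterfold partition function on a fixed closed $3$-manifold and then to evaluate that value by the decomposition lemmas of Section~2 together with the second torus action of Lemma~\ref{lem:zcoefficient}. Since $\dim\hom_{\mathcal{Z(C)}}(A_0, A_1\otimes\cdots\otimes A_n)=\dim\hom_{\mathcal{Z(C)}}(\1, A_0^{*}\otimes A_1\otimes\cdots\otimes A_n)$, and since each $I(\1_{\mathcal{D}_s})$ is, by Section~5 and the construction of $Z^{\mathcal{D}_s}$, exactly the $\mathcal{D}_s$-colored solid torus in the alterfold TQFT, I would represent every right-hand side as the alterfold on $\mathbb{S}^2\times\mathbb{S}^1$ (equivalently, on $\mathbb{S}^3$ after a $0$-surgery along an unknot colored by the Kirby color of $\mathcal{Z(C)}$) carrying $n$ parallel solid tori colored by $\mathcal{D}_1,\dots,\mathcal{D}_n$ together with one distinguished ``core'' $\mathbb{S}^1$-fiber whose decoration records $A_0$: empty for $A_0=\1_{\mathcal{Z(C)}}$ in part~(1), the $\mathcal{C}$-colored torus for $A_0=I(\1_{\mathcal{C}})$ in part~(2), and the half-Kirby decorations defining $G^{\pm}$ in Equation~\eqref{eq:G} for parts~(3) and~(4). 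All these $A_0$ are self-dual as objects, so the dualization above is harmless.

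First I would expand the core. By Lemma~\ref{lem:decomposition1} (after a Move-$1$ insertion) and the braided equivalence $\mathcal{Z(C)}\simeq\mathcal{C}\boxtimes\mathcal{C}^{op}$ recalled before it, the empty $B$-colored core fiber becomes, up to an explicit scalar, the sum over $j,k$ of the $\xi_j\otimes\xi_k^{op}$-tube, i.e.\ the fiber carrying parallel strands colored by $X_j$ and $X_k$; Lemmas~\ref{lem:decomposition3} and~\ref{lem:schange} give the corresponding expansions when the core instead carries the $\mathcal{C}$-colored torus or a half-Kirby circle. The essential point is that these three expansions differ in the index set that survives: the $\mathcal{C}$-colored core contributes the modular invariant of $\mathcal{C}$ itself, which is the identity matrix, and so forces $j=k$; a single half-Kirby circle forces $k=1$ (respectively $j=1$). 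This is precisely the dichotomy between part~(1) and parts~(2)--(4).

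Next I would apply Lemma~\ref{lem:zcoefficient} once for each $s=1,\dots,n$: the $\mathcal{D}_s$-colored solid torus linking the $\xi_j\otimes\xi_k^{op}$-tube acts on it by the scalar $\dfrac{z_{jk}^{\mathcal{D}_s}\mu}{d_jd_k}$, so after $n$ applications the whole configuration collapses to $\prod_{s=1}^{n}\dfrac{z_{jk}^{\mathcal{D}_s}\mu}{d_jd_k}$ times the now-unlinked $\xi_j\otimes\xi_k^{op}$-tube in $\mathbb{S}^2\times\mathbb{S}^1$, which evaluates to a fixed multiple of $d_jd_k$ by the trace computation inside the proof of Lemma~\ref{lem:zcoefficient}. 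Summing over the $j,k$ permitted by the chosen core and collecting the powers of $\mu$ and of the $d_j,d_k$ produced by Lemma~\ref{lem:decomposition1}, by the $n$ applications of Lemma~\ref{lem:zcoefficient}, and by the normalization of the ambient $\mathbb{S}^2\times\mathbb{S}^1$, should leave precisely the factor $\dfrac{\mu^{n-2}}{d_k^{n-2}d_j^{n-2}}$ (respectively $\dfrac{\mu^{n-2}}{d_j^{2n-4}}$ and $\dfrac{\mu^{n-2}}{d_j^{n-2}}$) in front of $\prod_s z_{jk}^{\mathcal{D}_s}$, which is the left-hand side.

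The main obstacle is twofold. First, one must pin down the correct closed alterfold and its overall normalization for $\dim\hom_{\mathcal{Z(C)}}(A_0,A_1\otimes\cdots\otimes A_n)$ in the conventions of Proposition~\ref{prop:dim} and its following remark, and in particular verify that $A_0=I(\1_{\mathcal{C}})$ and $A_0=\sum_j G^{\pm}(X_j)$ genuinely correspond to the $\mathcal{C}$-colored and half-Kirby cores, so that the core expansion cuts the double sum down to the diagonal, the first column, or the first row as needed. Second, the entire content of the four identities is the bookkeeping of scalars: every use of Moves~$0$, $1$, $2$, of handle sliding, and of the decomposition lemmas introduces powers of $\mu$ and of quantum dimensions, and what must be checked is that they conspire to the single uniform exponent $n-2$. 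I would control this by first fixing the normalization in the base case $n=2$ --- where part~(1) reduces to the identity $\dim\hom_{\mathcal{Z(C)}}(Z(Q),Z(Q'))=|{}_{Q}\mathcal{C}_{Q'}|$ of Section~5 and to Proposition~\ref{prop:tracedim} --- and then tracking the single extra factor $\dfrac{z_{jk}^{\mathcal{D}_s}\mu}{d_jd_k}$ contributed by each further $\mathcal{D}_s$-colored torus.
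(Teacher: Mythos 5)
Your proposal is correct and follows essentially the same route as the paper: the paper introduces the auxiliary quantity $\mathcal{I}=\sum_{j,k}\frac{d_jd_k}{\mu}\,tr(\cdots)$ and evaluates it in two ways, once by $n$ applications of Lemma~\ref{lem:zcoefficient} (yielding the left-hand side) and once by resumming the $\xi_j\otimes\xi_k^{op}$-tubes into a core torus via Lemma~\ref{lem:decomposition1} (yielding $\mu^2$ times the hom-space dimension), then obtains parts (2)--(4) by replacing the core with the plain, half-Kirby, and full-Kirby tori exactly as you describe. Your presentation merely runs the same computation in the opposite direction, starting from the right-hand side and expanding the core, with the same key lemmas and the same index restrictions ($j=k$, $k=1$, $j=1$) arising from the choice of core.
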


\begin{proof}
We consider the following summation:
\begin{align*}
\mathcal{I}=\sum_{j,k=1}^r \frac{d_jd_k}{\mu}tr\left(
\vcenter{\hbox{\scalebox{0.7}{
\begin{tikzpicture}[xscale=0.8, yscale=0.6]
\begin{scope}[shift={(0, 2)}]
\draw [brown!20!white, line width=0.7cm] (0,0) [partial ellipse=0:180:2 and 1.2];
\node at (-2, 0.5) {$\mathcal{D}_1$};
\end{scope}
\node at (-2, 0.5) {$\vdots$};
\draw [decorate, decoration={brace}] (-2.5, -2) --(-2.5, 2) node [pos=0.5, left] {$n$};
\begin{scope}[shift={(0, -2)}]
\draw [brown!20!white, line width=0.7cm] (0,0) [partial ellipse=0:180:2 and 1.2];
\node at (-2, 0.5) {$\mathcal{D}_n$};
\end{scope}
\begin{scope}[shift={(0,5)}]
\draw (0,0) [partial ellipse=0:360:0.6 and 0.3];
\end{scope}
\path [fill=white] (-0.6, -2) rectangle (0.6, 4.7);
\draw [blue, ->-=0.5] (-0.2, 4.8)--(-0.2, 0) node [left, pos=0.6] {\tiny $X_k$} ;
\draw [blue, -<-=0.5] (0.2, 4.8)--(0.2, 0) node [right, pos=0.6] {\tiny $X_j$}; 
\draw (-0.6, 5)--(-0.6, 0) (0.6, 5)--(0.6, 0); 
\draw [blue] (-0.2, -5.2)--(-0.2, 0) (0.2, -5.2)--(0.2, 0);
\draw (-0.6, -5)--(-0.6, 0) (0.6, -5)--(0.6, 0);
\begin{scope}[shift={(0, 2)}]
\draw [brown!20!white, line width=0.7cm] (0,0) [partial ellipse=180:360:2 and 1.2];
\end{scope}
\begin{scope}[shift={(0, -2)}]
\draw [brown!20!white, line width=0.7cm] (0,0) [partial ellipse=180:360:2 and 1.2];
\end{scope}
\begin{scope}[shift={(0,-5)}]
\draw [dashed](0,0) [partial ellipse=0:180:0.6 and 0.3];
\draw (0,0) [partial ellipse=180:360:0.6 and 0.3];
\end{scope}
\begin{scope}[shift={(0, -0.5)}]
\draw [red, dashed](0,0) [partial ellipse=0:180:0.6 and 0.3]; 
\draw [white, line width=4pt]  (0,0) [partial ellipse=220:270:0.6 and 0.3];
\draw [red]  (0,0) [partial ellipse=180:280:0.6 and 0.3];
\draw [red]  (0,0) [partial ellipse=300:360:0.6 and 0.3];
\end{scope}
\end{tikzpicture}}}}\right).
\end{align*}
By Lemma \ref{lem:zcoefficient}, we have that
\begin{align*}
\mathcal{I}=\sum_{j,k=1}^r \frac{d_jd_k}{\mu}\prod_{s=1}^nz_{jk}^{\mathcal{D}_s}\frac{\mu^{n+1}}{d_k^{n-1}d_j^{n-1}}.
\end{align*}
On the other hand, by summing the minimal central idempotents, we have that 
\begin{align*}
\mathcal{I}=\vcenter{\hbox{\scalebox{0.8}{
\begin{tikzpicture}[scale=0.7]
\draw [double distance=0.57cm] (0,0) [partial ellipse=-0.1:180.1:2 and 1.5];
\draw [red] (0,0) [partial ellipse=0:180:2.05 and 1.55];
\begin{scope}[shift={(2.5, 0)}]
\draw [line width=0.6cm, brown!20!white] (0,0) [partial ellipse=-0.1:180.1:2 and 1.5];
\end{scope} 
\begin{scope}[shift={(2.5, 0)}]
\draw [line width=0.6cm, brown!20!white] (0,0) [partial ellipse=180:360:2 and 1.5];
\node at (-2, 0.5) {$\mathcal{D}_n$};
\end{scope}
\begin{scope}[shift={(-2.5, 0)}]
\draw [line width=0.6cm, brown!20!white] (0,0) [partial ellipse=-0.1:180.1:2 and 1.5];
\end{scope} 
\begin{scope}[shift={(-2.5, 0)}]
\draw [line width=0.6cm, brown!20!white] (0,0) [partial ellipse=180:360:2 and 1.5];
\node at (-2, 0.5) {$\mathcal{D}_1$};
\end{scope}
\draw [line width=0.6cm] (0,0) [partial ellipse=180:360:2 and 1.5];
\draw [white, line width=0.57cm] (0,0) [partial ellipse=178:362:2 and 1.5];
\draw [red] (0,0) [partial ellipse=178:362:2.05 and 1.55];
\node at (0, 2.2) {$\ldots\ldots$};
\draw [decorate, decoration={brace}] (-2.5, 2.5)--(2.5, 2.5);
\node [above] at (0, 2.6) {$n$};
\end{tikzpicture}}}}
=\dim\hom_{\mathcal{Z(C)}}(\1_{\mathcal{Z(C)}}, I(\1_{\mathcal{D}_1})\otimes \cdots \otimes I(\1_{\mathcal{D}_n})) \mu^2.
\end{align*}

Replacing the middle torus by the following torus respectively:
\begin{align*}
\vcenter{\hbox{\scalebox{0.75}{
\begin{tikzpicture}[xscale=0.8, yscale=0.6]
\draw [line width=0.8cm] (0,0) [partial ellipse=0:180:2 and 1.5];
\draw [white, line width=0.77cm] (0,0) [partial ellipse=-0.1:180.1:2 and 1.5];
\draw [line width=0.8cm] (0,0) [partial ellipse=180:360:2 and 1.5];
\draw [white, line width=0.77cm] (0,0) [partial ellipse=178:362:2 and 1.5];
\end{tikzpicture}}}},
\quad
\vcenter{\hbox{\scalebox{0.75}{
\begin{tikzpicture}[xscale=0.8, yscale=0.6]
\draw [line width=0.8cm] (0,0) [partial ellipse=0:180:2 and 1.5];
\draw [white, line width=0.77cm] (0,0) [partial ellipse=-0.1:180.1:2 and 1.5];
\draw [red] (0,0) [partial ellipse=0:180:2 and 1.5];
\draw [line width=0.8cm] (0,0) [partial ellipse=180:360:2 and 1.5];
\draw [white, line width=0.77cm] (0,0) [partial ellipse=178:362:2 and 1.5];
\draw [red] (0,0) [partial ellipse=178:362:2 and 1.5]; 
\begin{scope}[shift={(-1.95, 0)}]
\draw [red, dashed](0,0) [partial ellipse=0:180:0.47 and 0.25] ;
\draw [white, line width=4pt] (0,0) [partial ellipse=290:270:0.47 and 0.25];
\draw [red] (0,0) [partial ellipse=180:360:0.47 and 0.25];
\end{scope}
\end{tikzpicture}}}},
\quad 
\vcenter{\hbox{\scalebox{0.75}{
\begin{tikzpicture}[xscale=0.8, yscale=0.6]
\draw [line width=0.8cm] (0,0) [partial ellipse=0:180:2 and 1.5];
\draw [white, line width=0.77cm] (0,0) [partial ellipse=-0.1:180.1:2 and 1.5];
\draw [red] (0,0) [partial ellipse=0:180:2 and 1.5];
\draw [line width=0.8cm] (0,0) [partial ellipse=180:360:2 and 1.5];
\draw [white, line width=0.77cm] (0,0) [partial ellipse=178:362:2 and 1.5];
\draw [red] (0,0) [partial ellipse=178:362:2 and 1.5]; 
\begin{scope}[shift={(-1.95, 0)}]
\draw [red, dashed](0,0) [partial ellipse=0:180:0.47 and 0.25];
\draw [white, line width=4pt] (0,0) [partial ellipse=290:270:0.47 and 0.25];
\draw [red] (0,0) [partial ellipse=280:360:0.47 and 0.25];
\draw [red] (0,0) [partial ellipse=180:260:0.47 and 0.25];
\end{scope}
\end{tikzpicture}}}},
\end{align*}
we obtain the rest statements.
\end{proof}

Note that when $n=1$, we obtain $\displaystyle  \sum_{j,k=1}^r z_{jk} \frac{d_jd_k}{\mu}=1$. 
When $n=2$, we obtain the new identity $\displaystyle \sum_{j,k=1}^r z_{jk}^{\mathcal{D}_1}z_{kj}^{\mathcal{D}_2}=|\Irr(\ _{\mathcal{D}_1}\mathcal{M}_{\mathcal{D}_2})|$.
When $\mathcal{D}_1=\mathcal{D}_2$, it reduces to Corollary 6.10 in \cite{BEK00}.
When $\mathcal{D}_2=\mathcal{C}$, it reduces to Corollary 6.13 in \cite{BEK00}.

\begin{remark}
We checked the identities in Theorem \ref{thm:physical} for several modular invariant $\mathcal{P}_3$ in \cite[Section 4]{FSS95} not from the Morita context and have not found a counterexample yet.
\end{remark}

\begin{theorem}\label{thm:series}
 Suppose $\mathcal{C}$ is modular fusion category and $\lambda$ is a complex variable.
 We have that 
 \begin{enumerate}
     \item $\displaystyle \sum_{n \geq 0} \dim\hom_{\mathcal{Z(C)}}(\1_{\mathcal{Z(C)}}, I(\1_{\mathcal{D}})^{\otimes n}) \lambda^n =\sum_{j,k=1}^r\frac{1}{\mu^2}\frac{d_j^3d_k^3}{d_jd_k-\mu z_{jk}\lambda}.$
     \item $\displaystyle \sum_{n \geq 0} \dim\hom_{\mathcal{Z(C)}}(I(\1_{\mathcal{C}}), I(\1_{\mathcal{D}})^{\otimes n}) \lambda^n =\sum_{j=1}^r\frac{1}{\mu^2}\frac{d_j^6}{d_j^2-\mu z_{jj}\lambda}.$
     \item 
    $\displaystyle\sum_{n \geq 0} \dim\hom_{\mathcal{Z(C)}}(\sum_j G^+(X_j), I(\1_{\mathcal{D}})^{\otimes n})\lambda^n=\sum_{j=1}^r\frac{1}{\mu^2}\frac{d_j^3}{d_j-\mu z_{j1}\lambda}$.
    \item 
     $\displaystyle\sum_{n \geq 0} \dim\hom_{\mathcal{Z(C)}}(\sum_j G^-(X_j), I(\1_{\mathcal{D}})^{\otimes n}))\lambda^n=\sum_{j=1}^r\frac{1}{\mu^2}\frac{d_j^3}{d_j-\mu z_{1j}\lambda}$.
 \end{enumerate}
\end{theorem}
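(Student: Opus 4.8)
The plan is to obtain all four identities directly from Theorem~\ref{thm:physical}, applied with all the Morita contexts equal, $\mathcal{D}_1=\cdots=\mathcal{D}_n=\mathcal{D}$, and then to recognise the four right-hand sides as sums of geometric series in $\lambda$. First I would observe that, under this specialisation, Theorem~\ref{thm:physical}(1) says that for $n\ge 1$ the coefficient of $\lambda^{n}$ in the left-hand side of (1) equals $\sum_{j,k=1}^{r}z_{jk}^{\,n}\,\mu^{n-2}/(d_j^{\,n-2}d_k^{\,n-2})$. I would then expand the proposed closed form,
\begin{align*}
\sum_{j,k=1}^{r}\frac{1}{\mu^{2}}\,\frac{d_j^{3}d_k^{3}}{d_jd_k-\mu z_{jk}\lambda}
&=\sum_{j,k=1}^{r}\frac{d_j^{2}d_k^{2}}{\mu^{2}}\sum_{n\ge 0}\Bigl(\frac{\mu z_{jk}}{d_jd_k}\Bigr)^{n}\lambda^{n}\\
&=\sum_{n\ge 0}\lambda^{n}\sum_{j,k=1}^{r}z_{jk}^{\,n}\,\frac{\mu^{n-2}}{d_j^{\,n-2}d_k^{\,n-2}},
\end{align*}
and compare coefficients; for $n=0$ the coefficient on the right is $\mu^{-2}\sum_{j,k}d_j^{2}d_k^{2}=1$ by $\sum_j d_j^{2}=\mu$, matching $\dim\hom_{\mathcal{Z(C)}}(\1,\1)$. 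Identities (2), (3) and (4) would be handled identically, using parts (2), (3) and (4) of Theorem~\ref{thm:physical}: in each case I would pull $d_j^{2}/\mu^{2}$ out of the $j$-th summand of the relevant rational function and expand the geometric series with common ratio $\mu z_{jj}/d_j^{2}$, $\mu z_{j1}/d_j$, and $\mu z_{1j}/d_j$ respectively, the $n=0$ term being checked directly against $\dim\hom_{\mathcal{Z(C)}}(I(\1_{\mathcal{C}}),\1)$, $\dim\hom_{\mathcal{Z(C)}}(\sum_j G^{+}(X_j),\1)$ and $\dim\hom_{\mathcal{Z(C)}}(\sum_j G^{-}(X_j),\1)$.

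As a structural consistency check on the $\mu$- and $d_j$-exponents, I would also carry out the same computation graphically: attach a formal weight $\lambda$ to each $\mathcal{D}$-colored torus in the $n$-fold stacked diagram from the proof of Theorem~\ref{thm:physical} and sum over $n\ge 0$. By Lemma~\ref{lem:zcoefficient}, inserting one $\mathcal{D}$-colored torus acts on the minimal idempotent corresponding to the simple object $X_j\boxtimes X_k^{op}$ of $\mathcal{Z(C)}$ with eigenvalue $z_{jk}\mu/(d_jd_k)$; writing $T$ for torus-insertion, the resolvent $\sum_{n\ge 0}\lambda^{n}T^{n}=(1-\lambda T)^{-1}$ then acts on that idempotent with eigenvalue $d_jd_k/(d_jd_k-\lambda\mu z_{jk})$. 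Closing up the diagram in the four ways used at the end of the proof of Theorem~\ref{thm:physical} --- those producing $\dim\hom_{\mathcal{Z(C)}}(\1,-)$, $\dim\hom_{\mathcal{Z(C)}}(I(\1_{\mathcal{C}}),-)$, $\dim\hom_{\mathcal{Z(C)}}(\sum_j G^{+}(X_j),-)$ and $\dim\hom_{\mathcal{Z(C)}}(\sum_j G^{-}(X_j),-)$ --- then reproduces the four rational functions at once.

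Finally I would add the routine analytic remark that $\dim\hom_{\mathcal{Z(C)}}(-,I(\1_{\mathcal{D}})^{\otimes n})\le C^{n}$ for a suitable constant $C$, so the left-hand series converges for small $|\lambda|$, while the right-hand sides have poles only at $\lambda=d_jd_k/(\mu z_{jk})$ and their analogues, which are bounded away from $0$; hence each identity holds both as an identity of formal power series and as an identity of meromorphic functions near $\lambda=0$. I do not expect a genuine obstacle here: all of the substance is already in Theorem~\ref{thm:physical} and Lemma~\ref{lem:zcoefficient}, and what remains is the summation of a geometric series. The points that require some care are keeping the powers of $\mu$ and $d_j$ straight and verifying the $n=0$ terms of (2)--(4) directly.
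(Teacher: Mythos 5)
Your argument is exactly the paper's: the published proof of this theorem is the single line ``It follows from Theorem \ref{thm:physical} directly,'' i.e.\ specialize $\mathcal{D}_1=\cdots=\mathcal{D}_n=\mathcal{D}$ in Theorem \ref{thm:physical} and resum the geometric series, which is precisely your coefficient comparison. One caution: your claim that the $n=0$ terms of (2)--(4) ``check directly'' does not actually go through --- the constant terms of the rational functions are $\sum_j d_j^4/\mu^2$, $1/\mu$ and $1/\mu$, which in general differ from $\dim\hom_{\mathcal{Z(C)}}(I(\1_{\mathcal{C}}),\1)=1$ and from the integer dimensions for the $G^{\pm}$ cases --- but this is a defect of the theorem's statement (those sums should start at $n=1$) that the paper's one-line proof silently shares, not a flaw introduced by your approach.
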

\begin{proof}
It follows from Theorem \ref{thm:physical} directly.    
\end{proof}

\section{Double \texorpdfstring{$\alpha$}{}-Induction}\label{sec:alpha}

In this section, we introduce the double $\alpha$-induction, which generalize the usual $\alpha$-induction.
We also obtain the equivariance of the double $\alpha$-induction thanks to the topological definition.
Consequently, we quickly read the properties of the $Z$-matrix induced by the double $\alpha$-induction.

Suppose $\Sigma$ is an oriented surface of genus $g$, and let $\RT_{\mathcal{Z(C)}}(\Sigma\times \{0, 1\})$ be the vector space associated to the 3-alterfolds with time boundaries $\Sigma\times \{0, 1\}$ in the alterfold TQFT (see \cite{LMWW23b}). 
In particular, as is shown in \cite{LMWW23b}, by topological moves, each vector in $\RT_{\mathcal{Z(C)}}(\Sigma\times \{0, 1\})$ can be obtained by putting inside $\Sigma\times [0,1]$ 3-manifolds with space boundary (whose interiors are colored by $A$, 
and whose space boundaries are decorated by $\mathcal{C}$-tensor diagrams), 
and coloring the rest of the interior $\Sigma\times (0,1)$ of $\Sigma\times [0,1]$ by $B$. 
Technically speaking, by the universal construction in \cite{LMWW23b}, elements in $\RT_{\mathcal{Z(C)}}(\Sigma\times \{0, 1\})$ are equivalence classes of the 3-alterfolds with time boundaries described above modulo the kernel of the partition function of closed 3-alterfolds, 
but for simplicity, in our alterfold graphical calculus, we will simply depict an element in $\RT_{\mathcal{Z(C)}}(\Sigma\times \{0, 1\})$ by choosing a representative of it.

For example, a vector in $\RT_{\mathcal{Z(C)}}(\Sigma\times \{0, 1\})$ can have the following local picture, where the cross in the middle represents a part of $A$-colored 3-manifold with space boundary, which possibly is decorated by $\mathcal{C}$-tensor diagrams:
\[\vcenter{\hbox{\scalebox{0.5}{
\begin{tikzpicture}
\begin{scope}[shift={(1, 3)}, scale=1.8]
\draw (-1,0.5)--(2, 0.5) (-2,-0.5)--(1, -0.5);
\draw (-2,-0.5)--(-1, 0.5) (1,-0.5)--(2, 0.5) node [right] {\tiny Time Boundary}  node [below] {\tiny $B$};
\end{scope}
\draw [line width=0.7cm]  (-3,0)--(3, 0);
\draw [line width=0.7cm] (-1.5, -1.5)--(1.5, 1.5);
\draw [white, line width=0.65cm] (-1.75, -1.75)--(1.75, 1.75);
\draw [white, line width=0.65cm]  (-3,0)--(3, 0);
\begin{scope}[shift={(3, 0)}]
\draw [fill=white] (0, 0) [partial ellipse=0:360:0.2 and 0.34];
\end{scope}
\begin{scope}[shift={(-3, 0)}]
\draw [fill=white] (0, 0) [partial ellipse=0:360:0.2 and 0.34];
\end{scope}
\begin{scope}[shift={(-1.5, -1.5)},rotate=45 ]
\draw [fill=white] (0, 0) [partial ellipse=0:360:0.2 and 0.34];
\end{scope}
\begin{scope}[shift={(1.5, 1.5)},rotate=45 ]
\draw [fill=white] (0, 0) [partial ellipse=0:360:0.2 and 0.34];
\end{scope}
\begin{scope}[shift={(1, 1)},rotate=45 ]
\end{scope}
\begin{scope}[shift={(1, -3)}, scale=1.8]
\draw (-1,0.5)--(2, 0.5) (-2,-0.5)--(1, -0.5);
\draw (-2,-0.5)--(-1, 0.5) (1,-0.5)--(2, 0.5) node [right] {\tiny Time Boundary} node [above] {\tiny $B$} ;
\end{scope}
\end{tikzpicture}}}}\]
Now for any $v \in \RT_{\mathcal{Z(C)}}(\Sigma\times \{0, 1\})$, we color its time boundary $\Sigma\times \{0\}$ by $\mathcal{D}$ and $\Sigma\times \{1\}$ by $\mathcal{E}$, both of which contains empty tensor diagrams. Then embed $v$ in $\mathbb{S}^3$ and color the exterior of $v$ by $A$, we get a closed 3-alterfold which is decorated by $\mathcal{C}$, $\mathcal{D}$ and $\mathcal{E}$. We denote the resulting closed 3-alterfold by $\mathfrak{M}_{\cE}^{\cD}(v)$, whose partition function (defined in \cite{LMWW23b}) is denoted by $\langle \mathfrak{M}^{\cD}_{\cE}(v) \rangle$ (note that by \cite{LMWW23}, this value does not depend on the embedding, as the topology of the $A$-colored part does not effect the value of the partition function).

\begin{definition}[Double $\alpha$-Induction functional]\label{def:alpha1}
Using the notations above, for any closed oriented surface $\Sigma$ of genus $g$, we define the double $\alpha$-induction functional to be a linear functional 
\[\alpha_{g, \cD, \cE}: \RT_{\mathcal{Z(C)}}(\Sigma\times \{0,1\}) \to \mathbb{C}\,, v \mapsto\frac{1}{\mu}\langle \fM^{\cD}_{\cE}(v) \rangle\,.\]
\end{definition}

Using local pictures, the double $\alpha$-induction functional can be depicted as follows
\begin{align*}
\alpha_{g, \mathcal{D}, \mathcal{E}}\left(\vcenter{\hbox{\scalebox{0.5}{
\begin{tikzpicture}
\begin{scope}[shift={(1, 3)}, scale=1.8]
\draw (-1,0.5)--(2, 0.5) (-2,-0.5)--(1, -0.5);
\draw (-2,-0.5)--(-1, 0.5) (1,-0.5)--(2, 0.5) node [right] {\tiny Time Boundary}  node [below] {\tiny $B$};
\end{scope}
\draw [line width=0.7cm]  (-3,0)--(3, 0);
\draw [line width=0.7cm] (-1.5, -1.5)--(1.5, 1.5);
\draw [white, line width=0.65cm] (-1.75, -1.75)--(1.75, 1.75);
\draw [white, line width=0.65cm]  (-3,0)--(3, 0);
\begin{scope}[shift={(3, 0)}]
\draw [fill=white] (0, 0) [partial ellipse=0:360:0.2 and 0.34];
\end{scope}
\begin{scope}[shift={(-3, 0)}]
\draw [fill=white] (0, 0) [partial ellipse=0:360:0.2 and 0.34];
\end{scope}
\begin{scope}[shift={(-1.5, -1.5)},rotate=45 ]
\draw [fill=white] (0, 0) [partial ellipse=0:360:0.2 and 0.34];
\end{scope}
\begin{scope}[shift={(1.5, 1.5)},rotate=45 ]
\draw [fill=white] (0, 0) [partial ellipse=0:360:0.2 and 0.34];
\end{scope}
\begin{scope}[shift={(1, 1)},rotate=45 ]
\end{scope}
\begin{scope}[shift={(1, -3)}, scale=1.8]
\draw (-1,0.5)--(2, 0.5) (-2,-0.5)--(1, -0.5);
\draw (-2,-0.5)--(-1, 0.5) (1,-0.5)--(2, 0.5) node [right] {\tiny Time Boundary} node [above] {\tiny $B$} ;
\end{scope}
\end{tikzpicture}}}}\right)
=\frac{1}{\mu}
\vcenter{\hbox{\scalebox{0.5}{
\begin{tikzpicture}
\begin{scope}[shift={(1, 3)}, scale=1.8]
\path [fill=brown!20!white] (-1,0.5)--(2, 0.5)--(1, -0.5)--(-2, -0.5)--cycle;
\draw (-1,0.5)--(2, 0.5) (-2,-0.5)--(1, -0.5);
\draw (-2,-0.5)--(-1, 0.5) (1,-0.5)--(2, 0.5)   node [below] {\tiny $B$} node [above] {\tiny $A$};
\node at (0,0) { $\mathcal{D}$};
\end{scope}
\draw [line width=0.7cm]  (-3,0)--(3, 0);
\draw [line width=0.7cm] (-1.5, -1.5)--(1.5, 1.5);
\draw [white, line width=0.65cm] (-1.75, -1.75)--(1.75, 1.75);
\draw [white, line width=0.65cm]  (-3,0)--(3, 0);
\begin{scope}[shift={(3, 0)}]
\draw [fill=white] (0, 0) [partial ellipse=0:360:0.2 and 0.34];
\end{scope}
\begin{scope}[shift={(-3, 0)}]
\draw [fill=white] (0, 0) [partial ellipse=0:360:0.2 and 0.34];
\end{scope}
\begin{scope}[shift={(-1.5, -1.5)},rotate=45 ]
\draw [fill=white] (0, 0) [partial ellipse=0:360:0.2 and 0.34];
\end{scope}
\begin{scope}[shift={(1.5, 1.5)},rotate=45 ]
\draw [fill=white] (0, 0) [partial ellipse=0:360:0.2 and 0.34];
\end{scope}
\begin{scope}[shift={(1, 1)},rotate=45 ]
\end{scope}
\begin{scope}[shift={(1, -3)}, scale=1.8]
\path [fill=brown!20!white] (-1,0.5)--(2, 0.5)--(1, -0.5)--(-2, -0.5)--cycle;
\draw (-1,0.5)--(2, 0.5) (-2,-0.5)--(1, -0.5);
\draw (-2,-0.5)--(-1, 0.5) (1,-0.5)--(2, 0.5) node [above] {\tiny $B$} node [below] {\tiny $A$};
\node at (0,0) { $\mathcal{E}$};
\end{scope}
\end{tikzpicture}}}}\,.
\end{align*}

\begin{remark}
Our definition of the double $\alpha$-induction here is well-defined for any spherical fusion category $\mathcal{C}$.
\end{remark}

We denote by $\MCG(\Sigma)$ the mapping class group of a closed oriented surface $\Sigma$, then $\MCG(\Sigma \times \{0,1\}) = \MCG(\Sigma) \times \MCG(\Sigma)$. For any $f \in \MCG(\Sigma \times \{0,1\})$, denote its mapping cylinder by $C_f$, then the action of $\MCG(\Sigma\times \{0,1\})$ on $V = \RT_{\mathcal{Z(C)}}(\Sigma\times \{0,1\})$ is given by the following group homomorphism:
\begin{equation}\label{eq:mcg-1}
\rho: \MCG(\Sigma\times\{0,1\}) \to \GL(V)\,, \quad \rho(f)(v) := v \cup_{\Sigma\times \{0,1\}} C_f(B)
\end{equation}
for all $f \in \MCG(\Sigma\times \{0,1\})$ and $v \in \RT_{\mathcal{Z(C)}}(\Sigma\times \{0, 1\})$, where by an abuse of notation, the right hand stands for the class in $V$ obtained from taking any 3-alterfold representative of $v$ and gluing it with a $B$-colored $C_f$ along the desired boundary. Here we remark that by construction, when we glue 3-alterfolds along their time boundaries, the two sides of the time boundaries are required to have the same color. Moreover, the gluing boundary will not be present in the resulting 3-alterfold with time boundary.

\begin{theorem}[Invariance]\label{thm:mcg0}
Suppose $\Sigma$ is an oriented surface with genus $g$. Then the double $\alpha$-induction is invariant under the mapping class group actions defined above. Precisely, for any $f \in \MCG(\Sigma\times \{0,1\})$, we have $\alpha_{g, \mathcal{D}, \mathcal{E}} \circ \rho(f)=\alpha_{g, \mathcal{D}, \mathcal{E}}$ as linear functionals on $V = \RT_{\mathcal{Z(C)}}(\Sigma\times \{0, 1\})$.
\end{theorem}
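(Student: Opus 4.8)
The plan is to show that the double $\alpha$-induction functional is defined in a manifestly topological way, so that pre-composing with a mapping class group action merely reglues a mapping cylinder into the bulk, producing a 3-alterfold that is homeomorphic (rel the colored structure) to the original one, and hence has the same partition function value by Proposition~\ref{thm:e partition function} (the ``Homeomorphisms'' axiom). Concretely, I would first unravel the two sides of the claimed identity. For $f = (f_0, f_1) \in \MCG(\Sigma) \times \MCG(\Sigma) = \MCG(\Sigma\times\{0,1\})$ and $v \in \RT_{\mathcal{Z(C)}}(\Sigma\times\{0,1\})$, the vector $\rho(f)(v)$ is obtained by gluing the $B$-colored mapping cylinder $C_f$ to $v$ along the time boundary $\Sigma \times \{0,1\}$. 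Then $\alpha_{g,\cD,\cE}(\rho(f)(v)) = \tfrac{1}{\mu}\langle \fM^{\cD}_{\cE}(\rho(f)(v))\rangle$, where we color the new time boundaries by $\cD$ and $\cE$ (with empty diagrams), cap off with the $A$-colored exterior in $\mathbb{S}^3$, and evaluate.

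The key step is the following topological observation: the closed 3-alterfold $\fM^{\cD}_{\cE}(\rho(f)(v))$ is homeomorphic, via an orientation-preserving homeomorphism respecting the $A/B$ coloring and all tensor-diagram decorations, to $\fM^{\cD}_{\cE}(v)$. Indeed, gluing the mapping cylinder $C_{f_0}$ along $\Sigma\times\{0\}$ and then capping that boundary with the $\cD$-colored (diagram-free) collar $\Sigma\times[-1,0]$ has the same effect on the homeomorphism type of the resulting closed 3-alterfold as simply capping $v$'s boundary $\Sigma\times\{0\}$ directly with a $\cD$-colored collar: the mapping cylinder gets absorbed into the collar, because a collar of $\Sigma$ carrying an empty diagram is invariant (up to homeomorphism rel boundary) under the self-homeomorphism $f_0$ of $\Sigma$. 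The same argument applies at the $\cE$-colored boundary $\Sigma\times\{1\}$ with $f_1$. This is where the empty-diagram condition on the $\cD$- and $\cE$-colored time boundaries is essential: if there were a nontrivial tensor diagram on $\Sigma\times\{0\}$, the homeomorphism $f_0$ would in general move it, and the values would differ. Having established this homeomorphism, the ``Homeomorphisms'' axiom of Proposition~\ref{thm:e partition function} (the evaluation depends only on the orientation-preserving homeomorphism class of $(R_B,\Gamma)$) gives $\langle \fM^{\cD}_{\cE}(\rho(f)(v))\rangle = \langle \fM^{\cD}_{\cE}(v)\rangle$, hence $\alpha_{g,\cD,\cE}(\rho(f)(v)) = \alpha_{g,\cD,\cE}(v)$ for all $v$, which is the assertion.

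I would also need to be slightly careful about the well-definedness of the argument at the level of the universal construction: elements of $\RT_{\mathcal{Z(C)}}(\Sigma\times\{0,1\})$ are equivalence classes of 3-alterfolds modulo the kernel of the closed partition function, and $\alpha_{g,\cD,\cE}$ is a priori defined on representatives. One should check that $\alpha_{g,\cD,\cE}$ descends to the quotient, i.e.\ that it annihilates the kernel; but this is immediate since $\fM^{\cD}_{\cE}(-)$ is itself defined by gluing/capping operations that are compatible with the equivalence relation (gluing a fixed alterfold piece to both sides of an equivalence preserves it), and the closed partition function is linear. Similarly $\rho(f)$ is a well-defined linear automorphism of $V$ by the same token, as noted after Equation~\eqref{eq:mcg-1}.

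The main obstacle I expect is making the ``mapping cylinder absorbed into the collar'' homeomorphism fully rigorous, i.e.\ producing an explicit orientation-preserving homeomorphism of pairs $(R_B, \Gamma)$ between $\fM^{\cD}_{\cE}(\rho(f)(v))$ and $\fM^{\cD}_{\cE}(v)$ rather than hand-waving. The clean way to package this is: $C_{f_0}$ is homeomorphic to the trivial cylinder $\Sigma\times[0,1]$ by a homeomorphism that restricts to $\mathrm{id}$ on one end and to $f_0$ on the other; attaching it to the diagram-free $\cD$-collar on the far side and sliding the homeomorphism along shows the composite capping region is homeomorphic (rel the $v$-side boundary) to a single diagram-free $\cD$-collar. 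One should double-check that the $A$-colored exterior and the $B$-colored bulk $\Sigma\times(0,1)$ of $v$ are untouched by this, and that orientations match, so that Proposition~\ref{thm:e partition function} genuinely applies. Once this homeomorphism is in hand the rest is a one-line invocation of the axioms.
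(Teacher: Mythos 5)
Your proposal is correct and follows essentially the same route as the paper: the paper's proof likewise observes that, because the time boundaries carry empty diagrams, the mapping cylinder is absorbed and $\fM^{\cD}_{\cE}(\rho(f)(v))$ is homeomorphic to $\fM^{\cD}_{\cE}(v)$, after which the homeomorphism-invariance axiom of the partition function gives the result. Your additional care about absorbing $C_f$ into the collar and about well-definedness under the universal construction is a more detailed write-up of the same one-line argument.
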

\begin{proof}
By definition, for any $f \in \MCG(\Sigma\times\{0,1\})$ and $v \in V$, since there is no tensor diagram on the time boundary of $v$, we have $\fM_{\cE}^{\cD}(\rho(f)(v))$ is homeomorphic to $\fM_{\cE}^{\cD}(v)$, and the theorem follows from the homeomorphism invariance of the partition function in the alterfold TQFT.
\end{proof}

When $\mathcal{C}$ is a modular fusion category, we have that $\mathcal{Z(C)}$ is braided isomorphic to $\mathcal{C}\boxtimes \mathcal{C}^{\op}$.
In this case $\RT_{\mathcal{Z(C)}}(\Sigma\times \{0,1\})= \RT_{\mathcal{C}}(\Sigma\times \{0,1\})\otimes \RT_{\mathcal{C}^{\op}}(\Sigma\times \{0,1\})$, and $\dim \RT_{\mathcal{C}}(\Sigma\times \{0,1\}) = \dim  \RT_{\mathcal{C}^{\op}}(\Sigma\times \{0,1\})$.
Hence we have the following specializations of the double $\alpha$-induction for modular fusion categories.

\begin{definition}[Double $\alpha$-Induction for Modular Fusion Category]
Suppose $\mathcal{C}$ is a modular fusion category.
We define the positive topological $\alpha$-induction $\alpha_{g, \mathcal{D}, \mathcal{E}}^+$ to be the linear map from $\RT_{\mathcal{C}}(\Sigma\times \{0,1\})$ to the dual space of $\RT_{\mathcal{C}^{\op}}(\Sigma\times \{0,1\})$ as follows:
\begin{align*}
\alpha^+_{g, \cD, \cE}: & \RT_{\mathcal{C}}(\Sigma\times \{0,1\}) \to \RT_{\mathcal{C}^{\op}}(\Sigma\times \{0,1\})^*\,,\\
\alpha^+_{g, \mathcal{D}, \mathcal{E}}(\xi) (\eta) =& \alpha_{g, \mathcal{D}, \mathcal{E}} (\xi\otimes \eta), 
\end{align*}
where $\xi\in \RT_{\mathcal{C}}(\Sigma\times \{0,1\})$ and $\eta \in \RT_{\mathcal{C}^{\op}}(\Sigma\times \{0,1\})$.
We call $(\alpha^+_{g,\cD, \cE})^*$ the negative topological $\alpha$-induction, and denote it by $\alpha^-_{g, \mathcal{D}, \mathcal{E}}$. Namely, we have 
\begin{align*}
\alpha^-_{g, \cD, \cE} = & (\alpha^+_{g,\cD,\cE})^*: \RT_{\mathcal{C}^{\op}}(\Sigma\times \{0,1\}) \to \RT_{\mathcal{C}}(\Sigma\times \{0,1\})^*\,,\\
\alpha^-_{g, \mathcal{D}, \mathcal{E}}(\eta) (\xi) = & \alpha_{g, \mathcal{D}, \mathcal{E}} (\xi\otimes \eta),
\end{align*}
where $\xi\in \RT_{\mathcal{C}}(\Sigma\times \{0,1\})$ and $\eta \in \RT_{\mathcal{C}^{\op}}(\Sigma\times \{0,1\})$.
\end{definition}

Note that the basis of $\RT_{\cC}(\Sigma\times\{0,1\})$ demonstrated in \cite{LMWW23b} to also be viewed as a basis for $\RT_{\cC^{\op}}(\Sigma\times\{0,1\})$ as our choice did not involve the data of the braidings of category under study. We will then denote a choice of basis (i.e., 3-alterfolds with time boundary) of $\RT_\cC(\Sigma \times \{0,1\})$ by $\{\xi_j\}$, and the same set of 3-alterfolds with time boundary will be our choice of a basis for $\RT_{\cC^{\op}}(\Sigma \times \{0,1\})$, denoted by $\{\xi_k^{\op}\}_k$.

\begin{definition}[$Z$-Transformation]\label{def:Zg}
Suppose $\mathcal{C}$ is a modular fusion category, $\{\xi_j\}_j$ be the basis for $ \RT_{\mathcal{C}}(\Sigma\times \{0,1\})$ and $\{\xi_j^{\op}\}_j$ the basis for $\RT_{\mathcal{C}^{\op}}(\Sigma\times \{0,1\})$ chosen above. Then the genus $g$ $Z$-matrix $Z_g^{\mathcal{D}, \mathcal{E}}$ is defined to be 
\begin{align*}
Z_g^{\mathcal{D}, \mathcal{E}}= \left(\alpha_{g, \mathcal{D}, \mathcal{E}}(\xi_j\otimes \xi_k^{\op})\right)_{j,k}.
\end{align*}
\end{definition}

\begin{remark}
The entries of the higher-genus $Z$-matrix are not necessarily integers.
The requirement that $\mathcal{C}$ is a modular fusion category is not necessary.
We could take a fusion subcategory $\mathcal{C}_1$ of $\mathcal{Z(C)}$ and the M\"uger centralizer $\mathcal{C}_2$ of $\mathcal{C}_1$ in $\mathcal{Z(C)}$ to present the definition of the $Z$-matrix.
\end{remark}

For any $f \in \MCG(\Sigma\times \{0,1\})$, recall from \eqref{eq:mcg-1} that $\rho(f)$ acts linearly on $\RT_{\mathcal{Z(C)}}(\Sigma\times \{0,1\})$. Let $\{\xi_{j}\}_j$ and $\{\xi_k^{\op}\}_k$ be the pair of bases described in Definition \ref{def:Zg}, then we can pick matrix representatives $f_\cC$ and $f_{\cC^{op}}$ of the (projective) actions of $f$ on $\RT_{\cC}(\Sigma\times\{0,1\})$ and $\RT_{\cC^{\op}}(\Sigma\times \{0,1\})$ respectively that satisfy the following two conditions:
\begin{itemize}
\item $f_{\cC} \otimes f_{\cC^{\op}} = \rho(f)$ for all $f \in \MCG(\Sigma\times\{0,1\})$;
\item if $f$ is one of the generators of $\MCG(\Sigma\times\{0,1\})$ described in Section 3.3 of \cite{Ker00}, then $f_{\cC^{\op}} = f_{\cC}^{-1}$.
\end{itemize}
(This is possible by the tangle presentation in loc.~cit.) Note that due to the well-known gluing anomaly, what may not be always possible is to extend such choices to linear actions of $f$ on $\RT_{\cC}(\Sigma\times\{0,1\})$ and $\RT_{\cC^{\op}}(\Sigma\times \{0,1\})$. Note that when $\Sigma$ is the torus, the S- and T-matrices are choices of matrix representatives in 
\[\operatorname{SL}_2(\mathbb{Z}) \cong \MCG(\Sigma) = \MCG(\Sigma)\times \{0\} \subset \MCG(\Sigma \times \{0,1\})\] 
satisfying the above conditions, which is a preferred choice due to its representation theoretical or physical meaning, which is not obvious in higher genus.

\begin{theorem}\label{thm:mcg1}
Let $\Sigma$ be a closed oriented surface of genus $g$. Using the above notations, we have equality of matrices
\begin{align*}
f_{\cC} \cdot Z_g^{\mathcal{D}, \mathcal{E}}= Z_g^{\mathcal{D}, \mathcal{E}} \cdot f_{\cC}\,.
\end{align*}
for any $f \in \MCG(\Sigma\times\{0,1\})$.
\end{theorem}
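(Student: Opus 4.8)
The plan is to deduce the commutation relation from the mapping-class-group invariance of the double $\alpha$-induction functional $\alpha_{g,\mathcal{D},\mathcal{E}}$ established in Theorem \ref{thm:mcg0}, exactly in the spirit of the genus-one Proposition \ref{prop:z}. First I would recall that, under $\mathcal{Z(C)} = \mathcal{C}\boxtimes\mathcal{C}^{\op}$, the action $\rho(f)$ on $\RT_{\mathcal{Z(C)}}(\Sigma\times\{0,1\})$ decomposes as $\rho(f) = f_{\cC}\otimes f_{\cC^{\op}}$ on the tensor product $\RT_{\cC}(\Sigma\times\{0,1\})\otimes\RT_{\cC^{\op}}(\Sigma\times\{0,1\})$, where $f_\cC$ and $f_{\cC^{\op}}$ are the chosen matrix representatives. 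The key point to import from the setup preceding the statement is that for the generators $f$ of $\MCG(\Sigma\times\{0,1\})$ described in \cite{Ker00}, one has $f_{\cC^{\op}} = f_\cC^{-1}$; this is the ``opposite category flips the braiding, hence inverts the genus-one $S$-anomaly-free representative'' phenomenon, and it is exactly what converts a two-sided invariance into a one-sided commutation.

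The main computation is then short. Fix a generator $f$ of $\MCG(\Sigma\times\{0,1\})$. For any basis vectors $\xi_j \in \RT_\cC(\Sigma\times\{0,1\})$ and $\xi_k^{\op}\in\RT_{\cC^{\op}}(\Sigma\times\{0,1\})$, Theorem \ref{thm:mcg0} gives $\alpha_{g,\mathcal{D},\mathcal{E}}\circ\rho(f) = \alpha_{g,\mathcal{D},\mathcal{E}}$, so
\begin{align*}
\alpha_{g,\mathcal{D},\mathcal{E}}\big((f_\cC \xi_j)\otimes(f_{\cC^{\op}}\xi_k^{\op})\big)
= \alpha_{g,\mathcal{D},\mathcal{E}}\big(\rho(f)(\xi_j\otimes\xi_k^{\op})\big)
= \alpha_{g,\mathcal{D},\mathcal{E}}(\xi_j\otimes\xi_k^{\op}).
\end{align*}
Now expand the left-hand side bilinearly using the matrix entries: writing $f_\cC = (a_{\ell j})$ and $f_{\cC^{\op}} = f_\cC^{-1} = (b_{mk})$ with $\sum_m a_{\ell m} b_{mk} = \delta_{\ell k}$, the displayed identity reads $\sum_{\ell,m} a_{\ell j} b_{mk}\, (Z_g^{\mathcal{D},\mathcal{E}})_{\ell m} = (Z_g^{\mathcal{D},\mathcal{E}})_{jk}$, i.e. $(f_\cC^{t}\, Z_g^{\mathcal{D},\mathcal{E}}\, (f_\cC^{-1})^{t})_{jk} = (Z_g^{\mathcal{D},\mathcal{E}})_{jk}$, which after transposing is $f_\cC^{-1} Z_g^{\mathcal{D},\mathcal{E}} f_\cC = Z_g^{\mathcal{D},\mathcal{E}}$, hence $f_\cC Z_g^{\mathcal{D},\mathcal{E}} = Z_g^{\mathcal{D},\mathcal{E}} f_\cC$. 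Since this holds for all generators $f$ of $\MCG(\Sigma\times\{0,1\})$ and the generators' representatives $f_\cC$ generate the relevant (projective) image, the commutation extends to all of $\MCG(\Sigma\times\{0,1\})$, giving the theorem.

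The one technical point that needs care — and the step I expect to be the genuine obstacle rather than the bilinear bookkeeping — is the interplay of projectivity: the $f_\cC$ are only projective representatives (gluing anomaly), so the identity $f_{\cC^{\op}} = f_\cC^{-1}$ and $f_\cC\otimes f_{\cC^{\op}} = \rho(f)$ hold only up to scalars, and one must check that the scalar ambiguity cancels. It does, because it enters symmetrically in $f_\cC$ and $f_{\cC^{\op}} = f_\cC^{-1}$ so that the product $f_\cC \cdot Z \cdot f_\cC^{-1}$ is scalar-independent; I would spell this out by noting that any rescaling $f_\cC \mapsto \lambda f_\cC$ forces $f_{\cC^{\op}} \mapsto \lambda^{-1} f_{\cC^{\op}}$ to keep $\rho(f)$ fixed, and $\lambda\lambda^{-1}=1$. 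A secondary remark worth inserting: the argument only uses that the time boundaries of the representative carry \emph{empty} tensor diagrams (so that $\fM_{\cE}^{\cD}(\rho(f)v)$ is genuinely homeomorphic to $\fM_{\cE}^{\cD}(v)$), which is built into Definition \ref{def:alpha1} and was the crux of Theorem \ref{thm:mcg0}; no modularity of $\mathcal{C}$ beyond the decomposition $\mathcal{Z(C)} = \mathcal{C}\boxtimes\mathcal{C}^{\op}$ is needed here, consistent with the remark following Definition \ref{def:Zg}.
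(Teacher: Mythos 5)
Your proposal is essentially the paper's own proof: both invoke the invariance $\alpha_{g,\cD,\cE}\circ\rho(f)=\alpha_{g,\cD,\cE}$ from Theorem \ref{thm:mcg0}, use the choice $f_{\cC^{\op}}=f_{\cC}^{-1}$ for the Kerler generators to turn the two-sided invariance into the one-sided commutation $f_{\cC}Z_g^{\cD,\cE}=Z_g^{\cD,\cE}f_{\cC}$, and extend to arbitrary $f$ by noting that the projective scalar $\lambda$ in $f_{\cC}=\lambda\prod_t(u_t)_{\cC}$ cancels in the conjugation. The only blemish is a transpose slip in your index bookkeeping (the bilinear expansion yields $f_{\cC}^{t}Z f_{\cC}^{-1}=Z$ rather than $f_{\cC}^{t}Z(f_{\cC}^{-1})^{t}=Z$), but this is at the same level of convention-dependence as the paper's own one-line passage to matrix form and does not affect the argument.
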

\begin{proof}
For simplicity, write $\alpha = \alpha_{g, \cD, \cE}$ and $Z = Z_g^{\mathcal{C}, \mathcal{E}}$. 
We first assume that $f$ is a generator of $\MCG(\Sigma\times \{0,1\})$ described in \cite[Section 3.3]{Ker00}. By our choice and Theorem \ref{thm:mcg0}, for any pair of basis elements $\xi_j$ and $\xi_k^{\op}$, we have
\[\alpha((f_{\cC}\xi_j) \otimes (f_{\cC^{\op}}\xi_k^{\op})) = \alpha(\rho(f)(\xi_j \otimes \xi_k^{\op})) = \alpha(\xi_j \otimes \xi_{k}^{\op})\,.\]
Therefore, by our choice of matrix representatives, we have
\[\alpha((f_{\cC}\xi_j) \otimes \xi_k^{\op}) = \alpha(\xi_j \otimes (f_{\cC}\xi_k^{\op}))\,,\]
and written in matrix form, we have $f_{\cC} \cdot Z = Z \cdot f_{\cC}$. 

Now suppose $f \in \MCG(\Sigma\times \{0,1\})$ is a finite product, i.e., $\displaystyle f = \prod_t u_t$, where $u_t$ are generators of the mapping class group. 
Then the projectivity of the mapping class group actions on $\RT_\cC(\Sigma\times \{0,1\})$ implies that the matrix representatives we pick are multiplicative up to a nonzero scalar, i.e., we have a equality of matrices $\displaystyle f_{\cC} = \lambda \cdot \prod_{t} (u_t)_{\cC}$ for some $\lambda \in \mathbb{C}^\times$. 
Then by the above argument, we have
\[f_{\cC} \cdot Z = \lambda (\prod_{t} (u_t)_{\cC}) \cdot Z = Z \cdot \lambda (\prod_t (u_t)_{\cC}) = Z \cdot f_{\cC}\]
as desired.
\end{proof}

\begin{theorem}\label{thm:tracevalue}
Suppose $\Sigma$ is an oriented surface with genus $g$.
Then we have that 
\begin{align*}
Tr(Z_g^{\mathcal{D}, \mathcal{E}})=
\frac{1}{\mu}
\vcenter{\hbox{\scalebox{0.5}{
\begin{tikzpicture}
\begin{scope}[shift={(1, 1)}, scale=1.8]
\path [fill=brown!20!white] (-1,0.5)--(2, 0.5)--(1, -0.5)--(-2, -0.5)--cycle;
\draw (-1,0.5)--(2, 0.5) (-2,-0.5)--(1, -0.5);
\draw (-2,-0.5)--(-1, 0.5) (1,-0.5)--(2, 0.5)   node [below] {\tiny $B$} node [above] {\tiny $A$};
\node at (0,0) { $\mathcal{D}$};
\end{scope}
\begin{scope}[shift={(1, -1)}, scale=1.8]
\draw (-1,0.5)--(2, 0.5) (-2,-0.5)--(1, -0.5);
\path [fill=brown!20!white] (-1,0.5)--(2, 0.5)--(1, -0.5)--(-2, -0.5)--cycle;
\draw (-2,-0.5)--(-1, 0.5) (1,-0.5)--(2, 0.5) node [above] {\tiny $B$} node [below] {\tiny $A$};
\node at (0,0) { $\mathcal{E}$};
\end{scope}
\end{tikzpicture}}}}.
\end{align*}
\end{theorem}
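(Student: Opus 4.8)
The plan is to compute $\operatorname{Tr}(Z_g^{\mathcal{D},\mathcal{E}})$ directly from its definition as $\sum_j \alpha_{g,\mathcal{D},\mathcal{E}}(\xi_j \otimes \xi_j^{\op})$, and to recognize the sum $\sum_j \xi_j \otimes \xi_j^{\op}$ as a canonical element of $\RT_{\mathcal{Z(C)}}(\Sigma \times \{0,1\})$ coming from the identity cobordism. The key observation, exactly parallel to the proof of Proposition \ref{prop:tracedim} in the genus-one case, is that the element $\sum_j \xi_j \otimes \xi_j^{\op}$ represents the image in $\RT_{\mathcal{Z(C)}}(\Sigma\times\{0,1\})$ of the cylinder $\Sigma \times [0,1]$ with $B$-colored interior and empty diagrams — i.e. the ``identity'' 3-alterfold with time boundaries $\Sigma \times \{0\}$ and $\Sigma \times \{1\}$. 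This is because $\{\xi_j\}$ and $\{\xi_j^{\op}\}$ are chosen to be the same underlying 3-alterfolds with time boundary, and the decomposition of the identity in $\RT_{\mathcal{Z(C)}}(\Sigma) = \RT_{\mathcal{C}}(\Sigma)\otimes \RT_{\mathcal{C}^{\op}}(\Sigma)$ over the factorization $\mathcal{Z(C)} = \mathcal{C}\boxtimes\mathcal{C}^{\op}$ is precisely $\sum_j \xi_j\otimes \xi_j^{\op}$ (this is the higher-genus analogue of the fact stated in the proof of Proposition \ref{prop:tracedim} that the identity in $\RT_{\mathcal{Z(C)}}(\Sigma)$ for the torus decomposes as $\sum_j \xi_j\otimes\xi_j^{\op}$).

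First I would write
\begin{align*}
\operatorname{Tr}(Z_g^{\mathcal{D},\mathcal{E}}) = \sum_{j}\alpha_{g,\mathcal{D},\mathcal{E}}(\xi_j\otimes \xi_j^{\op}) = \alpha_{g,\mathcal{D},\mathcal{E}}\Bigl(\sum_j \xi_j\otimes\xi_j^{\op}\Bigr),
\end{align*}
using linearity of the double $\alpha$-induction functional. Then I would identify $\sum_j\xi_j\otimes\xi_j^{\op}$ with the class of the trivial cylinder $v_0 = (\Sigma\times[0,1], B\text{-colored},\varnothing)$ in $\RT_{\mathcal{Z(C)}}(\Sigma\times\{0,1\})$. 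Plugging $v_0$ into Definition \ref{def:alpha1}, the construction $\fM_{\mathcal{E}}^{\mathcal{D}}(v_0)$ colors $\Sigma\times\{0\}$ by $\mathcal{D}$ (with empty diagram), $\Sigma\times\{1\}$ by $\mathcal{E}$ (with empty diagram), keeps $\Sigma\times(0,1)$ colored by $B$, and colors the complementary exterior in $\mathbb{S}^3$ by $A$. By definition $\alpha_{g,\mathcal{D},\mathcal{E}}(v_0) = \frac{1}{\mu}\langle\fM^{\mathcal{D}}_{\mathcal{E}}(v_0)\rangle$, which is precisely the partition function of the 3-alterfold drawn on the right-hand side of the statement: a $B$-colored thickened genus-$g$ surface, bounded on the two time-boundary sides by $\mathcal{D}$- and $\mathcal{E}$-colored surfaces, with a factor $\frac1\mu$ in front.

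The one point requiring care — and the main obstacle — is justifying that $\sum_j \xi_j\otimes\xi_j^{\op}$ really is the class of $v_0$ and not merely a class that pairs correctly with everything (which would suffice anyway by nondegeneracy of the TQFT pairing, but I would prefer to state it cleanly). I would argue this by invoking the factorization $\RT_{\mathcal{Z(C)}}(\Sigma\times\{0,1\}) = \RT_{\mathcal{C}}(\Sigma\times\{0,1\})\otimes\RT_{\mathcal{C}^{\op}}(\Sigma\times\{0,1\})$ recalled just before Section 3, together with the fact that the identity endomorphism of $\RT_{\mathcal{C}}(\Sigma)$ (resp. $\RT_{\mathcal{C}^{\op}}(\Sigma)$), viewed as a vector in $\RT_{\mathcal{C}}(\Sigma\times\{0,1\})$ (resp. $\RT_{\mathcal{C}^{\op}}(\Sigma\times\{0,1\})$) via the TQFT, is the trivial cylinder, whose expansion in the basis $\{\xi_j\}$ (resp. $\{\xi_j^{\op}\}$) paired against its dual gives $\sum_j \xi_j\otimes\xi_j^{\op}$ under the chosen duality — exactly as in the torus case used in Proposition \ref{prop:tracedim}. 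Alternatively, and more robustly, since both sides of the claimed identity are values of $\alpha_{g,\mathcal{D},\mathcal{E}}$ and the pairing identifying $\{\xi_j^{\op}\}$ as dual to $\{\xi_j\}$ is part of the TQFT data, one checks $\langle \xi_k\otimes\xi_\ell^{\op},\, \sum_j\xi_j\otimes\xi_j^{\op}\rangle = \delta_{k\ell}$, so that $\sum_j\xi_j\otimes\xi_j^{\op}$ is forced to be the image of $v_0$. Once this identification is in place, the statement follows immediately, with no further computation. The remaining steps are purely formal, so this completes the proof.
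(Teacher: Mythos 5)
Your proposal is correct and follows essentially the same route as the paper: the paper's proof is a one-line appeal to the fact that the identity in $\RT_{\mathcal{Z(C)}}(\Sigma)$ decomposes as $\sum_j \xi_j\otimes\xi_j^{\op}$, so that $\operatorname{Tr}(Z_g^{\mathcal{D},\mathcal{E}})=\alpha_{g,\mathcal{D},\mathcal{E}}(\sum_j\xi_j\otimes\xi_j^{\op})$ is the value of the functional on the trivial cylinder, exactly as you argue. Your extra care in justifying the identification of $\sum_j\xi_j\otimes\xi_j^{\op}$ with the class of the empty $B$-colored cylinder is a welcome elaboration of what the paper leaves implicit.
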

\begin{proof}
It follows from that the fact the identity in $\RT_{\mathcal{Z(C)}}(\Sigma)$ has decomposition $\sum_{j} \xi_j\otimes \xi_j^{\op}$, where $\{\xi_j\}_j$ is a basis in $\RT_{\mathcal{C}}(\Sigma)$.
\end{proof}

Suppose $\mathcal{E}=\mathcal{C}$.
Then the $\mathcal{Z(C)}$-diagram in the interior $\Sigma\times [0,1]$ can be contracted to the surface $\Sigma\times \{1\}$ colored by $\mathcal{C}$.
In this case, the double $\alpha$-induction is simply defined as follows:
\begin{align*}
\alpha_{g, \mathcal{D}}\left(\vcenter{\hbox{\scalebox{0.5}{
\begin{tikzpicture}
\begin{scope}[shift={(1, 1)}, scale=1.8]
\draw (-1,0.5)--(2, 0.5) (-2,-0.5)--(1, -0.5);
\draw (-2,-0.5)--(-1, 0.5) (1,-0.5)--(2, 0.5) node [right] {\tiny Time Boundary}  node [below] {\tiny $B$};
\end{scope}
\begin{scope}[shift={(1, -1)}, scale=1.8]
\draw (-1,0.5)--(2, 0.5) (-2,-0.5)--(1, -0.5);
\draw (-2,-0.5)--(-1, 0.5) (1,-0.5)--(2, 0.5) node [right] {\tiny Space Boundary} node [above] {\tiny $B$} node [below] {\tiny $A$} ;
\node at (0, 0) { $\Gamma$};
\end{scope}
\end{tikzpicture}}}}\right)
=\frac{1}{\mu}
\vcenter{\hbox{\scalebox{0.5}{
\begin{tikzpicture}
\begin{scope}[shift={(1, 1)}, scale=1.8]
\path [fill=brown!20!white] (-1,0.5)--(2, 0.5)--(1, -0.5)--(-2, -0.5)--cycle;
\draw (-1,0.5)--(2, 0.5) (-2,-0.5)--(1, -0.5);
\draw (-2,-0.5)--(-1, 0.5) (1,-0.5)--(2, 0.5)   node [below] {\tiny $B$} node [above] {\tiny $A$};
\node at (0,0) { $\mathcal{D}$};
\end{scope}
\begin{scope}[shift={(1, -1)}, scale=1.8]
\draw (-1,0.5)--(2, 0.5) (-2,-0.5)--(1, -0.5);
\draw (-2,-0.5)--(-1, 0.5) (1,-0.5)--(2, 0.5) node [above] {\tiny $B$} node [below] {\tiny $A$};
\node at (0,0) { $\Gamma$};
\end{scope}
\end{tikzpicture}}}}.
\end{align*}
Suppose $\{\xi_j\}_j, \{\xi_j^{\op}\}_j$ are base for $ \RT_{\mathcal{C}}(\Sigma\times \{0\})$ and $\RT_{\mathcal{C}^{\op}}(\Sigma\times \{0\})$.
The $Z$-matrix $Z_g^{\mathcal{D}}$ is 
\begin{align*}
Z_g^{\mathcal{D}}= (\alpha_{g, \mathcal{D} }(\xi_j\otimes \xi_k^{\op}))_{j,k}.
\end{align*}

\begin{proposition}\label{prop:zdecomposition}
Suppose that $\Sigma$ is an oriented surface with genus $g$.
Then we have that 
\begin{align*}
    Z_{g}^{\mathcal{D}, \mathcal{E}}=\mu^{-1} Z_g^{\mathcal{D}} (Z_{g}^{ \mathcal{E}})^t.
\end{align*}
\end{proposition}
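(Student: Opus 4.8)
The plan is to evaluate the closed--alterfold partition function $\langle\fM^{\cD}_{\cE}(\xi_j\otimes\xi_k^{\op})\rangle$ by cutting the cobordism $\Sigma\times[0,1]$ along an interior time boundary parallel to $\Sigma$ and invoking the gluing (composition) axiom of the alterfold TQFT for $\RT_{\mathcal{Z(C)}}$. First I would choose a representative of $\xi_j\otimes\xi_k^{\op}$ in which the $\cC$--flavored factor $\xi_j$ and the $\cC^{\op}$--flavored factor $\xi_k^{\op}$ --- with respect to the $\mathcal{Z(C)}\simeq\mathcal{C}\boxtimes\mathcal{C}^{\op}$ decomposition recalled in Section~2 --- are isotoped to opposite halves of $\Sigma\times[0,1]$, with $\xi_j$ pushed toward the $\cD$--colored end $\Sigma\times\{0\}$ and $\xi_k^{\op}$ toward the $\cE$--colored end $\Sigma\times\{1\}$.

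On the interior time boundary I would insert the resolution of the identity of $\RT_{\mathcal{Z(C)}}(\Sigma)$, namely $\operatorname{id}=\sum_\ell\xi_\ell\otimes\xi_\ell^{\op}$ (the decomposition used in Proposition~\ref{prop:tracedim} and Theorem~\ref{thm:tracevalue}). This writes $\langle\fM^{\cD}_{\cE}(\xi_j\otimes\xi_k^{\op})\rangle$ as $\sum_\ell\langle\fM_-^\ell\rangle\,\langle\fM_+^\ell\rangle$, where the lower piece $\fM_-^\ell$ is the $\cD$--capped alterfold carrying $\xi_j$ together with the inserted $\xi_\ell^{\op}$ on its free end, and the upper piece $\fM_+^\ell$ is the $\cE$--capped alterfold carrying $\xi_k^{\op}$ together with $\xi_\ell$. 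By the $\cE=\cC$ specialization of the double $\alpha$--induction (Section~\ref{sec:alpha}), $\langle\fM_-^\ell\rangle$ is, up to a normalization power of $\mu$, the matrix entry $\alpha_{g,\cD}(\xi_j\otimes\xi_\ell^{\op})=(Z_g^{\cD})_{j\ell}$. For the upper piece I would read $\fM_+^\ell$ with the reversed orientation on $\Sigma$; reversing orientation turns $\cC$--colored data into $\cC^{\op}$--colored data, hence interchanges $\xi_\ell$ with $\xi_\ell^{\op}$ and $\xi_k^{\op}$ with $\xi_k$, and therefore identifies $\langle\fM_+^\ell\rangle$ (again up to a power of $\mu$) with $\alpha_{g,\cE}(\xi_k\otimes\xi_\ell^{\op})=(Z_g^{\cE})_{k\ell}=\bigl((Z_g^{\cE})^{t}\bigr)_{\ell k}$. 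This orientation reversal is precisely what produces the transpose on $Z_g^{\cE}$.

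Summing over $\ell$ then gives $(Z_g^{\cD,\cE})_{jk}=\tfrac1\mu\langle\fM^{\cD}_{\cE}(\xi_j\otimes\xi_k^{\op})\rangle$ proportional to $\sum_\ell(Z_g^{\cD})_{j\ell}\bigl((Z_g^{\cE})^{t}\bigr)_{\ell k}=\bigl(Z_g^{\cD}(Z_g^{\cE})^{t}\bigr)_{jk}$, which is the asserted identity once the proportionality constant is shown to be $\mu^{-1}$. I expect the normalization to be the only real obstacle: one must combine the factor $\tfrac1\mu$ appearing in each of $\alpha_{g,\cD,\cE}$, $\alpha_{g,\cD}$, $\alpha_{g,\cE}$ with the factor produced by cutting $\Sigma\times[0,1]$ along an interior time boundary and by the insertion of the resolution of the identity, and then check that any genus--dependent gluing anomaly cancels between the two--sided functional and the two one--sided functionals, so that exactly $\mu^{-1}$ survives. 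The cleanest ways to pin this scalar down are to evaluate the small local pieces directly with the topological moves (Move~0 and Move~1, in the spirit of Proposition~\ref{prop:dim} and Lemma~\ref{lem:decomposition3}), or to specialize to $\cD=\cE=\cC$, where $Z_g^{\cC,\cC}$, $Z_g^{\cC}$ and $(Z_g^{\cC})^{t}$ are all directly computable and fix the constant unambiguously.
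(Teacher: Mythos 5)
Your proposal is essentially the paper's proof read in the opposite direction: the paper starts from $\sum_{k}\alpha_{g,\mathcal{D}}(\xi_j\otimes\xi_k^{\op})\,\alpha_{g,\mathcal{E}}(\xi_\ell\otimes\xi_k^{\op})$ and glues the two one-sided alterfolds along their $\mathcal{C}$-colored space boundaries, eliminating the resulting parallel $\mathcal{C}$-colored surfaces with $A$-colored interior by Moves 2 and 3 to obtain $\mu\,\alpha_{g,\mathcal{D},\mathcal{E}}(\xi_j\otimes\xi_\ell^{\op})$, which is exactly your cut-and-insert-the-identity argument together with the transpose bookkeeping. The only point you leave open, the normalization $\mu^{-1}$, is precisely what the paper extracts from that Moves 2--3 elimination of the middle surfaces, i.e.\ the first of the two verification routes you propose, so your argument closes as written.
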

\begin{proof}
We have that 
\begin{align*}
   \sum_{k} \alpha_{g, \mathcal{D}}(\xi_j \otimes \xi_k^{\op})\alpha_{g, \mathcal{E}}(\xi_\ell\otimes \xi_k^{\op})
   =\mu \alpha_{g, \mathcal{D}, \mathcal{E}}(\xi_j\otimes \xi_{\ell}^{\op})
\end{align*}
where the parallel surfaces colored by $\mathcal{C}$ with the inside colored by $A$ can be eliminated by Moves 2,3.
\end{proof}

\begin{corollary}\label{cor:mcgmatrix}
Suppose $\Sigma$ is an oriented surface with genus $g$ and $\Phi\in \MCG(\Sigma)$.
Then we have that 
\begin{align*}
(\Phi_{jk})_{jk} Z_g^{\mathcal{D}}= Z_g^{\mathcal{D}} (\Phi_{jk})_{jk}. 
\end{align*}
\end{corollary}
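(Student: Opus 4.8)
The plan is to deduce Corollary~\ref{cor:mcgmatrix} from Theorem~\ref{thm:mcg1} together with Proposition~\ref{prop:zdecomposition}, by specialising the second $2D$-colour to $\cE=\cC$. The first step is to identify the matrix $Z_g^{\cC}$. Unwinding Definition~\ref{def:Zg} in the case $\cD=\cC$ gives $(Z_g^{\cC})_{jk}=\alpha_{g,\cC}(\xi_j\otimes\xi_k^{\op})$; since the boundary surface produced by $\alpha_{g,\cC}$ is now coloured by $\cC$ itself, the $\cC$-coloured pieces (with $A$-coloured interior) can be pushed into the bulk and removed by Moves~$2$ and~$3$ — exactly the mechanism used in the proof of Proposition~\ref{prop:zdecomposition}. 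What remains is the canonical pairing of $\xi_j\in\RT_{\cC}(\Sigma\times\{0\})$ against the dual basis vector $\xi_k^{\op}\in\RT_{\cC^{\op}}(\Sigma\times\{0\})$, so $(Z_g^{\cC})_{jk}=\delta_{jk}$, i.e.\ $Z_g^{\cC}$ is the identity matrix. Hence Proposition~\ref{prop:zdecomposition} yields $Z_g^{\cD,\cC}=\mu^{-1}Z_g^{\cD}$.

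Next I would apply Theorem~\ref{thm:mcg1} to $Z_g^{\cD,\cC}$: for every $f\in\MCG(\Sigma\times\{0,1\})$ we have $f_{\cC}\cdot Z_g^{\cD,\cC}=Z_g^{\cD,\cC}\cdot f_{\cC}$. Restricting $f$ to the subgroup $\MCG(\Sigma)=\MCG(\Sigma)\times\{0\}\subset\MCG(\Sigma\times\{0,1\})$ identified before Theorem~\ref{thm:mcg1}, the matrix $f_{\cC}$ is precisely the matrix $(\Phi_{jk})_{jk}$ of the corresponding $\Phi\in\MCG(\Sigma)$ acting on $\RT_{\cC}(\Sigma)$. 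Substituting $Z_g^{\cD,\cC}=\mu^{-1}Z_g^{\cD}$ and cancelling the scalar $\mu^{-1}$ gives $(\Phi_{jk})_{jk}\,Z_g^{\cD}=Z_g^{\cD}\,(\Phi_{jk})_{jk}$, which is the claim.

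An alternative and perhaps cleaner route avoids Proposition~\ref{prop:zdecomposition}: one simply re-runs the proof of Theorem~\ref{thm:mcg1} with the single functional $\alpha_{g,\cD}$ in place of $\alpha_{g,\cD,\cE}$. The invariance statement of Theorem~\ref{thm:mcg0} holds verbatim for $\alpha_{g,\cD}$, since there are no tensor diagrams on the time boundary $\Sigma\times\{0\}$ and so gluing a mapping cylinder $C_f$ does not alter the homeomorphism type of the closed alterfold whose partition function computes $\alpha_{g,\cD}$. Then the generator-by-generator argument (using the matrix representatives normalised so that $f_{\cC^{\op}}=f_{\cC}^{-1}$ on generators) followed by the projective-multiplicativity argument for arbitrary mapping classes directly produces $f_{\cC}\,Z_g^{\cD}=Z_g^{\cD}\,f_{\cC}$ for all $f\in\MCG(\Sigma)$, which is Corollary~\ref{cor:mcgmatrix}.

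The step I expect to be the main obstacle is the bookkeeping surrounding $Z_g^{\cC}=I$ and the identifications it presupposes: namely $\RT_{\cC}(\Sigma\times\{0,1\})\cong\RT_{\cC}(\Sigma)\cong\RT_{\cC}(\Sigma\times\{0\})$, the normalisation of the dual bases $\{\xi_j\}_j$ and $\{\xi_j^{\op}\}_j$, and matching up which factor of $\MCG(\Sigma)\times\MCG(\Sigma)$ supplies $(\Phi_{jk})_{jk}$ in the end. None of this is deep, but it requires chasing carefully through the alterfold definitions of $\alpha_{g,\cD}$ and the collapse of the $\cC$-coloured boundary; once that is settled, the rest is a one-line cancellation.
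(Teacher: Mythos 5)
Your proposal is correct, and your ``alternative route'' is precisely what the paper does: the paper's entire proof of Corollary~\ref{cor:mcgmatrix} is the one line ``It follows from Theorem~\ref{thm:mcg1}'', i.e.\ the invariance argument $\alpha\circ\rho(f)=\alpha$ applied with $\alpha_{g,\cD}$ in place of $\alpha_{g,\cD,\cE}$ (exactly as in the genus-one Proposition~\ref{prop:z}), followed by the same generator-by-generator and projective-multiplicativity bookkeeping. Your first route, via $Z_g^{\cD,\cC}=\mu^{-1}Z_g^{\cD}(Z_g^{\cC})^t$ and the claim $Z_g^{\cC}=I$, also works but is more roundabout and rests on a point the paper does not actually pin down in higher genus: the paper only declares $\{\xi_k^{\op}\}$ to be ``the same set of 3-alterfolds'' as $\{\xi_j\}$, not that the two bases are dual under the gluing pairing, so $Z_g^{\cC}$ is a priori only an invertible Gram-type matrix rather than literally the identity. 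You correctly flag this normalisation issue as the delicate step; the direct route avoids it entirely, which is presumably why the paper takes it.
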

\begin{proof}
    It follows from Theorem \ref{thm:mcg1}.
\end{proof}

Suppose the genus $g=1$.
Then the surface $\Sigma$ is a torus and we shall write
$\alpha_{1, \mathcal{D}, \mathcal{E}}$ as $\alpha_{\mathcal{D}, \mathcal{E}}$, $\alpha_{1, \mathcal{D}, \mathcal{E}}^\pm$ as $\alpha_{\mathcal{D}, \mathcal{E}}^\pm$, $Z_{1}^{\mathcal{D}, \mathcal{E}}$ as $Z^{\mathcal{D},\mathcal{E}}$ etc.
If $\mathcal{E}=\mathcal{C}$, we shall write $\alpha_{1, \mathcal{D}}$ as $\alpha_{\mathcal{D}}$, $\alpha_{1, \mathcal{D}}^\pm$ as $\alpha_{\mathcal{D}}^\pm$, $Z_1^{\mathcal{D}}$ as $Z^{\mathcal{D}}$ etc.
The basis of $\RT_{\mathcal{C}}(\Sigma\times \{0\})$ can be taken as 
\begin{align*}
\vcenter{\hbox{\scalebox{0.5}{
\begin{tikzpicture}
\begin{scope}[shift={(1, 1)}, scale=1.8]
\draw (-1,0.5)--(2, 0.5) (-2,-0.5)--(1, -0.5);
\draw (-2,-0.5)--(-1, 0.5) (1,-0.5)--(2, 0.5) node [right] {\tiny Time Boundary}  node [below] {\tiny $B$};
\end{scope}
\begin{scope}[shift={(1, -1)}, scale=1.8]
\draw (-1,0.5)--(2, 0.5) (-2,-0.5)--(1, -0.5);
\draw (-2,-0.5)--(-1, 0.5) (1,-0.5)--(2, 0.5) node [right] {\tiny Space Boundary} node [above] {\tiny $B$} node [below] {\tiny $A$} ;
\draw [red] (-0.5, -0.5)--(0.5, 0.5);
\draw [white, line width=0.05cm] (-0.1, -0.1) --(0.1, 0.1);
\draw [blue]  (-1.5,0)--(1.5, 0);
\node [above right] at (1, 0) {\tiny $X_j$};
\end{scope}
\end{tikzpicture}}}}
\end{align*}
\begin{proposition}
    We have that the entry $\alpha_{\mathcal{D}}(X_j\otimes X_k^{\op})$ of $Z^{\mathcal{D}}$ is a positive integer for any $j,k$.
\end{proposition}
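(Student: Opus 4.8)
The plan is to recognize that, in the genus-one case with $\mathcal{E}=\mathcal{C}$ treated at the end of Section \ref{sec:alpha}, the functional $\alpha_{\mathcal{D}}$ of Definition \ref{def:alpha1} reduces exactly to the linear functional $\alpha_{\mathcal{D}}$ introduced in Subsection 3.1. Indeed, by Definition \ref{def:Zg} the basis vector of $\RT_{\mathcal{C}}(\Sigma\times\{0\})$ labelled by $X_j$ is the solid torus carrying a single $X_j$-colored curve linked against the Kirby color, which is precisely the representative of $\xi_j$ used in Section 3. First I would unwind $\alpha_{\mathcal{D}}(X_j\otimes X_k^{\op})=\tfrac1\mu\langle\mathfrak{M}^{\mathcal{D}}_{\mathcal{C}}(\xi_j\otimes\xi_k^{\op})\rangle$: one pushes the $\mathcal{Z(C)}$-diagram in the interior onto the $\mathcal{C}$-colored space boundary $\Sigma\times\{1\}$, uses the identification $\mathcal{Z(C)}\cong\mathcal{C}\boxtimes\mathcal{C}^{\op}$ to split the two Kirby-colored longitudes into the simple objects $X_j$ and $X_k^{\op}$, and then replaces the remaining $B$-colored handlebody by the standard $\mathbb{S}^3$-picture. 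The result is verbatim the solid Hopf-link alterfold with a $\mathcal{D}$-colored handle that computes $z_{jk}=\alpha_{\mathcal{D}}(\xi_j\otimes\xi_k^{\op})$ in Subsection 3.1.

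Having made this identification, I would invoke the chain of equalities established there, together with Lemma \ref{lem:decomposition1} and Moves 0,1,2, to conclude
\[
\alpha_{\mathcal{D}}(X_j\otimes X_k^{\op})=\dim\hom_{\mathcal{Z(C)}}\!\left(I(\1_{\mathcal{D}}),\,X_j\boxtimes X_k^{\op}\right)=\dim\hom_{\mathcal{D}}\!\left(\alpha_+(X_j),\alpha_-(X_k)\right),
\]
the second equality being \eqref{eq:modularinv}. Since the right-hand side is the dimension of a hom-space in the (semisimple) fusion category $\mathcal{Z(C)}$ (equivalently in $\mathcal{D}$), it is a non-negative integer; it is strictly positive precisely when $X_j\boxtimes X_k^{\op}$ occurs as a summand of $I(\1_{\mathcal{D}})$. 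In particular the genus-one matrix $Z^{\mathcal{D}}=Z_1^{\mathcal{D}}$ agrees with the modular invariant mass matrix of Subsection 3.2, so no separate computation is needed.

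The only real point requiring care — and the main, rather mild, obstacle — is the bookkeeping that the Section \ref{sec:alpha} construction specializes \emph{on the nose} to the Section 3 one: that the basis $\{\xi_j\}$ of $\RT_{\mathcal{C}}(\Sigma\times\{0\})$ picked in Definition \ref{def:Zg} is the one used in Subsection 3.1, that capping the space-boundary side with the $B$-colored handlebody reproduces the ambient $\mathbb{S}^3$ used there (up to the global $\tfrac1\mu$ factor already built into Definition \ref{def:alpha1}), and that the orientation conventions for the Kirby-colored circle and the $X_j$-labelled strand match. Once these identifications are recorded, the statement is immediate from the hom-space interpretation above.
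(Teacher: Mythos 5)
Your proposal is correct and follows essentially the same route as the paper: the paper's own proof is exactly the diagrammatic rewriting of $\alpha_{\mathcal{D}}(X_j\otimes X_k^{\op})$ into the solid-Hopf-link picture of Equation \eqref{eq:modularinv}, identifying it with $\dim\hom_{\mathcal{D}}(\alpha_+(X_j),\alpha_-(X_k))$ and hence with an integer. Note that, as you correctly observe, this argument (and the paper's) only yields \emph{non-negativity} — strict positivity would require $X_j\boxtimes X_k^{\op}$ to actually occur in $I(\1_{\mathcal{D}})$ — and indeed the paper's proof concludes only ``non-negative integers'' despite the statement's wording.
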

\begin{proof}
Note that 
\begin{align*}
\alpha_{\mathcal{D}} (X_j\otimes X_k^{\op})= &\frac{1}{\mu^2}
\vcenter{\hbox{\scalebox{0.6}{
\begin{tikzpicture}[scale=0.9]
\draw [line width=0.77cm, brown!20!white] (0,0) [partial ellipse=-0.1:180.1:2 and 1.5];
\begin{scope}[shift={(2.5, 0)}]
\draw [line width=0.8cm] (0,0) [partial ellipse=0:180:2 and 1.5];
\draw [white, line width=0.77cm] (0,0) [partial ellipse=-0.1:180.1:2 and 1.5];
\draw [blue] (0,0) [partial ellipse=0:180:2.15 and 1.65];
\draw [blue] (0,0) [partial ellipse=0:180:1.85 and 1.35];
\end{scope} 
\begin{scope}[shift={(2.5, 0)}]
\draw [line width=0.8cm] (0,0) [partial ellipse=180:360:2 and 1.5];
\draw [white, line width=0.77cm] (0,0) [partial ellipse=178:362:2 and 1.5];
\draw [blue, ->-=0.5] (0,0) [partial ellipse=178:362:2.15 and 1.65] node[black, pos=0.7,below ] {\tiny $X_k$}; 
\draw [blue, -<-=0.5] (0,0) [partial ellipse=178:362:1.85 and 1.35] node[black, pos=0.7,above ] {\tiny $X_j$};
\end{scope}
\draw [line width=0.77cm, brown!20!white] (0,0) [partial ellipse=180:360:2 and 1.5];
\begin{scope}[shift={(4.5, 0)}]
\draw [red, dashed](0,0) [partial ellipse=0:180:0.4 and 0.25] ;
\draw [white, line width=4pt] (0,0) [partial ellipse=290:270:0.4 and 0.25];
\draw [red] (0,0) [partial ellipse=260:360:0.4 and 0.25];
\draw [red] (0,0) [partial ellipse=180:230:0.4 and 0.25];
\end{scope}
\end{tikzpicture}}}} 
=
\frac{1}{\mu^3}\vcenter{\hbox{\scalebox{0.7}{
\begin{tikzpicture}[scale=0.7]
\draw [line width=0.6cm, brown!20!white] (0,0) [partial ellipse=-0.1:180.1:2 and 1.5];
\begin{scope}[shift={(2.5, 0)}]
\draw [double distance=0.57cm] (0,0) [partial ellipse=-0.1:180.1:2 and 1.5];
\draw [blue] (0,0) [partial ellipse=0:180:2 and 1.5];
\end{scope} 
\begin{scope}[shift={(2.5, 0)}]
\draw [line width=0.6cm] (0,0) [partial ellipse=180:360:2 and 1.5];
\draw [white, line width=0.57cm] (0,0) [partial ellipse=178:362:2 and 1.5];
\draw [blue, ->-=0.5] (0,0) [partial ellipse=178:362:2 and 1.5] node[black, pos=0.7,below ] {\tiny $X_k$};
\end{scope}
\begin{scope}[shift={(-2.5, 0)}]
\draw [double distance=0.57cm] (0,0) [partial ellipse=-0.1:180.1:2 and 1.5];
\draw [blue, ->-=0.5] (0,0) [partial ellipse=0:180:2 and 1.5] node[black, pos=0.6, above] {\tiny $X_j$};
\end{scope} 
\begin{scope}[shift={(-2.5, 0)}]
\draw [line width=0.6cm] (0,0) [partial ellipse=180:360:2 and 1.5];
\draw [white, line width=0.57cm] (0,0) [partial ellipse=178:362:2 and 1.5];
\draw [blue] (0,0) [partial ellipse=178:362:2 and 1.5];
\end{scope}
\draw [line width=0.6cm, brown!20!white] (0,0) [partial ellipse=180:360:2 and 1.5];
\begin{scope}[shift={(4.5, 0)}]
\draw [red, dashed](0,0) [partial ellipse=0:180:0.37 and 0.25] ;
\draw [white, line width=4pt] (0,0) [partial ellipse=270:250:0.37 and 0.25];
\draw [red] (0,0) [partial ellipse=180:360:0.37 and 0.25];
\end{scope}
\begin{scope}[shift={(-4.5, 0)}]
\draw [red, dashed] (0,0) [partial ellipse=0:180:0.37 and 0.25];
\draw [red] (0,0) [partial ellipse=180:250:0.37 and 0.25];
\draw [red] (0,0) [partial ellipse=290:360:0.37 and 0.25];
\end{scope}
\end{tikzpicture}}}}.
\end{align*}
We see the entries of $Z^{\mathcal{D}}$ are non-negative integers.
\end{proof}

\begin{corollary}\label{cor:mcg2}
We have that $Z^{\mathcal{D},\mathcal{E}}$ commutes with the $S$-matrix and the $T$-matrix, i.e. $Z^{\mathcal{D},\mathcal{E}}$ is a modular invariant matrix.   
\end{corollary}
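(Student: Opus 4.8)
The plan is to obtain this corollary as the genus-one specialization of Theorem \ref{thm:mcg1}. When $\Sigma$ is a torus, $\MCG(\Sigma)\cong\SL_2(\mathbb{Z})$ is generated by the classes of the two standard Dehn twists, and, as recalled in the paragraph preceding Theorem \ref{thm:mcg1}, the modular $S$-matrix and $T$-matrix are exactly a choice of matrix representatives $f_{\cC}$ (for $f$ running over these generators, viewed inside $\MCG(\Sigma\times\{0,1\}) = \MCG(\Sigma)\times\{0\}$) which satisfies the two required normalization conditions: $f_{\cC}\otimes f_{\cC^{\op}} = \rho(f)$ on $\RT_{\mathcal{Z(C)}}(\Sigma\times\{0,1\})$, and $f_{\cC^{\op}} = f_{\cC}^{-1}$ on the generators. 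So the first step is simply to record this identification and to note that ``modular invariant'' in the sense of Section 3.1 means precisely commuting with both $S$ and $T$.

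Next I would apply Theorem \ref{thm:mcg1} with $g=1$ and with $f$ taken to be, in turn, the generator whose representative on $\RT_{\cC}(\Sigma\times\{0,1\})$ is the $S$-matrix and then the one giving the $T$-matrix. This yields directly the matrix identities $S\cdot Z^{\mathcal{D},\mathcal{E}} = Z^{\mathcal{D},\mathcal{E}}\cdot S$ and $T\cdot Z^{\mathcal{D},\mathcal{E}} = Z^{\mathcal{D},\mathcal{E}}\cdot T$, which is the assertion. One could alternatively (and should perhaps remark) that the same follows from combining Proposition \ref{prop:zdecomposition}, which gives $Z^{\mathcal{D},\mathcal{E}} = \mu^{-1} Z^{\mathcal{D}}(Z^{\mathcal{E}})^t$, with the $g=1$ case of Corollary \ref{cor:mcgmatrix} (equivalently Proposition \ref{prop:z}) applied to $Z^{\mathcal{D}}$ and $Z^{\mathcal{E}}$ separately, since $S$ is symmetric and $(Z^{\mathcal{E}})^t$ inherits the commutation relation by transposition.

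The only delicate point — and it is essentially bookkeeping already handled in the excerpt rather than a genuine obstacle — is the projectivity of the $\MCG(\Sigma\times\{0,1\})$-action on $\RT_{\mathcal{Z(C)}}(\Sigma\times\{0,1\})$: the matrix representatives multiply only up to nonzero scalars, so one must check that this scalar ambiguity does not interfere. As in the proof of Theorem \ref{thm:mcg1}, it does not, because in the commutation relation $f_{\cC}Z^{\mathcal{D},\mathcal{E}} = Z^{\mathcal{D},\mathcal{E}}f_{\cC}$ the same scalar multiplies $f_{\cC}$ on both sides and hence cancels. I would therefore keep the write-up short: state the identification of $S,T$ with the relevant $f_{\cC}$, invoke Theorem \ref{thm:mcg1}, and conclude.
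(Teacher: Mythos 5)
Your proposal is correct and follows essentially the same route as the paper, which simply notes that the $S$- and $T$-matrices arise as representatives of torus mapping class group elements and invokes Theorem \ref{thm:mcg1}. The extra remarks on projectivity and the alternative derivation via Proposition \ref{prop:zdecomposition} are sound but not needed.
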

\begin{proof}
    Note that $S$-matrix and $T$-matrix come from the mapping class group of the torus.
    By Theorem \ref{thm:mcg1}, we see the corollary is true.
\end{proof}

\begin{corollary}\label{cor:mcg3}
We have that $Z^{\mathcal{D}}$ commutes with the $S$-matrix and the $T$-matrix.    
\end{corollary}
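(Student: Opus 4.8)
The plan is to obtain Corollary~\ref{cor:mcg3} as the genus-one specialization of Corollary~\ref{cor:mcgmatrix}, invoking the standard identification of the mapping class group of the torus with $\operatorname{SL}_2(\mathbb{Z})$ and of the modular $S$- and $T$-matrices with the matrices of its two standard generators acting on $\RT_{\mathcal{C}}(\Sigma\times\{0\})$.

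First I would recall that for $\Sigma$ a torus one has $\MCG(\Sigma)\cong \operatorname{SL}_2(\mathbb{Z})$, that this group is generated by the two elements usually denoted $s$ and $t$, and that the (projective) action of $\MCG(\Sigma)$ on $\RT_{\mathcal{C}}(\Sigma\times\{0\})$ sends $s$ and $t$ to the modular $S$-matrix and $T$-matrix of $\mathcal{C}$ respectively. As already noted in the discussion preceding Theorem~\ref{thm:mcg1}, for the torus these two matrices may be taken as the matrix representatives $f_{\mathcal{C}}$ occurring in that theorem and in Corollary~\ref{cor:mcgmatrix}, so no gluing anomaly intervenes at this stage.

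Next I would apply Corollary~\ref{cor:mcgmatrix} with $g=1$: for every $\Phi\in\MCG(\Sigma)$ the matrix $(\Phi_{jk})_{jk}$ of its action on $\RT_{\mathcal{C}}(\Sigma\times\{0\})$ commutes with $Z^{\mathcal{D}}=Z_1^{\mathcal{D}}$. Taking $\Phi=s$ and $\Phi=t$ then yields $S\,Z^{\mathcal{D}}=Z^{\mathcal{D}}\,S$ and $T\,Z^{\mathcal{D}}=Z^{\mathcal{D}}\,T$, which is precisely the assertion. Since everything here is a direct specialization of results already proved, I do not expect a genuine obstacle; the only point deserving a line of care is the compatibility of the matrix representatives in Theorem~\ref{thm:mcg1} with the honest $S$ and $T$ matrices, which is exactly the tangle-presentation fact recorded there in the torus case.

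Should one wish to avoid Theorem~\ref{thm:mcg1} altogether, an alternative route is to combine Corollary~\ref{cor:mcg2} (modular invariance of $Z^{\mathcal{D},\mathcal{E}}$) with Proposition~\ref{prop:zdecomposition}, noting that for the trivial Morita context $\mathcal{D}=\mathcal{C}$ the induction functors $\alpha_{\pm}$ are the identity, so that $z_{jk}^{\mathcal{C}}=\delta_{jk}$ and hence $Z_1^{\mathcal{C}}$ is a nonzero scalar multiple of the identity matrix; Proposition~\ref{prop:zdecomposition} then identifies $Z^{\mathcal{D},\mathcal{C}}$ with a nonzero scalar multiple of $Z^{\mathcal{D}}$, and Corollary~\ref{cor:mcg2} transfers the commutation with $S$ and $T$ from $Z^{\mathcal{D},\mathcal{C}}$ to $Z^{\mathcal{D}}$.
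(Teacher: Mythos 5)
Your proposal is correct and follows essentially the same route as the paper: the paper's entire proof is ``It follows from Corollary~\ref{cor:mcg2},'' i.e.\ exactly your alternative argument (specialize the double $\alpha$-induction to $\mathcal{E}=\mathcal{C}$, where $Z_1^{\mathcal{C}}$ is the identity, and transfer modular invariance via Proposition~\ref{prop:zdecomposition}), while your primary route through Corollary~\ref{cor:mcgmatrix} is just the adjacent specialization of the same Theorem~\ref{thm:mcg1}.
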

\begin{proof}
    It follows from Corollary \ref{cor:mcg2}.
\end{proof}

\begin{corollary}
Suppose $\mathcal{C}$ is a modular fusion category.
We have that 
\begin{align*}
\left|\Irr(\ _\mathcal{D}\mathcal{M}_{\mathcal{E}})\right|
=Tr(Z^{\mathcal{D}}(Z^{\mathcal{E}})^t)
= \mu Tr(Z^{\mathcal{D}, \mathcal{E}}).
\end{align*}
\end{corollary}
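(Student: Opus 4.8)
The plan is to prove the two asserted equalities separately. The second one, $Tr(Z^{\mathcal{D}}(Z^{\mathcal{E}})^t)=\mu\,Tr(Z^{\mathcal{D},\mathcal{E}})$, is immediate: specializing Proposition \ref{prop:zdecomposition} to $g=1$ gives $Z^{\mathcal{D},\mathcal{E}}=\mu^{-1}Z^{\mathcal{D}}(Z^{\mathcal{E}})^t$, and taking traces of both sides yields the claim. So the substance lies in the first equality, $|\Irr(\ _{\mathcal{D}}\mathcal{M}_{\mathcal{E}})|=Tr(Z^{\mathcal{D}}(Z^{\mathcal{E}})^t)$, equivalently $|\Irr(\ _{\mathcal{D}}\mathcal{M}_{\mathcal{E}})|=\mu\,Tr(Z^{\mathcal{D},\mathcal{E}})$.

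For this I would evaluate $Tr(Z^{\mathcal{D},\mathcal{E}})$ topologically. By Theorem \ref{thm:tracevalue} at genus one (whose proof uses that the identity of $\RT_{\mathcal{Z(C)}}(\Sigma)$ decomposes as $\sum_j\xi_j\otimes\xi_j^{\mathrm{op}}$), $Tr(Z^{\mathcal{D},\mathcal{E}})$ equals $\tfrac1\mu$ times the closed $3$-alterfold $\mathfrak{M}^{\mathcal{D}}_{\mathcal{E}}(\mathrm{id})$ obtained by coloring the two torus time boundaries of the trivial vector in $\RT_{\mathcal{Z(C)}}(T^2\times\{0,1\})$ by $\mathcal{D}$ and $\mathcal{E}$ with empty tensor diagrams, and the complement in $\mathbb{S}^3$ by $A$; the $B$-colored region here is $T^2\times[0,1]$ with its two boundary tori colored $\mathcal{D}$ and $\mathcal{E}$. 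Equivalently and more concretely, one can expand $Tr(Z^{\mathcal{D}}(Z^{\mathcal{E}})^t)=\sum_{j,k}\alpha_{\mathcal{D}}(\xi_j\otimes\xi_k^{\mathrm{op}})\,\alpha_{\mathcal{E}}(\xi_j\otimes\xi_k^{\mathrm{op}})$ and glue each pair of tori along their $X_j$- and $X_k$-colored legs, exactly as in the proof of Proposition \ref{prop:tracedim}; this produces the same closed alterfold.

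The final step is to recognize this closed alterfold, up to orientation-preserving homeomorphism of the pair $(R_B,\Gamma)$, as the one appearing in Proposition \ref{prop:dim} (which in turn rests on Lemma 2.8 of \cite{LMWW23b}), so that after collecting the normalization constants its contribution is $|\Irr(\ _{\mathcal{D}}\mathcal{M}_{\mathcal{E}})|$, giving $\mu\,Tr(Z^{\mathcal{D},\mathcal{E}})=|\Irr(\ _{\mathcal{D}}\mathcal{M}_{\mathcal{E}})|$ as desired. I expect the main nuisance (rather than a genuine obstacle) to be the bookkeeping of the powers of $\mu$: one must keep straight the $\tfrac1\mu$ in Definition \ref{def:alpha1}, the $\tfrac1\mu$ in Proposition \ref{prop:dim}, and the convention that a $B$-colored $\mathbb{S}^3$ evaluates to $\tfrac1\mu$, and check that together with Proposition \ref{prop:zdecomposition} they conspire to the clean statement above. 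As consistency checks, setting $\mathcal{E}=\mathcal{D}$ recovers $Tr(Z(Z)^t)=|\Irr(\mathcal{D})|$ from Proposition \ref{prop:tracedim}, while $\mathcal{E}=\mathcal{C}$ — for which $Z^{\mathcal{C}}$ is the identity matrix, since the full center of $\1_{\mathcal{C}}$ is $\bigoplus_j X_j\boxtimes X_j^{\ast}$ and hence $\alpha_\pm$ are identity functors in \eqref{eq:modularinv} — recovers $Tr(Z^{\mathcal{D}})=|\Irr(\mathcal{M})|$.
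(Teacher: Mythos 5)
Your proposal is correct and follows essentially the same route as the paper, whose entire proof is the citation of Theorem \ref{thm:tracevalue}, Proposition \ref{prop:zdecomposition} and Proposition \ref{prop:dim} — precisely the three ingredients you combine (trace of the factorization identity for the second equality, and the topological evaluation of the doubly-colored closed alterfold for the first). Your explicit attention to the $\mu$-normalization conventions and the consistency checks at $\mathcal{E}=\mathcal{D}$ and $\mathcal{E}=\mathcal{C}$ are sound elaborations of what the paper leaves implicit.
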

\begin{proof}
 It follows from Theorem \ref{thm:tracevalue}, Proposition \ref{prop:zdecomposition}  and Proposition \ref{prop:dim}.
\end{proof}

In the following, we shall discuss the $\alpha$-induction functors.
We define the strict monoidal category $\widetilde{\mathcal{D}}_0$ as follows:
\begin{itemize}
    \item The objects are given by the following configuration over $\mathbb{R}\times [0, \infty)$:
    \begin{enumerate}
        \item A set of marked points labeled by objects in $\mathcal{D}$ over $\mathbb{R}\times \{0\}$,
        \item and a set of disjoint closed disks embedded in the region $\mathbb{R}\times (0, \infty)$ with marked points labeled by objects in $\mathcal{C}$ on the boundary.
    \end{enumerate}
The object should be considered as a two dimensional alterfold (See \cite{LMWW23b}) with its boundary left open for defining the monoidal structure. The $A$-colored region are exactly
$$\{(x, y)|y\in (-\infty, 0)\}\cup \{\text{disk interior}\}$$ 
\item The morphism are the linear span of the $3$-dimensional alterfold connecting the $2$-dimensional alterfold, modolo the moves and graphical calculus in the fusion category $\mathcal{C}, \mathcal{D}$.
\item Defining the composition of morphisms to be the stacking along $z$-axis, and the monoidal structure to be the juxtaposition along $x$-axis.
\end{itemize}
Let $\widetilde{\mathcal{D}}$ be the idempotent completion of $\widetilde{\mathcal{D}}_0$.
Then the category $\widetilde{\mathcal{D}}$ is equivalent to the category $\mathcal{D}$ via the forgetful functor $F:\mathcal{Z(C)} \to \mathcal{D}$ (see Section 4 in \cite{LMWW23} for details).
We denote the isomorphic functor by $\widetilde{F}$.
The classical $\alpha$-induction $\alpha_{O}$ is defined as follows in the alterfold TQFT:
\begin{align*}
\alpha_{O}\left(\vcenter{\hbox{\scalebox{0.5}{
\begin{tikzpicture}
\begin{scope}[shift={(1, 3)}, scale=1.8]
\draw (-1,0.5)--(2, 0.5) (-2,-0.5)--(1, -0.5);
\draw (-2,-0.5)--(-1, 0.5) (1,-0.5)--(2, 0.5) node [right] {\tiny Time Boundary}  node [below] {\tiny $B$};
\end{scope}
\draw [line width=0.7cm]  (-3,0)--(3, 0);
\draw [line width=0.7cm] (-1.5, -1.5)--(1.5, 1.5);
\draw [white, line width=0.65cm] (-1.75, -1.75)--(1.75, 1.75);
\draw [white, line width=0.65cm]  (-3,0)--(3, 0);
\begin{scope}[shift={(3, 0)}]
\draw [fill=white] (0, 0) [partial ellipse=0:360:0.2 and 0.34];
\end{scope}
\begin{scope}[shift={(-3, 0)}]
\draw [fill=white] (0, 0) [partial ellipse=0:360:0.2 and 0.34];
\end{scope}
\begin{scope}[shift={(-1.5, -1.5)},rotate=45 ]
\draw [fill=white] (0, 0) [partial ellipse=0:360:0.2 and 0.34];
\end{scope}
\begin{scope}[shift={(1.5, 1.5)},rotate=45 ]
\draw [fill=white] (0, 0) [partial ellipse=0:360:0.2 and 0.34];
\end{scope}
\begin{scope}[shift={(1, 1)},rotate=45 ]
\end{scope}
\begin{scope}[shift={(1, -3)}, scale=1.8]
\draw (-1,0.5)--(2, 0.5) (-2,-0.5)--(1, -0.5);
\draw (-2,-0.5)--(-1, 0.5) (1,-0.5)--(2, 0.5) node [right] {\tiny Time Boundary} node [above] {\tiny $B$} ;
\end{scope}
\end{tikzpicture}}}}\right)
=\frac{1}{\mu^2}
\vcenter{\hbox{\scalebox{0.5}{
\begin{tikzpicture}
\begin{scope}[shift={(1, 3)}, scale=1.8]
\path [fill=brown!20!white] (-1,0.5)--(2, 0.5)--(1, -0.5)--(-2, -0.5)--cycle;
\draw (-1,0.5)--(2, 0.5) (-2,-0.5)--(1, -0.5);
\draw (-2,-0.5)--(-1, 0.5) (1,-0.5)--(2, 0.5)   node [below] {\tiny $B$} node [above] {\tiny $A$};
\node at (0,0) { $\mathcal{D}$};
\end{scope}
\draw [line width=0.7cm]  (-3,0)--(3, 0);
\draw [line width=0.7cm] (-1.5, -1.5)--(1.5, 1.5);
\draw [white, line width=0.65cm] (-1.75, -1.75)--(1.75, 1.75);
\draw [white, line width=0.65cm]  (-3,0)--(3, 0);
\begin{scope}[shift={(3, 0)}]
\draw [fill=white] (0, 0) [partial ellipse=0:360:0.2 and 0.34];
\end{scope}
\begin{scope}[shift={(-3, 0)}]
\draw [fill=white] (0, 0) [partial ellipse=0:360:0.2 and 0.34];
\end{scope}
\begin{scope}[shift={(-1.5, -1.5)},rotate=45 ]
\draw [fill=white] (0, 0) [partial ellipse=0:360:0.2 and 0.34];
\end{scope}
\begin{scope}[shift={(1.5, 1.5)},rotate=45 ]
\draw [fill=white] (0, 0) [partial ellipse=0:360:0.2 and 0.34];
\end{scope}
\begin{scope}[shift={(1, 1)},rotate=45 ]
\end{scope}
\begin{scope}[shift={(1, -3)}, scale=1.8]
\draw (-1,0.5)--(2, 0.5) (-2,-0.5)--(1, -0.5);
\draw (-2,-0.5)--(-1, 0.5) (1,-0.5)--(2, 0.5) node [above] {\tiny $B$} node [below right] {\tiny Time Boundary};
\end{scope}
\end{tikzpicture}}}},
\end{align*}
The classical $\alpha$-induction $\alpha_{O}$ induces a tensor functor $\widetilde{\alpha}_{O}: \mathcal{Z(C)} \to \widetilde{\mathcal{D}}$ by adding a $\mathcal{D}$-colored surface to the time boundary.
Denote by $I^\pm:\mathcal{C}\to \mathcal{Z(C)}$ the two embedding functors.
Then the classical $\alpha$-induction induces two functors $\widetilde{F}\widetilde{\alpha}_{O}I^\pm: \mathcal{C}\to \mathcal{D}$. Pictorially, we have Equations \eqref{def:alpha1} and \eqref{def:alpha2}.

The double $\alpha$-induction $\alpha_{g, \mathcal{D}, \mathcal{E}}$ shall induce two tensor functors in a similar way.
We define the category $\langle\mathcal{D}, \mathcal{E}\rangle$ be the following strict monoidal category as follows:
\begin{itemize}
    \item The objects are given by the following configuration over $\mathbb{R}\times [0, 1]$:
    \begin{enumerate}
        \item A set of marked points labeled by objects in $\mathcal{D}$ over $\mathbb{R}\times \{0\}$,
        \item a set of marked points labeled by objects in $\mathcal{E}$ over $\mathbb{R}\times \{1\}$,
        \item and a set of disjoint closed disks embedded in the region $\mathbb{R}\times \{0, 1\}$ with marked points labeled by objects in $\mathcal{C}$ on the boundary.
    \end{enumerate}
The object should be considered as a two dimensional alterfold (See \cite{LMWW23b}) with its left and right boundary left open for defining the monoidal structure. The $A$-colored region are exactly
$$\{(x, y)|y\in (-\infty, 0)\cup(1, \infty)\}\cup \{\text{disk interior}\}$$ 
\item The morphism are the linear span of the $3$-dimensional alterfold connecting the $2$-dimensional alterfold, modolo the Moves and Graphical calculus in category $\mathcal{C}, \mathcal{D}$ and $\mathcal{E}$.
\item Defining the composition of morphisms to be the stacking along $z$-axis, and the monoidal structure to be the juxtaposition along $x$-axis.
\end{itemize}
We first remark that the unit object in this monoidal category is not simple. Indeed, the endomorphism space of tensor unit equals to the matrix algebra $\text{Mat}(|\text{Irr}({}_{\mathcal{D}}\mathcal{M}_{\mathcal{E}})|)$.

\begin{lemma}\label{lem:representative}
All objects in $\langle\mathcal{D}, \mathcal{E}\rangle$ are equivalent to a subobject of an object that contains no disks in $\mathbb{R}\times (0, 1)$.
\end{lemma}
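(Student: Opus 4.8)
The plan is to eliminate the disks embedded in the bulk region $\mathbb{R}\times(0,1)$ one at a time by using the decompositions established in Section 2 together with Move~1. The key observation is that a disk in $\mathbb{R}\times(0,1)$ with marked points on its boundary labeled by objects of $\mathcal{C}$ is, after applying Move~1 to attach a handle connecting it either to the $\mathcal{D}$-colored boundary $\mathbb{R}\times\{0\}$ or to the $\mathcal{E}$-colored boundary $\mathbb{R}\times\{1\}$, turned into a domain wall. So the first step I would carry out is to set up the reduction: pick a representative of a given object of $\langle\mathcal{D},\mathcal{E}\rangle$ with a chosen disk $D$ in $\mathbb{R}\times(0,1)$, and use Move~1 to attach a $1$-handle connecting a collar of $D$ to the $\mathcal{D}$-colored boundary; this introduces an $\Omega$-colored (Kirby-colored) circle on the belt of the handle, which in the decorated $2$-category setting becomes the $\Omega$-color in $\mathrm{HOM}(\mathfrak{C},\mathfrak{D})$, i.e. involves the simple generator $J$.

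Second, I would expand the Kirby-colored belt circle over simple objects using modularity of $\mathcal{C}$ (Lemma~\ref{lem:decomposition1}, Lemma~\ref{lem:decomposition3}) and the planar graphical calculus for the Morita context. The point is that after this handle attachment the former bulk disk is no longer floating in $\mathbb{R}\times(0,1)$: it has been fused into the $\mathcal{D}$-colored $2$-dimensional alterfold on $\mathbb{R}\times\{0\}$, and the $\mathcal{C}$-colored marked points on its boundary are absorbed into the graphical calculus of $\mathcal{D}$ via the forgetful functor $F\colon \mathcal{Z(C)}\to\mathcal{D}$ (equivalently, via the identification of $\mathcal{D}$-colored disks and $\mathcal{C}$-colored tubes as in Equation~\eqref{pic:alpha1} and the surrounding discussion). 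The result is a finite sum of representatives each having one fewer bulk disk. Iterating this over all bulk disks produces, on the level of morphism spaces of $\langle\mathcal{D},\mathcal{E}\rangle$, a decomposition of the original object as a direct summand of a direct sum of objects with no disks in $\mathbb{R}\times(0,1)$; passing to the idempotent-completed category (as $\langle\mathcal{D},\mathcal{E}\rangle$ is already closed under subobjects, being built as a linear span of alterfold morphisms modulo the moves) identifies the original object as a subobject of such a disk-free object.

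Third, I would verify that the handle attachment and Kirby expansion are compatible with the monoidal and composition structures --- this is routine since Move~1 is a local move and can be performed inside a small ball disjoint from the open left/right boundary used for the tensor product and from the top/bottom used for composition --- and that no scalar ambiguity obstructs the argument, since by Proposition~\ref{thm:e partition function} the moves change the partition function only by the normalization scalar $\zeta$ (set to $1$) times powers of $\mu$, and $\mu\neq 0$ by assumption.

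The main obstacle I anticipate is bookkeeping the colors correctly when a bulk disk is attached to the $\mathcal{D}$-boundary versus the $\mathcal{E}$-boundary: the two choices introduce $\Omega$-colors in different hom-categories of the Morita $2$-category ($\mathrm{HOM}(\mathfrak{C},\mathfrak{D})$ versus $\mathrm{HOM}(\mathfrak{C},\mathfrak{E})$), and one must check that the resulting disk-free representative genuinely lies in $\langle\mathcal{D},\mathcal{E}\rangle$ and that the equivalence class does not depend on which boundary one attaches to (this independence follows from the homeomorphism invariance of the partition function, but stating it cleanly requires care). Once that is handled, the statement follows by induction on the number of bulk disks.
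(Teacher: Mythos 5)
Your proposal follows essentially the same route as the paper: attach a $1$-handle via Move~1 from the bulk disk to the $\mathcal{D}$-colored boundary, decompose the belt circle over simple objects, and thereby realize the original object as a subobject of one whose disk has been absorbed into the boundary as $M^*\otimes X\otimes M\in\mathcal{D}$, then induct on the number of bulk disks. The only imprecision is that the relevant expansion is of the $\Omega$-color of the module category ${}_{\mathcal{D}}\mathcal{M}_{\mathcal{C}}$ over its simple objects $M$ (no modularity of $\mathcal{C}$ or appeal to Lemmas~\ref{lem:decomposition1}, \ref{lem:decomposition3} is needed), which is exactly the sum $\sum_{M}d_M^{1/2}$ appearing in the paper's proof.
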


\begin{proof}
Locally, we have the following isomorphism that reduces the number of disks in the bulk.
$$\sum_{M\in \text{Irr}({}_{\mathcal{D}}\mathcal{M}_{\mathcal{C}})}{d_{M}^{\frac{1}{2}}}\quad
\vcenter{\hbox{\scalebox{0.7}{\begin{tikzpicture}[scale=0.35]
\draw (4, -5)--(-4,-6)--(-4, 1)--(4, 2)  (4, -5)--(4, -2.5);
\draw [dashed](4, -2.5)--(4, 2);
\draw (3,1.5)--(3,0.5)..controls +(0,-1) and +(0,1)..(-2,-4);
\draw (7,1.5)--(7,0.5)..controls +(0,-1) and +(0,1)..(2,-4);
\draw [blue] (5,0.5)..controls +(0,-1) and +(0,1)..(0,-4);
\draw[dashed] (0,-4) [partial ellipse=0:180:2 and 0.8];
\draw [dashed] (0,-4) [partial ellipse=180:360:2 and 0.8];
\draw[blue, ->-=0.5] (0, -4)--(0, -5.5)node[below, black]{\tiny{${X}$}};
\draw [red](5, 1.25) [partial ellipse=-75:0:2 and 0.8];
\draw [black](5, 1.25) [partial ellipse=0:180:2 and 0.8];
\draw [black](5, 1.25) [partial ellipse=180:360:2 and 0.8];
\draw [blue] (0, -6) [partial ellipse=40:10:2.5 and 4] node [below, black] {\tiny $M^{*}$};
\draw [blue,dashed, ->-=0.5] (0, -6) [partial ellipse=90:40:2.5 and 4];
\draw [blue] (0, -6) [partial ellipse=90:179:2.5 and 4] node [below, black] {\tiny $M$};
\end{tikzpicture}}}}$$
On the other hand, $M\otimes X\otimes M^{*}$ is an object in $\mathcal{D}$. Hence all disks in the bulk could be pushed to the $\mathcal{D}$-colored boundary using the above isomorphism.
Diagramatically, we have the following injective map:
$$\vcenter{\hbox{\begin{tikzpicture}
\draw (-1, 0) to (1, 0) node[right]{$\mathcal{D}$};
\draw (-1, 1) to (1, 1) node[right]{$\mathcal{E}$};
\draw[fill=gray!50!white] (0, 0.5)circle(0.3);
\draw[fill=blue] (0, 0.8) circle(0.03);
\draw (0, 0.6) node{\tiny$X$};
\end{tikzpicture}}}\hookrightarrow \vcenter{\hbox{\begin{tikzpicture}
\draw (-1, 0) to (1, 0) node[right]{$\mathcal{D}$};
\draw (-1, 1) to (1, 1) node[right]{$\mathcal{E}$};
\draw[fill=blue] (-0.5, 0) circle(0.03) node[above]{\tiny$M^*$};
\draw[fill=blue] (0, 0) circle(0.03)node[above]{\tiny$X$}
;
\draw[fill=blue] (0.5, 0) circle(0.03)node[above]{\tiny$M$};
\end{tikzpicture}}}$$
\end{proof}

\begin{proposition}\label{prop:doublefusion}
The idempotent completion of category $\langle\mathcal{D}, \mathcal{E}\rangle$ is a multi-fusion category.
\end{proposition}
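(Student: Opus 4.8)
The plan is to show that the idempotent completion $\widetilde{\langle\mathcal{D},\mathcal{E}\rangle}$ satisfies the axioms of a multi-fusion category: it is a finite, semisimple, rigid monoidal $\mathbb{C}$-linear category whose unit object decomposes as a finite direct sum of simple objects. The key reduction, already provided by Lemma \ref{lem:representative}, is that every object is (up to idempotent completion) a subobject of one with no disks in the bulk $\mathbb{R}\times(0,1)$; such ``diskless'' objects are nothing but pairs of a marked-point configuration on the $\mathcal{D}$-colored boundary and one on the $\mathcal{E}$-colored boundary, connected by a $3$-dimensional alterfold. First I would make precise the description of $\operatorname{Hom}$-spaces between diskless objects: by pushing all bulk structure to the boundaries via Moves 1, 2, 3 and the graphical calculus, a morphism between such objects is determined by its values in the fusion categories $\mathcal{D}$, $\mathcal{E}$ together with the bimodule category ${}_{\mathcal{D}}\mathcal{M}_{\mathcal{E}}$. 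In particular $\operatorname{Hom}$-spaces are finite-dimensional over $\mathbb{C}$, so the category is locally finite; combined with the idempotent completion this gives semisimplicity provided we exhibit enough simple objects, which follows because $\mathcal{D}$, $\mathcal{E}$ and ${}_{\mathcal{D}}\mathcal{M}_{\mathcal{E}}$ are each semisimple with finitely many simple objects.

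Next I would address rigidity. For a diskless object, duals are constructed by the obvious ``flip'' in the $y$-direction: reversing orientations of all marked strands on the $\mathcal{D}$- and $\mathcal{E}$-colored boundaries and reflecting, and the evaluation/coevaluation morphisms are the cup/cap alterfolds, which satisfy the zig-zag identities by the sphericality of the input $2$-category of spherical Morita contexts (the relevant pivotal/spherical structures on $\mathcal{C}$, $\mathcal{D}$, $\mathcal{E}$, and ${}_{\mathcal{D}}\mathcal{M}_{\mathcal{E}}$ are part of the data in Section 2). Since duals of diskless objects are again diskless and Lemma \ref{lem:representative} lets us replace an arbitrary object by a subobject of a diskless one, and subobjects in an idempotent-complete rigid category inherit duals, rigidity holds in general. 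The monoidal structure is strict by construction (juxtaposition along the $x$-axis), associativity and the interchange law for composition versus tensor product being immediate from the three-dimensional picture.

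Then I would analyze the unit object $\mathbf{1}$, which is the empty configuration: by the remark preceding Lemma \ref{lem:representative}, $\operatorname{End}(\mathbf{1}) \cong \operatorname{Mat}_{|\operatorname{Irr}({}_{\mathcal{D}}\mathcal{M}_{\mathcal{E}})|}(\mathbb{C})$, which is a (multi-)matrix algebra and hence semisimple; this is exactly the statement that $\mathbf{1}$ is a finite direct sum of simple objects (the distinct minimal idempotents of this matrix algebra), so the category is multi-fusion rather than fusion in general. I would verify that each such summand of $\mathbf{1}$ is a simple object by identifying $\operatorname{End}$ of a minimal idempotent with $\mathbb{C}$, using the computation in Proposition \ref{prop:dim} that the $\mathcal{D}$-$\mathcal{E}$ annulus evaluates to $|\operatorname{Irr}({}_{\mathcal{D}}\mathcal{M}_{\mathcal{E}})|$. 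Finiteness (finitely many simple objects and finite-dimensional $\operatorname{Hom}$s) then follows from the diskless reduction together with the finiteness of $\mathcal{C}$, $\mathcal{D}$, $\mathcal{E}$.

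I expect the main obstacle to be a fully rigorous treatment of the $\operatorname{Hom}$-space computation in the idempotent completion: one must check that the naive description of morphisms between diskless objects in terms of $\mathcal{D}$, $\mathcal{E}$, ${}_{\mathcal{D}}\mathcal{M}_{\mathcal{E}}$ data is faithful, i.e. that no additional relations among alterfolds collapse these spaces and, conversely, that the topological moves do not produce morphisms outside this description — this is the analogue of Theorem 4.29 in \cite{LMWW23b} for the multi-colored setting and is where the real content lies. A secondary subtlety is compatibility of the various spherical structures across the domain walls labeled by ${}_{\mathcal{D}}\mathcal{M}_{\mathcal{E}}$, needed so that the duality morphisms genuinely satisfy the zig-zag relations; I would handle this by invoking the sphericality hypotheses on the Morita context recorded in Section 2 (following M\"uger \cite{Mug03a,Mug03b}) rather than reproving them. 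Everything else — semisimplicity, rigidity, the monoidal axioms — should reduce to standard arguments once the $\operatorname{Hom}$-space description is in hand.
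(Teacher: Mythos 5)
Your overall strategy coincides with the paper's: reduce to diskless objects via Lemma \ref{lem:representative}, inherit rigidity from the rigidity of $\mathcal{Z(C)}$, $\mathcal{D}$ and $\mathcal{E}$, and locate the real content in showing that the endomorphism algebras are multi-matrix algebras. The problem is that you explicitly defer that last step --- you call the Hom-space computation ``the main obstacle'' and ``where the real content lies'' --- and this deferred step is precisely what constitutes the paper's proof. Without it your argument is a plan rather than a proof: semisimplicity of the idempotent completion does not follow merely from finite-dimensionality of Hom-spaces together with semisimplicity of $\mathcal{C}$, $\mathcal{D}$, $\mathcal{E}$ and ${}_{\mathcal{D}}\mathcal{M}_{\mathcal{E}}$; one must actually exhibit the endomorphism algebras as semisimple.

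The paper fills this gap concretely. For an object $\langle D,E\rangle$ with a single marked point $D$ on the $\mathcal{D}$-colored boundary and $E$ on the $\mathcal{E}$-colored boundary, it cuts the connecting $B$-colored region open along the two annular domain walls separating the two boundary components (an application of Moves 1 and 2), which decomposes the identity there into a sum over simple objects $M_i, M_j$ of ${}_{\mathcal{D}}\mathcal{M}_{\mathcal{E}}$ together with pairs of dual bases $\phi_{ij}^{k}$, $\phi_{ij}^{*\ell}$ of the resulting hom-spaces. It then observes that, for fixed $(i,j)$, the spanning elements indexed by $(k,\ell)$ compose exactly like elementary matrix units, so $\operatorname{End}(\langle D,E\rangle)$ is a direct sum of full matrix algebras; by Lemma \ref{lem:representative} such objects dominate the category. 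This is the multi-colored analogue of Theorem 4.29 of \cite{LMWW23b} that you correctly anticipate but do not supply: writing down this basis and checking the matrix-unit relations (which amounts to the duality of the $\phi$'s and sphericality) is what you would need to add to complete your argument. Your remaining points --- that $\operatorname{End}(\1)\cong\operatorname{Mat}(|\Irr({}_{\mathcal{D}}\mathcal{M}_{\mathcal{E}})|)$, and that duals of diskless objects are obtained by reflection --- are consistent with the paper, which treats them more briefly.
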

\begin{proof}
The rigidity follows from the rigidity of $\mathcal{Z(C)}$, $\mathcal{D}$ and $\mathcal{E}$. We only need to show the endomorphism spaces are matrix algebras.
We prove the endomorphism algebra are all matrix algebras. To be specific, we explicitly describe the endomorphism space of the following object:
$$\langle D, E\rangle :=\vcenter{\hbox{\begin{tikzpicture}
\draw (-1, 0) to (1, 0) node[right]{$\mathcal{D}$};
\draw (-1, 1) to (1, 1) node[right]{$\mathcal{E}$};
\draw[fill=blue] (0, 1) circle(0.03) node[below]{\tiny$E$};
\draw[fill=blue] (0, 0) circle(0.03)node[above]{\tiny$D$}
;
\end{tikzpicture}}}.$$
First notice that the following vectors form a basis of the morphism space:
$$\vcenter{\hbox{
\begin{tikzpicture}
\draw (-1, -2)--(-4, -3)--(-4, 3)--(-1, 2);
\draw (1, -2)--(4, -3)--(4, 3)--(1, 2);
\draw[dashed] (-2.5, 0) [partial ellipse=-90:90:1 and 1.5];
\draw (-2.5, 0) [partial ellipse=90:270:1 and 1.5];
\draw (2.5, 0) [partial ellipse=-90:90:1 and 1.5];
\draw[dashed] (2.5, 0) [partial ellipse=90:270:1 and 1.5];
\draw(-2.5, 1.5)--(2.5, 1.5);
\draw(-2.5, -1.5)--(2.5, -1.5);
\draw[blue, ->-=0.1, -<-=0.9] (-3.27, 2.74)node[above, black]{\tiny$D$}--(-3.27, 1)--(3.27, 1)--(3.27, 2.74)node[above, black]{\tiny$E$};
\draw[blue, -<-=0.1, ->-=0.9] (-3.27, -2.74)node[below, black]{\tiny$D$}--(-3.27, -1)--(3.27, -1)--(3.27, -2.74)node[below, black]{\tiny$E$};
\draw[blue, ->-=0.5] (0, 0)[partial ellipse=90:270:0.5 and 1.5];
\draw[blue, ->-=0.5, dashed] (0, 0)[partial ellipse=90:-90:0.5 and 1.5];
\draw (-.7, 0) node{\tiny$M_i$};
\draw (.7, 0) node{\tiny$M_j$};
\draw[fill=white] (-0.7, 0.7) rectangle (-.1, 1.4);
\draw (-0.4, 1.05) node{\tiny$\phi_{ij}^{k}$};
\draw[fill=white] (-0.7, -0.7) rectangle (-.1, -1.4);
\draw (-0.4, -1.05) node{\tiny$\phi_{ij}^{*\ell}$};
\draw (-2, -2) node{$\mathcal{D}$};
\draw (2, -2) node{$\mathcal{E}$};
\end{tikzpicture}
}}$$
where the vectors $\phi_{ij}^{k}$ and $\phi_{ij}^{*\ell}$ are chosen to be sets of dual basis in the corresponding morphisms spaces that are dual to each other due to the spherical structure. Fix $i, j$, this set of basis are elementary matrices in the corresponding block in the endomorphism algebra of $\langle D, E \rangle$. Therefore, we gave a matrix algebra description of endomorphism algebra of all objects of form $\langle D, E \rangle$. 
By Lemma \ref{lem:representative}, such objects dominate the category, therefore we finished the proof the Proposition.
\end{proof}

We shall characterize the minimal idempotents in the fusion algebra of $\mathcal{D}$ in Section 5.
Recall that in \cite{LMWW23}, we have a diagrammatical definition of the $\mathcal{Z(C)}$, the objects and the morphisms are exactly the configurations that could be put in the middle bulk $\mathbb{R}\times (0, 1)\times \mathbb{R}$. 
This produces a tensor functor $$\widehat{F}:\mathcal{Z(C)}\rightarrow \langle\mathcal{D}, \mathcal{E}\rangle.$$
The double $\alpha$-induction functor $\widetilde{\alpha}^\pm$ is defined to be $\widehat{F}I^\pm: \mathcal{C}\to \langle \mathcal{D}, \mathcal{E}\rangle.$
It is clear that $\widetilde{\alpha}^\pm$ are tensor functors.

\bibliographystyle{abbrv}
\bibliography{Trace}
\end{document}